\def\E{\mathbb{E}}
\def\R{\mathbb{R}}
\def\Z{\mathbb{Z}}
\newcommand{\Fcal}{{\cal F}}
\DeclareMathOperator{\tr}{tr}
\newcommand{\defeq}{:=}
\newtheorem{definition}{Definition}
\newtheorem{thm}{Theorem}
\newtheorem{lem}{Lemma}
\newtheorem{assum}{Assumption}
\newtheorem{cond}{Condition}
\newtheorem{test}{Test}
\begin{document}

\title[On the Convergence and Complexity of the Stochastic Central Finite-Difference Based Gradient Estimation Methods]{On the Convergence and Complexity of the Stochastic Central Finite-Difference Based Gradient Estimation Methods}

\author*[1]{\fnm{Raghu} \sur{Bollapragada}}\email{raghu.bollapragada@utexas.edu}

\author[1]{\fnm{Cem} \sur{Karamanli}}\email{cem.karamanli@utexas.edu}

\affil[1]{\orgdiv{Operations Research and Industrial Engineering}, \orgname{The University of Texas at Austin}, \orgaddress{\city{Austin}, \state{TX}, \country{USA}}}

\abstract{
This paper presents an algorithmic framework for solving unconstrained stochastic optimization problems using only stochastic function evaluations. We employ central finite-difference based gradient estimation methods %
to approximate the gradients and dynamically control the accuracy of these approximations by adjusting the sample sizes used in stochastic realizations.
Extending the analysis of \cite{bollapragada2024derivative} to nonconvex functions and central finite-difference based methods, we analyze the theoretical properties of the proposed framework. 
Our analysis yields sublinear convergence results to the neighborhood of the solution, and establishes the optimal worst-case iteration complexity ($\mathcal{O}(\epsilon^{-1})$) and sample complexity ($\mathcal{O}(\epsilon^{-2})$) for each gradient estimation method to achieve an $\epsilon$-accurate solution. Finally, we demonstrate the performance of the proposed framework and the quality of the gradient estimation methods through numerical experiments on nonlinear least squares problems.
}

\maketitle

\section{Introduction}
\label{sec:intro}
We consider unconstrained stochastic optimization problems of the form 
\begin{equation} \label{eq:stochasticProblem}
	\min_{x \in \mathbb{R}^d} F(x) = \E_{\zeta}\left[f(x, \zeta)\right],
\end{equation}
where $F : \R^{d} \to \R$ is a continuously differentiable function,  
$\zeta$ is a random variable with associated probability space $(\Xi,\Fcal,P)$, $f : \R^{d} \times 
\Xi \to \R$, and $\E_{\zeta}[\cdot]$ denotes the expectation with respect to $P$. We consider derivative-free optimization (DFO) settings where gradient information is unavailable, and only stochastic realizations of the objective functions, obtained through a zeroth-order oracle, are available. Such problem settings arise in different applications, including simulation optimization \citep{blanchet2019convergence,pasupathy2018sampling,shashaani2018astro} %
and reinforcement learning \citep{bertsekas2019reinforcement,choromanski2018optimizing,choromanski2018structured, mania2018simple}.

Gradient estimation methods, which involve estimating gradients through function evaluations and incorporating these estimators into optimization algorithms, are widely recognized within DFO settings. Recently, \cite{bollapragada2024derivative} proposed a unified algorithmic  
framework that encompasses stochastic 
gradient estimation methods for solving \eqref{eq:stochasticProblem}. This framework incorporates forward finite-difference methods for gradient estimation and controls the accuracy of these estimators by utilizing common random number (CRN) settings and adaptively choosing the number of stochastic realizations (sample sizes) employed in these estimators.

In this paper, we extend this framework to include central finite-difference methods, which are an alternative to forward finite-difference methods and have the potential to achieve superior performance under specific %
settings, such as %
the standard finite-difference method employed within CRN settings \citep{l1998budget,larson2019derivative}. Specifically, we propose iterative algorithms that incorporate gradient estimators computed using subsampled functions defined as
\begin{equation}
\label{eq:subfuncdef}
F_{S_k}(x) \defeq \frac{1}{|S_k|} \sum_{\zeta_i \in S_k} f(x, \zeta_i)\qquad \forall x\in \R^d,
\end{equation}
where $S_k = \{\zeta_1, \ldots, \zeta_{|S_k|}\}$ is a set of random realizations at each iteration $k$. By utilizing a set of vectors $T_k$, we derive the general central finite-difference based gradient estimators within CRN settings as follows
\begin{equation}\label{eq:gen_grad_est}
g_{S_k, T_k}(x_k) \defeq \gamma_k \sum_{u_j \in T_k} \left( \frac{F_{S_k}(x_k + \nu u_j) - F_{S_k}(x_k - \nu u_j)}{2\nu} \right)u_j,
\end{equation}
where $\nu > 0$ is the sampling radius, and $\gamma_k > 0$ is the scaling coefficient. 

Within this framework, we consider central finite-difference variants of various methods, encompassing standard finite-difference methods (cFD) \citep{blum1954multidimensional,kiefer1952stochastic}, Gaussian smoothing methods (cGS) \citep{nesterov2017random}, sphere smoothing methods (cSS) \citep{berahas2021theoretical}, randomized coordinate finite-difference methods (cRC) \citep{wright2015coordinate}, and randomized subspace finite-difference methods (cRS) \citep{berahas2021theoretical,kozak2021stochastic}. These methods differ in their choices of $T_k$ and $\gamma_k$ \citep[Table 2.1]{bollapragada2024derivative}. The $u_j$ vectors are canonical vectors $e_j$ for cFD and cRC, sampled from a multivariate standard normal distribution ($\mathcal{N}(0,I)$) for cGS, drawn from a uniform distribution over the surface of a unit sphere ($\mathcal{U}(\mathcal{S}(0,1))$) for cSS, and random orthonormal vectors for cRS. The value of $\gamma_k$ is $1$ for cFD, $1/|T_k|$ for cGS, and $d/|T_k|$ for cSS, cRC, and cRS.

The general iterative update rule within this framework, employing the gradient estimators defined in \eqref{eq:gen_grad_est}, is given as
\begin{equation} \label{eq:iter}
x_{k+1} = x_k - \alpha_k g_{S_k,T_k}(x_k),
\end{equation}
where $\alpha_k$ is the step size. The efficiency of this update rule depends on the accuracy of the gradient estimators. \cite{bollapragada2024derivative} incorporated an adaptive sampling approach to control the gradient estimation accuracy as the algorithm progresses by adjusting the sample sizes ($|S_k|$) at each iteration. The key idea behind adaptive sampling approaches is that inaccurate gradient approximations are sufficient when the current iterate is far away from the solution, and the accuracy in these approximations should increase as the iterates approach the optimal solution. Such approaches retain the optimal theoretical convergence properties of their deterministic counterparts while being efficient. In this work, we adapt these strategies to the central finite-difference based methods and extend their analysis to nonconvex problem settings.

The paper is organized as follows. A literature review and a summary of our notation are presented in the remainder of Section~\ref{sec:intro}. Mathematical preliminaries, including assumptions and sampling conditions, are discussed in Section~\ref{sec:assumandcond}. In Section~\ref{sec:theory}, we provide theoretical convergence and %
complexity results, while Section~\ref{sec:numericalExp} presents the numerical results. Finally, in Section~\ref{sec:conclusions}, we make some concluding remarks.

\subsection{Literature Review}
Several gradient estimation methods \citep{kiefer1952stochastic,nesterov2017random,flaxman2005online,berahas2021theoretical} %
have been proposed for both deterministic and stochastic optimization, as extensively reviewed in \citep{conn2009introduction,larson2019derivative}. \cite{pasupathy2018sampling} analyzed the convergence properties of the fundamental gradient estimation method introduced by \cite{kiefer1952stochastic} under different sampling rates. \cite{ghadimi2013stochastic} provided optimal worst-case sample complexities for both convex and nonconvex functions, showing $\mathcal{O}(\epsilon^{-2})$ complexity to achieve an $\epsilon$-accurate solution for stochastic approximation based gradient estimation methods, with different accuracy definitions for convex and nonconvex problems. Gradient estimation methods have also been integrated into quasi-Newton approaches \citep{berahas2019derivative,bollapragada2023adaptive,marrinan2023zeroth}. Additionally, adaptive sampling approaches, well-established in stochastic optimization \citep{byrd2012sample,bollapragada2018adaptive,bollapragada2018progressive,bollapragada2019exact,xie2020constrained,BOLLAPRAGADA2023239,berahas2022adaptive}, have recently been applied in DFO settings \citep{shashaani2018astro,bollapragada2023adaptive,bollapragada2024derivative}. \cite{bollapragada2023adaptive} generalized the norm condition \citep{byrd2012sample} and the practical inner-product condition \citep{bollapragada2018adaptive} to standard finite-difference based gradient estimation methods, while \cite{bollapragada2024derivative} extended these conditions to other forward finite-difference based gradient estimation methods.

\subsection{Notation}
The set of nonnegative integers and positive integers is denoted by $\Z_{+} \defeq \{0,1,2,\dots\}$ and $\Z_{++} \defeq \{1,2,\dots\}$ respectively. The set of real numbers (scalars) is denoted by $\R$, the set of $d$-dimensional vectors is denoted by $\R^d$, and the set of $m$-by-$d$ matrices is denoted by $\R^{m\times d}$. The transpose of a matrix $A \in \R^{m \times d}$ is denoted by $A^T \in \R^{d\times m}$. Throughout the paper, the $\ell_2$ vector norm or matrix norm is denoted by $\|\cdot\|$.

\section{Preliminaries}
\label{sec:assumandcond}
In this section, we outline the mathematical preliminaries, including the assumptions made throughout the paper and the conditions used to determine the sample sizes in the stochastic approximations at each iteration. We begin by stating the assumptions about the objective function.

\begin{assum}\label{assum:Lipschitzstochf}
    The objective function $F:\R^d \rightarrow \R$ is twice continuously differentiable function and is bounded below. That is, there exists $F^* > -\infty$ such that $F^* \defeq \inf_{x \in \R^d} F(x)$. Furthermore, the stochastic function $f(\cdot,\zeta):\R^d \rightarrow \R$ is also twice continuously differentiable with Lipschitz continuous gradients and Hessians with Lipschitz constants $L_f < \infty$ and $M_f < \infty$, respectively. That is, for every $\zeta$,
    \begin{align*}
	\| \nabla f(x,\zeta) - \nabla f(y,\zeta) \| & \leq L_f \| x - y \|, \quad \text{and} \\
        \| \nabla^2 f(x,\zeta) - \nabla^2 f(y,\zeta) \| & \leq M_f \| x - y \| \quad \forall x,y \in \R^d.
    \end{align*}
\end{assum}
Assumption~\ref{assum:Lipschitzstochf} implies that the objective function $F$ has Lipschitz continuous gradients and Hessians with Lipschitz constants $L_F \leq L_f$ and $M_F \leq M_f$, respectively, which is a common assumption in central finite-difference settings \citep{berahas2021theoretical}. While it is possible to relax the assumption on the smoothness of stochastic functions and require only $F$ to be smooth \citep{bollapragada2023adaptive}, such assumptions on the stochastic functions are useful in providing the complexity analysis \citep{bollapragada2024derivative}. Moreover, under Assumption~\ref{assum:Lipschitzstochf}, it follows from the second-order Taylor expansion that for every $\nu > 0$, $\zeta$, and $x, u \in \mathbb{R}^d$,
\begin{align}
    f (x + \nu u, \zeta) - f (x - \nu u,\zeta) &\leq 2\nu u ^T \nabla f (x,\zeta) + \frac{M_f}{3} \nu^3 \| u\|^3, \label{eq:Taylorstochf} \\
    F (x + \nu u) - F (x - \nu u) &\leq 2\nu u ^T \nabla F (x) + \frac{M_F}{3} \nu^3 \| u\|^3. \label{eq:TaylorF}
\end{align}
We also assume that the variance in the stochastic gradients is bounded, a standard assumption in stochastic optimization literature \citep{bottou2018optimization}.

\begin{assum}\label{assum:boundedvarinstochgrad}
There exist constants $\beta_1,\beta_2 \geq 0$ such that
\begin{align*}
    \E_{\zeta_i}[\| \nabla f (x, \zeta_i) - \nabla F (x)\|^2] \leq \beta_1 \|\nabla F (x)\|^2 + \beta_2, \quad \forall x \in \R^d.
\end{align*} 
\end{assum}

The next assumption pertains to the independence of sets $S_k$ and $T_k$ sampled at each iteration $k$.

\begin{assum}\label{assum:sampling}
	At every iteration $k$, the sample set $S_k$ consists of independent and identically distributed (i.i.d.)\ samples of $\zeta$. That is, for all $x \in \R^d$ and $k\in \Z_+$, 
    \begin{equation*}
	\E_{\zeta_i}[f(x,\zeta_i)] = F(x), \qquad \forall \zeta_i \in S_k.
    \end{equation*}
    Moreover, the vector set $T_k$ comprises sampled vectors that are chosen independently of the sample set $S_k$. 
\end{assum}

We also define the conditional expectations with respect to these random subsets, which are the only sources of randomness in the iterates generated by \eqref{eq:iter}. That is, we define the filtrations $\mathcal{F}_{k} = \sigma (x_0,\{T_1,S_1\},\{T_2,S_2\},\cdots,\{T_{k-1},S_{k-1}\})$ and $\mathcal{F}_{k + \frac{1}{2}} = \sigma (x_0,\{T_1,S_1\},\{T_2,S_2\},\cdots,\{T_{k-1},S_{k-1}\},T_k)$,
\begin{align*}
    \E_{k}[\cdot] = \E_{T_k}[\cdot] \defeq \E[\cdot| \mathcal{F}_{k}], \quad \text{and} \quad
    \E_{S_k}[\cdot] \defeq \E\left[\cdot\Big| \mathcal{F}_{k+\frac{1}{2}}\right].
\end{align*}
Moreover, we define the following expected quantities
\begin{align}
g_{T_k}(x_k) \defeq \E_{S_k}[g_{S_k,T_k}(x_k)], \quad \text{and} \quad
g(x_k) \defeq \E_{T_k} [g_{T_k}(x_k)].
\end{align}
Using these definitions, we can decompose the error in the gradient estimator into three terms: 
\begin{equation*}
    g_{S_k,T_k} (x_k) - \nabla F(x_k) = \underbrace{g_{S_k,T_k} (x_k) - g_{T_k} (x_k)}_{\text{function sampling error}} + \underbrace{g_{T_k} (x_k) - g (x_k)}_{\text{vector sampling error}} + \underbrace{g(x_k) - \nabla F(x_k),}_{ \text{bias}}
\end{equation*}
where the function sampling error depends on the choice of $S_k$, vector sampling error depends on the choice of $T_k$, and the bias term, arising from the absence of gradient information, 
depends on the choice of $\nu$. While we choose a constant $|T_k|$ and $\nu$ throughout the iterations, $|S_k|$ is adaptively chosen to control the function sampling error, thereby controlling the gradient estimation error. 
Guided by this principle,  \cite{bollapragada2024derivative} proposed the following theoretical condition that generalizes the well-known \emph{norm condition} in the derivative-based methods \citep{byrd2012sample,bollapragada2018adaptive}.   

\begin{cond}\label{cond:theoreticalnormcond} (Theoretical Norm Condition) \citep[Condition 3]{bollapragada2024derivative}
\begin{align}\label{eq:theoreticalnormcond}
    \frac{\E_{T_k}[\E_{\zeta_i}[\| g_{\zeta_i, T_k}(x_k) - g_{T_k}(x_k)\|^2]]}{|S_k|} \leq \theta^2 \E_{T_k}[\|g_{T_k}(x_k)\|^2] , \quad \theta > 0.
\end{align}
\end{cond}
We choose the sample sizes $|S_k|$ at each iteration such that this condition is satisfied. Evaluating this condition requires computing population (expectation) quantities that may not be available in practice, and we provide a practical test that employs sampled quantities to overcome this limitation in Section~\ref{sec:numericalExp}.

\section{Theoretical Results}
In this section, we provide theoretical convergence guarantees and worst-case complexity results for the iterates generated by \eqref{eq:iter} with sample sizes $|S_k|$ satisfying Condition~\ref{cond:theoreticalnormcond}. 
\label{sec:theory}

\subsection{Convergence Results}
We first establish a descent lemma that provides an upper bound on the expected decrease in the function value per iteration.
\begin{lem} \label{lem:generalconv}
For any $x_0 \in \R^d$, let $\{x_k: k\in \Z_{++}\}$ be generated by iteration \eqref{eq:iter} with $|S_{k}|$ satisfying Condition \ref{cond:theoreticalnormcond} for a given constant $\theta > 0$. Suppose that Assumptions \ref{assum:Lipschitzstochf}, \ref{assum:boundedvarinstochgrad}, and \ref{assum:sampling} hold. Then, for any $k\in \Z_{+}$,  if $\alpha_k$ satisfies
\begin{equation}\label{eq:alphaupper}
    0 < \alpha_k \leq \bar \alpha_k, 
\end{equation}
we have 
\begin{equation}\label{eq:generalconvres}
\E_k[F(x_{k+1})] \leq F(x_k) - \frac{\alpha_k}{4} \|\nabla F(x_k)\|^2 + \alpha_k \chi_k \quad \forall k\in \Z_{+},
\end{equation}
where $\bar \alpha_k, \chi_k > 0$ are 
given in Table~\ref{tbl:unified}. 
\end{lem}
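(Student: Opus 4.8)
The plan is to follow the standard descent-lemma template for stochastic gradient-type methods, but carefully tracking the three error components (function sampling error, vector sampling error, and bias) identified in the error decomposition above. The starting point is the second-order Taylor-type inequality for $F$ along the step $x_{k+1} = x_k - \alpha_k g_{S_k,T_k}(x_k)$: since $F$ has $L_F$-Lipschitz gradients,
\begin{equation*}
F(x_{k+1}) \leq F(x_k) - \alpha_k \nabla F(x_k)^T g_{S_k,T_k}(x_k) + \frac{L_F \alpha_k^2}{2} \|g_{S_k,T_k}(x_k)\|^2.
\end{equation*}
I would then take $\E_k[\cdot] = \E_{T_k}\E_{S_k}[\cdot]$ of both sides. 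The cross term becomes $-\alpha_k \nabla F(x_k)^T g(x_k)$, and I would write $g(x_k) = \nabla F(x_k) + (g(x_k) - \nabla F(x_k))$ so that the ``good'' term $-\alpha_k\|\nabla F(x_k)\|^2$ appears, with the bias $g(x_k) - \nabla F(x_k)$ handled separately via Young's inequality $-a^Tb \leq \frac14\|a\|^2 + \|b\|^2$ to absorb part of it into $\frac14\|\nabla F(x_k)\|^2$ and leave a $\|g(x_k)-\nabla F(x_k)\|^2$ remainder that feeds into $\chi_k$. The bias norm itself is controlled by the Taylor inequalities \eqref{eq:TaylorF}/\eqref{eq:Taylorstochf} (central differences give an $O(\nu^2)$ bias, so this contributes a term proportional to $\nu^4 M_F^2$ times a moment of $\|u_j\|$ depending on the method, plus possibly a $\gamma_k$-dependent ``directional-coverage'' term measuring how far $\gamma_k\E[\sum u_j u_j^T]$ is from the identity — for cFD this is exact, for the randomized variants this is where $\gamma_k$ is chosen to make it unbiased in expectation).

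The second main piece is bounding $\E_k[\|g_{S_k,T_k}(x_k)\|^2]$. I would split it as $\E_{T_k}\E_{S_k}[\|g_{S_k,T_k} - g_{T_k}\|^2] + \E_{T_k}[\|g_{T_k} - g\|^2] + \|g\|^2$ (each split being a variance decomposition, so the cross terms vanish). The first term is exactly what Condition~\ref{cond:theoreticalnormcond} controls: it is bounded by $\theta^2 \E_{T_k}[\|g_{T_k}(x_k)\|^2]$. But $\E_{T_k}[\|g_{T_k}\|^2] = \E_{T_k}[\|g_{T_k} - g\|^2] + \|g\|^2$, so I then need to bound $\E_{T_k}[\|g_{T_k}-g\|^2]$ (the vector sampling error) and $\|g\|^2$ in terms of $\|\nabla F(x_k)\|^2$ and constants. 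For $\|g\|^2 \leq 2\|\nabla F(x_k)\|^2 + 2\|g - \nabla F(x_k)\|^2$, so it reduces again to the bias bound. The vector sampling error term is method-dependent and is where Assumption~\ref{assum:boundedvarinstochgrad} enters: for cGS/cSS/cRS the randomness in $T_k$ contributes a variance-like term that, after bounding the finite-difference quotient by a directional derivative plus $O(\nu^2)$ error, yields something like $c_1\|\nabla F(x_k)\|^2 + c_2(\beta_1\|\nabla F(x_k)\|^2 + \beta_2) + c_3\nu^4(\cdots)$ with method-specific constants (e.g. factors of $d$, and moments of Gaussian/spherical vectors).

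Assembling these, the right-hand side takes the form $F(x_k) - \alpha_k(\tfrac12 - c\alpha_k)\|\nabla F(x_k)\|^2 + \alpha_k(\text{constant} \cdot \nu^4 + \cdots)$ after collecting the $\|\nabla F(x_k)\|^2$ coefficients; choosing $\bar\alpha_k$ so that $\alpha_k \leq \bar\alpha_k$ forces $(\tfrac12 - c\alpha_k) \geq \tfrac14$ (hence the $\tfrac{\alpha_k}{4}$ in \eqref{eq:generalconvres}), and bundling the remaining $\nu$- and $\beta_2$-dependent constants into $\chi_k$ gives the claim; the precise formulas for $\bar\alpha_k$ and $\chi_k$ in each of the five cases are what Table~\ref{tbl:unified} records. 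The main obstacle I anticipate is the bookkeeping in the vector sampling error and in relating $\E_{T_k}[\|g_{T_k}\|^2]$ back to $\|\nabla F(x_k)\|^2$ uniformly across the five method choices of $(T_k,\gamma_k)$ — in particular handling the $\theta^2 \E_{T_k}[\|g_{T_k}\|^2]$ term from Condition~\ref{cond:theoreticalnormcond} without circularity, since $\|g_{T_k}\|^2$ appears on both sides and one must ensure $\theta$ (or a product like $\theta$ with the method constants) is absorbed so that the net $\|\nabla F(x_k)\|^2$ coefficient stays bounded below by $1/4$; this is presumably why $\bar\alpha_k$ depends on $\theta$ and the method constants rather than being a universal constant.
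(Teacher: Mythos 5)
Your overall skeleton is the same as the paper's: the $L_F$-smoothness descent inequality, Young's inequality on the bias cross term, the variance decomposition of $\E_k[\|g_{S_k,T_k}(x_k)\|^2]$ into function-sampling, vector-sampling, and squared-mean parts, Condition~\ref{cond:theoreticalnormcond} to replace the function-sampling part by $\theta^2\E_{T_k}[\|g_{T_k}(x_k)\|^2]$, and then method-specific bounds on the bias $\delta_k$ and on $\E_{T_k}[\|g_{T_k}(x_k)-g(x_k)\|^2]$. This is exactly the route the paper takes, except that it invokes Lemma 3.2 of \cite{bollapragada2024derivative} for the generic descent step and then treats the five methods case by case, importing the cGS/cSS covariance bounds from \cite{berahas2021theoretical} and computing cRC/cRS directly; your resolution of the apparent circularity in the $\theta^2\E_{T_k}[\|g_{T_k}\|^2]$ term, via $\E_{T_k}[\|g_{T_k}\|^2]=\E_{T_k}[\|g_{T_k}-g\|^2]+\|g\|^2$, is also the one used there, and your $\tfrac14$ Young split versus the paper's $\tfrac12$--$\tfrac12$ split is only a harmless difference in constants.

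There is, however, one concrete misstep in your plan: you place Assumption~\ref{assum:boundedvarinstochgrad} inside the vector sampling error and predict a contribution of the form $c_2(\beta_1\|\nabla F(x_k)\|^2+\beta_2)$ there. But $g_{T_k}(x_k)=\E_{S_k}[g_{S_k,T_k}(x_k)]$ is, by Assumption~\ref{assum:sampling}, the central-difference estimator built from the \emph{deterministic} function $F$; conditioned on $T_k$ it carries no $\zeta$-randomness, so $\E_{T_k}[\|g_{T_k}(x_k)-g(x_k)\|^2]$ is controlled purely by $\|\nabla F(x_k)\|^2$, $M_F\nu^2$-type bias terms, and moments of the sampled directions (the $(d+2)(d+4)(d+6)$ factor for cGS, the exact identity $\frac{d-|T_k|}{|T_k|}\|\nabla^{FD}F(x_k)\|^2$ for cRC, orthonormality for cRS). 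No $\beta_1,\beta_2$ appears anywhere in this lemma; if you carried out your plan as written, $\beta_1$ would leak into the coefficient of $\|\nabla F(x_k)\|^2$ (hence into $\bar\alpha_k$) and $\beta_2$ into $\chi_k$, and you would establish only a weaker variant rather than \eqref{eq:generalconvres} with the $\bar\alpha_k,\chi_k$ values of Table~\ref{tbl:unified} that the lemma asserts. In this lemma, Assumption~\ref{assum:boundedvarinstochgrad} serves only to guarantee that the variance appearing in Condition~\ref{cond:theoreticalnormcond} is finite, so that a finite $|S_k|$ satisfying it exists (see Section~\ref{sec:CFDproofofboundedvar}); it is used substantively only later, in the sample-size bound of Lemma~\ref{lem:boundonSk}. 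Beyond this, what your proposal leaves as ``bookkeeping''---the per-method bias and direction-moment computations---is in fact the bulk of the paper's proof, but the plan identifies the right ingredients for it.
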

\begin{proof}
Using the result of \cite[Lemma 3.2]{bollapragada2024derivative}, it follows that 
\begin{align*}
     \E_{k}[F(x_{k+1})]
    & \leq  F(x_k) + \frac{\alpha_k}{2} \|\delta_k\|^2 - \frac{\alpha_k}{2} \|\nabla F(x_k)\|^2 \\
    & + L_F \alpha_k^2 (1 + \theta^2)\left(\|\delta_k\|^2 + \|\nabla F (x_k)\|^2 + \frac{1}{2}(\E_{T_k}[\|g_{T_k}(x_{k}) - g(x_k)\|^2]) \right),
\end{align*}
where $\delta_k \defeq g(x_k) - \nabla F(x_k) $. 
\citep[Section A.2]{bollapragada2024derivative} Table~\ref{tbl:unified} presents the established upper bounds on $\| \delta_k\|$ from existing literature. Using these results, we derive the bounds on per-iteration decrease in expected function values for each gradient estimation method. 
\begin{enumerate}
\item \textit{cFD:} Note that $T_k$ is deterministic and hence $g_{T_k}(x_k) = g(x_k)$. Thus, by \citep[Lemma 3.2]{bollapragada2024derivative},
\begin{align*}
\E_{k}[F(x_{k+1})] &\leq  F(x_k) - \alpha_k\left(\frac{1}{2} - L_F \alpha_k (1 + \theta^2)\right) \|\nabla F(x_k)\|^2  \nonumber + \alpha_k\left(\frac{1}{2} + L_F \alpha_k (1 + \theta^2)\right) \|\delta_k\|^2 \\
&\leq F(x_k) - \frac{\alpha_k}{4} \|\nabla F(x_k)\|^2 + \frac{3\alpha_k}{4} \|\delta_k\|^2 \\
&\leq F(x_k) - \frac{\alpha_k}{4} \|\nabla F(x_k)\|^2 + \frac{\alpha_k d M_F^2 \nu^4}{48},
\end{align*}
where the second inequality is due to \eqref{eq:alphaupper} and $\bar \alpha_k$ given in  Table~\ref{tbl:unified}, and the third inequality is due to the bound on $\|\delta_k\|^2$ given in Table~\ref{tbl:unified}.

\item \textit{cGS:} An upper bound on the covariance term is given as,
\begin{align}\label{eq:covarianceupper}
    Var(g_{T_k}(x_k)) = \E_{T_k}[(g_{T_k}(x_k) - g(x_k))(g_{T_k}(x_k) - g(x_k))^T] \preceq \kappa(x_k) I,
\end{align}
where $\kappa(x_k) = \frac{3}{|T_k|}(3\|\nabla F(x_k)\|^2 + \frac{M_F^2 \nu^4}{36}(d+2)(d+4)(d+6) )$ (see \citep[Lemma 2.4]{berahas2021theoretical}). Therefore, using \eqref{eq:covarianceupper}, we can bound the variance term as 
\begin{align}\label{eq:varianceupperkappax}
    \E_{T_k}[\| g_{T_k}(x_k) - g(x_k) \|^2] \leq d \kappa(x_k).
\end{align}

Now, by \citep[Lemma 3.2]{bollapragada2024derivative}, we have,
\begin{align*}
\E_{k}[F(x_{k+1})] & \leq  F(x_k) + \frac{\alpha_k}{2} \|\delta_k\|^2 - \frac{\alpha_k}{2} \|\nabla F(x_k)\|^2
+ L_F \alpha_k^2 (1 + \theta^2) [ \|\delta_k\|^2 + \|\nabla F (x_k)\|^2 \\
& \quad + \frac{1}{2}(\E_{T_k}[\|g_{T_k}(x_{k}) - g(x_k)\|^2]) ] \\
& \leq  F(x_k) + \frac{\alpha_k}{2} \|\delta_k\|^2 - \frac{\alpha_k}{2} \|\nabla F(x_k)\|^2 + L_F \alpha_k^2 (1 + \theta^2) [ \|\delta_k\|^2 + \|\nabla F (x_k)\|^2 \\
& \quad + \frac{3d}{2|T_k|}(3\|\nabla F(x_k)\|^2 + \frac{M_F^2 \nu^4}{36}(d+2)(d+4)(d+6) )] \\
& \leq F(x_k) + \frac{\alpha_k}{2} d^2 M_F^2 \nu^4 - \frac{\alpha_k}{2} \|\nabla F(x_k)\|^2 \nonumber + L_F \alpha_k^2 (1 + \theta^2)[d^2 M_F^2 \nu^4 + \|\nabla F (x_k)\|^2 \\
& \quad + \frac{3d}{2|T_k|}(3\|\nabla F(x_k)\|^2 + \frac{M_F^2 \nu^4}{36}(d+2)(d+4)(d+6) )] \\
& \leq F(x_k) - \frac{\alpha_k}{4} \|\nabla F(x_k)\|^2 
+ \frac{72|T_k|d + 216d^2 + (d+2)(d+4)(d+6)}{96(|T_k| + 4.5d)} \alpha_k d M_F^2 \nu^4,
\end{align*}
where the second inequality is due to \eqref{eq:varianceupperkappax},
the third inequality is due to the bound on $\|\delta_k\|^2$ given in Table~\ref{tbl:unified},  and the fourth inequality is due to \eqref{eq:alphaupper} and $\bar \alpha_k$ given in Table~\ref{tbl:unified}. 

\item \textit{cSS:} A similar bound on the covariance term \eqref{eq:covarianceupper} is given in \citep[Lemma 2.9]{berahas2021theoretical} for the cSS method where $\kappa(x_k) = \frac{3}{|T_k|}(\frac{3d}{d+2}\|\nabla F(x_k)\|^2 + \frac{d M_F^2 \nu^4}{36} )$ in \eqref{eq:varianceupperkappax}. Now, by \citep[Lemma 3.2]{bollapragada2024derivative}, we have,
\begin{align*}
\E_k[F(x_{k+1})] & \leq  F(x_k) + \frac{\alpha_k}{2} \|\delta_k\|^2 - \frac{\alpha_k}{2} \|\nabla F(x_k)\|^2 + L_F \alpha_k^2 (1 + \theta^2)[\|\delta_k\|^2 + \|\nabla F (x_k)\|^2 \\
& \quad + \frac{1}{2}(\E_{T_k}[\|g_{T_k}(x_{k}) - g(x_k)\|^2]) ] \\
& \leq  F(x_k) + \frac{\alpha_k}{2} \|\delta_k\|^2 - \frac{\alpha_k}{2} \|\nabla F(x_k)\|^2 + L_F \alpha_k^2 (1 + \theta^2)[\|\delta_k\|^2 + \|\nabla F (x_k)\|^2 \\
& \quad + \frac{3d}{2|T_k|}(\frac{3d}{d+2}\|\nabla F(x_k)\|^2 + \frac{d M_F^2 \nu^4}{36} )] \\
& \leq  F(x_k) + \frac{\alpha_k}{2} M_F^2 \nu^4 - \frac{\alpha_k}{2} \|\nabla F(x_k)\|^2 + L_F \alpha_k^2 (1 + \theta^2)[M_F^2 \nu^4 + \|\nabla F (x_k)\|^2 \\
& \quad + \frac{3d}{2|T_k|}(\frac{3d}{d+2}\|\nabla F(x_k)\|^2 + \frac{d M_F^2 \nu^4}{36} )] \\
& \leq  F(x_k) + \frac{\alpha_k}{2} M_F^2 \nu^4 - \frac{\alpha_k}{2} \|\nabla F(x_k)\|^2 + L_F \alpha_k^2 (1 + \theta^2)[M_F^2 \nu^4 + \|\nabla F (x_k)\|^2 \\
& \quad + \frac{3d}{2|T_k|}(3\|\nabla F(x_k)\|^2 + \frac{d M_F^2 \nu^4}{36} )] \\
& \leq F(x_k) - \frac{\alpha_k}{4} \|\nabla F(x_k)\|^2 + \frac{72|T_k| + 216d + d^2}{96(|T_k| + 4.5d)} \alpha_k M_F^2 \nu^4  ,
\end{align*}
where the second inequality is due to \eqref{eq:varianceupperkappax},
the third inequality is due to the bound on $\|\delta_k\|^2$ given in Table~\ref{tbl:unified}, the fourth inequality is because $\frac{d}{d+2} \leq 1$, and the fifth inequality is due to \eqref{eq:alphaupper} and $\bar \alpha_k$ given in Table~\ref{tbl:unified}. 

\item \textit{cRC:} 
Consider 
\begin{align} \label{eq:RCgenconvstep1}
    & \E_{T_k}[\|g_{T_k}(x_{k}) - g(x_k)\|^2] \nonumber \\ & \quad = \E_{T_k} \Big[ \sum_{e_j \in T_k} \Big(\frac{d}{|T_k|}[\nabla^{FD} F(x_k)]_{e_j} - [\nabla^{FD} F(x_k)]_{e_j} \Big)^2 + \sum_{e_j \notin T_k} \Big( [\nabla^{FD} F(x_k)]_{e_j} \Big)^2 \Big]  \nonumber \\
    & \quad = \E_{T_k} \Big[ \sum_{e_j \in T_k} \Big(\frac{d - |T_k|}{|T_k|}\Big)^2[\nabla^{FD} F(x_k)]^2_{e_j} + \sum_{e_j \notin T_k} [\nabla^{FD} F(x_k)]^2_{e_j} \Big]  \nonumber \\
    & \quad = \sum_{j = 1}^d \Big[ \frac{|T_k|}{d} \Big( \frac{d - |T_k|}{|T_k|} \Big)^2 [\nabla^{FD} F(x_k)]^2_{e_j} + \Big(1 - \frac{|T_k|}{d}\Big) [\nabla^{FD} F(x_k)]^2_{e_j} \Big]  \nonumber \\
    & \quad = \frac{d - |T_k|}{|T_k|} \| \nabla^{FD} F(x_k) \|^2 \leq \frac{2d - 2|T_k|}{|T_k|} \| \delta_k \|^2 + \frac{2d - 2|T_k|}{|T_k|} \| \nabla F(x_k) \|^2,
\end{align}
where the inequality is due to $(a+b)^2 \leq 2a^2 + 2b^2$ for any $a,b \in \R$.
Now, by \citep[Lemma 3.2]{bollapragada2024derivative} and \eqref{eq:RCgenconvstep1}, we have,
\begin{align*}
\E_{k}[F(x_{k+1})] & \leq  F(x_k) + \frac{\alpha_k}{2} \|\delta_k\|^2 - \frac{\alpha_k}{2} \|\nabla F(x_k)\|^2 + L_F \alpha_k^2 (1 + \theta^2)\left( \frac{d}{|T_k|} \|\delta_k\|^2 + \frac{d}{|T_k|} \|\nabla F (x_k)\|^2 \right) \\
& \leq F(x_k) - \frac{\alpha_k}{4} \|\nabla F(x_k)\|^2 + \frac{3\alpha_k}{4} \|\delta_k\|^2 \\
& \leq F(x_k) - \frac{\alpha_k}{4} \|\nabla F(x_k)\|^2 + \frac{\alpha_k d M_F^2 \nu^4}{48},
\end{align*}
where the second inequality is due to \eqref{eq:alphaupper} and $\bar \alpha_k$ given given in Table~\ref{tbl:unified}, and the third inequality is due to the bound on $\|\delta_k\|^2$ given in Table~\ref{tbl:unified}.

\item \textit{cRS:} 
From the proof of \citep[Lemma 3.2]{bollapragada2024derivative}, combining \citep[Equation 3.1]{bollapragada2024derivative} and \citep[Equation 3.2]{bollapragada2024derivative}, we get
\begin{align}\label{eq:RSLemma4step1}
    \E_k[F(x_{k+1})] & \leq F(x_k) + \frac{\alpha_k}{2}\|\delta_k\|^2 - \frac{\alpha_k}{2}\|\nabla F(x_k)\|^2 + \frac{L_F \alpha_k^2}{2}  (1 + \theta^2) \E_{T_k}[\|g_{T_k}(x_k)\|^2]. 
\end{align}
We now analyze the terms $\|\delta_k\|^2$ and $\E_{T_k}[\|g_{T_k}(x_k)\|^2]$.  
From \citep[Equation 2.15]{bollapragada2024derivative}, we have
\begin{align}
    \|\delta_k\|^2 &= \|\E_{\Tilde T_k}[U_{\Tilde T_k} b_{\Tilde T_k} - \nabla F(x_k)]\|^2 \leq \E_{\Tilde T_k}[\|U_{\Tilde T_k} b_{\Tilde T_k} - \nabla F(x_k)\|^2] = \E_{\Tilde T_k}[\| b_{\Tilde T_k} - U_{\Tilde T_k}^T \nabla F(x_k)\|^2] \label{eq:ortheq},
\end{align}
where the inequality is due to the Jensen's inequality, and the second equality is due to $U_{\Tilde T_k} U_{\Tilde T_k}^T = U_{\Tilde T_k}^TU_{\Tilde T_k} = I$ and $\|U_{\Tilde T_k}\| = 1$. 
Consider any $u_j^{(k)} \in \tilde{T}_k$. %
By Assumption \ref{assum:Lipschitzstochf} and \eqref{eq:TaylorF}, we have for all $j=1,\cdots,d$ that
\begin{align*}
    F(x_k + \nu u_j^{(k)}) - F(x_k - \nu u_j^{(k)}) \leq 2 \nu \nabla F(x_k)^T u_j^{(k)} + \frac{M_F \nu^3 \| u_j^{(k)}\|^3}{3}. 
\end{align*}
Therefore,
\begin{align*}
    \frac{F(x_k + \nu u_j^{(k)}) - F(x_k - \nu u_j^{(k)})}{2\nu} - \nabla F(x_k)^T u_j^{(k)} \leq \frac{M_F \nu^2}{6}, 
\end{align*}
for all $j=1,\cdots,d$. By concatenating the above inequalities, we have  
\begin{align} \label{eq:RSproxydeltak}
b_{\Tilde T_k} - U_{\Tilde T_k}^T \nabla F(x_k) \leq (\frac{M_F \nu^2}{6})\vec{1}, 
\end{align}
where $\vec{1} \in \R^d$ is a vector composed of $1$'s. Therefore, we have
\begin{align}
    \| b_{\Tilde T_k} - U_{\Tilde T_k}^T \nabla F(x_k)\|^2 \leq (\frac{\sqrt{d} M_F \nu^2 }{6})^2.
    \label{eq:boundgTk}
\end{align}
Combining this inequality with \eqref{eq:ortheq}, we obtain
\begin{equation} \label{eq:boundondeltakRS}
    \|\delta_k\|^2 \leq (\frac{\sqrt{d} M_F \nu^2}{6})^2.
\end{equation}

Consider 
\begin{align}
    & \E_{T_k}[\| g_{T_k}(x_k)\|^2] = \E_{T_k}\left[\left\| \frac{d}{|T_k|} U_{T_k} b_{T_k} \right\|^2\right] = \frac{d^2}{|T_k|^2}\E_{T_k}[\| U_{T_k} b_{ T_k}\|^2] \nonumber \\
    & \quad = \frac{d^2}{|T_k|^2}\E_{T_k}[\| b_{ T_k}\|^2] = \frac{d}{|T_k|}\E_{\Tilde T_k}[\| b_{\Tilde T_k}\|^2] = \frac{d}{|T_k|}\E_{\Tilde T_k}[\| U_{\Tilde T_k} b_{\Tilde T_k}\|^2], \label{eq:RSEtkgtk}
\end{align}
where we used the fact that $U_{T_k}$ and $U_{\Tilde T_k}$ have orthonormal columns. Therefore, we have 
\begin{align}
    \E_{T_k}[\| g_{T_k}(x_k)\|^2] %
    & \leq \frac{2d}{|T_k|}\E_{\Tilde T_k}[\| U_{\Tilde T_k} b_{\Tilde T_k} - \nabla F(x_k) \|^2] + \frac{2d}{|T_k|}\| \nabla F(x_k) \|^2 \nonumber \\
    & \leq \frac{2d}{|T_k|}(\frac{M_F \nu^2 \sqrt{d}}{6})^2 + \frac{2d}{|T_k|}\| \nabla F(x_k) \|^2, \label{eq:RSEtkgtk2}
\end{align}
where the first inequality is due to $(a+b)^2 \leq 2a^2 + 2b^2$, and the second inequality is due to \eqref{eq:ortheq} and \eqref{eq:RSproxydeltak}. Combining \eqref{eq:RSLemma4step1}, \eqref{eq:boundondeltakRS}, and \eqref{eq:RSEtkgtk2}, we obtain
\begin{align*}
    \E_{k}[F(x_{k+1})]
    & \leq F(x_k) + \frac{\alpha_k}{2} \| \delta_k \|^2 - \frac{\alpha_k}{2} \|\nabla F(x_k)\|^2 + L_F \alpha_k^2  \frac{d}{|T_k|} (1 + \theta^2) ( \frac{d M_F^2 \nu^4}{36} + \|\nabla F (x_k)\|^2) \\
    & \leq F(x_k) - \frac{\alpha_k}{4} \|\nabla F(x_k)\|^2 + \frac{\alpha_k d M_F^2 \nu^4}{48},
\end{align*}
where the second inequality is due to \eqref{eq:alphaupper} and $\bar \alpha_k$ given in Table~\ref{tbl:unified}. %

\end{enumerate}
\end{proof}

Lemma \ref{lem:generalconv} demonstrates that the function values are expected to decrease at any iteration $k$, provided the gradient ($\|\nabla F(x_k)\|^2$) is sufficiently large. However, the term $\chi_k$ hinders the progress. In fact, when a fixed step size ($\alpha_k = \bar \alpha$) and a fixed number of directions ($|T_k| = N$) are employed, the $\chi_k$ term becomes constant, and the iterates converge to a neighborhood of the solution. The following theorem establishes the convergence rate and the size of the neighborhood for nonconvex functions.
\begin{thm} \label{thm:sublconv}  (Sublinear Convergence). 
    Suppose the conditions for Lemma \ref{lem:generalconv} hold. Let $\bar \alpha$ and $\bar \chi$ be the values obtained by choosing %
    $|T_k| = N$ in Table \ref{tbl:unified}. If $\alpha_k = \bar \alpha$, for all $k \in \Z_{+}$, then the sequence $\{ x_k: k \in \Z_{+} \}$ converges sublinearly to a neighborhood of the solution. That is,
    \begin{equation}\label{eq:sublconv}
        \min_{0 \leq k \leq K-1} \E[\| \nabla F(x_k) \|^2] \leq \frac{4(F(x_0) - F^*)}{K \bar \alpha} + 4 \bar \chi.
    \end{equation}
    Moreover, for any $p \in (0,1]$, we have with probability at least $1-p$ that  
    \begin{equation}\label{eq:highprobconv}
        \min_{0 \leq k \leq K-1} \| \nabla F(x_k) \|^2 \leq \frac{4(F(x_0) - F^*)}{K \bar \alpha p} + \frac{4 \bar \chi}{p}.
    \end{equation}
\end{thm}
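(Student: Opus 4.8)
The plan is to start from the per-iteration descent guarantee \eqref{eq:generalconvres}, which under the hypotheses of Lemma~\ref{lem:generalconv} and the choices $\alpha_k = \bar\alpha$, $|T_k| = N$ reads $\E_k[F(x_{k+1})] \leq F(x_k) - \tfrac{\bar\alpha}{4}\|\nabla F(x_k)\|^2 + \bar\alpha\bar\chi$. First I would take total expectations (using the tower property over the filtration $\mathcal{F}_k$) to obtain $\E[F(x_{k+1})] \leq \E[F(x_k)] - \tfrac{\bar\alpha}{4}\E[\|\nabla F(x_k)\|^2] + \bar\alpha\bar\chi$. Then I would sum this inequality over $k = 0, 1, \dots, K-1$; the left-hand side telescopes, leaving $\E[F(x_K)] \leq F(x_0) - \tfrac{\bar\alpha}{4}\sum_{k=0}^{K-1}\E[\|\nabla F(x_k)\|^2] + K\bar\alpha\bar\chi$. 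Using $\E[F(x_K)] \geq F^*$ from Assumption~\ref{assum:Lipschitzstochf}, rearranging gives $\tfrac{\bar\alpha}{4}\sum_{k=0}^{K-1}\E[\|\nabla F(x_k)\|^2] \leq F(x_0) - F^* + K\bar\alpha\bar\chi$. Dividing by $\tfrac{K\bar\alpha}{4}$ and bounding the average from below by the minimum yields \eqref{eq:sublconv}.

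For the high-probability bound \eqref{eq:highprobconv}, I would apply Markov's inequality to the nonnegative random variable $Y \defeq \min_{0 \leq k \leq K-1}\|\nabla F(x_k)\|^2$. Since $\E[Y] \leq \E\big[\tfrac{1}{K}\sum_{k=0}^{K-1}\|\nabla F(x_k)\|^2\big] \leq \tfrac{4(F(x_0)-F^*)}{K\bar\alpha} + 4\bar\chi$ by the first part, Markov gives $P\big(Y \geq \tfrac{1}{p}\E[Y]\big) \leq p$, i.e., with probability at least $1-p$ we have $Y < \tfrac{1}{p}\E[Y] \leq \tfrac{4(F(x_0)-F^*)}{K\bar\alpha p} + \tfrac{4\bar\chi}{p}$, which is exactly \eqref{eq:highprobconv}.

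There is essentially no genuine obstacle here: the argument is the standard telescoping-plus-Markov template for nonconvex stochastic first-order methods, and all the hard work — establishing the descent inequality with the correct $\bar\alpha$ and $\bar\chi$ for each of the five gradient estimators — has already been done in Lemma~\ref{lem:generalconv}. The only point requiring a little care is the measurability bookkeeping when passing from the conditional bound \eqref{eq:generalconvres} to the unconditional one: one must verify that $F(x_k)$ and $\|\nabla F(x_k)\|^2$ are $\mathcal{F}_k$-measurable (which they are, since $x_k$ is determined by $x_0$ and $\{T_j, S_j\}_{j<k}$) so that the tower property applies cleanly. I would also note in passing that $\bar\alpha$ and $\bar\chi$ are indeed constants under the fixed choices $\alpha_k = \bar\alpha$, $|T_k| = N$, as observed in the discussion following Lemma~\ref{lem:generalconv}, so that the telescoped sum of the error terms is simply $K\bar\alpha\bar\chi$.
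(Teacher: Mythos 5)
Your proposal is correct and follows essentially the same route as the paper: telescope the descent bound from Lemma~\ref{lem:generalconv} under total expectation, use $F(x_K) \geq F^*$ and $\min \leq$ average to get \eqref{eq:sublconv}, then obtain \eqref{eq:highprobconv} via Markov's inequality. The only cosmetic difference is that you apply Markov directly to $\min_k \|\nabla F(x_k)\|^2$ after bounding its expectation by that of the average, whereas the paper bounds $P(\min > t) \leq P(\text{average} > t)$ and applies Markov to the average; these are equivalent.
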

\begin{proof}
Rearranging \eqref{eq:generalconvres} yields
\begin{align*}
    \|\nabla F(x_k)\|^2 \leq \frac{4(F(x_k) - \E_k[F(x_{k+1})])}{\bar \alpha} + 4 \bar \chi.
\end{align*}
Taking the total expectation and averaging the terms from $k=0$ to $k=K-1$ yields
\begin{align}\label{eq:avg_bnd}
    \frac{1}{K} \sum_{k = 0}^{K-1} \E[\|\nabla F(x_k)\|^2] & \leq \frac{1}{K} 
 \sum_{k = 0}^{K-1} \bigg[ \frac{4\E[F(x_k) - F(x_{k+1})]}{\bar \alpha} + 4 \bar \chi \bigg] = \frac{4\E[F(x_0) - F(x_{K})]}{K \bar \alpha} + 4 \bar \chi.
\end{align}
Using the above inequality, along with the fact that $\min_{0 \leq k \leq K-1} \E[\| \nabla F(x_k) \|^2] \leq \frac{1}{K} \sum_{k = 0}^{K-1} \E[\|\nabla F(x_k)\|^2] $, and $F(x_{K}) \geq F^*$ due to Assumption \ref{assum:Lipschitzstochf}, we obtain \eqref{eq:sublconv}. Moreover, using the 
fact that $\min_{0 \leq k \leq K-1} \| \nabla F(x_k) \|^2 \leq \frac{1}{K} \sum_{k = 0}^{K-1} \|\nabla F(x_k)\|^2 $, we get
\begin{align*}
    & P \bigg( \min_{0 \leq k \leq K-1} \| \nabla F(x_k) \|^2 > \frac{4(F(x_0) - F^*)}{K \bar \alpha p} + \frac{4 \bar \chi}{p} \bigg) \\
    &\leq P \bigg( \frac{1}{K} \sum_{k = 0}^{K-1} \|\nabla F(x_k)\|^2 > \frac{4(F(x_0) - F^*)}{K \bar \alpha p} + \frac{4 \bar \chi}{p} \bigg) \\ &\leq \frac{\E[\frac{1}{K} \sum_{k = 0}^{K-1} \|\nabla F(x_k)\|^2]}{\frac{4(F(x_0) - F^*)}{K \bar \alpha p} + \frac{4 \bar \chi}{p}} \leq \frac{\frac{4\E[F(x_0) - F(x_{K})]}{K \bar \alpha} + 4 \bar \chi}{\frac{4(F(x_0) - F^*)}{K \bar \alpha p} + \frac{4 \bar \chi}{p}}\leq p.
\end{align*}
where the second inequality is due to Markov's inequality, the third inequality is due to \eqref{eq:avg_bnd}, and the last inequality is due to the fact that $F(x_{K}) \geq F^*$, which completes the proof.
\end{proof}
Theorem \ref{thm:sublconv} provides convergence results both in expectation and in probability. The difference between different gradient estimation methods in terms of convergence behavior is reflected in the step size $\bar \alpha$ and neighborhood $\bar \chi$ parameters (see Table~\ref{tbl:unified}). We note that similar observations to those made in forward finite-difference methods \citep{bollapragada2024derivative} are found here. cRC and cRS methods have convergence rates $\frac{d}{N}$ times worse compared to cFD. Similarly, convergence rates of cGS and cSS methods are $\frac{N+4.5d}{N}$ times worse than that of cFD. Regarding the size of convergence neighborhoods, we observe that cFD, cRC, and cRS methods have the same neighborhood size. Furthermore, assuming that $N$ is small and $d$ is large, cSS has a neighborhood size similar to cFD, whereas cGS has a neighborhood size that is $d^2$ times larger compared to cFD.

\begin{table}[ht!]
\centering
\def\arraystretch{1.7}
\caption{
Properties and convergence results for different gradient estimation methods. $\bar \delta$ serves as an upper bound on $\|\delta_k\|$, where $\delta_k \defeq g(x_k) - \nabla F(x_k)$. $\bar \alpha_k$ and $\chi_k$ values summarize the results of Lemma \ref{lem:generalconv}. The $\nu = \hat \nu$ values ensure that the convergence neighborhood $4 \bar \chi$ (as stated in Theorem \ref{thm:sublconv}) equals $ \frac{\epsilon p}{2}$. Here, $\Omega_1 \defeq \frac{1}{4(1+\theta^2)L_F}$, $\Omega_2 \defeq \frac{d M_F^2 \nu^4}{48}$, and $\Omega_3 \defeq \sqrt[4]{\frac{6 \epsilon p}{ M_F^2 d}}$.
}
\begin{tabular}{|c|c|c|c|c|}
\hline
Method & $ \bar \delta $ & $\bar \alpha_k$ & $\chi_k$ & $\hat \nu$ \\ \hline
cFD & $\frac{\sqrt{d} M_F \nu^2 }{6}$ & $ \Omega_1 $ & $ \Omega_2 $ & $\Omega_3$ \\ 
cGS & $d M_F \nu^2$ & $ \frac{|T_k|}{(|T_k| + 4.5d)} \Omega_1$ & $\frac{72|T_k|d + 216d^2 + (d+2)(d+4)(d+6)}{2(|T_k| + 4.5d)} \Omega_2$ & $ \sqrt[4]{\frac{2(N + 4.5d)}{72Nd + 216d^2 + (d+2)(d+4)(d+6)}} \Omega_3 $ \\ 
cSS & $M_F \nu^2$ & $ \frac{|T_k|}{(|T_k| + 4.5d)} \Omega_1$ & $\frac{72|T_k| + 216d + d^2}{2d(|T_k| + 4.5d)} \Omega_2 $ & $ \sqrt[4]{\frac{2d(N + 4.5d)}{72N + 216d + d^2}} \Omega_3 $ \\ 
cRC & $\frac{\sqrt{d} M_F \nu^2 }{6}$ & $\frac{|T_k|}{d} \Omega_1$ & $ \Omega_2$ & $\Omega_3$ \\ 
cRS & $\frac{\sqrt{d} M_F \nu^2 }{6}$ & $\frac{|T_k|}{d} \Omega_1$ & $ \Omega_2$ & $\Omega_3$ \\
\hline
\end{tabular}
\label{tbl:unified}
\end{table}

\subsection{Complexity Results}
We now present the iteration and sample complexity results for the central finite-difference based gradient estimation methods, providing bounds on the total number of iterations and stochastic function evaluations required to achieve an $\epsilon$-accurate solution, respectively. The accuracy measure is defined as follows. 

\begin{definition}
    A random iterate $x_k$ is said to be an $\epsilon$-accurate solution if it satisfies $ \| \nabla F(x_k) \|^2 \leq \epsilon$ with probability at least $1-p$, where $p\in[0,1)$.
\end{definition}

Next, we observe that by employing $\hat \nu$ values in $\bar \delta$ in Table~\ref{tbl:unified}, we can ensure that%
\begin{align*}
\|\delta_k\| = \|g(x_k) - \nabla F(x_k)\| \leq \frac{\sqrt{\epsilon p}}{2} \leq \frac{\sqrt{\epsilon}}{2},
\end{align*}
for each gradient estimation method. Suppose that $x_k$ satisfies $\|g(x_k)\|^2 \leq \epsilon'$ with $\epsilon' \defeq \epsilon / 4$. Then, by using the above inequality, we can guarantee that %
\begin{align*}
    \|\nabla F(x_k)\| \leq \|g(x_k)\| + \|\delta_k \| \leq \sqrt{\epsilon},
\end{align*}
where the first inequality is by the triangle inequality (i.e. $\|a + b\| \leq \|a\| + \|b\|$, for any $a,b \in \R^n$). Therefore, for any iteration $k$, if $\|g(x_k)\|^2 \leq \epsilon'$ is satisfied, then $x_k$ is an $\epsilon$-accurate solution. This observation provides a means to upper bound the sample sizes employed at each iteration before achieving an $\epsilon$-accurate solution, as stated in the following lemma.
\begin{lem}\label{lem:boundonSk}
Suppose the conditions of Theorem~\ref{thm:sublconv} hold. 
For all iterations $k\in \mathbb{Z}_{+}$ such that $\|g(x_k)\|^2 > \epsilon'$, we have
\begin{align}
    |S_k| \leq b_1 + \frac{ b_2 }{\epsilon'} = b_1 + \frac{ 4b_2 }{\epsilon} \label{eq:boundonSk},
\end{align}
where $b_1,b_2 \in \R$ are constants that depend on the gradient estimation method. Table \ref{tbl:compres} summarizes the order results for $b_1$ and $b_2$ values.
\end{lem}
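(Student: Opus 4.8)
The plan is to rearrange Condition~\ref{cond:theoreticalnormcond} into an explicit upper bound on $|S_k|$, and then bound its numerator from above and its denominator from below. Since $|S_k|$ is chosen as the smallest integer satisfying \eqref{eq:theoreticalnormcond}, we have
\begin{equation*}
|S_k| \;\le\; \frac{\E_{T_k}\!\big[\E_{\zeta_i}[\| g_{\zeta_i, T_k}(x_k) - g_{T_k}(x_k)\|^2]\big]}{\theta^2\,\E_{T_k}[\|g_{T_k}(x_k)\|^2]} + 1 .
\end{equation*}
For the denominator, Jensen's inequality gives $\E_{T_k}[\|g_{T_k}(x_k)\|^2] \ge \|\E_{T_k}[g_{T_k}(x_k)]\|^2 = \|g(x_k)\|^2 > \epsilon'$.

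For the numerator (the function-sampling variance), I would reuse the central-difference Taylor estimates already used in the proof of Lemma~\ref{lem:generalconv}. Writing $v:=\nabla f(x_k,\zeta_i)-\nabla F(x_k)$ and applying the bound \eqref{eq:Taylorstochf} to $\pm u_j$ (so that the quadratic terms cancel), once for $f(\cdot,\zeta_i)$ and once for $F$, gives for every $u_j\in T_k$
\begin{equation*}
\frac{f(x_k+\nu u_j,\zeta_i)-f(x_k-\nu u_j,\zeta_i)}{2\nu} - \frac{F(x_k+\nu u_j)-F(x_k-\nu u_j)}{2\nu} = v^{T} u_j + \tilde r_j, \qquad |\tilde r_j| \le \tfrac{(M_f+M_F)\nu^2\|u_j\|^3}{6},
\end{equation*}
so that $g_{\zeta_i,T_k}(x_k)-g_{T_k}(x_k) = \gamma_k\sum_{u_j\in T_k}(v^T u_j + \tilde r_j)u_j$. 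Taking the squared $\ell_2$-norm, separating the ``signal'' ($v^Tu_j$) and ``remainder'' ($\tilde r_j$) contributions via $(a+b)^2\le 2a^2+2b^2$, taking $\E_{\zeta_i}$ and invoking Assumption~\ref{assum:boundedvarinstochgrad} through $\E_{\zeta_i}[\|v\|^2]\le\beta_1\|\nabla F(x_k)\|^2+\beta_2$, and finally taking $\E_{T_k}$ (using Assumption~\ref{assum:sampling} to interchange the expectations and evaluating the moments of $u_j$ proper to each method) yields a bound of the form
\begin{equation*}
\E_{T_k}\!\big[\E_{\zeta_i}[\| g_{\zeta_i, T_k}(x_k) - g_{T_k}(x_k)\|^2]\big] \;\le\; c_1\|\nabla F(x_k)\|^2 + c_2 ,
\end{equation*}
where $c_1,c_2>0$ depend only on $d, N, \nu, M_f, L_f, \beta_1, \beta_2$ and the method ($\gamma_k$ being fixed once $|T_k|=N$ is). For cFD, cRC and cRS the directions form (a subset of) an orthonormal set, so the squared norm contains no cross terms and $\|u_j\|=1$; for cGS and cSS one must additionally resolve the off-diagonal terms $u_j^Tu_l$ ($j\neq l$), which after $\E_{\zeta_i}$ become $u_j^T\Sigma(x_k)u_l$ with $\Sigma(x_k)$ the stochastic-gradient covariance (whose trace is again bounded by Assumption~\ref{assum:boundedvarinstochgrad}), together with the Gaussian/uniform-sphere moments of the $u_j$'s such as $\E[(v^Tu)^2\|u\|^2]$ and $\E[\|u\|^6]$.

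Finally, to convert $\|\nabla F(x_k)\|^2$ into $\|g(x_k)\|^2$, the triangle inequality together with $\|\delta_k\|\le\bar\delta$ from Table~\ref{tbl:unified} gives $\|\nabla F(x_k)\|^2 \le 2\|g(x_k)\|^2 + 2\bar\delta^2$ (and, since the choice $\nu=\hat\nu$ makes $\bar\delta^2\le\epsilon'$, the $\bar\delta^2$ term may instead be folded into the $\epsilon'$-free constant). Substituting into the displayed bound on $|S_k|$ and using $\|g(x_k)\|^2>\epsilon'$ only on the term not proportional to $\|g(x_k)\|^2$ gives
\begin{equation*}
|S_k| \;\le\; \Big(\tfrac{2c_1}{\theta^2}+1\Big) + \frac{2c_1\bar\delta^2 + c_2}{\theta^2}\cdot\frac{1}{\epsilon'} ,
\end{equation*}
i.e., \eqref{eq:boundonSk} with $b_1=\tfrac{2c_1}{\theta^2}+1$ and $b_2=\tfrac{2c_1\bar\delta^2+c_2}{\theta^2}$; reading off the order of $c_1,c_2$ for each method (e.g., $c_1=\mathcal{O}(\beta_1)$ for cFD, $\mathcal{O}(d\beta_1/N)$ for cRC and cRS, and $\mathcal{O}((d+N)\beta_1/N)$ for cGS and cSS) produces the entries of Table~\ref{tbl:compres}. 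I expect the main obstacle to be the numerator bound for cGS and cSS: unlike the orthonormal-direction methods the squared-norm expansion is not diagonal, so one must carefully carry out the $\E_{T_k}$ computations for the products $(v^Tu_j)(v^Tu_l)(u_j^Tu_l)$, $j\neq l$, and for the $\|u_j\|^6$-type remainder terms, and then recombine them with Assumption~\ref{assum:boundedvarinstochgrad}; a secondary point is verifying that every constant is genuinely independent of $x_k$, so that the bound holds uniformly over all iterations with $\|g(x_k)\|^2>\epsilon'$.
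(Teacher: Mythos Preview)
Your overall plan matches the paper's: rearrange Condition~\ref{cond:theoreticalnormcond} into an upper bound on $|S_k|$, bound the numerator via the central-difference Taylor estimate and Assumption~\ref{assum:boundedvarinstochgrad}, lower-bound the denominator, and then convert $\|\nabla F(x_k)\|^2$ back to $\|g(x_k)\|^2$ using the bias bound $\bar\delta$. Two points, however, deserve correction.

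First, your anticipated ``main obstacle'' for cGS and cSS is not one. The paper never expands the cross terms $(v^Tu_j)(v^Tu_l)(u_j^Tu_l)$; it simply applies $\bigl(\sum_{j} a_j\bigr)^2 \le |T_k|\sum_j a_j^2$ to $g_{\zeta_i,T_k}-g_{T_k}=\gamma_k\sum_j c_{i,j}u_j$, obtaining $\E_{\zeta_i}[\|g_{\zeta_i,T_k}-g_{T_k}\|^2]\le \gamma_k^2|T_k|\sum_j \E_{\zeta_i}[c_{i,j}^2]\|u_j\|^2$. After taking $\E_{T_k}$ this reduces to $\gamma_k^2|T_k|^2\,\tr\bigl(\E_{u}[\E_{\zeta_i}[c_{i,j}^2]\,uu^T]\bigr)$, and the only moments required are $\E_u[(v^Tu)^2 uu^T]$ and $\E_u[\|u\|^6 uu^T]$, which are read off from known identities. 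No covariance matrix $\Sigma(x_k)$ enters.

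Second, for cRC and cRS your plan (Jensen for the denominator: $\E_{T_k}[\|g_{T_k}\|^2]\ge\|g(x_k)\|^2>\epsilon'$) is too loose to reproduce the orders in Table~\ref{tbl:compres}. The numerator for these methods carries an extra $d/N$ factor coming from $\gamma_k=d/N$, and with the Jensen denominator you would end up with $b_1=\mathcal{O}(d\beta_1/N)$ instead of the claimed $\mathcal{O}(\beta_1)$. The paper instead keeps the stronger denominator \eqref{eq:Skappx1} and computes it exactly: for cRC, $\E_{T_k}[\|g_{T_k}(x_k)\|^2]=\tfrac{d}{N}\|\nabla^{FD}F(x_k)\|^2$ (and an analogous identity for cRS via orthonormality of the columns of $U_{\tilde T_k}$), which carries the same $d/N$ factor and cancels it. Your stated orders at the end ($\mathcal{O}(d\beta_1/N)$ for cRC/cRS and $\mathcal{O}((d+N)\beta_1/N)$ for cGS/cSS) are therefore incorrect; the correct orders are $\mathcal{O}(\beta_1)$ and $\mathcal{O}(d\beta_1)$, respectively.
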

\begin{proof}
\citep[Section A.4]{bollapragada2024derivative}
Without loss of generality and choosing the minimum sample size that satisfies Condition~\ref{cond:theoreticalnormcond} at each iteration $k \in \Z_{+}$, we have
\begin{align}
    |S_k| = \left \lceil \frac{\E_{T_k} [\E_{\zeta_i}[\| g_{\zeta_i, T_k}(x_k) - g_{T_k}(x_k)\|^2]]}{\theta^2 \E_{T_k} [\|g_{T_k} (x_k)\|^2]} \right \rceil
    &\leq  1 + \frac{\E_{T_k} [\E_{\zeta_i}[\| g_{\zeta_i, T_k}(x_k) - g_{T_k}(x_k)\|^2]]}{\theta^2 \E_{T_k} [\|g_{T_k} (x_k)\|^2]}
    \label{eq:Skappx1} \\
    & \leq 1 + \frac{\E_{T_k} [\E_{\zeta_i}[\| g_{\zeta_i, T_k}(x_k) - g_{T_k}(x_k)\|^2]]}{\theta^2 \|\E_{T_k} [g_{T_k} (x_k)]\|^2} \label{eq:Skappx2},
\end{align}
where the second inequality is by Jensen's inequality.
We use the weaker bound in \eqref{eq:Skappx2} whenever the bound in \eqref{eq:Skappx1} is difficult to compute. We employ $\epsilon'$ bound to lower bound the denominator in \eqref{eq:Skappx2}. That is, 
\begin{align}\label{eq:gxkandepsilonprime}
    \epsilon' < \| g(x_k) \|^2  =  \|\E_{T_k} [g_{T_k}(x_k)] \|^2. %
\end{align}
We utilize the results developed in Section~\ref{sec:CFDproofofboundedvar} to analyze the variance term (numerator) in \eqref{eq:Skappx1} and \eqref{eq:Skappx2}. 
We now develop bounds on sample sizes for each gradient estimation method. 
\begin{enumerate}
\item \textit{cFD:} Note that $T_k$ is not stochastic and \eqref{eq:Skappx1} can be written as
\begin{align}\label{eq:SkappxFD}
    |S_k| = 1 + \frac{ \E_{\zeta_i}[\| \nabla^{FD} f(x_k,\zeta_i) - \nabla^{FD} F(x_k)\|^2]}{\theta^2 \|\nabla^{FD} F(x_k)\|^2}. %
\end{align}
Considering the numerator in \eqref{eq:SkappxFD}, using \eqref{eq:defofci} and by \citep[Table 2.1]{bollapragada2024derivative}, we have
\begin{align}\label{eq:SkappxFDstep1}
    & \E_{\zeta_i} [ \| \nabla^{FD} f(x_k,\zeta_i) - \nabla^{FD} F(x_k)\|^2] \nonumber \\
    & \quad = \E_{\zeta_i}\Big[\sum_{i=1}^d \Big(\frac{f(x_k + \nu e_j,\zeta_i) - f(x_k - \nu e_j,\zeta_i)}{2\nu} - \frac{F(x_k + \nu e_j) - F(x_k - \nu e_j)}{2\nu} \Big)^2 \Big] \nonumber \\
    & \quad = \E_{\zeta_i}[\sum_{i=1}^d c_{i,j}^2]
    ~\leq~ \E_{\zeta_i}\Big[\sum_{i=1}^d \Big( 2\Big( e_j^T [\nabla f (x_k,\zeta_i) - \nabla F (x_k)] \Big)^2 + \frac{2}{9}M_f^2 \nu^4 \|e_j\|^6 \Big) \Big] \nonumber \\
    & \quad = 2\E_{\zeta_i}[\| \nabla f (x_k,\zeta_i) - \nabla F (x_k)\|^2] + \frac{2}{9}M_f^2 \nu^4 d \nonumber \\
    & \quad \leq 2\beta_1 \|\nabla F (x_k)\|^2 + 2\beta_2 + \frac{2}{9}M_f^2 \nu^4 d  \nonumber \\
    & \quad \leq 4\beta_1 \|\nabla^{FD} F (x_k)\|^2 + 4\beta_1 \|\nabla F (x_k) - \nabla^{FD} F (x_k)\|^2 + 2\beta_2 + \frac{2}{9}M_f^2 \nu^4 d  \nonumber \\
    & \quad \leq 4\beta_1 \|\nabla^{FD} F (x_k)\|^2 + 2\beta_2 + \frac{(2+\beta_1)}{9}M_f^2 \nu^4 d,
\end{align}
where the first inequality is due to \eqref{eq:boundedvarstep2}, the second inequality is due to Assumption \ref{assum:boundedvarinstochgrad}, the third inequality is due to $(a+b)^2 \leq 2a^2 + 2b^2$ for any $a,b \in \R$, and the last inequality is by \citep[Lemma 3.2]{bollapragada2024derivative}, $\bar \delta$ given in Table \ref{tbl:unified} and the fact that $M_F \leq M_f$.
Substituting \eqref{eq:SkappxFDstep1} in \eqref{eq:SkappxFD}, and using \eqref{eq:gxkandepsilonprime},
we have \eqref{eq:boundonSk} satisfied with $b_1 =1 + \frac{4 \beta_1}{\theta^2} $, $b_2 = \frac{(2+\beta_1) M_f^2 \nu^4 d}{9\theta^2} + \frac{2\beta_2}{\theta^2} $.
\vspace{0.5em}
\item \textit{cGS:}
Taking expectation $\E_{T_k}[\cdot]$ on both sides in \eqref{eq:boundedvarstep1}, we get
\begin{align} \label{eq:GSandSSvarstep1}
\E_{T_k} \left [ \E_{\zeta_i} [  \|g_{\zeta_i, T_k}(x_k) - g_{T_k}(x_k)\|^2 ] \right] &\leq \E_{T_k} \left[ \gamma_k^2|T_k| \E_{\zeta_i} \left[ \sum_{u_j \in T_k} \left\| c_{i,j} u_j \right\|^2 \right] \right] \nonumber \\
&= \E_{T_k} \left[ \gamma_k^2|T_k| \E_{\zeta_i} \left[ \sum_{u_j \in T_k} \tr(c_{i,j}^2 u_j u_j^T) \right] \right] \nonumber \\
&= \gamma_k^2 |T_k|^2 \tr \Big( \E_{u_j} [ \E_{\zeta_i} [c_{i,j}^2 u_j u_j^T]] \Big),
\end{align}
where the last equality is due to the linearity of $\tr(\cdot)$ and $\E[\cdot]$ operations, as well as independence of $u_j \in T_k$ vectors. 
By \eqref{eq:boundedvarstep2}, \eqref{eq:GSandSSvarstep1} and by \citep[Table 2.1]{bollapragada2024derivative}, we have
\begin{align}\label{eq:SkappxGSstep1}
    & \E_{T_k} \left[ \E_{\zeta_i} \left[  \|g_{\zeta_i, T_k}(x_k) - g_{T_k}(x_k)\|^2 \right] \right] \nonumber \\
    & \quad \leq 2 \E_{\zeta_i} \left[ \tr \left( \E_{u_j} \left[ \left( u_j^T [\nabla f (x_k,\zeta_i) - \nabla F (x_k) ] \right)^2 u_j u_j^T\right] \right) \right] + \frac{2}{9} \tr \left( \E_{u_j} \left[ M_f^2 \nu^4 \|u_j\|^6 u_j u_j^T \right] \right) \nonumber \\
    & \quad = 2 \E_{\zeta_i} [ \tr ( \| \nabla f (x_k,\zeta_i) - \nabla F (x_k)] \|^2 I + 2[\nabla f (x_k,\zeta_i) - \nabla F (x_k)][\nabla f (x_k,\zeta_i) - \nabla F (x_k)]^T )] \nonumber \\
    & \quad \quad + \frac{2}{9} \tr \left( M_f^2 \nu^4(d+2)(d+4)(d+6)I \right) \nonumber \\
    & \quad = 2 \E_{\zeta_i} \Big[ (d+2)\| \nabla f (x_k,\zeta_i) - \nabla F (x_k)] \|^2 \Big] + \frac{2}{9}M_f^2 \nu^4 d(d+2)(d+4)(d+6) \nonumber \\
    & \quad \leq 2 (d+2) \beta_1 \|\nabla F (x_k)\|^2 + 2 (d+2) \beta_2 + \frac{2}{9} M_f^2 \nu^4 d(d+2)(d+4)(d+6) \nonumber \\
    & \quad \leq 4 (d+2) \beta_1 \|\nabla F^{GS}_{\nu} (x_k)\|^2 + 4 (d+2) \beta_1 \|\nabla F (x_k) - \nabla F^{GS}_{\nu} (x_k)\|^2 + 2 (d+2) \beta_2 \nonumber \\
    & \quad \quad + \frac{2}{9} M_f^2 \nu^4 d(d+2)(d+4)(d+6) \nonumber \\
    & \quad \leq 4 (d+2) \beta_1 \|\nabla F^{GS}_{\nu} (x_k)\|^2 + 2 (d+2) \beta_2 + 2 M_f^2 \nu^4 d(d+2) \bigg[ \frac{1}{9}(d+4)(d+6) + 2 \beta_1 d \bigg]
\end{align}
where the first equality is due to \citep[Equation 2.18]{berahas2021theoretical}, the second inequality is due to Assumption \ref{assum:boundedvarinstochgrad}, the third inequality is due to $(a+b)^2 \leq 2a^2 + 2b^2$ for any $a,b \in \R$, and the last inequality is by \citep[Lemma 3.2]{bollapragada2024derivative}, $\bar \delta$ given in Table \ref{tbl:unified} and $M_F \leq M_f$. By \eqref{eq:Skappx2}, \eqref{eq:gxkandepsilonprime} and \eqref{eq:SkappxGSstep1}, we have \eqref{eq:boundonSk} satisfied with $b_1 =1 + \frac{4 (d+2) \beta_1}{\theta^2} $, $b_2 = \frac{2 (d+2) \beta_2 + 2 M_f^2 \nu^4 d(d+2) [ \frac{1}{9}(d+4)(d+6) + 2 \beta_1 d ]}{\theta^2} $.
\vspace{0.5em}
\item \textit{cSS:}
By \eqref{eq:boundedvarstep2}, \eqref{eq:GSandSSvarstep1} and by \citep[Table 2.1]{bollapragada2024derivative}, we have
\begin{align}\label{eq:SkappxSSstep1}
    & \E_{T_k} \left[ \E_{\zeta_i} \left[  \|g_{\zeta_i, T_k}(x_k) - g_{T_k}(x_k)\|^2 \right] \right] \nonumber \\
    & \quad \leq 2d^2 \E_{\zeta_i} \left[ \tr \left( \E_{u_j} \left[ \left( u_j^T [\nabla f (x_k,\zeta_i) - \nabla F (x_k) ] \right)^2 u_j u_j^T\right] \right) \right] + \frac{2d^2}{9} \tr \Big( \E_{u_j} \Big[ \Big( M_f^2 \nu^4 \|u_j\|^6 u_j u_j^T \Big] \Big) \nonumber \\
    & \quad = \frac{2d}{d+2} \E_{\zeta_i} [ \tr ( \| \nabla f (x_k,\zeta_i) - \nabla F (x_k)] \|^2 I + 2[\nabla f (x_k,\zeta_i) - \nabla F (x_k)][\nabla f (x_k,\zeta_i) - \nabla F (x_k)]^T )] \nonumber \\
    & \quad \quad + \frac{2d}{9} \tr \Big( M_f^2 \nu^4I \Big) \nonumber \\
    & \quad = 2 d \E_{\zeta_i} \Big[ \| \nabla f (x_k,\zeta_i) - \nabla F (x_k)] \|^2 \Big] + \frac{2 d^2}{9} M_f^2 \nu^4 \nonumber \\
    & \quad \leq 2 d \beta_1 \|\nabla F (x_k)\|^2 + 2 d \beta_2 + \frac{2 d^2}{9} M_f^2 \nu^4 \nonumber \\
    & \quad \leq  4 d \beta_1 \|\nabla F^{SS}_{\nu} (x_k)\|^2 + 4 d \beta_1 \|\nabla F (x_k) - \nabla F^{SS}_{\nu} (x_k)\|^2 + 2 d \beta_2 + \frac{2 d^2}{9} M_f^2 \nu^4 \nonumber \\
    & \quad \leq  4 d \beta_1 \|\nabla F^{SS}_{\nu} (x_k)\|^2 + 2 d \beta_2 + 2 d M_f^2 \nu^4 (\frac{d}{9} + 2\beta_1),
\end{align}
where the first equality is due to \citep[Equation 2.39]{berahas2021theoretical}, the second inequality is due to Assumption \ref{assum:boundedvarinstochgrad}, the third inequality is due to $(a+b)^2 \leq 2a^2 + 2b^2$ for any $a,b \in \R$, and the last inequality is by \citep[Lemma 3.2]{bollapragada2024derivative}, $\bar \delta$ given in Table \ref{tbl:unified} and $M_F \leq M_f$. By \eqref{eq:Skappx2}, \eqref{eq:gxkandepsilonprime} and \eqref{eq:SkappxSSstep1} we have \eqref{eq:boundonSk} satisfied with $b_1 = 1 + \frac{4 d \beta_1}{\theta^2} $, $b_2 = \frac{2 d \beta_2 + 2 d M_f^2 \nu^4 (\frac{d}{9} + 2\beta_1)}{\theta^2} $.
\vspace{0.5em}
\item \textit{cRC:}
Considering the numerator in \eqref{eq:Skappx1}, using \eqref{eq:defofci} and by \citep[Table 2.1]{bollapragada2024derivative}, we have
\begin{align}\label{eq:SkappxRCstep1}
    & \E_{T_k} \left[ \E_{\zeta_i} \left[ \| [\nabla^{FD} f(x_k,\zeta_i) - \nabla^{FD} F(x_k)]_{T_k}\|^2 \right] \right] \nonumber \\
    & \quad = \frac{d}{N} \sum_{j=1}^d \E_{\zeta_i}[c_{i,j}^2] \nonumber \\
    & \quad \leq \frac{d}{N} 
 \E_{\zeta_i}\left[\sum_{j=1}^d \left( 2\left( e_j^T [\nabla f (x_k,\zeta_i) - \nabla F (x_k)] \right)^2 + \frac{2}{9} M_f^2 \nu^4 \|e_j\|^6 \right) \right] \nonumber \\
    & \quad = \frac{2d}{N}\E_{\zeta_i}[\| \nabla f (x_k,\zeta_i) - \nabla F (x_k)\|^2] + \frac{2d^2}{9N} M_f^2 \nu^4  \nonumber \\
    & \quad \leq \frac{2\beta_1 d}{N} \|\nabla F (x_k)\|^2 + \frac{2\beta_2 d}{N} + \frac{2d^2}{9N} M_f^2 \nu^4 \nonumber \\
    & \quad \leq \frac{4\beta_1 d}{N} \|\nabla^{FD} F (x_k)\|^2 + \frac{4\beta_1 d}{N} \|\nabla F (x_k) - \nabla^{FD} F (x_k)\|^2 + \frac{2\beta_2 d}{N} + \frac{2d^2}{9N} M_f^2 \nu^4 \nonumber \\
    & \quad \leq \frac{4\beta_1 d}{N} \|\nabla^{FD} F (x_k)\|^2 + \frac{2\beta_2 d}{N} + (2+\beta_1)\frac{M_f^2 \nu^4 d^2}{9N},
\end{align}
where the first inequality is due to \eqref{eq:boundedvarstep2}, the second inequality is due to Assumption \ref{assum:boundedvarinstochgrad}, the third inequality is due to $(a+b)^2 \leq 2a^2 + 2b^2$ for any $a,b \in \R$, and the last inequality is by \citep[Lemma 3.2]{bollapragada2024derivative}, $\bar \delta$ given in Table \ref{tbl:unified} and $M_F \leq M_f$. Note that the denominator in \eqref{eq:Skappx1} can be bounded from below as follows,
\begin{align}
    \E_{T_k}[\| g_{T_k}(x_k)\|^2] &= \frac{d^2}{N^2} \sum_{j = 1}^d \frac{N}{d} ([\nabla^{FD} F(x_k)]_{e_j})^2 = \frac{d}{N} \|\nabla^{FD} F (x_k)\|^2 > \frac{d\epsilon'}{N} \label{eq:denomRC}.
\end{align}
Therefore by \eqref{eq:gxkandepsilonprime}, \eqref{eq:SkappxRCstep1} and \eqref{eq:denomRC}, we have \eqref{eq:boundonSk} satisfied with $b_1 = 1 + \frac{4 \beta_1}{\theta^2} $, $b_2 = \frac{(2+\beta_1) M_f^2 \nu^4 d}{9\theta^2} + \frac{2\beta_2}{\theta^2} $.
\vspace{0.5em}
\item \textit{cRS:}
Considering \eqref{eq:defofci}, \eqref{eq:Skappx1} and by \citep[Table 2.1]{bollapragada2024derivative}, taking the expectation $\E_{T_k}[\cdot]$ on both sides in \eqref{eq:boundedvarstep1}, we get
\begin{align}\label{eq:SkappxRSstep1}
    & \E_{T_k} \left[ \E_{\zeta_i} \left[  \|g_{\zeta_i, T_k}(x_k) - g_{T_k}(x_k)\|^2 \right] \right] = \E_{T_k} \left[ \E_{\zeta_i} \left[ \| \frac{d}{N} \sum_{u_j \in T_k} c_{i,j} u_j  \|^2 \right] \right] \nonumber \\
    & \quad = \frac{d^2}{N^2} \E_{T_k} \left[ \E_{\zeta_i} \left[ \sum_{u_j \in T_k} c_{i,j}^2 \right] \right] = \frac{d}{N} \E_{\Tilde T_k} \left[ \E_{\zeta_i} \left[ \sum_{u_j \in \Tilde T_k} c_{i,j}^2 \right] \right] \nonumber \\
    & \quad \leq \frac{d}{N} 
 \E_{\zeta_i} \left[ \E_{\Tilde T_k}\left[\sum_{u_j \in \Tilde T_k} \left[ 2\left( u_j^T [\nabla f (x_k,\zeta_i) - \nabla F (x_k)] \right)^2 + \frac{2}{9} M_f^2 \nu^4 \|u_j\|^6 \right] \right] \right]\nonumber \\
    & \quad = \frac{2d}{N}\E_{\zeta_i}[\| \nabla f (x_k,\zeta_i) - \nabla F (x_k)\|^2] + \frac{2d^2}{9N} M_f^2 \nu^4 \nonumber \\
    & \quad \leq \frac{2\beta_1 d}{N} \|\nabla F (x_k)\|^2 + \frac{2\beta_2 d}{N} + \frac{2d^2}{9N} M_f^2 \nu^4 \nonumber \\
    & \quad \leq \frac{4\beta_1 d}{N} \|\E_{\Tilde T_k}[U_{\Tilde T_k} b_{\Tilde T_k}]\|^2 + \frac{4\beta_1 d}{N} \|\nabla F (x_k) - \E_{\Tilde T_k}[U_{\Tilde T_k} b_{\Tilde T_k}]\|^2 + \frac{2\beta_2 d}{N} + \frac{2d^2}{9N} M_f^2 \nu^4  \nonumber \\
    & \quad \leq \frac{4\beta_1 d}{N} \E_{\Tilde T_k}[\|U_{\Tilde T_k} b_{\Tilde T_k}\|^2] + \frac{4\beta_1 d}{N} \E_{\Tilde T_k}[\|\nabla F (x_k) - U_{\Tilde T_k} b_{\Tilde T_k}\|^2] + \frac{2\beta_2 d}{N} + \frac{2d^2}{9N} M_f^2 \nu^4 \nonumber \\
    & \quad \leq \frac{4\beta_1 d}{N} \E_{\Tilde T_k}[\|U_{\Tilde T_k} b_{\Tilde T_k}\|^2] + \frac{2\beta_2 d}{N} + (2+\beta_1)\frac{M_f^2 \nu^4 d^2}{9N},
\end{align}
where the first inequality is due to \eqref{eq:boundedvarstep2}, the second inequality is due to Assumption \ref{assum:boundedvarinstochgrad}, the third inequality is due to $(a+b)^2 \leq 2a^2 + 2b^2$ for any $a,b \in \R$, the fourth inequality is by Jensen's inequality, the last inequality is by \citep[Lemma 3.2]{bollapragada2024derivative}, $\bar \delta$ given in Table \ref{tbl:unified} and $M_F \leq M_f$, and the last equality is due to $\| v^T U_{\Tilde T_k} \|^2 = \| v \|^2$ for any $v \in \R^d$. 
From \eqref{eq:RSEtkgtk}, we have
\begin{align}
    \E_{T_k}[\| g_{T_k}(x_k)\|^2] = \frac{d}{N}\E_{\Tilde T_k}[\| U_{\Tilde T_k} b_{\Tilde T_k}\|^2] > \frac{d\epsilon'}{N} \label{eq:denomRS}.
\end{align}
Therefore, by \eqref{eq:Skappx1}, \eqref{eq:gxkandepsilonprime}, \eqref{eq:SkappxRSstep1}, and \eqref{eq:denomRS}, we have \eqref{eq:boundonSk} satisfied with $b_1 = 1 + \frac{4 \beta_1}{\theta^2} $, $b_2 = \frac{(2+\beta_1) M_f^2 \nu^4 d}{9\theta^2} + \frac{2\beta_2}{\theta^2} $.
\end{enumerate}
Since $\theta$ is a user-defined hyperparameter, we do not consider it in our complexity analysis (i.e. we use the fact that $\theta = \mathcal{O}(1)$). %
\end{proof}
Now, we present the complexity results based on these sample size bounds.
\begin{thm} \label{thm:compres}(Complexity Results). 
    Suppose the conditions for Theorem~\ref{thm:sublconv} hold. Then, the number of iterations (iteration complexity) and the total number of stochastic function evaluations (sample complexity) required to obtain an $\epsilon$-accurate solution are given as follows. 
    \begin{align}\label{eq:KepsandWepsdef}
        & K_{\epsilon} = \frac{8 (F(x_0) - F^*) }{\epsilon p \bar \alpha} = \mathcal{O}(\epsilon^{-1}), \quad \text{and} \quad W_{\epsilon} \defeq \sum_{k = 0}^{K_{\epsilon}} 2 |T_k| |S_k| \leq 2N K_{\epsilon} \left(b_1 + \frac{ 4 b_2 }{\epsilon}\right) = \mathcal{O}(\epsilon^{-2}).
    \end{align}
    The results are detailed in Table \ref{tbl:compres} for each gradient estimation method.
\end{thm}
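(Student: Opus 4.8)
The plan is to assemble the two bounds from three ingredients already in place: the high-probability convergence estimate \eqref{eq:highprobconv}, the calibration of $\nu=\hat\nu$ that shrinks the convergence neighborhood to $\epsilon p/2$, and the per-iteration sample-size bound of Lemma~\ref{lem:boundonSk}.

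First I would fix $\nu=\hat\nu$ from Table~\ref{tbl:unified}, so that $4\bar\chi=\epsilon p/2$ for every gradient estimation method. Substituting this into \eqref{eq:highprobconv} gives, with probability at least $1-p$,
\[
\min_{0\le k\le K-1}\|\nabla F(x_k)\|^2 \le \frac{4(F(x_0)-F^*)}{K\bar\alpha p} + \frac{\epsilon}{2}.
\]
Requiring the first term to be at most $\epsilon/2$ yields $K\ge K_\epsilon:=8(F(x_0)-F^*)/(\epsilon p\bar\alpha)$, so after $K_\epsilon$ iterations some visited iterate is $\epsilon$-accurate with probability at least $1-p$. Since $\bar\alpha$ (read off from Table~\ref{tbl:unified} with $|T_k|=N$) is a positive constant independent of $\epsilon$, this is the claimed $K_\epsilon=\mathcal{O}(\epsilon^{-1})$.

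Next I would count stochastic function evaluations. Each central-difference estimator \eqref{eq:gen_grad_est} evaluates $F_{S_k}$ at the $2|T_k|$ points $x_k\pm\nu u_j$, and each such evaluation costs $|S_k|$ stochastic samples, so iteration $k$ costs $2|T_k||S_k|$ evaluations; with $|T_k|=N$ the total over the iterations performed is $W_\epsilon=\sum_{k=0}^{K_\epsilon}2N|S_k|$. The structural observation is that before an $\epsilon$-accurate iterate is produced, every iteration satisfies $\|g(x_k)\|^2>\epsilon'$ (otherwise the argument preceding Lemma~\ref{lem:boundonSk} forces $\|\nabla F(x_k)\|\le\sqrt\epsilon$), so Lemma~\ref{lem:boundonSk} applies at each such iteration and $|S_k|\le b_1+4b_2/\epsilon$. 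Combining, $W_\epsilon\le 2NK_\epsilon(b_1+4b_2/\epsilon)$. Because $b_1$ is an $\epsilon$-independent constant and $b_2$ contains only an $\mathcal{O}(1)$ term together with a $\nu^4$ term that equals $\Theta(\epsilon)$ at $\nu=\hat\nu$, we have $b_1+4b_2/\epsilon=\mathcal{O}(\epsilon^{-1})$; multiplying by $K_\epsilon=\mathcal{O}(\epsilon^{-1})$ gives $W_\epsilon=\mathcal{O}(\epsilon^{-2})$. The method-specific values of $\bar\alpha$, $b_1$, and $b_2$ then populate Table~\ref{tbl:compres}.

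The routine part is substituting the method-by-method constants; the step needing the most care is the accounting in the sample-complexity bound, namely justifying that $|S_k|\le b_1+4b_2/\epsilon$ holds at every iteration counted in $W_\epsilon$. This rests on the equivalence, established just before Lemma~\ref{lem:boundonSk}, between $\|g(x_k)\|^2\le\epsilon'$ and the $\epsilon$-accuracy certificate, which puts the bound of Lemma~\ref{lem:boundonSk} in force precisely on the iterations preceding termination. I would also double-check that neither $\bar\alpha$ nor the dominant part of $b_2/\epsilon$ hides an $\epsilon$-dependence through $\hat\nu$: since $\hat\nu=\Theta(\epsilon^{1/4})$ we have $\hat\nu^4=\Theta(\epsilon)$, which is exactly what keeps the $\nu^4/\epsilon$ contributions bounded and makes the $\mathcal{O}(\epsilon^{-2})$ sample complexity tight.
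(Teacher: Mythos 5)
Your proposal is correct and follows essentially the same route as the paper: calibrate $\nu=\hat\nu$ so that $4\bar\chi/p=\epsilon/2$, invoke the high-probability bound \eqref{eq:highprobconv} to get $K_\epsilon=8(F(x_0)-F^*)/(\epsilon p\bar\alpha)$, and combine the per-iteration cost $2N|S_k|$ with the bound of Lemma~\ref{lem:boundonSk} on iterations where $\|g(x_k)\|^2>\epsilon'$ to obtain $W_\epsilon\le 2NK_\epsilon(b_1+4b_2/\epsilon)=\mathcal{O}(\epsilon^{-2})$. Your write-up simply makes explicit the sample-counting step and the $\hat\nu^4=\Theta(\epsilon)$ check that the paper leaves implicit in the theorem statement and Tables~\ref{tbl:unified}--\ref{tbl:compres}.
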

\begin{proof}
    Substituting the values for $\nu=\hat{\nu}$ given in Table~\ref{tbl:unified} leads to $\frac{4 \bar \chi}{p} = \frac{\epsilon}{2}$. From \eqref{eq:highprobconv}, it follows that for any $K \geq K_{\epsilon} = \frac{8 (F(x_0) - F^*) }{\epsilon p \bar \alpha}$,
    \vspace{-1em}
    \begin{align*}
         \min_{0 \leq k \leq K-1} \| \nabla F(x_k) \|^2 \leq \frac{4(F(x_0) - F^*)}{K \bar \alpha p} + \frac{4 \bar \chi}{p} \leq \frac{\epsilon}{2} + \frac{\epsilon}{2} = \epsilon. 
    \end{align*}
    Plugging in the values for $\bar \alpha$ from Table~\ref{tbl:unified} and for $b_1$ and $b_2$ from Table~\ref{tbl:compres} completes the proof.
\end{proof}
Theorem \ref{thm:compres} establishes that the iteration complexities of the methods are $\mathcal{O} (\epsilon^{-1})$, and the sample complexities are $\mathcal{O} (\epsilon^{-2})$, matching the optimal worst-case complexity results for nonconvex problem settings \citep{bottou2018optimization,ghadimi2013stochastic}. Similar observations to those made in forward finite-differences methods \citep{bollapragada2024derivative} are found here. The results in Table~\ref{tbl:compres} highlight that the cFD method exhibits the best iteration complexity compared to other methods concerning the dependency on problem dimension $d$. Moreover, the complexities of cGS and cSS methods are $\frac{N+d}{N}$ times worse than that of cFD, while the complexities of cRC and cRS methods are $\frac{d}{N}$ times worse than cFD. In terms of sample complexities, cFD, cRC, and cRS methods match the optimal complexity, also in terms of the dependency on $d$ \citep{ghadimi2013stochastic}. Assuming $\epsilon^{-2}$ is the dominating term and $N$ is small, cGS and cSS methods exhibit complexities that are $d$ times worse than the cFD method.

\begin{table}[htp]
\centering
\def\arraystretch{1.7}
\caption{Complexity results for various gradient estimation methods. Here, $\beta_1$ and $\beta_2$ are constants defined in Assumption~\ref{assum:boundedvarinstochgrad}. The order results specify only dependencies on $d$, $L_F$, $M_f$, $N$, $\nu$, $p$, $\beta_1$, and $\beta_2$. For $K_{\epsilon}$ and $W_{\epsilon}$, dependencies on $\beta_1$ and $\beta_2$ are removed.
}
\begin{tabular}{|c|c|c|c|c|}
\hline
Method & $ b_1 $ & $ b_2 $ & $ K_{\epsilon} $ & $W_{\epsilon}$ \\ \hline
cFD
& $\mathcal{O} (\beta_1)$
& $\mathcal{O} (\beta_1 M_f^2 \nu^4 d + \beta_2)$
& $\mathcal{O} (\frac{L_F}{\epsilon p})$
& $\mathcal{O} (\frac{L_F d}{\epsilon^{2} p})$\\
cGS
& $\mathcal{O} (\beta_1 d)$
& $\mathcal{O} ((\beta_1 + d)M_f^2 \nu^4 d^3 + \beta_2 d)$
& $\mathcal{O} (\frac{L_F}{\epsilon p} \frac{N+d}{N})$
& $\mathcal{O} (\frac{L_F d (N+d)}{\epsilon^{2} p} + \frac{L_F (N+d)^2 d^2}{\epsilon(N+d^2)})$ \\
cSS 
& $\mathcal{O} (\beta_1 d)$
& $\mathcal{O} ((\beta_1 + d)M_f^2 \nu^4 d + \beta_2 d )$
& $\mathcal{O} (\frac{L_F}{\epsilon p} \frac{N+d}{N})$
& $\mathcal{O} (\frac{L_F d (N+d)}{\epsilon^{2} p} + \frac{L_F (N+d)^2 d^2}{\epsilon(N+d^2)})$ \\
cRC 
& $\mathcal{O} (\beta_1)$
& $\mathcal{O} (\beta_1 M_f^2 \nu^4 d + \beta_2)$
& $\mathcal{O} (\frac{L_F}{\epsilon p} \frac{d}{N})$
& $\mathcal{O} (\frac{L_F d}{\epsilon^{2} p})$ \\
cRS
& $\mathcal{O} (\beta_1)$
& $\mathcal{O} (\beta_1 M_f^2 \nu^4 d + \beta_2)$
& $\mathcal{O} (\frac{L_F}{\epsilon p} \frac{d}{N})$
& $\mathcal{O} (\frac{L_F d}{\epsilon^{2} p})$ \\
\hline
\end{tabular}
\label{tbl:compres}
\end{table}

\section{Numerical Experiments}
\label{sec:numericalExp}
In this section, we evaluate the empirical performance of the proposed methods on nonlinear least squares (NLLS) problems obtained by introducing stochastic Gaussian noise of the form $\zeta \sim \mathcal{N}(0,\sigma^2 I_p)$ with $\sigma = 10^{-3}$ to the nonlinear functions $\phi : \mathbb{R}^d \rightarrow \mathbb{R}^p$ from the CUTEr \citep{gould2003cuter} collection of optimization problems.\footnote{We also evaluate the empirical performance of the proposed methods on binary classification logistic regression problems (see Appendix~\ref{sec:additionalPlots}.)} We consider two different nonlinear functions $\phi$: Bdqrtic $(d = 50, p = 92)$ and Cube $(d = 20, p = 30)$, with two different error terms: absolute error and relative error. The resulting stochastic objective functions are defined as follows
\begin{equation*}
\quad f_{\rm abs}(x,\zeta) \defeq \sum_{j=1}^{p}(\phi_j(x) + \zeta_j)^2 - \sigma^2, \quad \text{and} \quad 
f_{\rm rel}(x,\zeta) \defeq \frac{1}{1 + \sigma^2} \sum_{j=1}^{p}\phi^2_j(x) (1 + \zeta_j)^2. 
\end{equation*}

We employ the following practical test to approximately satisfy the theoretical Condition \ref{cond:theoreticalnormcond} by approximating the population (expectation) quantities with sampled quantities in our implementation.
\begin{test}\label{test:practicalnormtest} (Practical Norm Test) \citep[Test 1]{bollapragada2024derivative}
\begin{align}\label{eq:practicalnormtest}
    \frac{Var_{\zeta_i \in S_k^v}\left( g_{\zeta_i, T_k}(x_k)\right) }{|S_k|} \leq \theta^2  \|{g_{S_k,T_k}(x_k)}\|^2, \quad \theta>0,
\end{align}
where $S_k^v \subseteq S_k$ is any subset of $S_k$,
and $Var_{\zeta_i \in S_k^v}\left( g_{\zeta_i, T_k}(x_k)\right) = \frac{1}{|S_k^v| - 1} \sum_{\zeta_i \in S_k^v} \| g_{\zeta_i, T_k}(x_k) - g_{S_k,T_k}(x_k) \|^2 $.
\end{test}
We evaluate this test at each iteration, and if the test fails, we append the set $S_k$ with additional samples such that $S_k$ satisfies $$ |S_k| = \left\lceil\frac{Var_{\zeta_i \in S_k^v}\left( g_{\zeta_i, T_k}(x_k)\right) }{\theta^2  \|{g_{S_k,T_k}(x_k)}\|^2} \right\rceil.$$ Although this approach involves approximations, the accuracy of these approximations improves with increasing sample sizes $|S_k|$. Moreover, this approach is more efficient than selecting a fixed, yet large, number of samples throughout the algorithm.

In our implementation, we set $\theta=0.9$, initial sample size $|S_0| = 2$, and treat the number of sampled directions $N$, the sampling radius $\nu$, and the step size $\alpha$ as tunable hyperparameters.  We consider the worst-case performance of each combination of $N$, $\nu$, and $\alpha$ values for each method, across three random runs. Finally, we select the best combination yielding the smallest optimality gap ($F(x_k) - F^*$) after the methods have utilized a fixed budget of function evaluations. We conduct an additional 17 random runs for the best combinations. The legends of the figures in this section indicate the method and hyperparameters in $``(\text{method},N,\nu,\alpha)"$ format. We exclude the cRS method in reporting the results as its corresponding results are similar to those of the cRC method. For more detailed information about the implementation, refer to \cite{bollapragada2024derivative}.

Figures~\ref{fig:193bestvsbest} and~\ref{fig:203bestvsbest} present results for the Bdqrtic and Cube functions. The solid lines represent the median performances, while the bands around the lines indicate the 35th and 65th quantiles across 20 random runs. The hyperparameters $(N,\nu,\alpha)$ are independently tuned for each method. The tuned cRC and cFD performed similarly since the tuned $N$ was equal to $d$.
In Figs.~\ref{fig:19abs3bestvsbestoptgap},~\ref{fig:19rel3bestvsbestoptgap},~\ref{fig:20abs3bestvsbestoptgap}, and~\ref{fig:20rel3bestvsbestoptgap}, we report the optimality gap $(F(x_k) - F^*)$ with respect to the number of stochastic function evaluations. We observe that the optimal tuned number of directions ($N$) is smaller for smoothing methods (cGS and cSS) compared to the cRC method. Furthermore, while the smoothing methods initially exhibit superior performance, the standard finite-difference method (cFD) ultimately matches their performance as the number of function evaluations increases due to their superior accuracy in gradient estimation. In Figs.~\ref{fig:19abs3bestvsbestbatch},~\ref{fig:19rel3bestvsbestbatch},~\ref{fig:20abs3bestvsbestbatch}, and~\ref{fig:20abs3bestvsbestbatch}, we report the batch size or sample size ($|S_k|$) with respect to iterations. Here, the smoothing methods tend to increase the sample sizes at a faster rate than the cRC and cFD methods. We conjecture that this behavior is due to the high variance in the smoothing methods with smaller $N$ values, necessitating larger sample sizes.

We also analyze the effect of the number of directions $(N)$ on the empirical performance of cGS, cSS, and cRC methods in Fig.~\ref{fig:193numdirsens}, which reports the optimality gap with respect to the number of function evaluations. The hyperparameters $\nu$ and $\alpha$ are tuned for each $N$ and method combination. The solid lines represent the mean performance, while the bands depict the minimum and maximum values across three random runs. We observe that the behavior across different $N$ values is problem-specific. Typically, smaller values of $N$ lead to high variance in gradient estimation, while larger values result in high per-iteration function evaluations, rendering both inefficient. An optimal $N$ exists that achieves the best performance for the smoothing methods.

Results corresponding to the other datasets are available in Appendix~\ref{sec:additionalPlots}.

\begin{figure}[ht]
\centering
\begin{subfigure}{0.45\textwidth}
  \centering
  \includegraphics[width=1\linewidth]{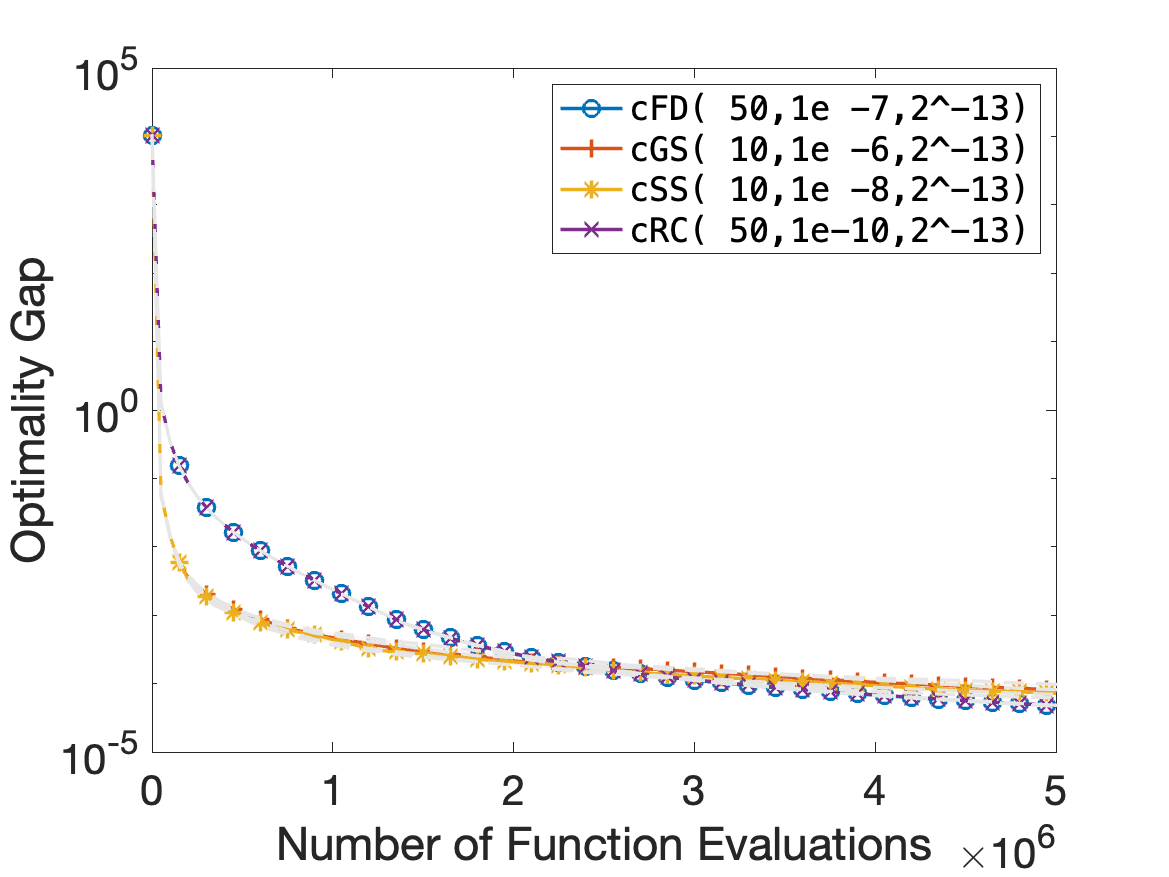}
  \caption{Optimality Gap}
  \label{fig:19abs3bestvsbestoptgap}
\end{subfigure}%
\begin{subfigure}{0.45\textwidth}
  \centering
  \includegraphics[width=1\linewidth]{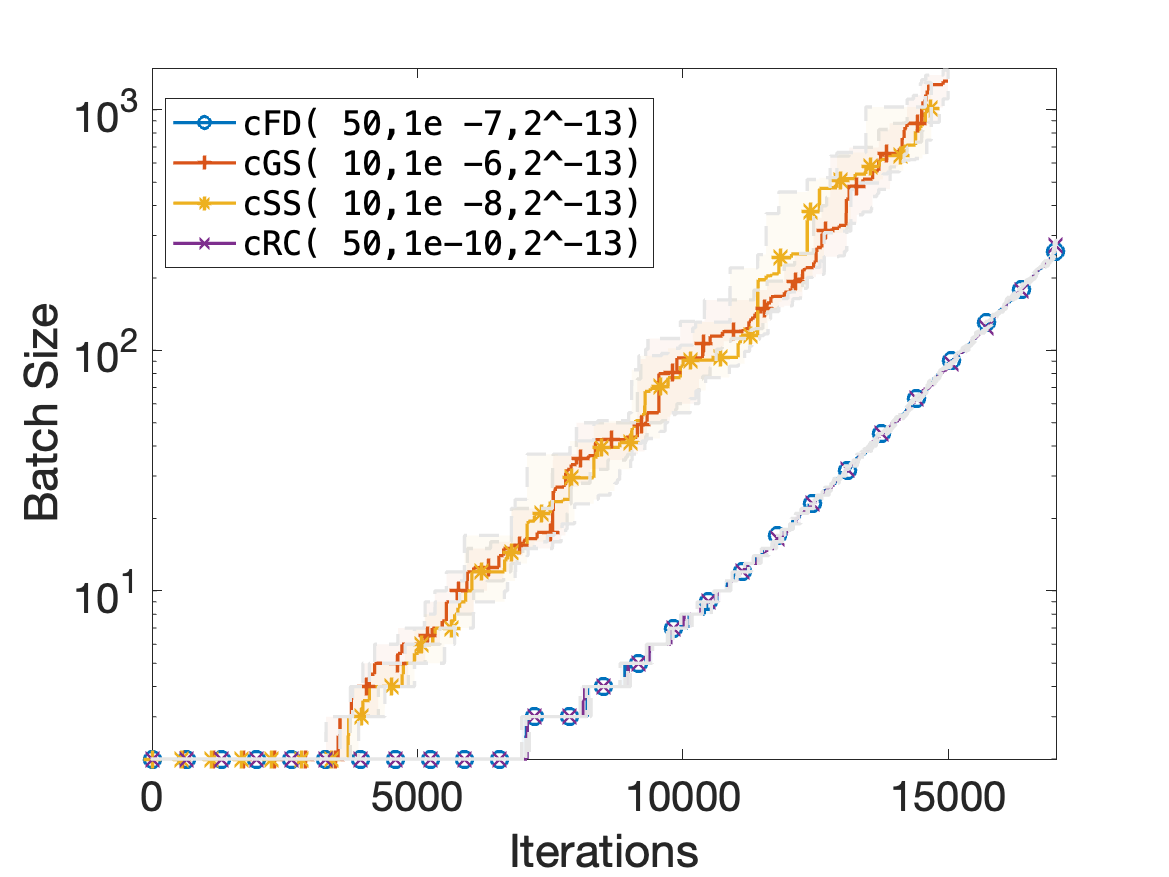}
  \caption{Batch Size}
  \label{fig:19abs3bestvsbestbatch}
\end{subfigure}
\begin{subfigure}{0.45\textwidth}
  \centering
  \includegraphics[width=1\linewidth]{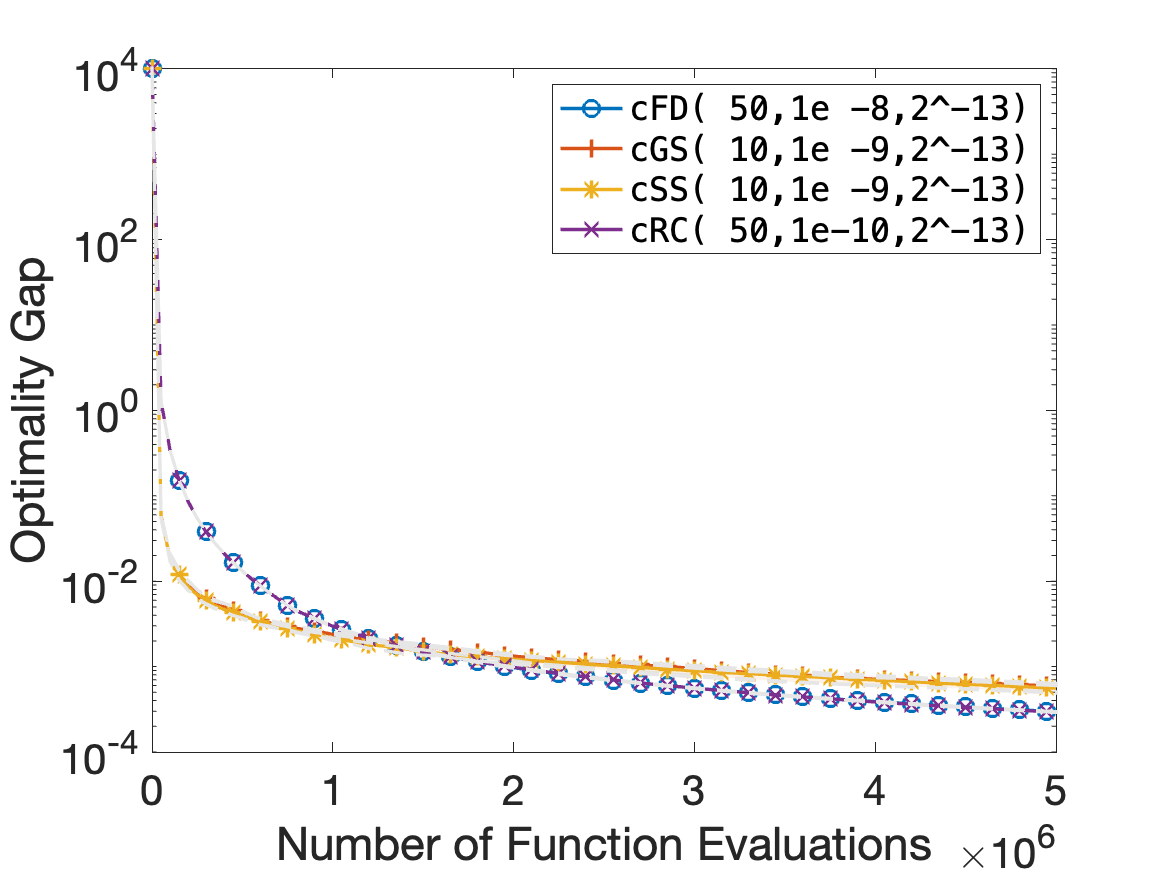}
  \caption{Optimality Gap}
  \label{fig:19rel3bestvsbestoptgap}
\end{subfigure}%
\begin{subfigure}{0.45\textwidth}
  \centering
  \includegraphics[width=1\linewidth]{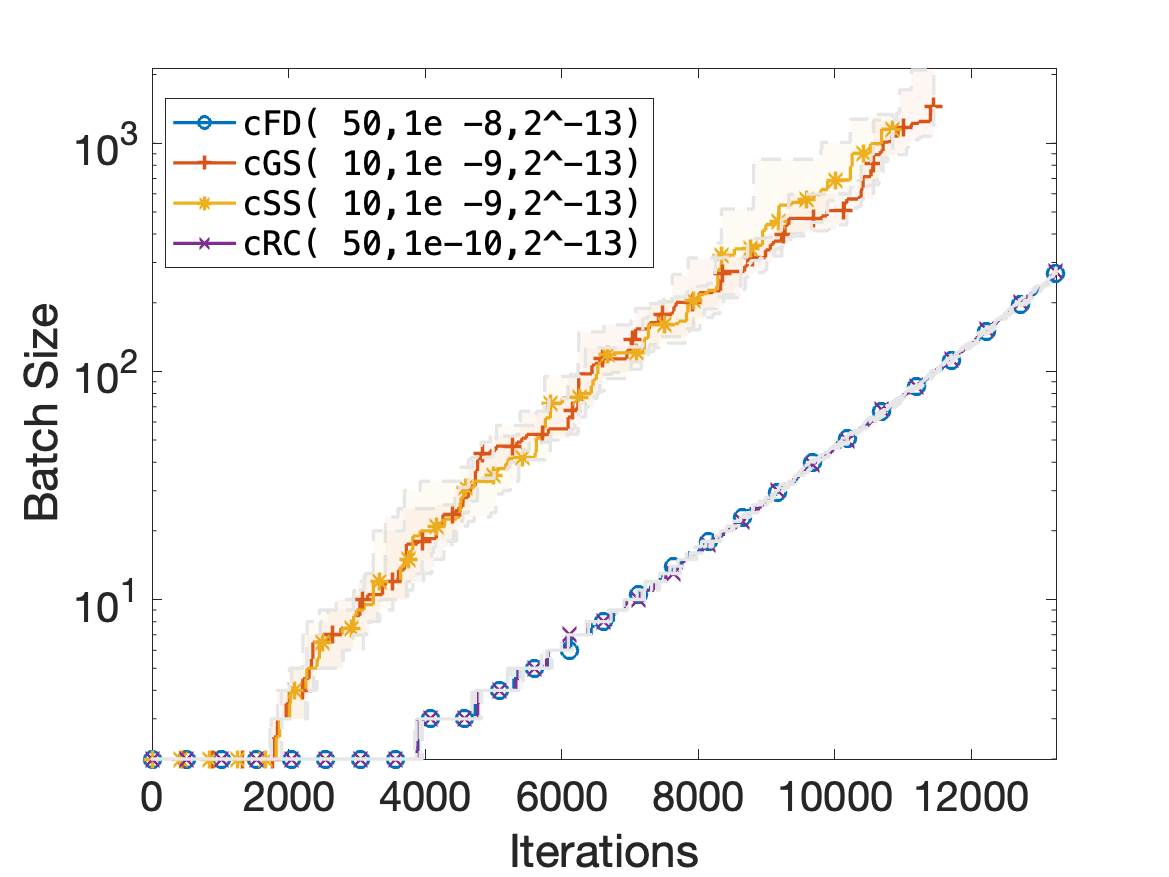}
  \caption{Batch Size}
  \label{fig:19rel3bestvsbestbatch}
\end{subfigure}
\caption{Performance of different gradient estimation methods using the tuned hyperparameters on the Bdqrtic function with $\sigma = 10^{-3}$. Top row: absolute error, bottom row: relative error.}
\label{fig:193bestvsbest}
\end{figure}
\begin{figure}[ht]
\centering
\begin{subfigure}{0.45\textwidth}
  \centering
  \includegraphics[width=1\linewidth]{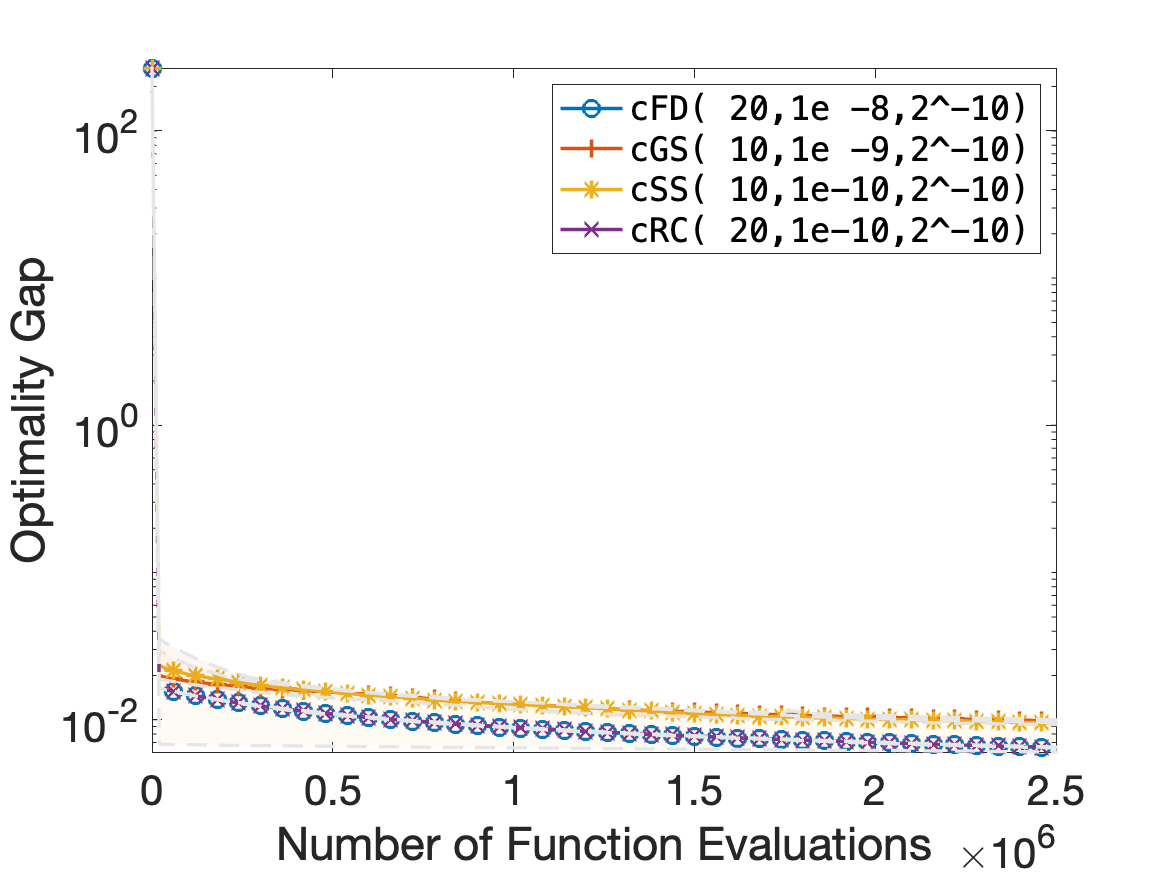}
  \caption{Optimality Gap}
  \label{fig:20abs3bestvsbestoptgap}
\end{subfigure}%
\begin{subfigure}{0.45\textwidth}
  \centering
  \includegraphics[width=1\linewidth]{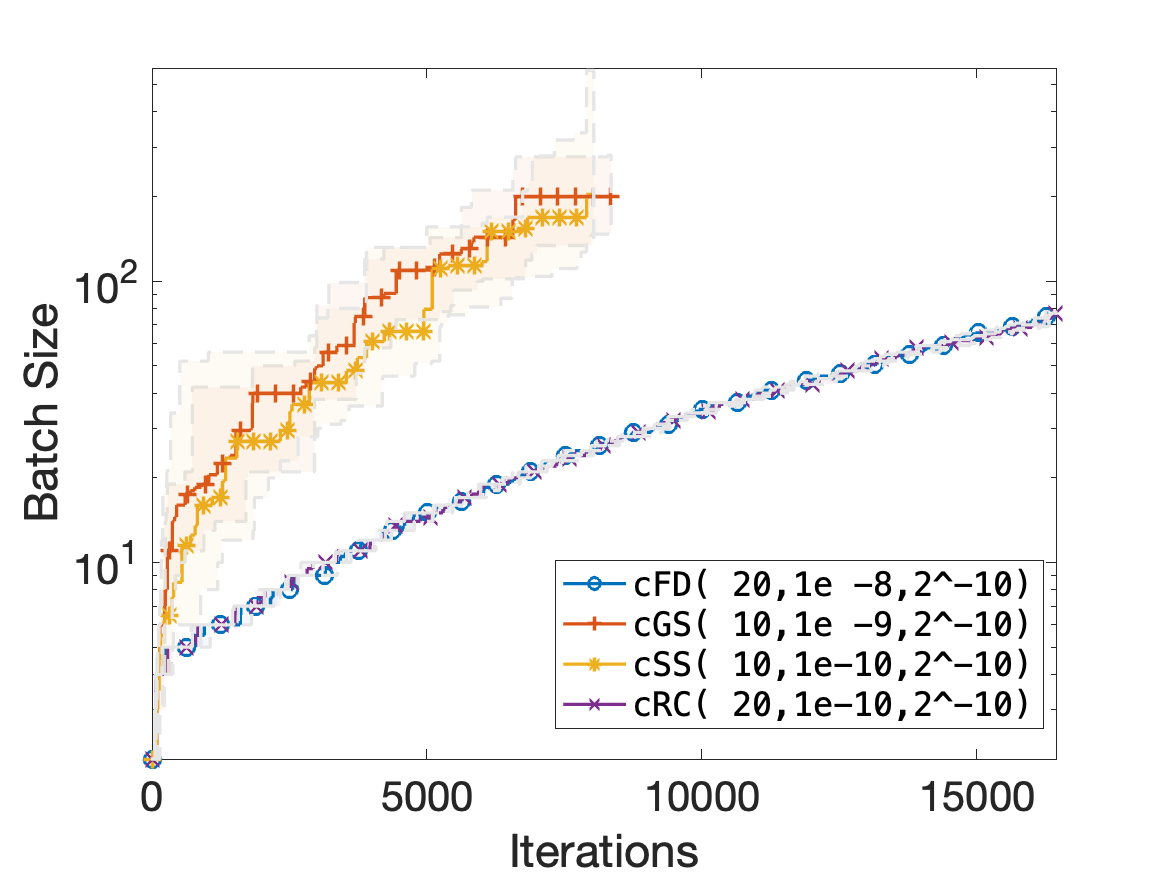}
  \caption{Batch Size}
  \label{fig:20abs3bestvsbestbatch}
\end{subfigure}
\begin{subfigure}{0.45\textwidth}
  \centering
  \includegraphics[width=1\linewidth]{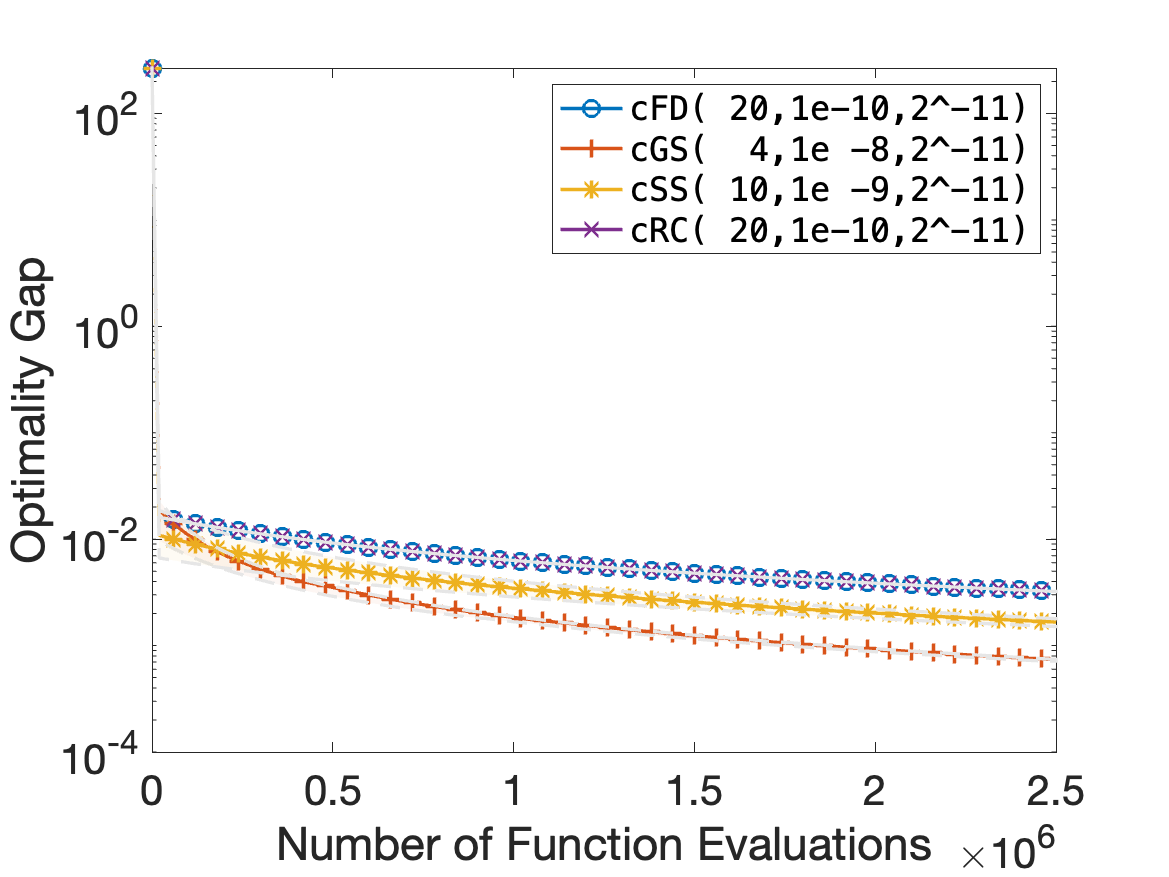}
  \caption{Optimality Gap}
  \label{fig:20rel3bestvsbestoptgap}
\end{subfigure}%
\begin{subfigure}{0.45\textwidth}
  \centering
  \includegraphics[width=1\linewidth]{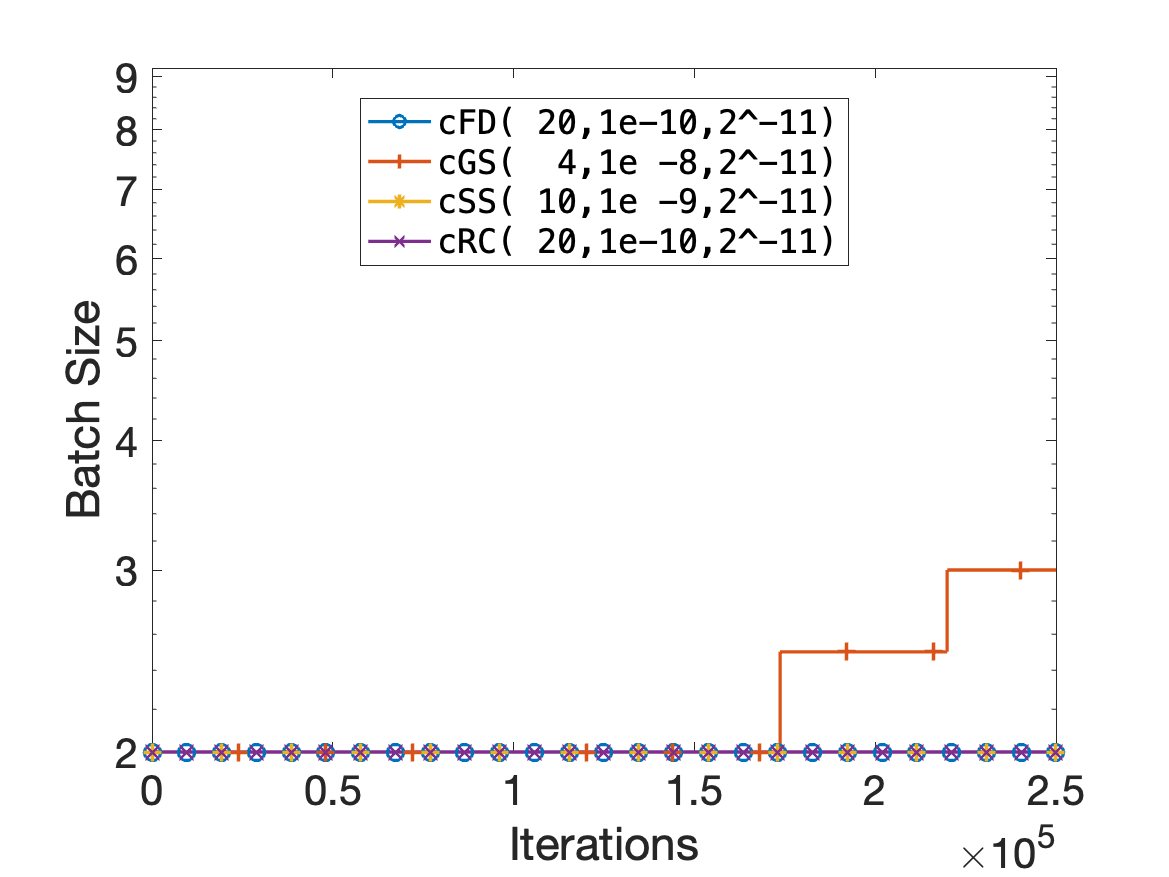}
  \caption{Batch Size}
  \label{fig:20rel3bestvsbestbatch}
\end{subfigure}
\caption{Performance of different gradient estimation methods using the tuned hyperparameters on the Cube function with $\sigma = 10^{-3}$. Top row: absolute error, bottom row: relative error.}
\label{fig:203bestvsbest}
\end{figure}

\begin{figure}[ht]
\centering
\begin{subfigure}{0.33\textwidth}
  \centering
  \includegraphics[width=1\linewidth]{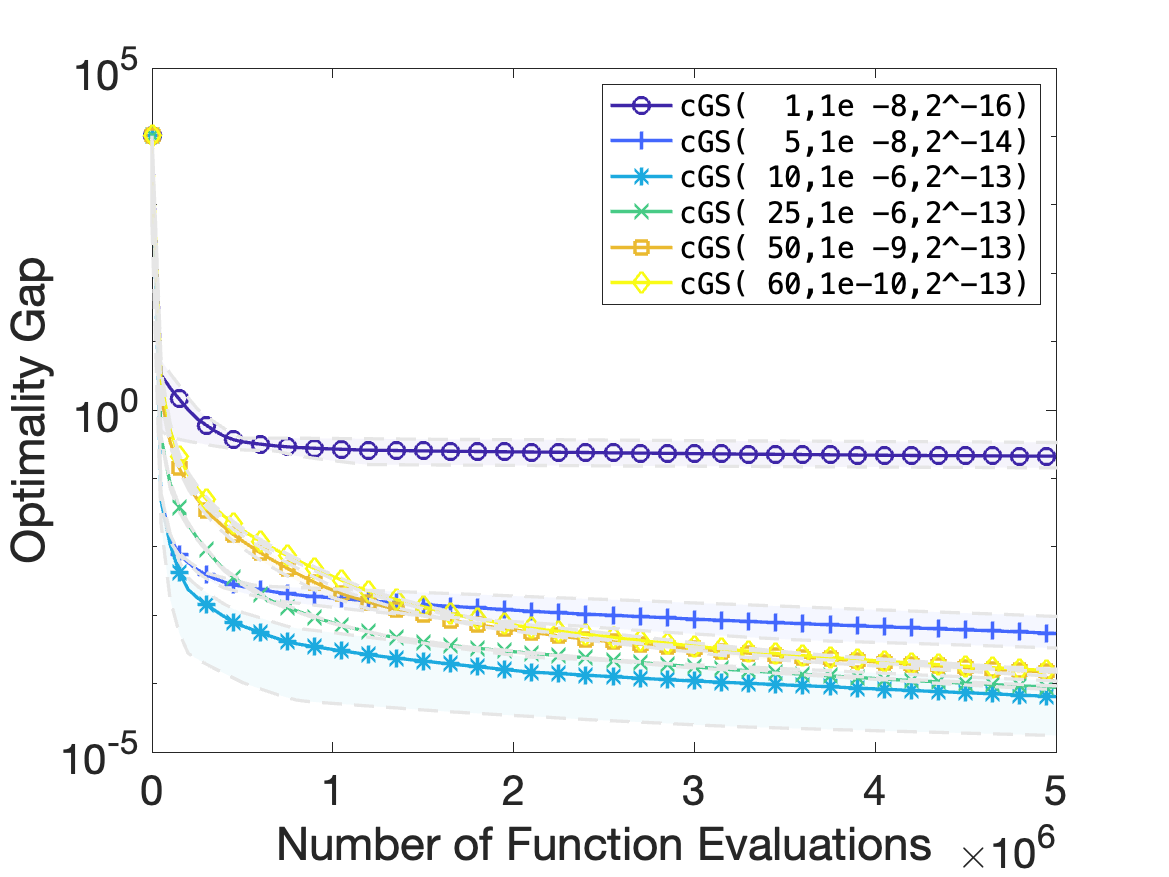}
  \caption{Performance of cGS}
  \label{fig:19abs3numdirsensGSCFD}
\end{subfigure}%
\begin{subfigure}{0.33\textwidth}
  \centering
  \includegraphics[width=1\linewidth]{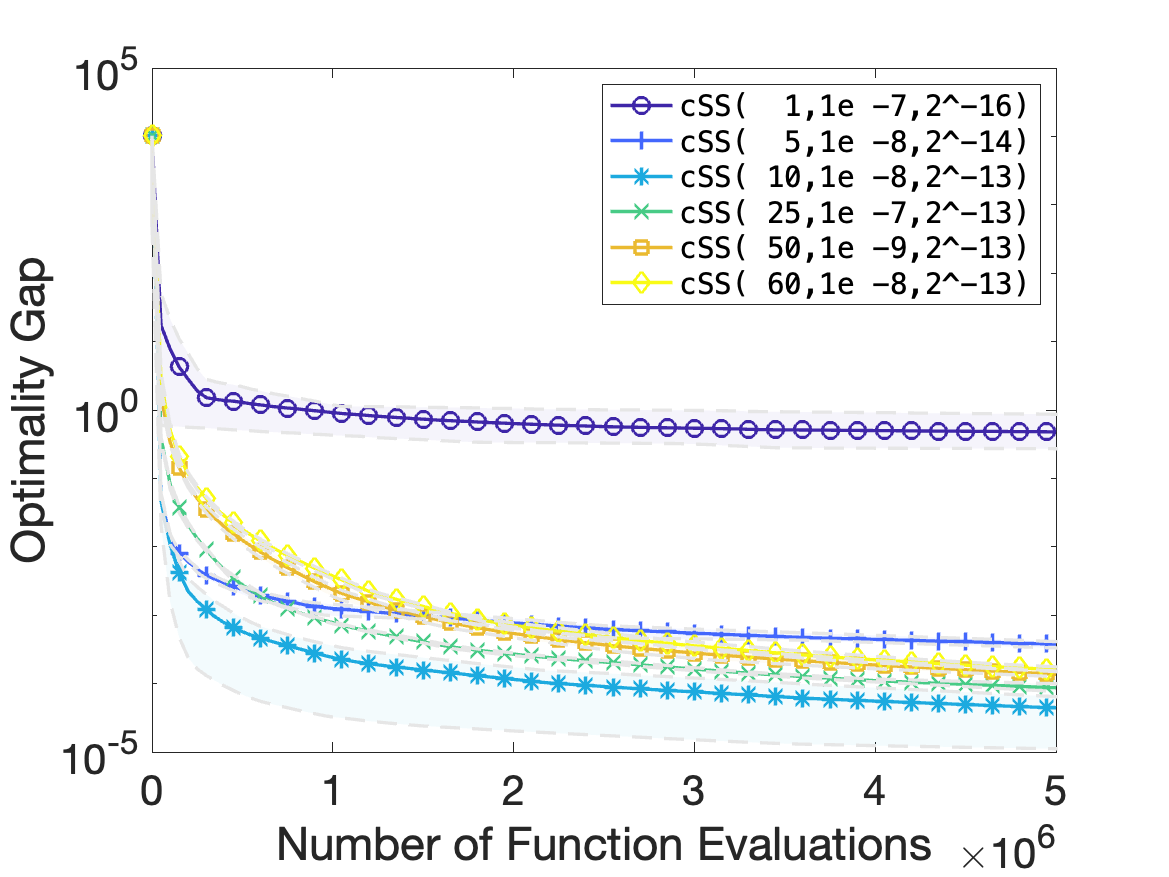}
  \caption{Performance of cSS}
  \label{fig:19abs3numdirsensSSCFD}
\end{subfigure}%
\begin{subfigure}{0.33\textwidth}
  \centering
  \includegraphics[width=1\linewidth]{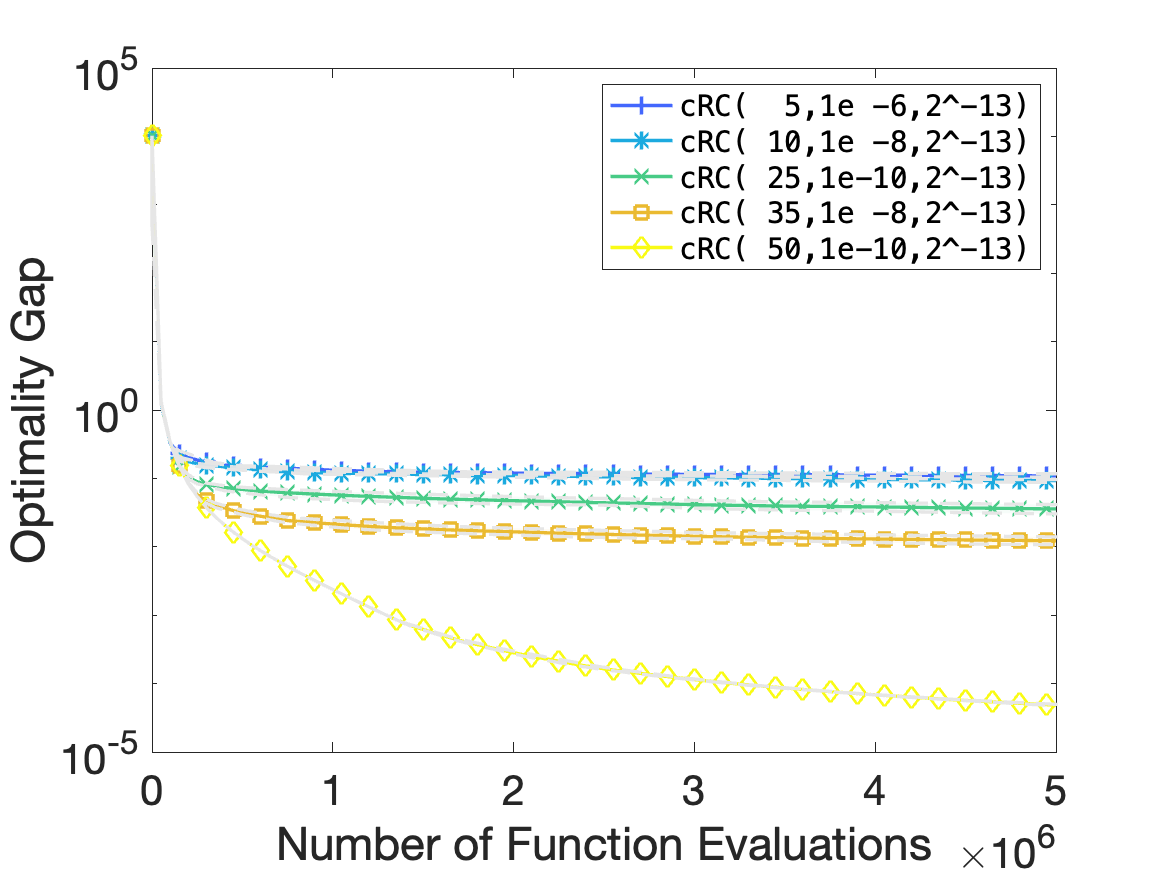}
  \caption{Performance of cRC}
  \label{fig:19abs3numdirsensRCCFD}
\end{subfigure}
\begin{subfigure}{0.33\textwidth}
  \centering
  \includegraphics[width=1\linewidth]{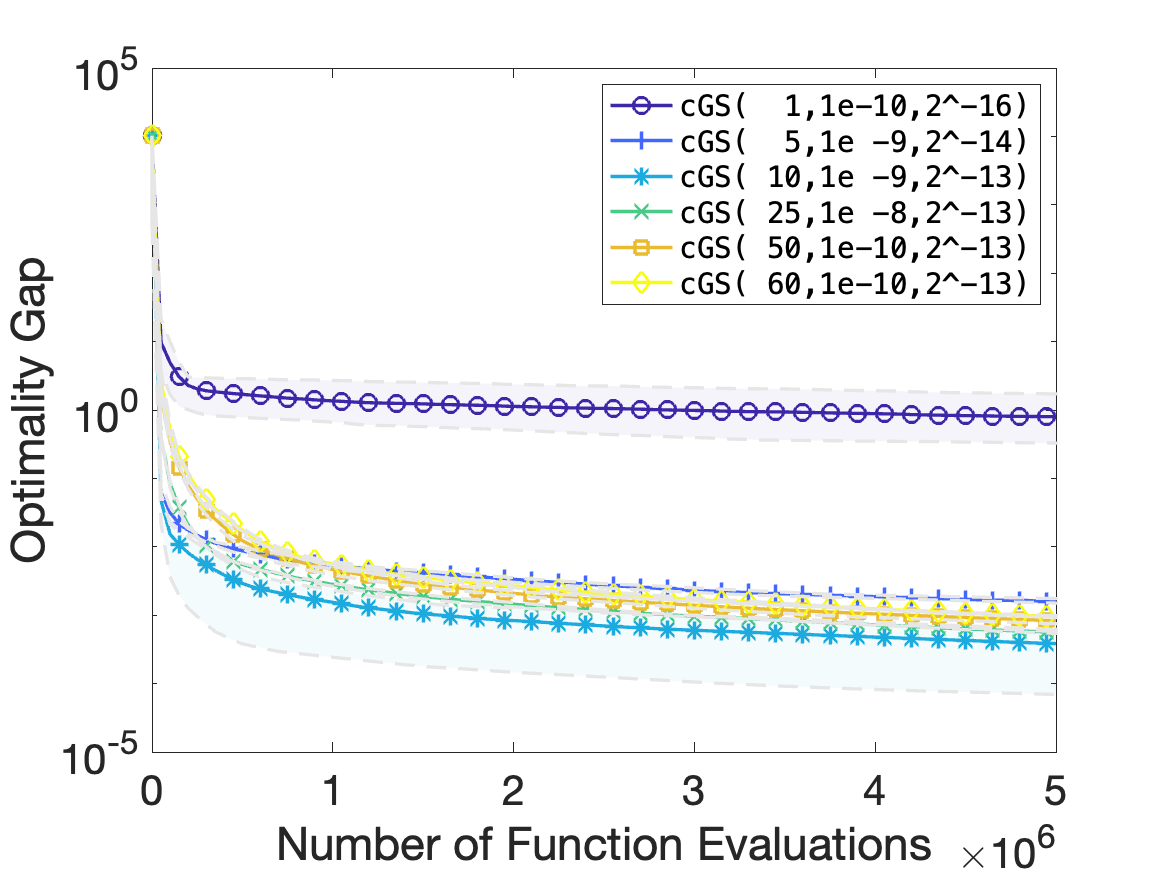}
  \caption{Performance of cGS}
  \label{fig:19rel3numdirsensGSCFD}
\end{subfigure}%
\begin{subfigure}{0.33\textwidth}
  \centering
  \includegraphics[width=1\linewidth]{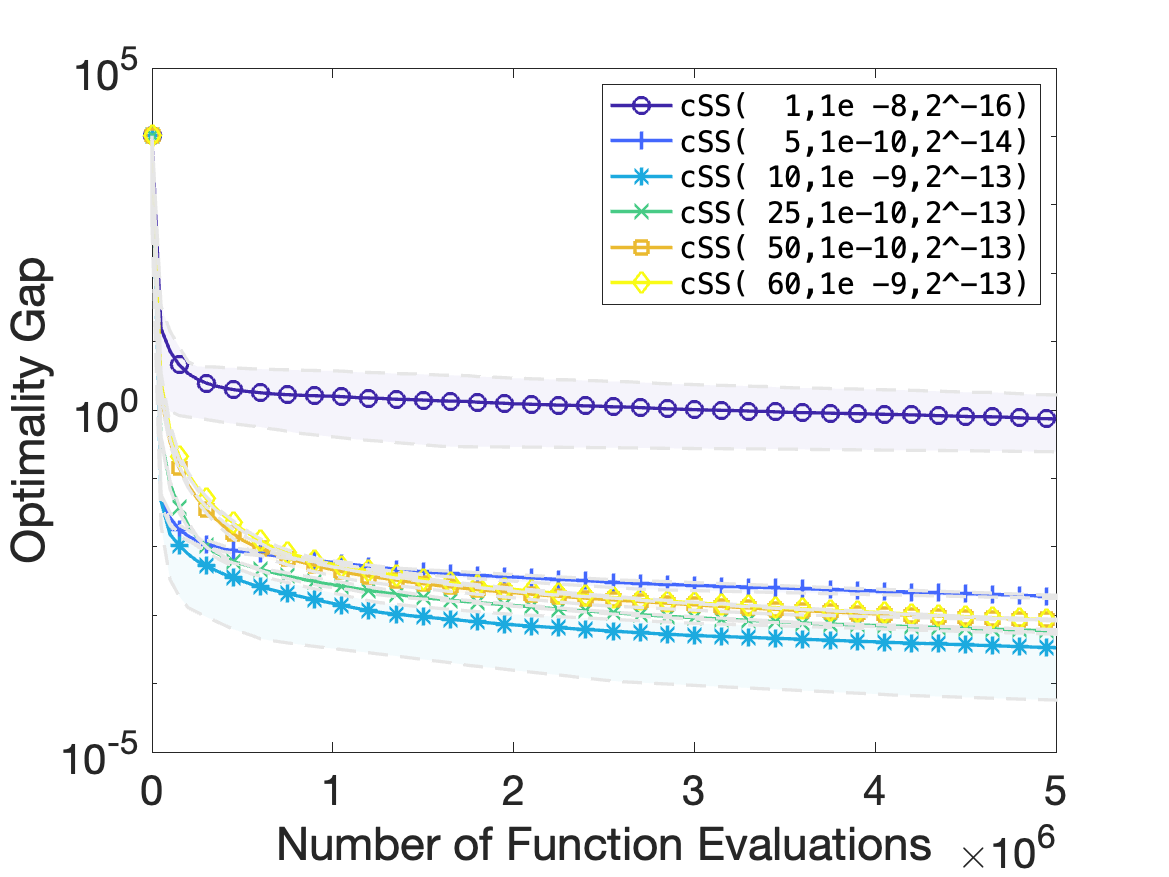}
  \caption{Performance of cSS}
  \label{fig:19rel3numdirsensSSCFD}
\end{subfigure}%
\begin{subfigure}{0.33\textwidth}
  \centering
  \includegraphics[width=1\linewidth]{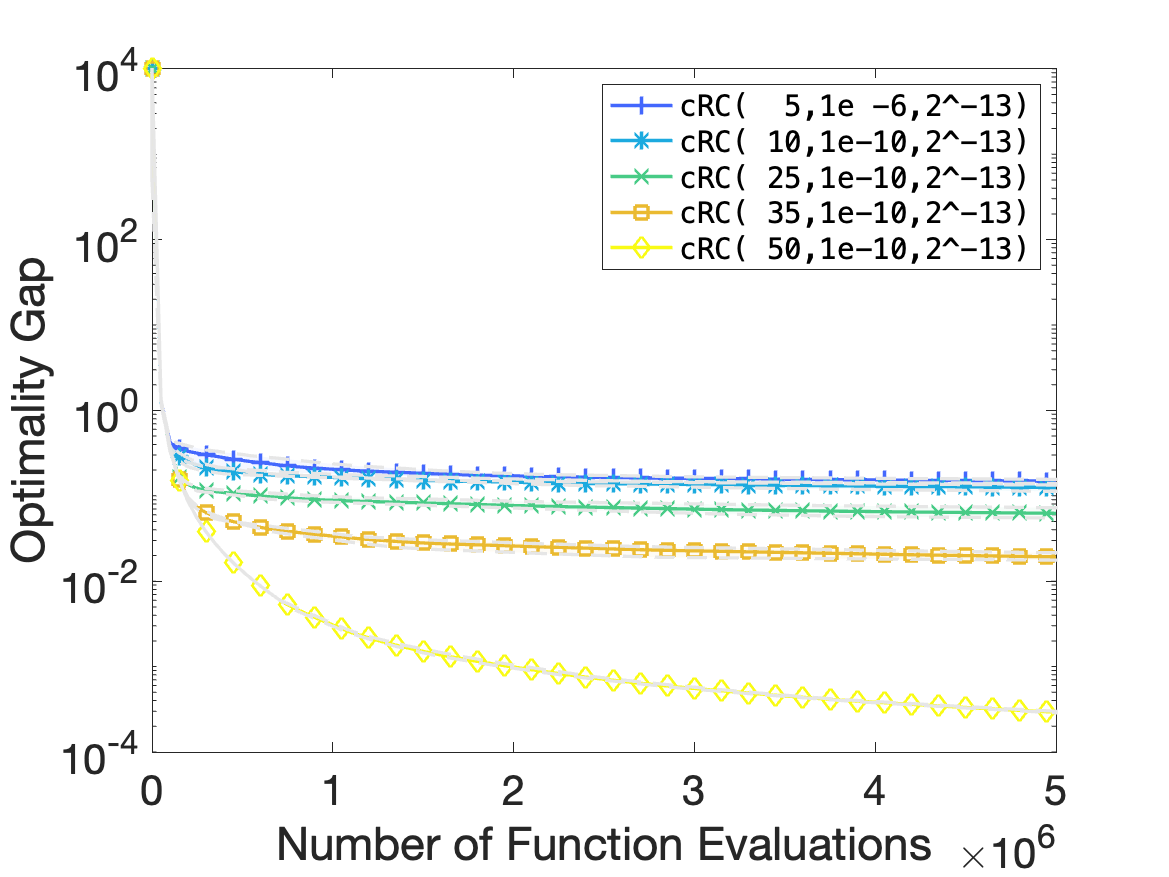}
  \caption{Performance of cRC}
  \label{fig:19rel3numdirsensRCCFD}
\end{subfigure}
\caption{The effect of number of directions $N$ on the performance of different randomized gradient estimation methods on the Bdqrtic function with $ \sigma = 10^{-3} $. Sampling radius $\nu$ and step size $\alpha$ are tuned for each method and $N$ combination to achieve the best performance. Top row: absolute error, bottom row: relative error.}
\label{fig:193numdirsens}
\end{figure}

\newpage
\section{Final Remarks}
\label{sec:conclusions}

Gradient estimation methods that utilize stochastic function evaluations to estimate gradients and incorporate these estimators into standard optimization methods offer scalability to large-dimensional problems and are of interest from both theoretical and practical perspectives. We introduce a unified algorithmic framework for solving stochastic optimization problems using central finite-difference based gradient estimation methods. The variance in these estimators is controlled by adaptively selecting the sample sizes employed in the stochastic approximations. We establish sublinear convergence to a neighborhood for nonconvex functions and demonstrate that this framework achieves optimal worst-case iteration and sample complexities of $\mathcal{O}(\epsilon^{-1})$ and $\mathcal{O}(\epsilon^{-2})$, respectively. Our numerical results on nonlinear least squares problems illustrate the effectiveness of this approach. Furthermore, this framework can be extended to explore other potentially new or hybrid variants of existing central finite-difference methods to further enhance efficiency. It can also be seamlessly integrated into more sophisticated algorithms such as quasi-Newton or accelerated methods.

\section*{Acknowledgments}

The authors thank Dr. Stefan Wild of the Lawrence Berkeley National Laboratory for his invaluable insights that contributed to the preliminary version of this paper.

\bibliography{references}


\begin{thebibliography}{32}
\ifx \bisbn   \undefined \def \bisbn  #1{ISBN #1}\fi
\ifx \binits  \undefined \def \binits#1{#1}\fi
\ifx \bauthor  \undefined \def \bauthor#1{#1}\fi
\ifx \batitle  \undefined \def \batitle#1{#1}\fi
\ifx \bjtitle  \undefined \def \bjtitle#1{#1}\fi
\ifx \bvolume  \undefined \def \bvolume#1{\textbf{#1}}\fi
\ifx \byear  \undefined \def \byear#1{#1}\fi
\ifx \bissue  \undefined \def \bissue#1{#1}\fi
\ifx \bfpage  \undefined \def \bfpage#1{#1}\fi
\ifx \blpage  \undefined \def \blpage #1{#1}\fi
\ifx \burl  \undefined \def \burl#1{\textsf{#1}}\fi
\ifx \doiurl  \undefined \def \doiurl#1{\url{https://doi.org/#1}}\fi
\ifx \betal  \undefined \def \betal{\textit{et al.}}\fi
\ifx \binstitute  \undefined \def \binstitute#1{#1}\fi
\ifx \binstitutionaled  \undefined \def \binstitutionaled#1{#1}\fi
\ifx \bctitle  \undefined \def \bctitle#1{#1}\fi
\ifx \beditor  \undefined \def \beditor#1{#1}\fi
\ifx \bpublisher  \undefined \def \bpublisher#1{#1}\fi
\ifx \bbtitle  \undefined \def \bbtitle#1{#1}\fi
\ifx \bedition  \undefined \def \bedition#1{#1}\fi
\ifx \bseriesno  \undefined \def \bseriesno#1{#1}\fi
\ifx \blocation  \undefined \def \blocation#1{#1}\fi
\ifx \bsertitle  \undefined \def \bsertitle#1{#1}\fi
\ifx \bsnm \undefined \def \bsnm#1{#1}\fi
\ifx \bsuffix \undefined \def \bsuffix#1{#1}\fi
\ifx \bparticle \undefined \def \bparticle#1{#1}\fi
\ifx \barticle \undefined \def \barticle#1{#1}\fi
\bibcommenthead
\ifx \bconfdate \undefined \def \bconfdate #1{#1}\fi
\ifx \botherref \undefined \def \botherref #1{#1}\fi
\ifx \url \undefined \def \url#1{\textsf{#1}}\fi
\ifx \bchapter \undefined \def \bchapter#1{#1}\fi
\ifx \bbook \undefined \def \bbook#1{#1}\fi
\ifx \bcomment \undefined \def \bcomment#1{#1}\fi
\ifx \oauthor \undefined \def \oauthor#1{#1}\fi
\ifx \citeauthoryear \undefined \def \citeauthoryear#1{#1}\fi
\ifx \endbibitem  \undefined \def \endbibitem {}\fi
\ifx \bconflocation  \undefined \def \bconflocation#1{#1}\fi
\ifx \arxivurl  \undefined \def \arxivurl#1{\textsf{#1}}\fi
\csname PreBibitemsHook\endcsname

\bibitem[\protect\citeauthoryear{Berahas et~al.}{2022}]{berahas2022adaptive}
\begin{botherref}
\oauthor{\bsnm{Berahas}, \binits{A.S.}},
\oauthor{\bsnm{Bollapragada}, \binits{R.}},
\oauthor{\bsnm{Zhou}, \binits{B.}}:
An adaptive sampling sequential quadratic programming method for equality
  constrained stochastic optimization.
arXiv preprint arXiv:2206.00712
(2022)
\end{botherref}
\endbibitem

\bibitem[\protect\citeauthoryear{Berahas et~al.}{2019}]{berahas2019derivative}
\begin{barticle}
\bauthor{\bsnm{Berahas}, \binits{A.S.}},
\bauthor{\bsnm{Byrd}, \binits{R.H.}},
\bauthor{\bsnm{Nocedal}, \binits{J.}}:
\batitle{Derivative-free optimization of noisy functions via quasi-newton
  methods}.
\bjtitle{SIAM Journal on Optimization}
\bvolume{29}(\bissue{2}),
\bfpage{965}--\blpage{993}
(\byear{2019})
\end{barticle}
\endbibitem

\bibitem[\protect\citeauthoryear{Berahas et~al.}{2021}]{berahas2021theoretical}
\begin{botherref}
\oauthor{\bsnm{Berahas}, \binits{A.S.}},
\oauthor{\bsnm{Cao}, \binits{L.}},
\oauthor{\bsnm{Choromanski}, \binits{K.}},
\oauthor{\bsnm{Scheinberg}, \binits{K.}}:
A theoretical and empirical comparison of gradient approximations in
  derivative-free optimization.
Foundations of Computational Mathematics,
1--54
(2021)
\end{botherref}
\endbibitem

\bibitem[\protect\citeauthoryear{Bertsekas}{2019}]{bertsekas2019reinforcement}
\begin{bbook}
\bauthor{\bsnm{Bertsekas}, \binits{D.}}:
\bbtitle{Reinforcement Learning and Optimal Control}.
\bpublisher{Athena Scientific},
\blocation{Belmont}
(\byear{2019})
\end{bbook}
\endbibitem

\bibitem[\protect\citeauthoryear{Blanchet
  et~al.}{2019}]{blanchet2019convergence}
\begin{barticle}
\bauthor{\bsnm{Blanchet}, \binits{J.}},
\bauthor{\bsnm{Cartis}, \binits{C.}},
\bauthor{\bsnm{Menickelly}, \binits{M.}},
\bauthor{\bsnm{Scheinberg}, \binits{K.}}:
\batitle{Convergence rate analysis of a stochastic trust-region method via
  supermartingales}.
\bjtitle{INFORMS journal on optimization}
\bvolume{1}(\bissue{2}),
\bfpage{92}--\blpage{119}
(\byear{2019})
\end{barticle}
\endbibitem

\bibitem[\protect\citeauthoryear{Blum}{1954}]{blum1954multidimensional}
\begin{botherref}
\oauthor{\bsnm{Blum}, \binits{J.R.}}:
Multidimensional stochastic approximation methods.
The annals of mathematical statistics,
737--744
(1954)
\end{botherref}
\endbibitem

\bibitem[\protect\citeauthoryear{Bollapragada and
  Wild}{2023}]{bollapragada2023adaptive}
\begin{barticle}
\bauthor{\bsnm{Bollapragada}, \binits{R.}},
\bauthor{\bsnm{Wild}, \binits{S.M.}}:
\batitle{Adaptive sampling quasi-newton methods for zeroth-order stochastic
  optimization}.
\bjtitle{Mathematical Programming Computation}
\bvolume{15}(\bissue{2}),
\bfpage{327}--\blpage{364}
(\byear{2023})
\end{barticle}
\endbibitem

\bibitem[\protect\citeauthoryear{Bollapragada
  et~al.}{2018a}]{bollapragada2018adaptive}
\begin{barticle}
\bauthor{\bsnm{Bollapragada}, \binits{R.}},
\bauthor{\bsnm{Byrd}, \binits{R.H.}},
\bauthor{\bsnm{Nocedal}, \binits{J.}}:
\batitle{Adaptive sampling strategies for stochastic optimization}.
\bjtitle{SIAM Journal on Optimization}
\bvolume{28}(\bissue{4}),
\bfpage{3312}--\blpage{3343}
(\byear{2018})
\end{barticle}
\endbibitem

\bibitem[\protect\citeauthoryear{Bollapragada
  et~al.}{2019}]{bollapragada2019exact}
\begin{barticle}
\bauthor{\bsnm{Bollapragada}, \binits{R.}},
\bauthor{\bsnm{Byrd}, \binits{R.H.}},
\bauthor{\bsnm{Nocedal}, \binits{J.}}:
\batitle{Exact and inexact subsampled newton methods for optimization}.
\bjtitle{IMA Journal of Numerical Analysis}
\bvolume{39}(\bissue{2}),
\bfpage{545}--\blpage{578}
(\byear{2019})
\end{barticle}
\endbibitem

\bibitem[\protect\citeauthoryear{Bollapragada
  et~al.}{2024}]{bollapragada2024derivative}
\begin{botherref}
\oauthor{\bsnm{Bollapragada}, \binits{R.}},
\oauthor{\bsnm{Karamanli}, \binits{C.}},
\oauthor{\bsnm{Wild}, \binits{S.M.}}:
Derivative-free optimization via adaptive sampling strategies.
arXiv preprint arXiv:2404.11893
(2024)
\end{botherref}
\endbibitem

\bibitem[\protect\citeauthoryear{Bollapragada
  et~al.}{2018b}]{bollapragada2018progressive}
\begin{bchapter}
\bauthor{\bsnm{Bollapragada}, \binits{R.}},
\bauthor{\bsnm{Nocedal}, \binits{J.}},
\bauthor{\bsnm{Mudigere}, \binits{D.}},
\bauthor{\bsnm{Shi}, \binits{H.-J.}},
\bauthor{\bsnm{Tang}, \binits{P.T.P.}}:
\bctitle{A progressive batching l-bfgs method for machine learning}.
In: \bbtitle{International Conference on Machine Learning},
pp. \bfpage{620}--\blpage{629}
(\byear{2018}).
\bcomment{PMLR}
\end{bchapter}
\endbibitem

\bibitem[\protect\citeauthoryear{Bollapragada
  et~al.}{2023}]{BOLLAPRAGADA2023239}
\begin{barticle}
\bauthor{\bsnm{Bollapragada}, \binits{R.}},
\bauthor{\bsnm{Karamanli}, \binits{C.}},
\bauthor{\bsnm{Keith}, \binits{B.}},
\bauthor{\bsnm{Lazarov}, \binits{B.}},
\bauthor{\bsnm{Petrides}, \binits{S.}},
\bauthor{\bsnm{Wang}, \binits{J.}}:
\batitle{An adaptive sampling augmented lagrangian method for stochastic
  optimization with deterministic constraints}.
\bjtitle{Computers \& Mathematics with Applications}
\bvolume{149},
\bfpage{239}--\blpage{258}
(\byear{2023})
\end{barticle}
\endbibitem

\bibitem[\protect\citeauthoryear{Bottou et~al.}{2018}]{bottou2018optimization}
\begin{barticle}
\bauthor{\bsnm{Bottou}, \binits{L.}},
\bauthor{\bsnm{Curtis}, \binits{F.E.}},
\bauthor{\bsnm{Nocedal}, \binits{J.}}:
\batitle{Optimization methods for large-scale machine learning}.
\bjtitle{SIAM review}
\bvolume{60}(\bissue{2}),
\bfpage{223}--\blpage{311}
(\byear{2018})
\end{barticle}
\endbibitem

\bibitem[\protect\citeauthoryear{Byrd et~al.}{2012}]{byrd2012sample}
\begin{barticle}
\bauthor{\bsnm{Byrd}, \binits{R.H.}},
\bauthor{\bsnm{Chin}, \binits{G.M.}},
\bauthor{\bsnm{Nocedal}, \binits{J.}},
\bauthor{\bsnm{Wu}, \binits{Y.}}:
\batitle{Sample size selection in optimization methods for machine learning}.
\bjtitle{Mathematical programming}
\bvolume{134}(\bissue{1}),
\bfpage{127}--\blpage{155}
(\byear{2012})
\end{barticle}
\endbibitem

\bibitem[\protect\citeauthoryear{Chang and Lin}{2011}]{10.1145/1961189.1961199}
\begin{botherref}
\oauthor{\bsnm{Chang}, \binits{C.-C.}},
\oauthor{\bsnm{Lin}, \binits{C.-J.}}:
Libsvm: A library for support vector machines.
ACM Trans. Intell. Syst. Technol.
\textbf{2}(3)
(2011)
\end{botherref}
\endbibitem

\bibitem[\protect\citeauthoryear{Choromanski
  et~al.}{2018a}]{choromanski2018optimizing}
\begin{bchapter}
\bauthor{\bsnm{Choromanski}, \binits{K.}},
\bauthor{\bsnm{Iscen}, \binits{A.}},
\bauthor{\bsnm{Sindhwani}, \binits{V.}},
\bauthor{\bsnm{Tan}, \binits{J.}},
\bauthor{\bsnm{Coumans}, \binits{E.}}:
\bctitle{Optimizing simulations with noise-tolerant structured exploration}.
In: \bbtitle{2018 IEEE International Conference on Robotics and Automation
  (ICRA)},
pp. \bfpage{2970}--\blpage{2977}
(\byear{2018}).
\bcomment{IEEE}
\end{bchapter}
\endbibitem

\bibitem[\protect\citeauthoryear{Choromanski
  et~al.}{2018b}]{choromanski2018structured}
\begin{bchapter}
\bauthor{\bsnm{Choromanski}, \binits{K.}},
\bauthor{\bsnm{Rowland}, \binits{M.}},
\bauthor{\bsnm{Sindhwani}, \binits{V.}},
\bauthor{\bsnm{Turner}, \binits{R.}},
\bauthor{\bsnm{Weller}, \binits{A.}}:
\bctitle{Structured evolution with compact architectures for scalable policy
  optimization}.
In: \bbtitle{International Conference on Machine Learning},
pp. \bfpage{970}--\blpage{978}
(\byear{2018}).
\bcomment{PMLR}
\end{bchapter}
\endbibitem

\bibitem[\protect\citeauthoryear{Conn et~al.}{2009}]{conn2009introduction}
\begin{bbook}
\bauthor{\bsnm{Conn}, \binits{A.R.}},
\bauthor{\bsnm{Scheinberg}, \binits{K.}},
\bauthor{\bsnm{Vicente}, \binits{L.N.}}:
\bbtitle{Introduction to Derivative-free Optimization}.
\bpublisher{SIAM},
\blocation{Philadelphia}
(\byear{2009})
\end{bbook}
\endbibitem

\bibitem[\protect\citeauthoryear{Flaxman et~al.}{2005}]{flaxman2005online}
\begin{bchapter}
\bauthor{\bsnm{Flaxman}, \binits{A.D.}},
\bauthor{\bsnm{Kalai}, \binits{A.T.}},
\bauthor{\bsnm{McMahan}, \binits{H.B.}}:
\bctitle{Online convex optimization in the bandit setting: gradient descent
  without a gradient}.
In: \bbtitle{Proceedings of the Sixteenth Annual ACM-SIAM Symposium on Discrete
  Algorithms},
pp. \bfpage{385}--\blpage{394}
(\byear{2005})
\end{bchapter}
\endbibitem

\bibitem[\protect\citeauthoryear{Ghadimi and Lan}{2013}]{ghadimi2013stochastic}
\begin{barticle}
\bauthor{\bsnm{Ghadimi}, \binits{S.}},
\bauthor{\bsnm{Lan}, \binits{G.}}:
\batitle{Stochastic first-and zeroth-order methods for nonconvex stochastic
  programming}.
\bjtitle{SIAM journal on optimization}
\bvolume{23}(\bissue{4}),
\bfpage{2341}--\blpage{2368}
(\byear{2013})
\end{barticle}
\endbibitem

\bibitem[\protect\citeauthoryear{Gould et~al.}{2003}]{gould2003cuter}
\begin{barticle}
\bauthor{\bsnm{Gould}, \binits{N.I.}},
\bauthor{\bsnm{Orban}, \binits{D.}},
\bauthor{\bsnm{Toint}, \binits{P.L.}}:
\batitle{Cuter and sifdec: A constrained and unconstrained testing environment,
  revisited}.
\bjtitle{ACM Transactions on Mathematical Software (TOMS)}
\bvolume{29}(\bissue{4}),
\bfpage{373}--\blpage{394}
(\byear{2003})
\end{barticle}
\endbibitem

\bibitem[\protect\citeauthoryear{Kiefer and
  Wolfowitz}{1952}]{kiefer1952stochastic}
\begin{botherref}
\oauthor{\bsnm{Kiefer}, \binits{J.}},
\oauthor{\bsnm{Wolfowitz}, \binits{J.}}:
Stochastic estimation of the maximum of a regression function.
The Annals of Mathematical Statistics,
462--466
(1952)
\end{botherref}
\endbibitem

\bibitem[\protect\citeauthoryear{Kozak et~al.}{2021}]{kozak2021stochastic}
\begin{barticle}
\bauthor{\bsnm{Kozak}, \binits{D.}},
\bauthor{\bsnm{Becker}, \binits{S.}},
\bauthor{\bsnm{Doostan}, \binits{A.}},
\bauthor{\bsnm{Tenorio}, \binits{L.}}:
\batitle{A stochastic subspace approach to gradient-free optimization in high
  dimensions}.
\bjtitle{Computational Optimization and Applications}
\bvolume{79}(\bissue{2}),
\bfpage{339}--\blpage{368}
(\byear{2021})
\end{barticle}
\endbibitem

\bibitem[\protect\citeauthoryear{Larson et~al.}{2019}]{larson2019derivative}
\begin{barticle}
\bauthor{\bsnm{Larson}, \binits{J.}},
\bauthor{\bsnm{Menickelly}, \binits{M.}},
\bauthor{\bsnm{Wild}, \binits{S.M.}}:
\batitle{Derivative-free optimization methods}.
\bjtitle{Acta Numerica}
\bvolume{28},
\bfpage{287}--\blpage{404}
(\byear{2019})
\end{barticle}
\endbibitem

\bibitem[\protect\citeauthoryear{L'Ecuyer and Yin}{1998}]{l1998budget}
\begin{barticle}
\bauthor{\bsnm{L'Ecuyer}, \binits{P.}},
\bauthor{\bsnm{Yin}, \binits{G.}}:
\batitle{Budget-dependent convergence rate of stochastic approximation}.
\bjtitle{SIAM Journal on Optimization}
\bvolume{8}(\bissue{1}),
\bfpage{217}--\blpage{247}
(\byear{1998})
\end{barticle}
\endbibitem

\bibitem[\protect\citeauthoryear{Mania et~al.}{2018}]{mania2018simple}
\begin{botherref}
\oauthor{\bsnm{Mania}, \binits{H.}},
\oauthor{\bsnm{Guy}, \binits{A.}},
\oauthor{\bsnm{Recht}, \binits{B.}}:
Simple random search of static linear policies is competitive for reinforcement
  learning.
Advances in Neural Information Processing Systems
\textbf{31}
(2018)
\end{botherref}
\endbibitem

\bibitem[\protect\citeauthoryear{Marrinan et~al.}{2023}]{marrinan2023zeroth}
\begin{botherref}
\oauthor{\bsnm{Marrinan}, \binits{L.}},
\oauthor{\bsnm{Shanbhag}, \binits{U.V.}},
\oauthor{\bsnm{Yousefian}, \binits{F.}}:
Zeroth-order gradient and quasi-newton methods for nonsmooth nonconvex
  stochastic optimization.
arXiv preprint arXiv:2401.08665
(2023)
\end{botherref}
\endbibitem

\bibitem[\protect\citeauthoryear{Nesterov and
  Spokoiny}{2017}]{nesterov2017random}
\begin{barticle}
\bauthor{\bsnm{Nesterov}, \binits{Y.}},
\bauthor{\bsnm{Spokoiny}, \binits{V.}}:
\batitle{Random gradient-free minimization of convex functions}.
\bjtitle{Foundations of Computational Mathematics}
\bvolume{17}(\bissue{2}),
\bfpage{527}--\blpage{566}
(\byear{2017})
\end{barticle}
\endbibitem

\bibitem[\protect\citeauthoryear{Pasupathy
  et~al.}{2018}]{pasupathy2018sampling}
\begin{barticle}
\bauthor{\bsnm{Pasupathy}, \binits{R.}},
\bauthor{\bsnm{Glynn}, \binits{P.}},
\bauthor{\bsnm{Ghosh}, \binits{S.}},
\bauthor{\bsnm{Hashemi}, \binits{F.S.}}:
\batitle{On sampling rates in simulation-based recursions}.
\bjtitle{SIAM Journal on Optimization}
\bvolume{28}(\bissue{1}),
\bfpage{45}--\blpage{73}
(\byear{2018})
\end{barticle}
\endbibitem

\bibitem[\protect\citeauthoryear{Shashaani et~al.}{2018}]{shashaani2018astro}
\begin{barticle}
\bauthor{\bsnm{Shashaani}, \binits{S.}},
\bauthor{\bsnm{Hashemi}, \binits{F.S.}},
\bauthor{\bsnm{Pasupathy}, \binits{R.}}:
\batitle{Astro-df: A class of adaptive sampling trust-region algorithms for
  derivative-free stochastic optimization}.
\bjtitle{SIAM Journal on Optimization}
\bvolume{28}(\bissue{4}),
\bfpage{3145}--\blpage{3176}
(\byear{2018})
\end{barticle}
\endbibitem

\bibitem[\protect\citeauthoryear{Wright}{2015}]{wright2015coordinate}
\begin{barticle}
\bauthor{\bsnm{Wright}, \binits{S.J.}}:
\batitle{Coordinate descent algorithms}.
\bjtitle{Mathematical Programming}
\bvolume{151}(\bissue{1}),
\bfpage{3}--\blpage{34}
(\byear{2015})
\end{barticle}
\endbibitem

\bibitem[\protect\citeauthoryear{Xie et~al.}{2023}]{xie2020constrained}
\begin{botherref}
\oauthor{\bsnm{Xie}, \binits{Y.}},
\oauthor{\bsnm{Bollapragada}, \binits{R.}},
\oauthor{\bsnm{Byrd}, \binits{R.H.}},
\oauthor{\bsnm{Nocedal}, \binits{J.}}:
Constrained and composite optimization via adaptive sampling methods.
IMA Journal of Numerical Analysis
(2023)
\end{botherref}
\endbibitem

\end{thebibliography}

\appendix
\newpage
\section{Appendix}
\label{sec:appendix}

\subsection{Bounded Variance in \eqref{eq:theoreticalnormcond}}
\label{sec:CFDproofofboundedvar}

\citep[Section A.1]{bollapragada2024derivative}. Let us define
\begin{align} \label{eq:defofci}
    c_{i,j} \defeq \Big( \frac{f(x_k + \nu u_j,\zeta_i) - f(x_k - \nu u_j,\zeta_i)}{2\nu} - \frac{F(x_k + \nu u_j) - F(x_k - \nu u_j)}{2\nu} \Big).
\end{align}
By \eqref{eq:gen_grad_est} and Assumption \ref{assum:sampling} we have
\begin{align} \label{eq:boundedvarstep1}
& \E_{\zeta_i} \Bigg[  \|g_{\zeta_i, T_k}(x_k) - g_{T_k}(x_k)\|^2 \Bigg] %
 = \E_{\zeta_i} \Bigg[ \Big\| \gamma_k\sum_{u_j \in T_k} c_{i,j} u_j \Big \|^2 \Bigg] \nonumber \\
 & \leq \gamma_k^2 |T_k|\sum_{u_j \in T_k} \E_{\zeta_i} [ \| c_{i,j} u_j \|^2 ] = \gamma_k^2 |T_k| \sum_{u_j \in T_k} \E_{\zeta_i} [ c_{i,j}^2] \|u_j\|^2 ,
\end{align}
where the inequality is due to $ (a_1 + a_2 + \dots + a_n)^2 \leq n(a_1^2 + a_2^2 + \dots + a_n^2)$. Now, consider
\begin{align} 
c_{i,j}^2 & = \Big( \frac{f(x_k + \nu u_j,\zeta_i) - f(x_k - \nu u_j,\zeta_i)}{2\nu} - \frac{F(x_k + \nu u_j) - F(x_k - \nu u_j)}{2\nu} \Big)^2 \nonumber \\
& = \Big( u_j^T [\nabla f (x_k,\zeta_i) - \nabla F (x_k)] \nonumber \\
& \quad + \frac{f(x_k + \nu u_j,\zeta_i) - f(x_k - \nu u_j,\zeta_i) - 2\nu u_j^T \nabla f (x_k,\zeta_i)}{2\nu} \nonumber \\
& \quad - \frac{F(x_k + \nu u_j) - F(x_k - \nu u_j) - 2\nu u_j^T \nabla F (x_k)}{2\nu} \Big)^2 \nonumber \\
& \leq 2\Big( u_j^T [\nabla f (x_k,\zeta_i) - \nabla F (x_k)] \Big)^2 \nonumber \\
& \quad + 4 \Big(\frac{f(x_k + \nu u_j,\zeta_i) - f(x_k - \nu u_j,\zeta_i) - 2\nu u_j^T \nabla f (x_k,\zeta_i)}{2\nu} \Big)^2 \nonumber \\
& \quad + 4 \Big( \frac{F(x_k + \nu u_j) - F(x_k - \nu u_j) - 2\nu u_j^T \nabla F (x_k)}{2\nu} \Big)^2 \nonumber \\
& \leq 2\Big( u_j^T [\nabla f (x_k,\zeta_i) - \nabla F (x_k)] \Big)^2 + \frac{1}{9}(M_F^2 + M_f^2) \nu^4 \|u_j\|^6 \nonumber \\
& \leq 2\Big( u_j^T [\nabla f (x_k,\zeta_i) - \nabla F (x_k)] \Big)^2 + \frac{2}{9}M_f^2 \nu^4 \|u_j\|^6 \label{eq:boundedvarstep2} \\
& \leq 2 \| \nabla f (x_k,\zeta_i) - \nabla F (x_k)\|^2 \|u_j\|^2 + \frac{2}{9}M_f^2 \nu^4 \|u_j\|^6 \label{eq:boundedvarstep3}.
\end{align}
where the first inequality is due to $(a+b)^2 \leq 2 a^2 + 2 b^2$, the second inequality is due to Assumption \ref{assum:Lipschitzstochf}, the third inequality is due to $M_F \leq M_f$, and the last inequality is due to $a^Tb \leq \|a\| \|b\|$. By \eqref{eq:boundedvarstep3} we have
\begin{align}\label{eq:boundedvarstep4}
\E_{\zeta_i}[c_{i,j}^2] & \leq 2 \E_{\zeta_i}[\| \nabla f (x_k,\zeta_i) - \nabla F (x_k)\|^2] \|u_j\|^2 + \frac{2}{9}M_f^2 \nu^4 \|u_j\|^6 \nonumber \\
& \leq 2(\beta_1 \|\nabla F (x_k)\|^2 + \beta_2) \|u_j\|^2 + \frac{2}{9}M_f^2 \nu^4 \|u_j\|^6,
\end{align}
where the second inequality is due to Assumption \ref{assum:boundedvarinstochgrad}.
Finally, by \eqref{eq:boundedvarstep1} and \eqref{eq:boundedvarstep4} we have
\begin{align}\label{eq:boundedvarstep5}
    & \E_{\zeta_i} \Bigg[  \|g_{\zeta_i, T_k}(x_k) - g_{T_k}(x_k)\|^2 \Bigg] \leq 2 \gamma_k^2 |T_k| \sum_{u_j \in T_k} \Big[(\beta_1 \|\nabla F (x_k)\|^2 + \beta_2) \|u_j\|^4 + \frac{1}{9}M_f^2 \nu^4 \|u_j\|^8\Big].
\end{align}
Therefore, for all iterations $k$ where $\|\nabla F(x_k)\| < \infty$, and either for deterministic or finite realizations of random vectors $u_j$ (i.e. $\|u_j\| < \infty$ for all $u_j \in T_k$) with $\gamma_k \in \mathbb{Z}_{++}$ and $|T_k| \in \mathbb{Z}_{++}$, we have
\begin{align*}
    \E_{\zeta_i}[\| g_{\zeta_i, T_k}(x_k) - g_{T_k}(x_k)\|^2] < \infty. %
\end{align*} 

\subsection{Additional Plots}
\label{sec:additionalPlots}

In this section, we present additional plots demonstrating the performance on nonlinear least squares problems (Section \ref{sec:add_plots_CFD_NLLS}) and binary classification logistic regression problems (Section \ref{sec:add_plots_CFD_logreg}). We also include ``Comparison of cSS and cRC" plots \citep{bollapragada2024derivative}, where we compare these two methods using different numbers of directions $N$, with the other hyperparameters tuned for each $N$ and method combination.

\subsubsection{Nonlinear Least Squares Problems}
\label{sec:add_plots_CFD_NLLS}

\paragraph{$\quad ~$  Chebyquad Function $(d = 30, p = 45)$ with Relative Error, $\sigma = 10^{-3}$}

\begin{figure}[H]
\centering
\begin{subfigure}{0.33\textwidth}
  \centering
  \includegraphics[width=1.1\linewidth]{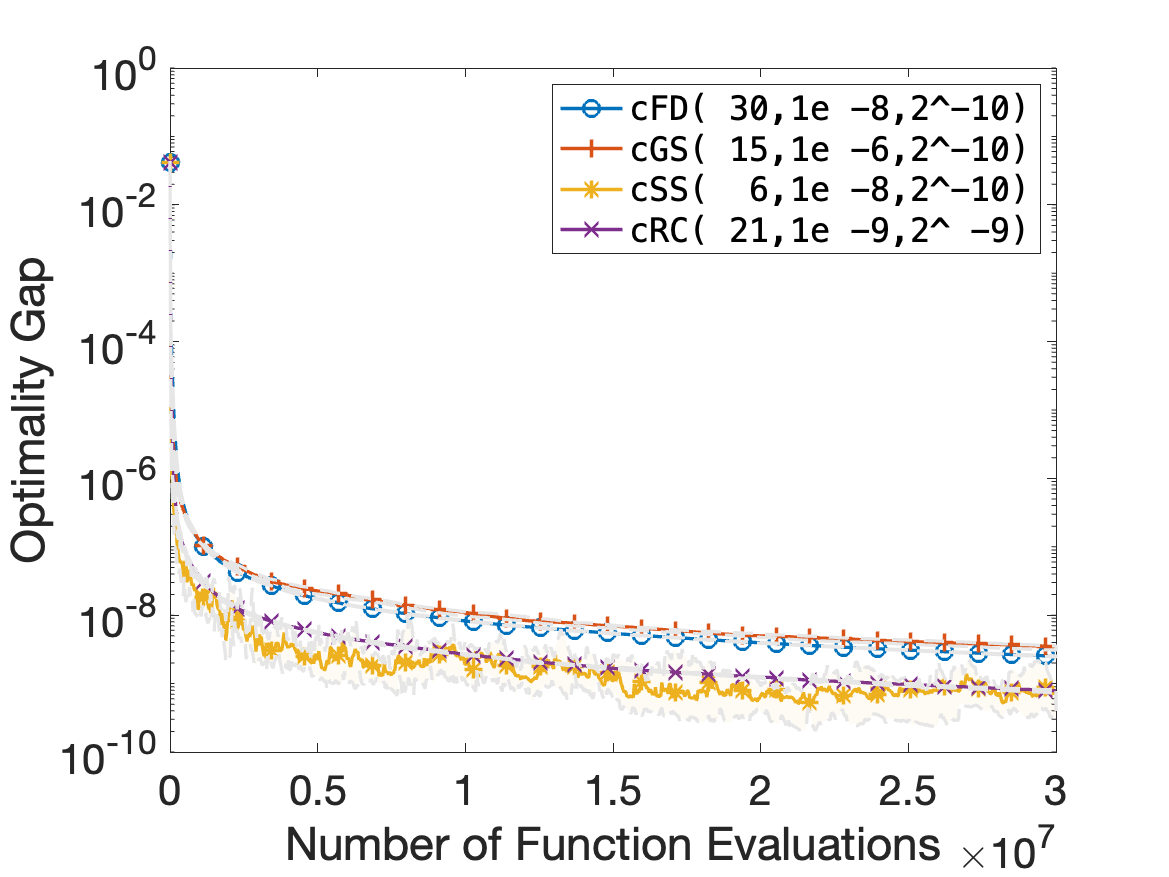}
  \caption{Optimality Gap}
\end{subfigure}%
\begin{subfigure}{0.33\textwidth}
  \centering
  \includegraphics[width=1.1\linewidth]{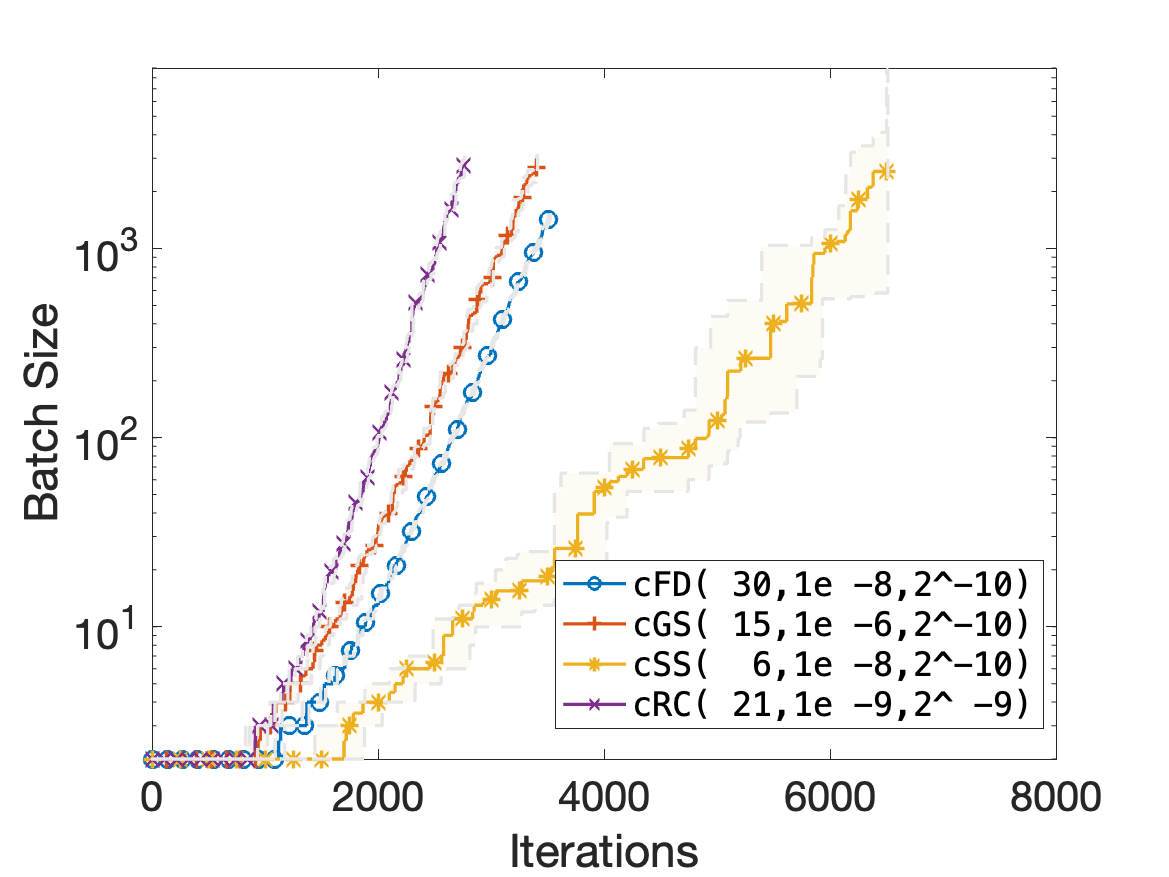}
  \caption{Batch Size}
\end{subfigure}
\begin{subfigure}{0.33\textwidth}
  \centering
  \includegraphics[width=1.1\linewidth]{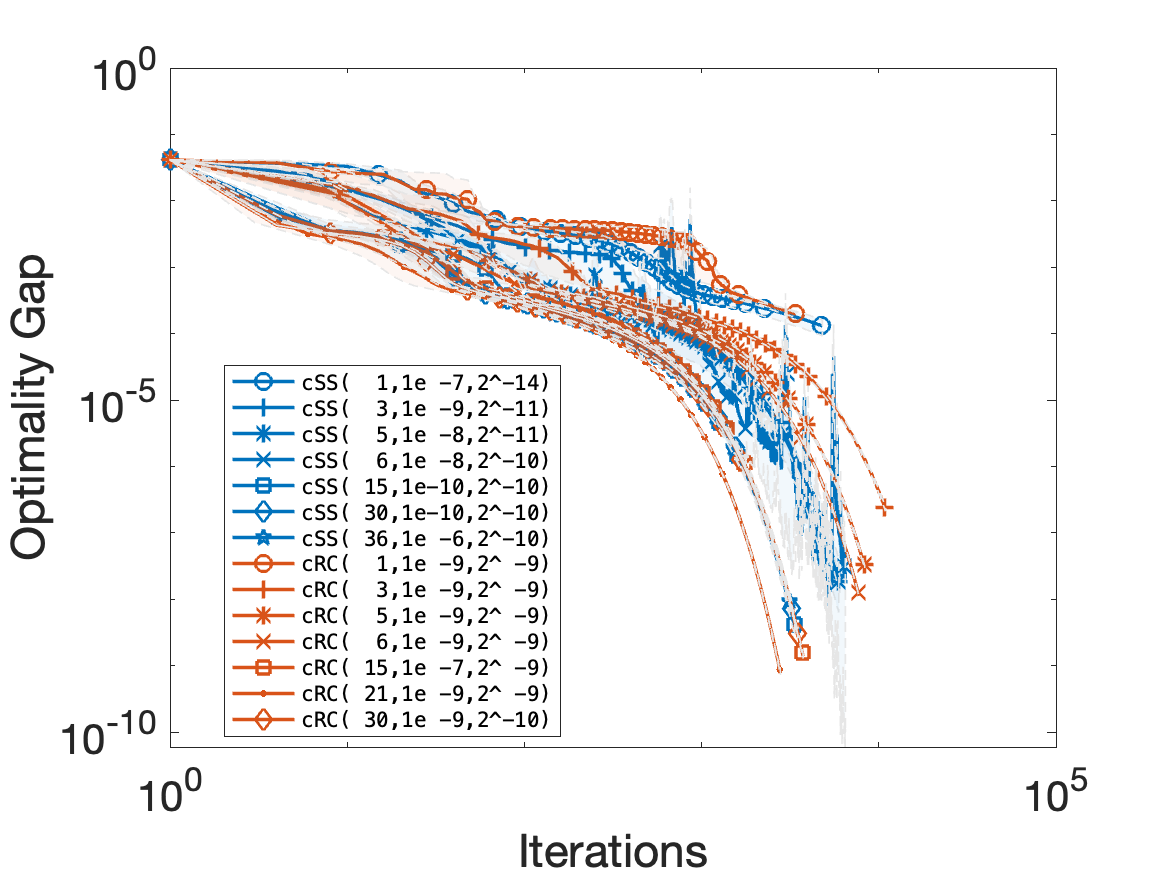}
  \caption{Comparison of cSS and cRC}
\end{subfigure}
\caption{\small{Performance of different gradient estimation methods using the tuned hyperparameters on the Chebyquad function with relative error and $ \sigma = 10^{-3} $.}}
\end{figure}

\begin{figure}[H]
\centering
\begin{subfigure}{0.33\textwidth}
  \centering
  \includegraphics[width=1.1\linewidth]{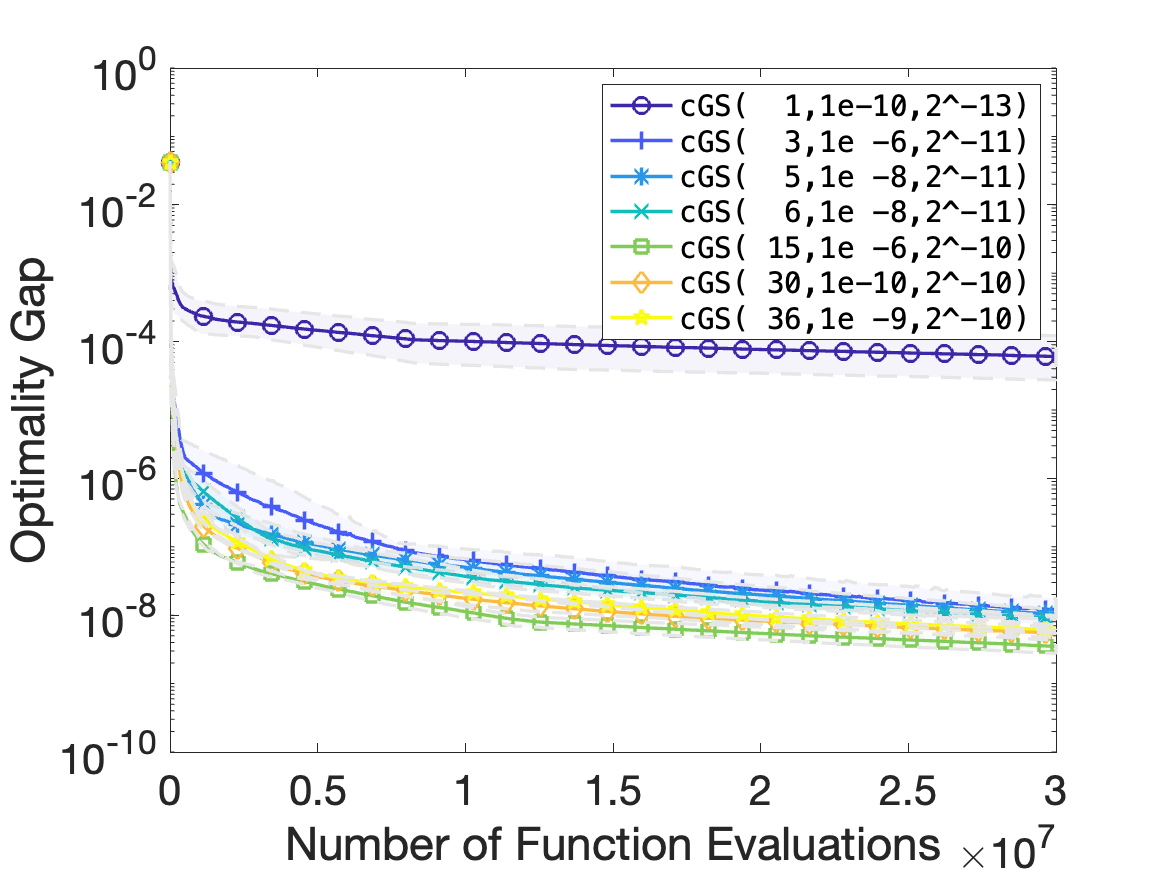}
  \caption{Performance of cGS}
\end{subfigure}%
\begin{subfigure}{0.33\textwidth}
  \centering
  \includegraphics[width=1.1\linewidth]{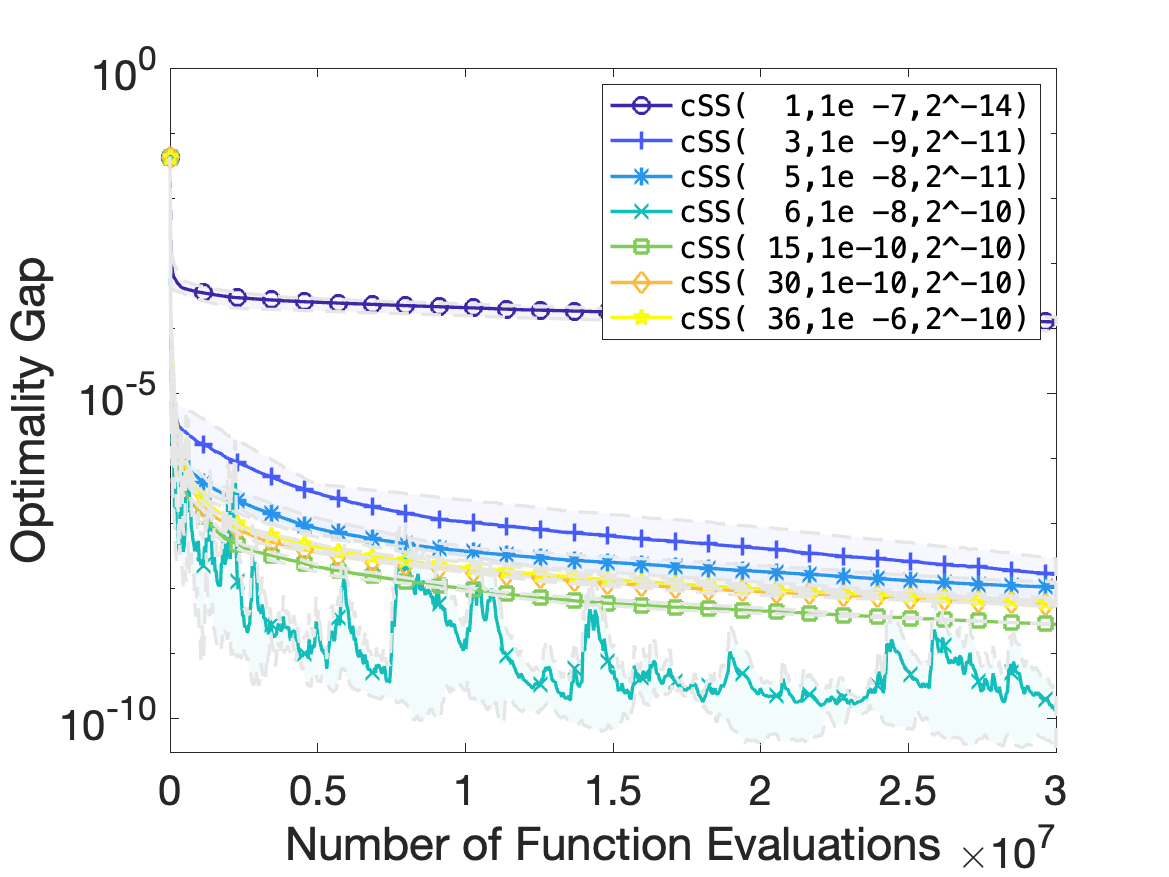}
  \caption{Performance of cSS}
\end{subfigure}%
\begin{subfigure}{0.33\textwidth}
  \centering
  \includegraphics[width=1.1\linewidth]{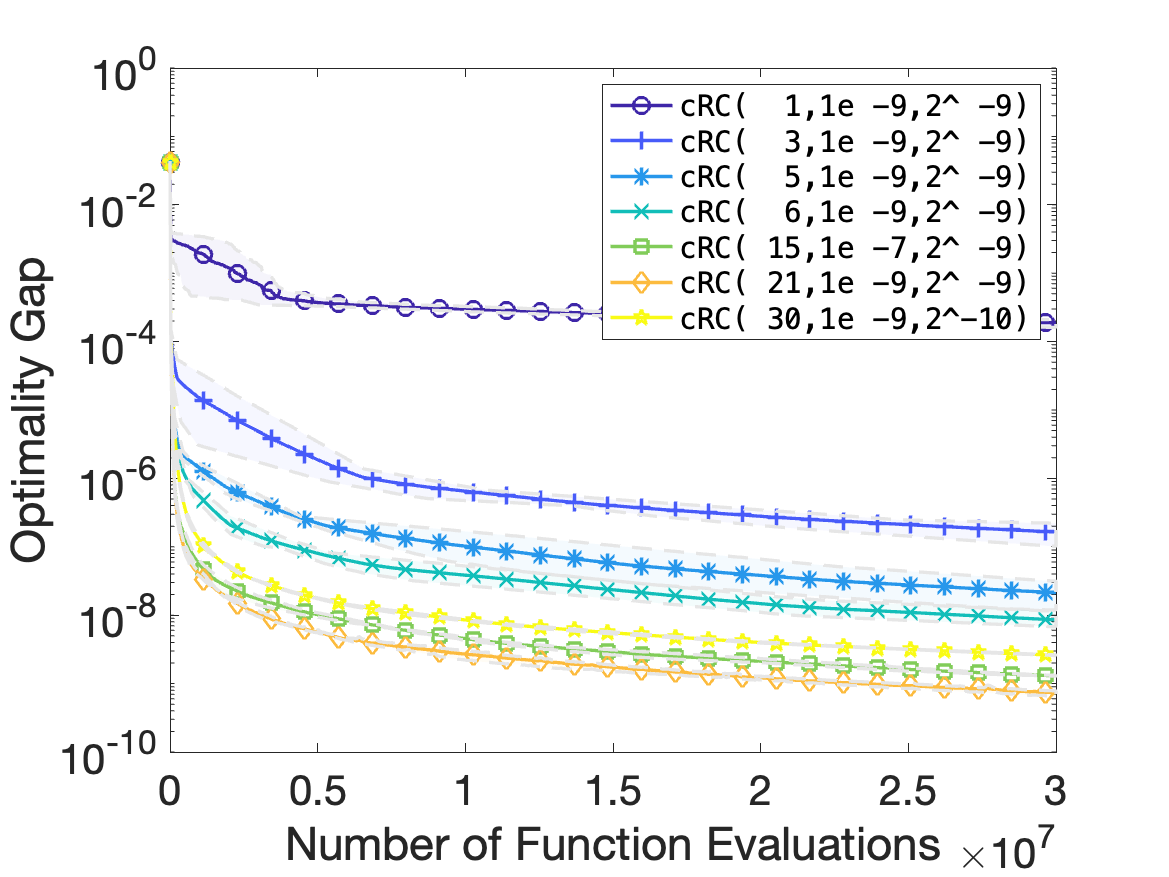}
  \caption{Performance of cRC}
\end{subfigure}
\caption{The effect of number of directions $N$ on the performance of different randomized gradient estimation methods on the Chebyquad function with relative error and $\sigma = 10^{-3}$. The sampling radius $\nu$ and step size $\alpha$ are tuned for each method and $N$ combination to achieve the best performance.}
\end{figure}

\newpage
\paragraph{Chebyquad Function $(d = 30, p = 45)$ with Absolute Error, $\sigma = 10^{-3}$}

\begin{figure}[H]
\centering
\begin{subfigure}{0.33\textwidth}
  \centering
  \includegraphics[width=1.1\linewidth]{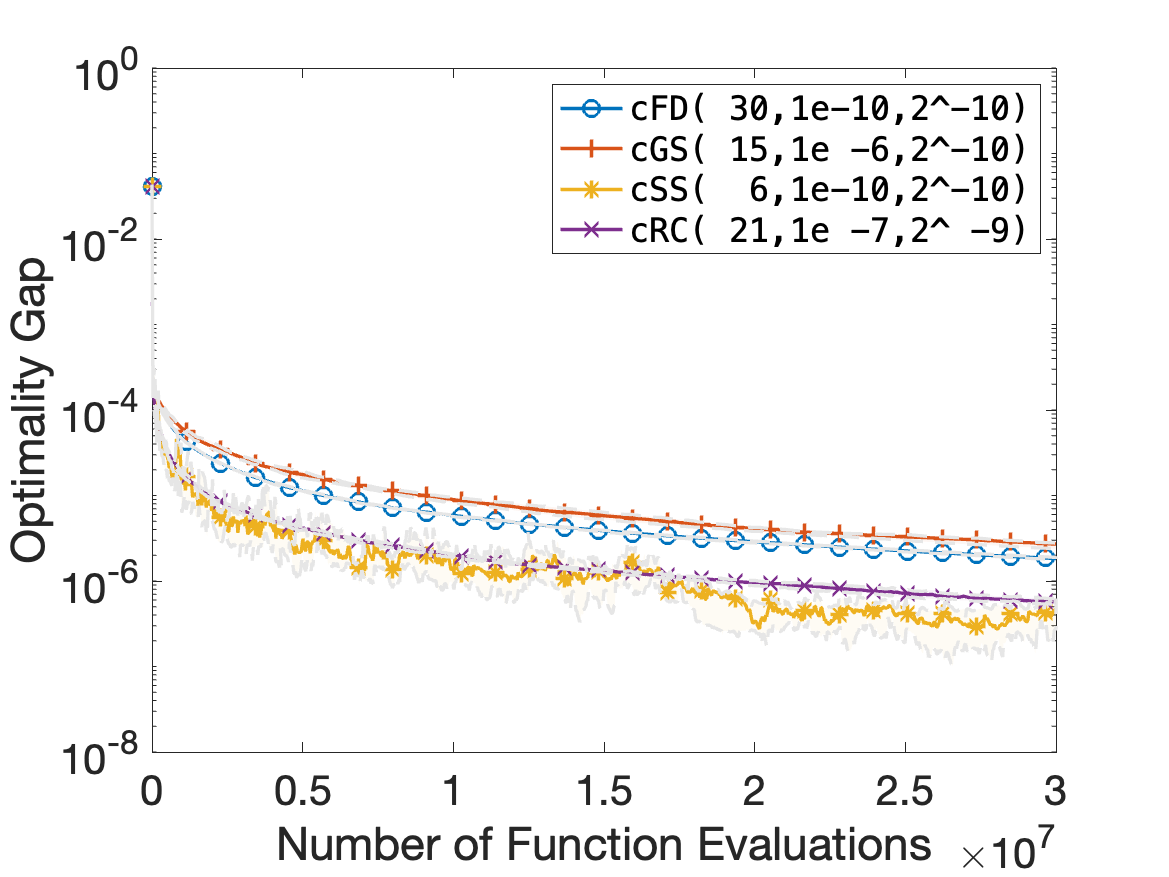}
  \caption{Optimality Gap}
\end{subfigure}%
\begin{subfigure}{0.33\textwidth}
  \centering
  \includegraphics[width=1.1\linewidth]{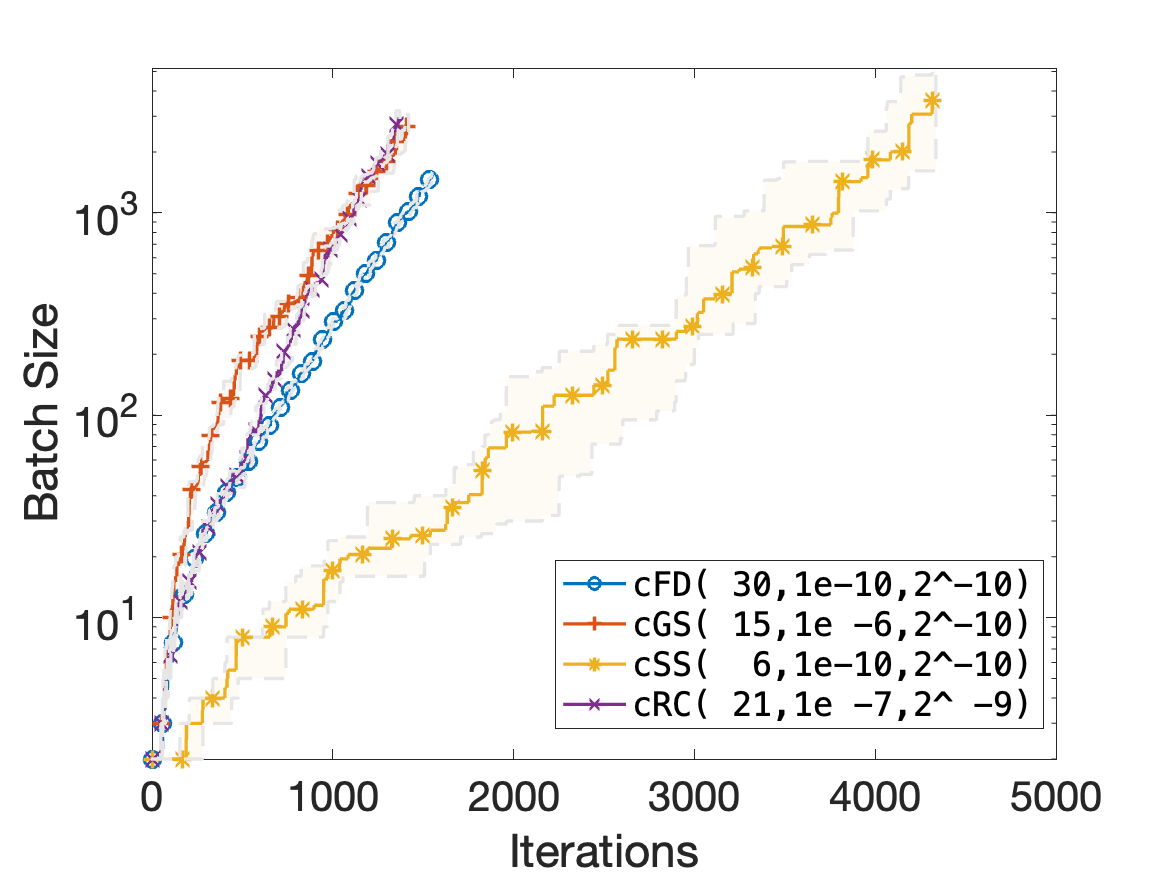}
  \caption{Batch Size}
\end{subfigure}
\begin{subfigure}{0.33\textwidth}
  \centering
  \includegraphics[width=1.1\linewidth]{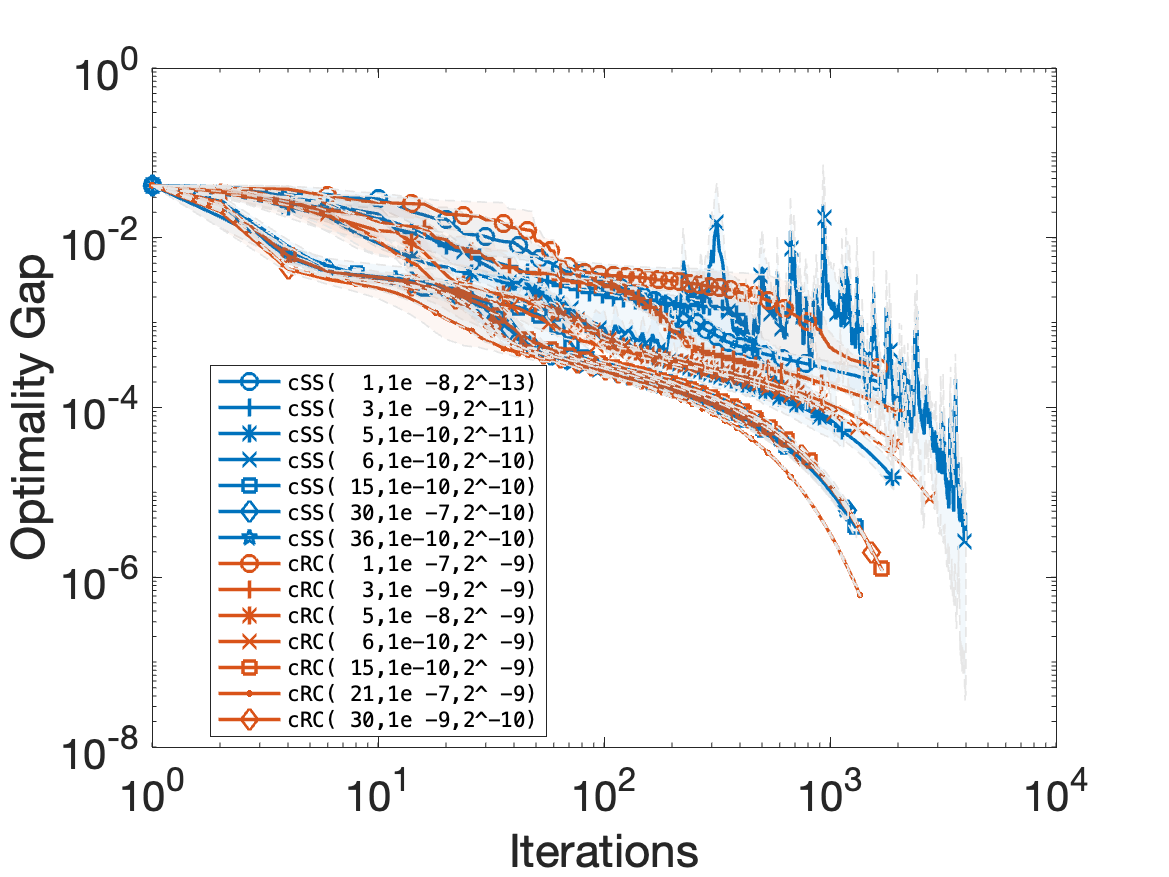}
  \caption{Comparison of cSS and cRC}
\end{subfigure}
\caption{Performance of different gradient estimation methods using the tuned hyperparameters on the Chebyquad function with absolute error and $ \sigma = 10^{-3} $.}
\end{figure}

\begin{figure}[H]
\centering
\begin{subfigure}{0.33\textwidth}
  \centering
  \includegraphics[width=1.1\linewidth]{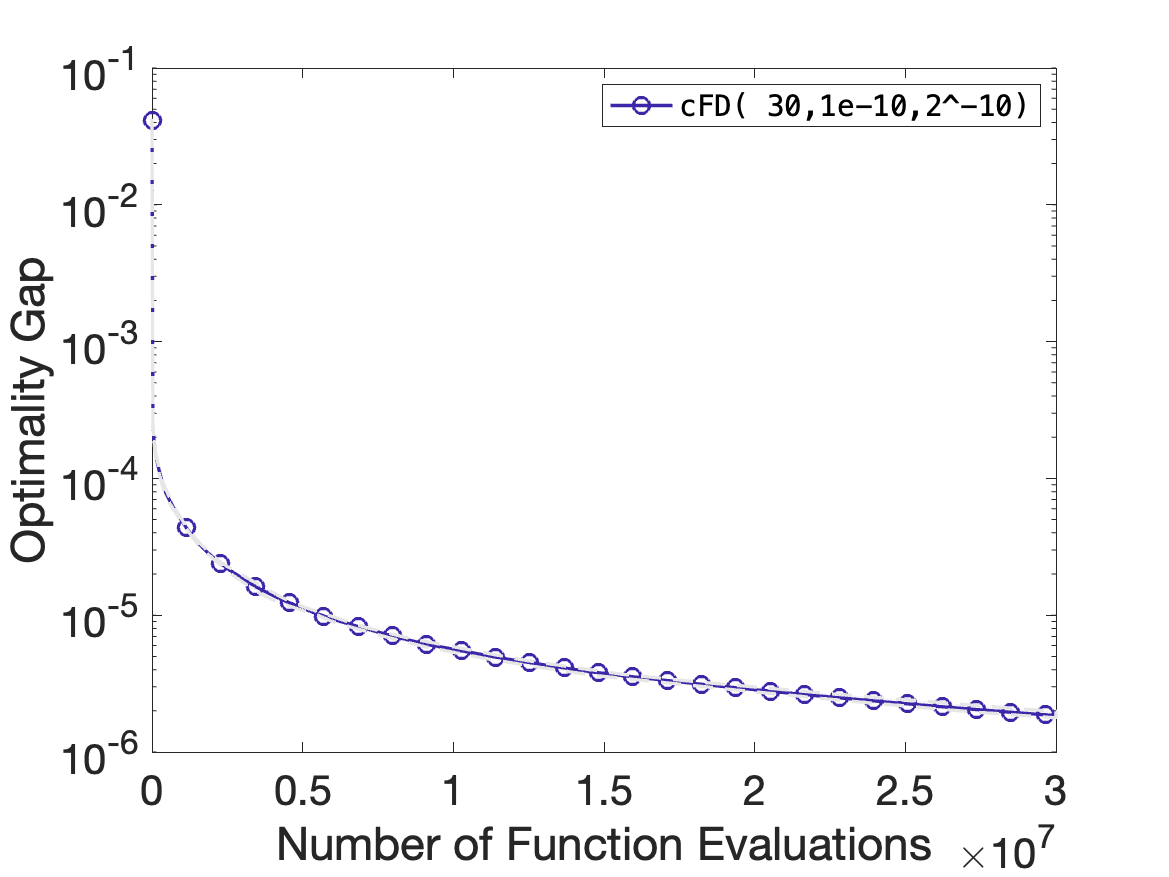}
  \caption{Performance of cGS}
\end{subfigure}%
\begin{subfigure}{0.33\textwidth}
  \centering
  \includegraphics[width=1.1\linewidth]{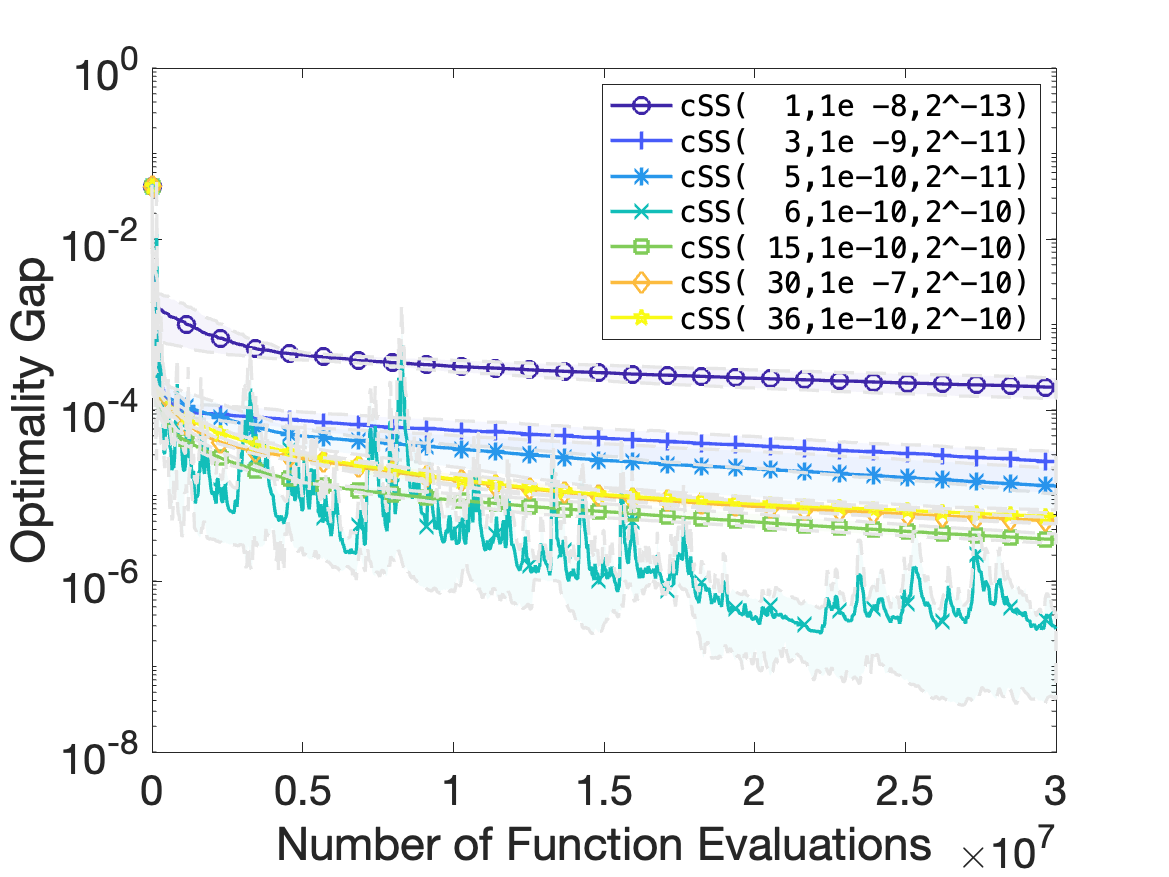}
  \caption{Performance of cSS}
\end{subfigure}
\begin{subfigure}{0.33\textwidth}
  \centering
  \includegraphics[width=1.1\linewidth]{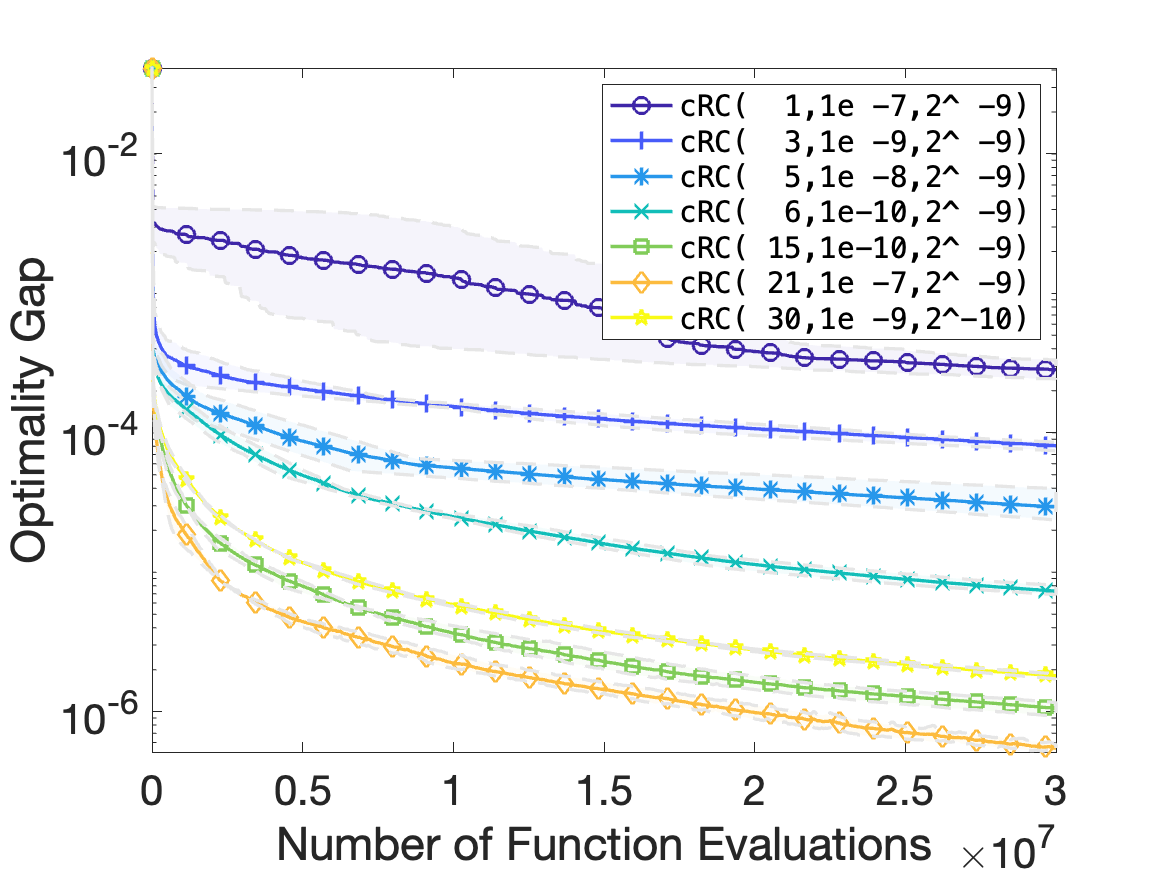}
  \caption{Performance of cRC}
\end{subfigure}
\caption{The effect of number of directions $N$ on the performance of different randomized gradient estimation methods on the Chebyquad function with absolute error and $ \sigma = 10^{-3} $. The sampling radius $\nu$ and step size $\alpha$ are tuned for each method and $N$ combination to achieve the best performance.}
\end{figure}

\newpage
\paragraph{Chebyquad Function $(d = 30, p = 45)$ with Relative Error, $\sigma = 10^{-5}$}

\begin{figure}[H]
\centering
\begin{subfigure}{0.33\textwidth}
  \centering
  \includegraphics[width=1.1\linewidth]{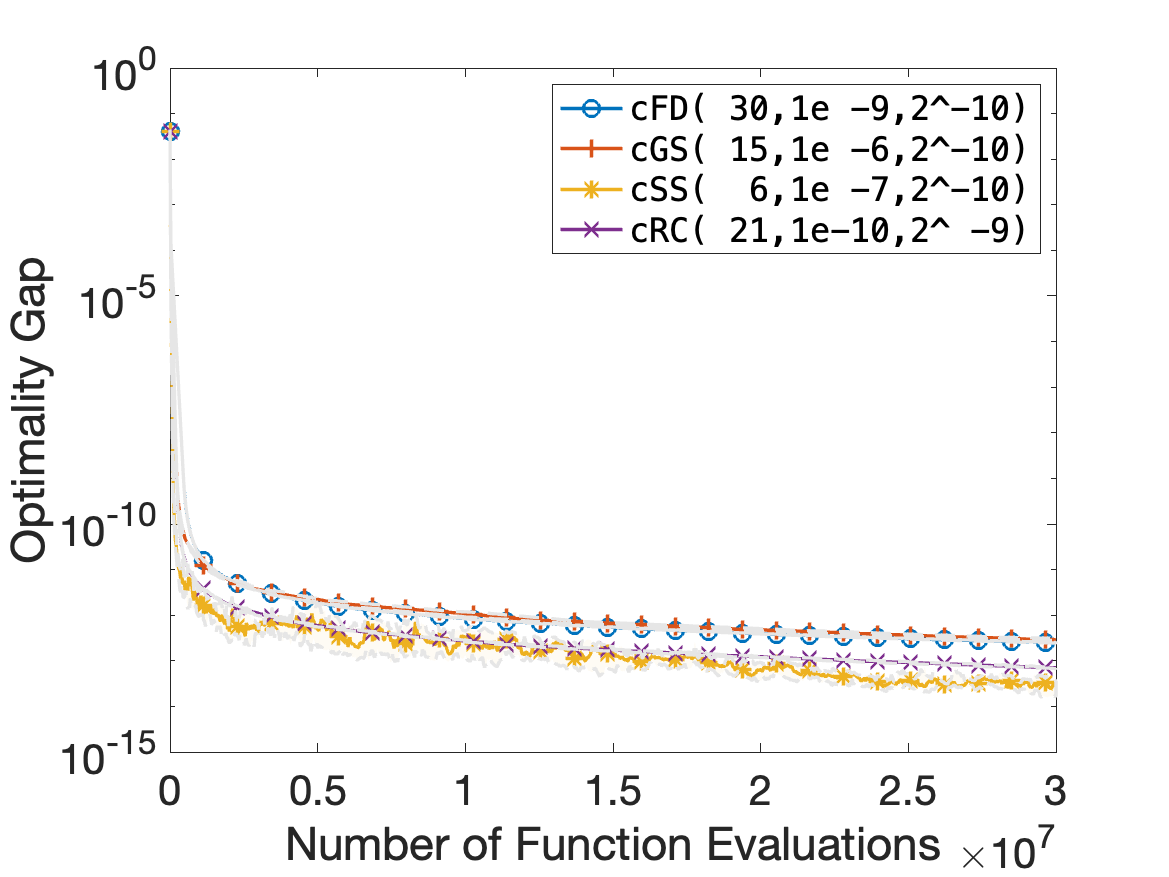}
  \caption{Optimality Gap}
\end{subfigure}%
\begin{subfigure}{0.33\textwidth}
  \centering
  \includegraphics[width=1.1\linewidth]{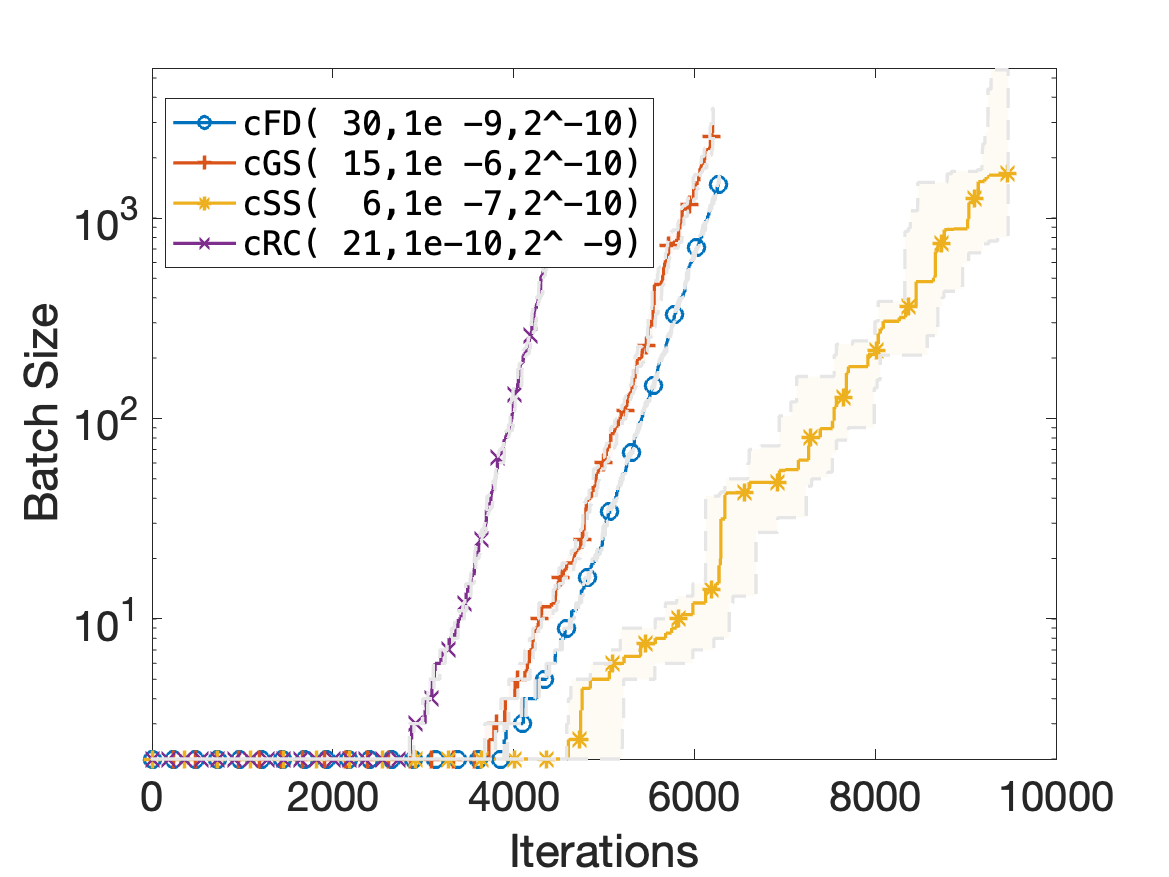}
  \caption{Batch Size}
\end{subfigure}
\begin{subfigure}{0.33\textwidth}
  \centering
  \includegraphics[width=1.1\linewidth]{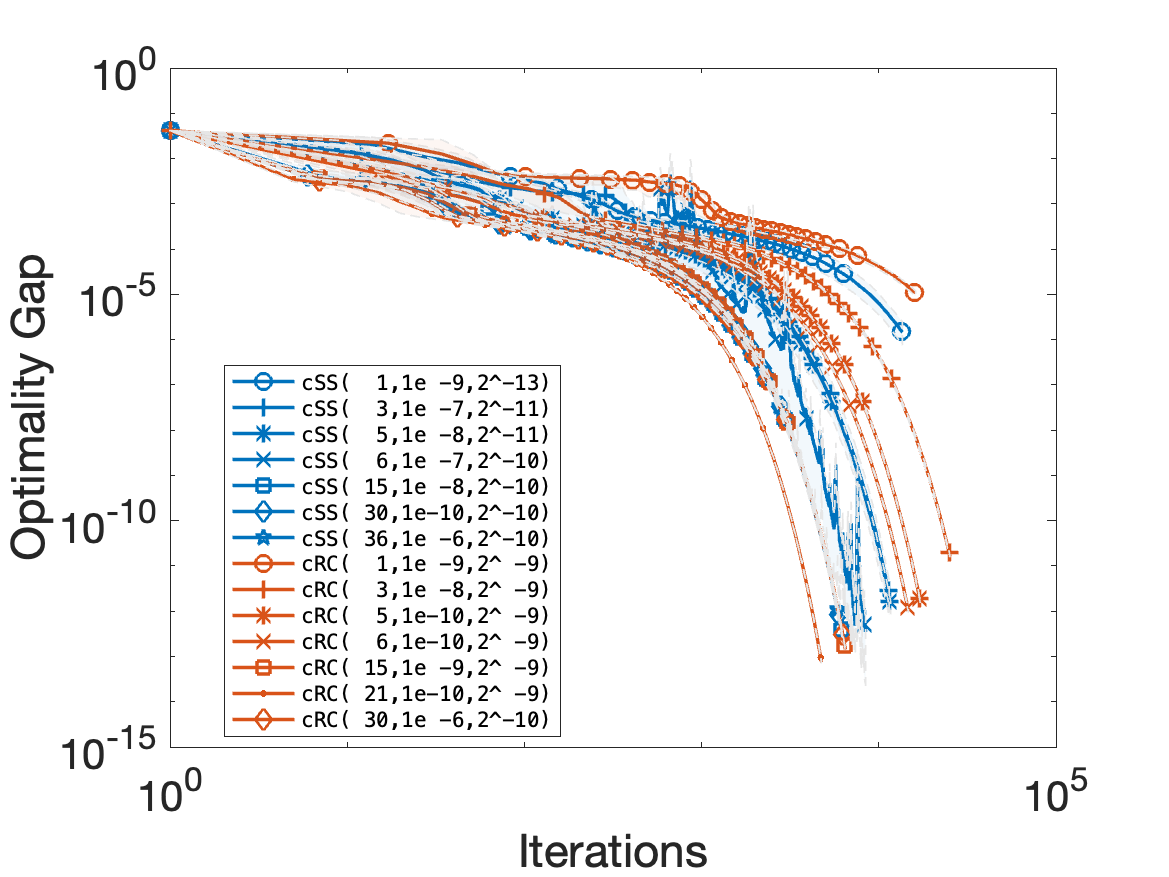}
  \caption{Comparison of cSS and cRC}
\end{subfigure}
\caption{\small{Performance of different gradient estimation methods using the tuned hyperparameters on the Chebyquad function with relative error and $ \sigma = 10^{-5} $.}}
\end{figure}

\begin{figure}[H]
\centering
\begin{subfigure}{0.33\textwidth}
  \centering
  \includegraphics[width=1.1\linewidth]{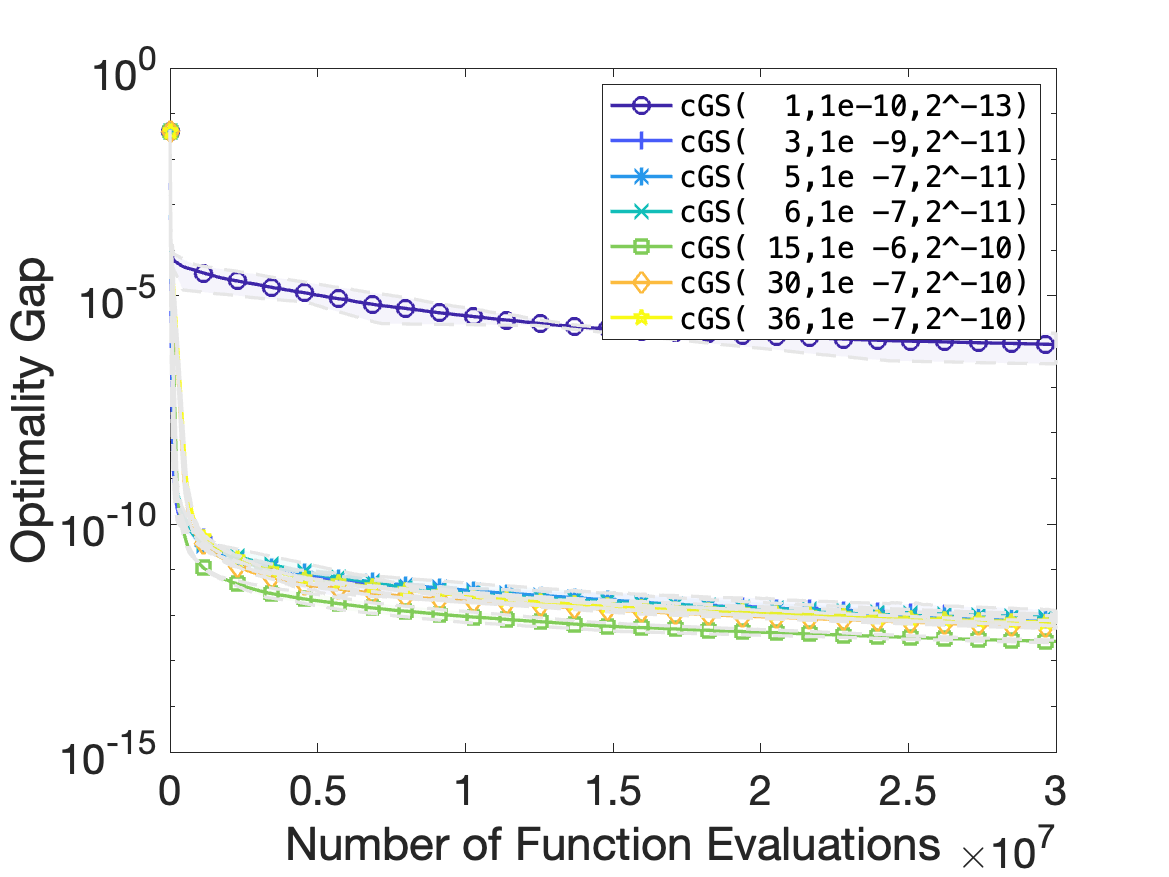}
  \caption{Performance of cGS}
\end{subfigure}%
\begin{subfigure}{0.33\textwidth}
  \centering
  \includegraphics[width=1.1\linewidth]{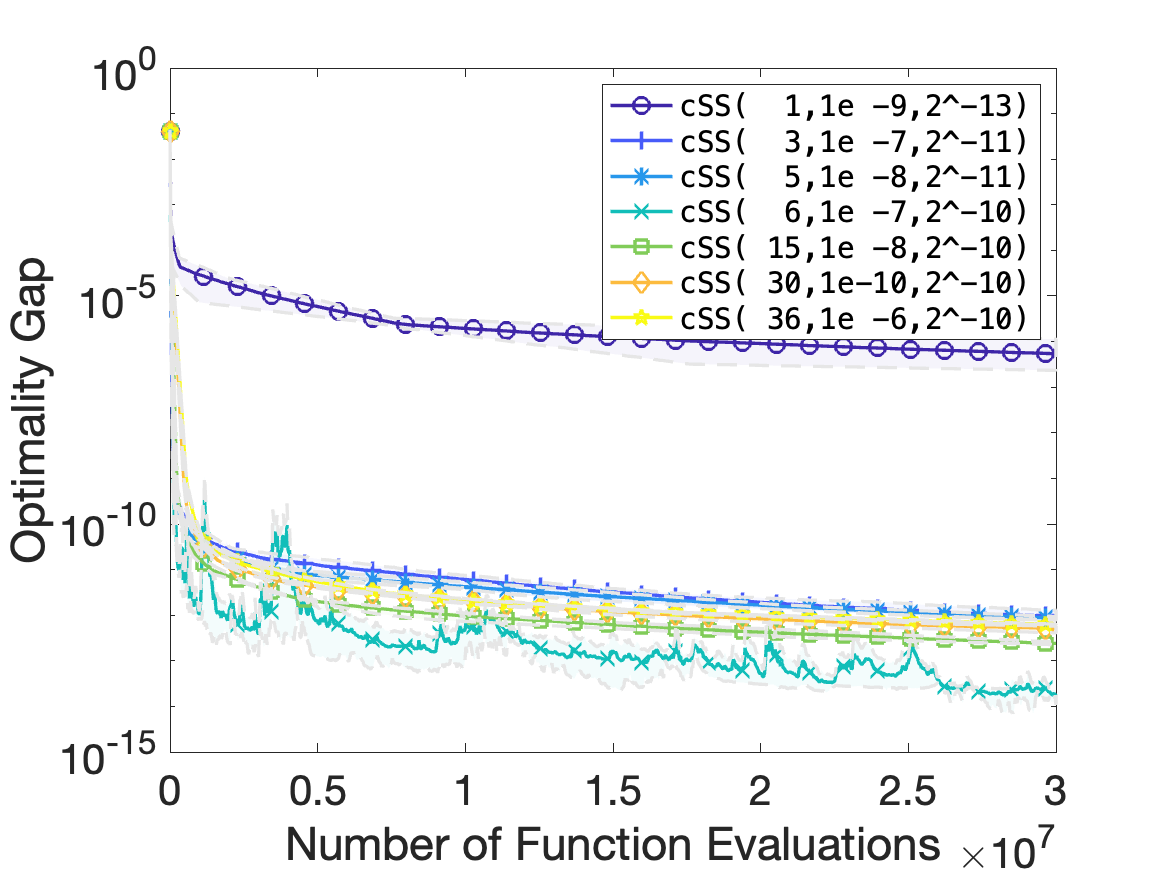}
  \caption{Performance of cSS}
\end{subfigure}%
\begin{subfigure}{0.33\textwidth}
  \centering
  \includegraphics[width=1.1\linewidth]{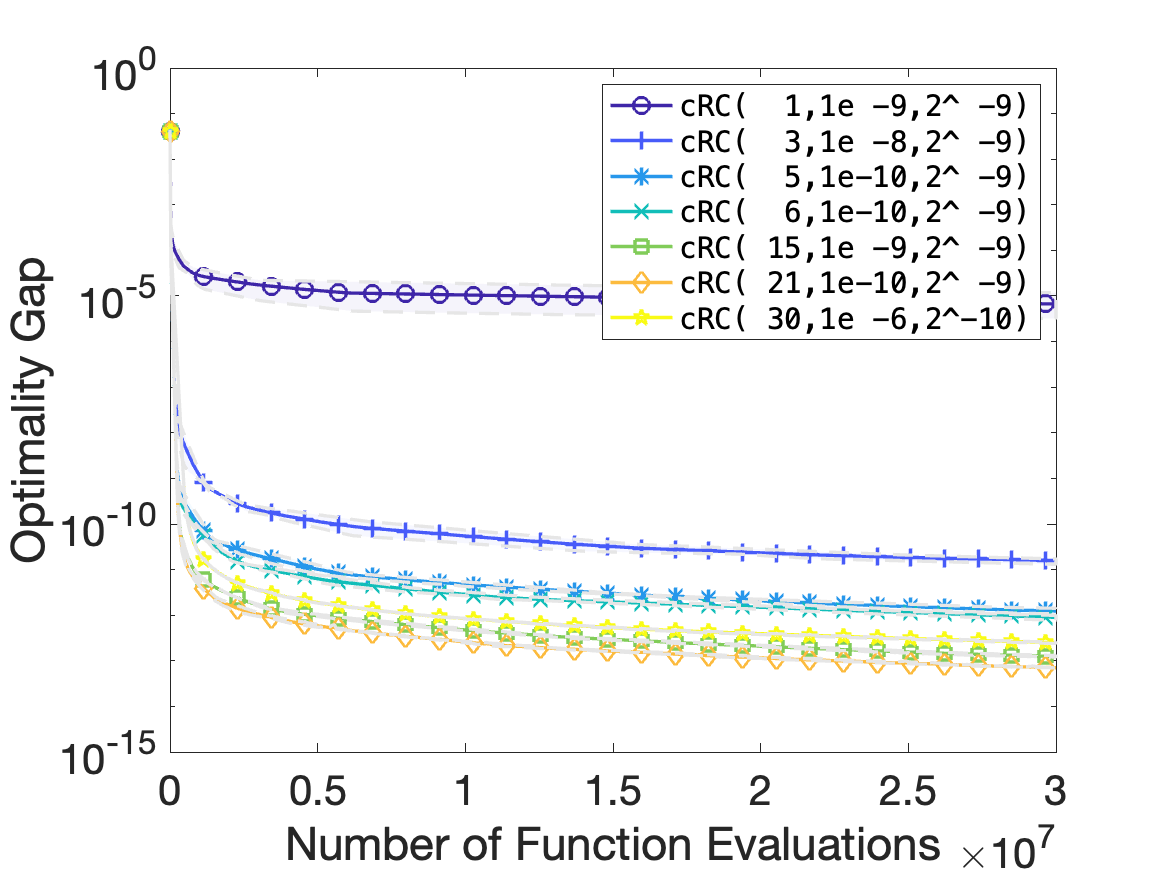}
  \caption{Performance of cRC}
\end{subfigure}
\caption{The effect of number of directions $N$ on the performance of different randomized gradient estimation methods on the Chebyquad function with relative error and $ \sigma = 10^{-5} $. The sampling radius $\nu$ and step size $\alpha$ are tuned for each method and $N$ combination to achieve the best performance.}
\end{figure}

\newpage
\paragraph{Chebyquad Function $(d = 30, p = 45)$ with Absolute Error, $\sigma = 10^{-5}$}

\begin{figure}[H]
\centering
\begin{subfigure}{0.33\textwidth}
  \centering
  \includegraphics[width=1.1\linewidth]{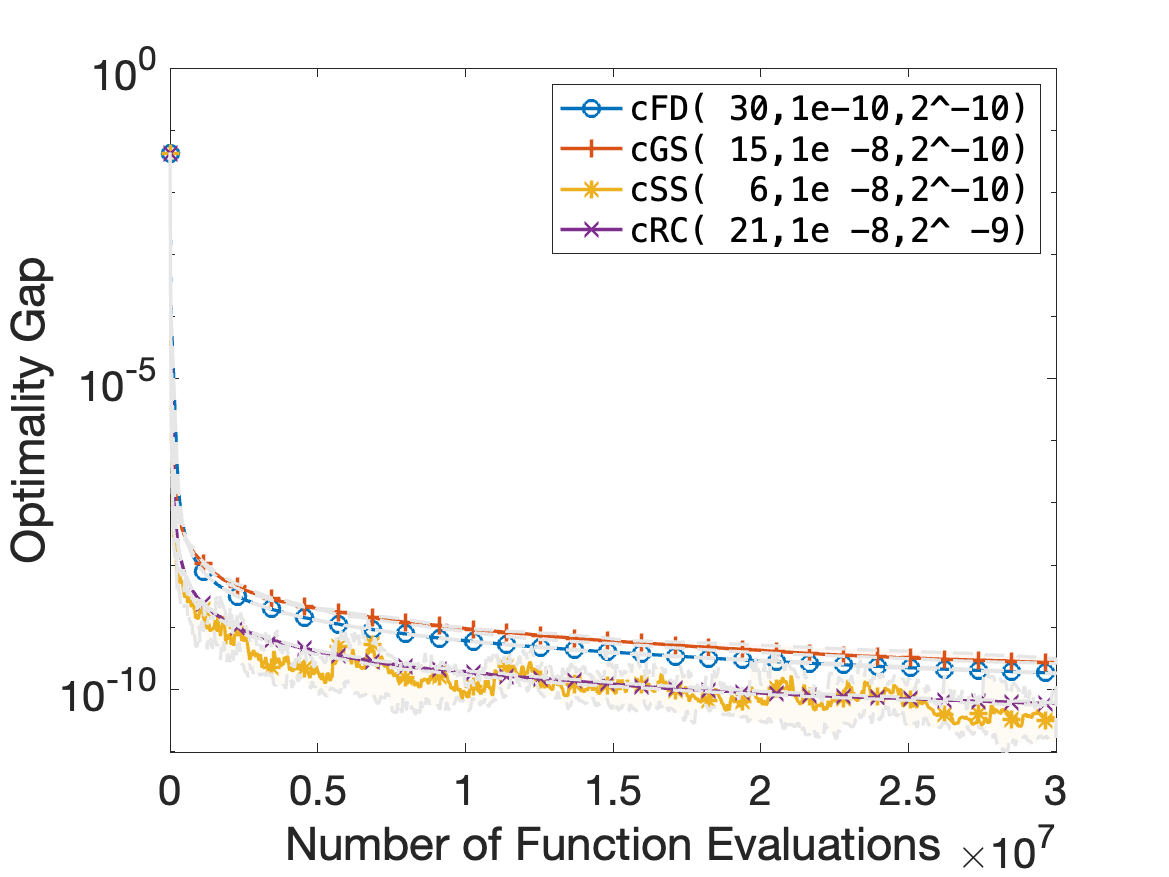}
  \caption{Optimality Gap}
\end{subfigure}%
\begin{subfigure}{0.33\textwidth}
  \centering
  \includegraphics[width=1.1\linewidth]{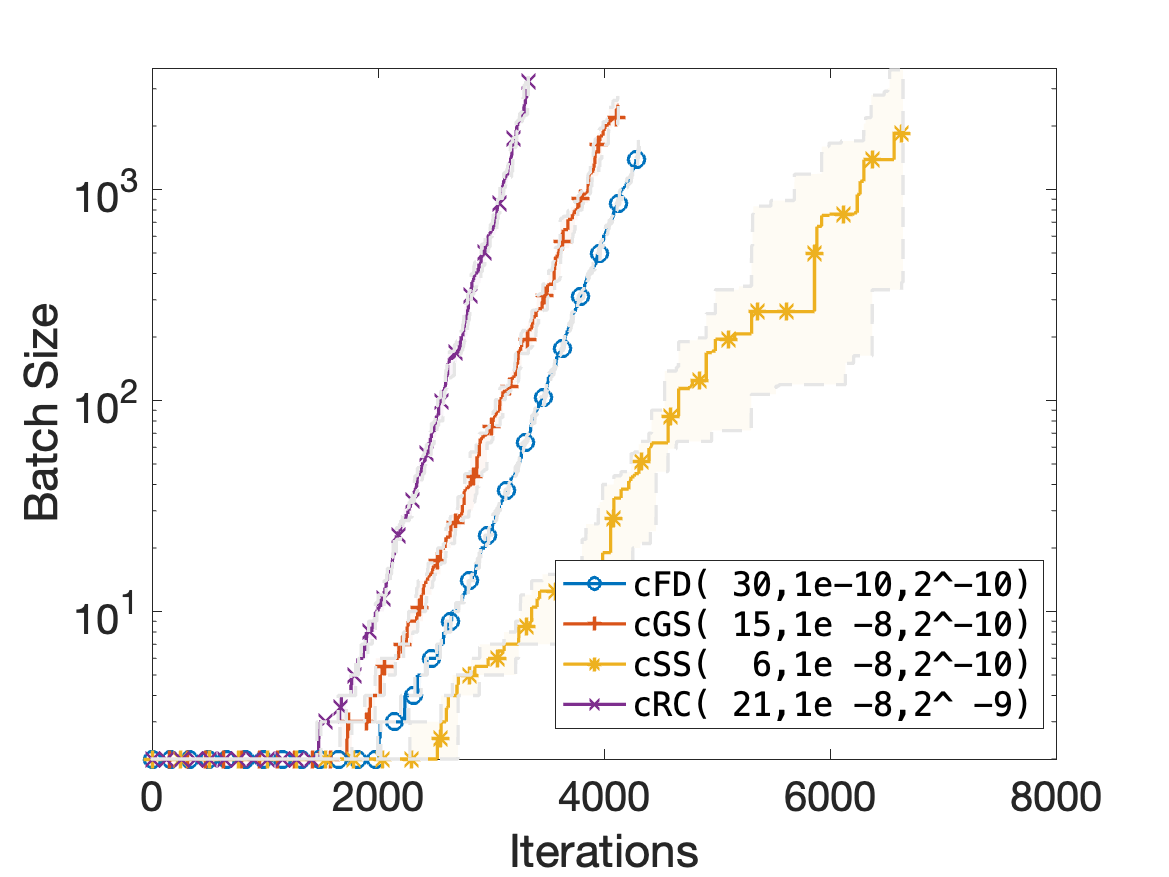}
  \caption{Batch Size}
\end{subfigure}
\begin{subfigure}{0.33\textwidth}
  \centering
  \includegraphics[width=1.1\linewidth]{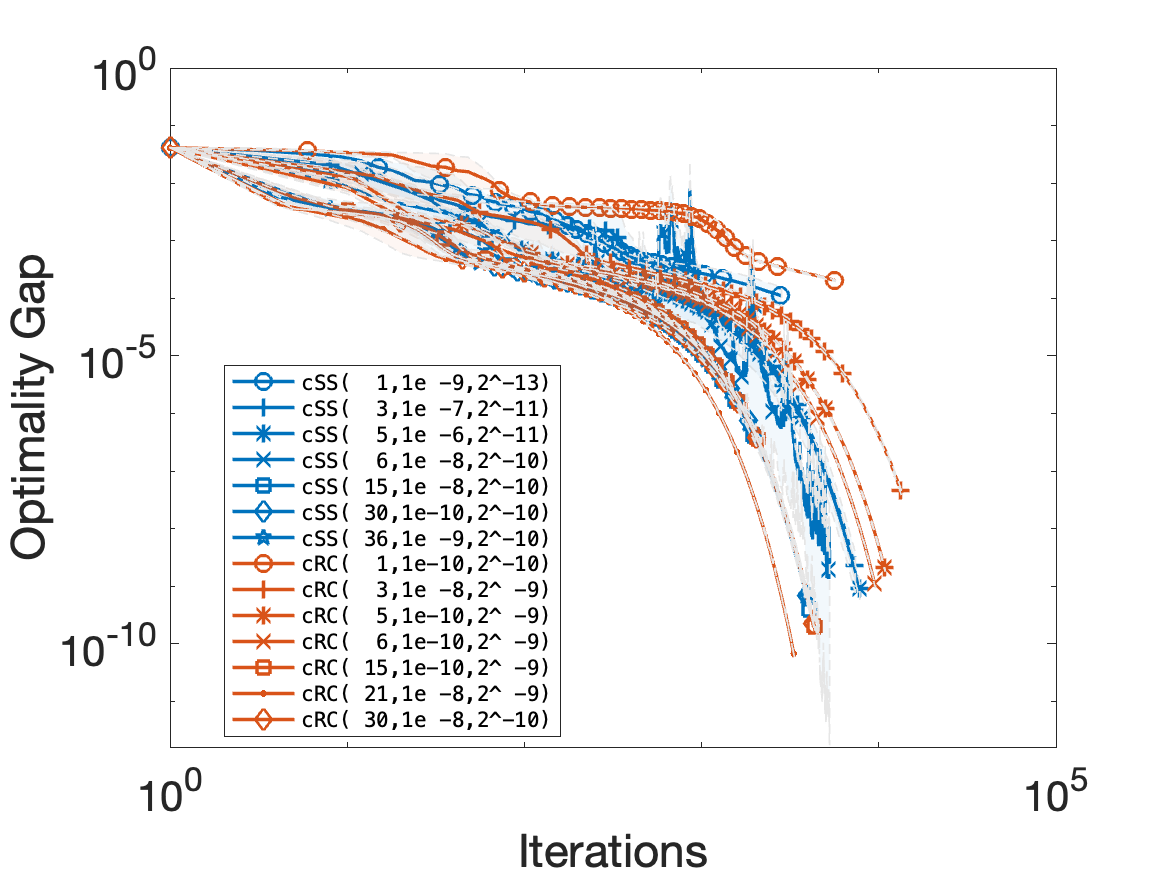}
  \caption{Comparison of cSS and cRC}
\end{subfigure}
\caption{Performance of different gradient estimation methods using the tuned hyperparameters on the Chebyquad function with absolute error and $ \sigma = 10^{-5} $.}
\end{figure}

\begin{figure}[H]
\centering
\begin{subfigure}{0.33\textwidth}
  \centering
  \includegraphics[width=1.1\linewidth]{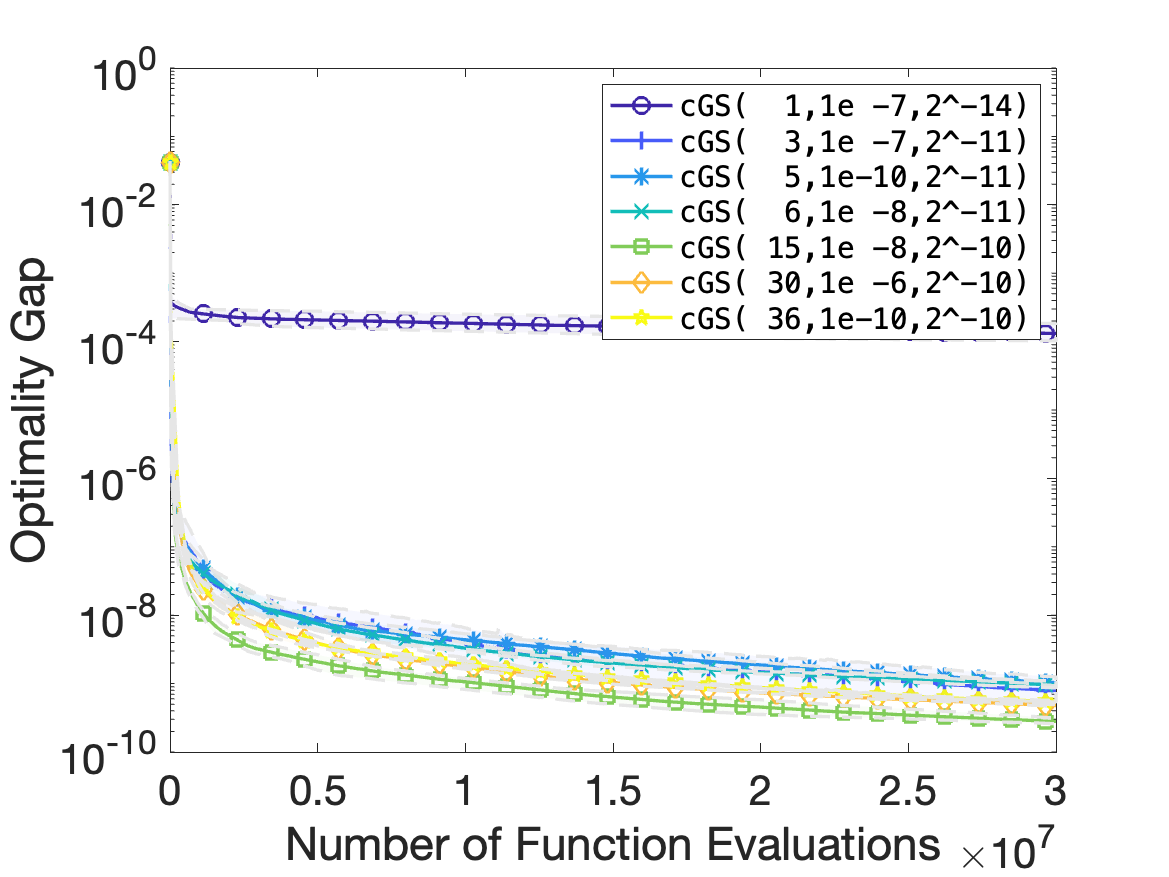}
  \caption{Performance of cGS}
\end{subfigure}%
\begin{subfigure}{0.33\textwidth}
  \centering
  \includegraphics[width=1.1\linewidth]{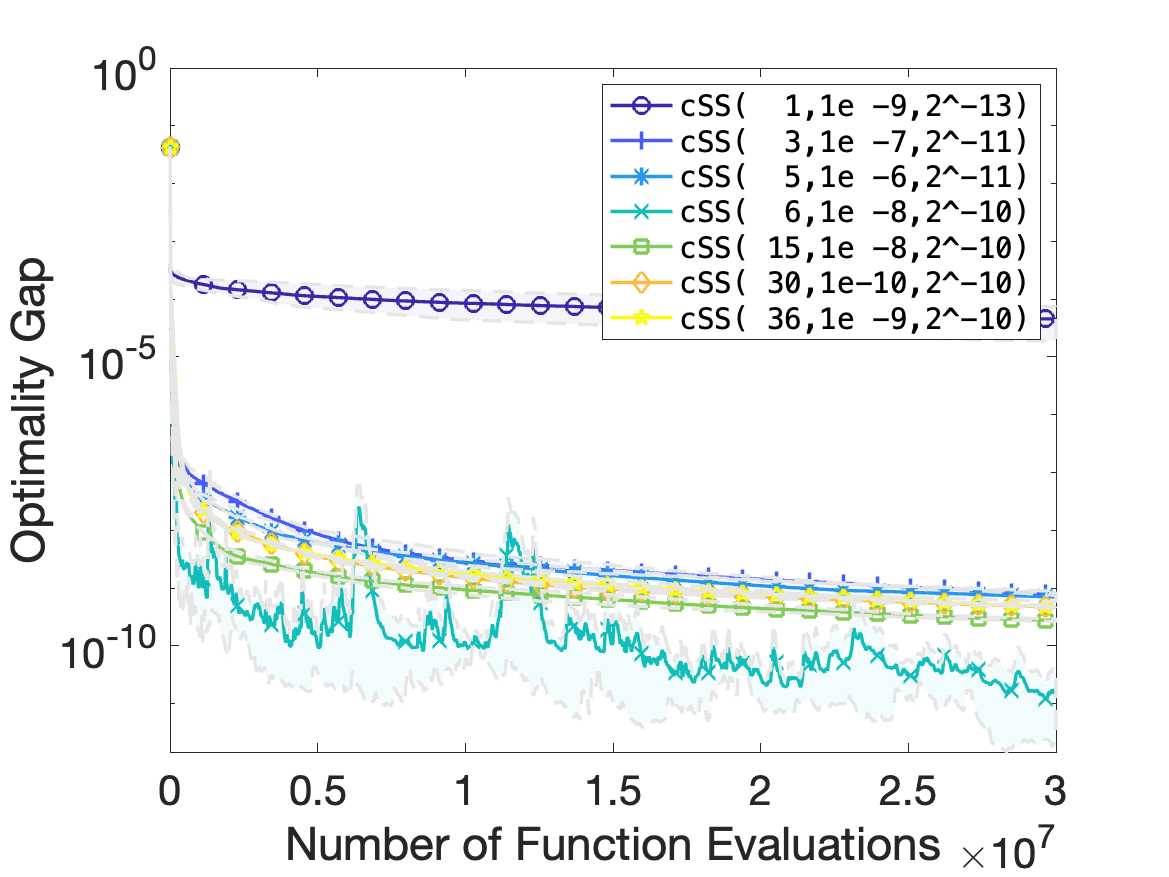}
  \caption{Performance of cSS}
\end{subfigure}
\begin{subfigure}{0.33\textwidth}
  \centering
  \includegraphics[width=1.1\linewidth]{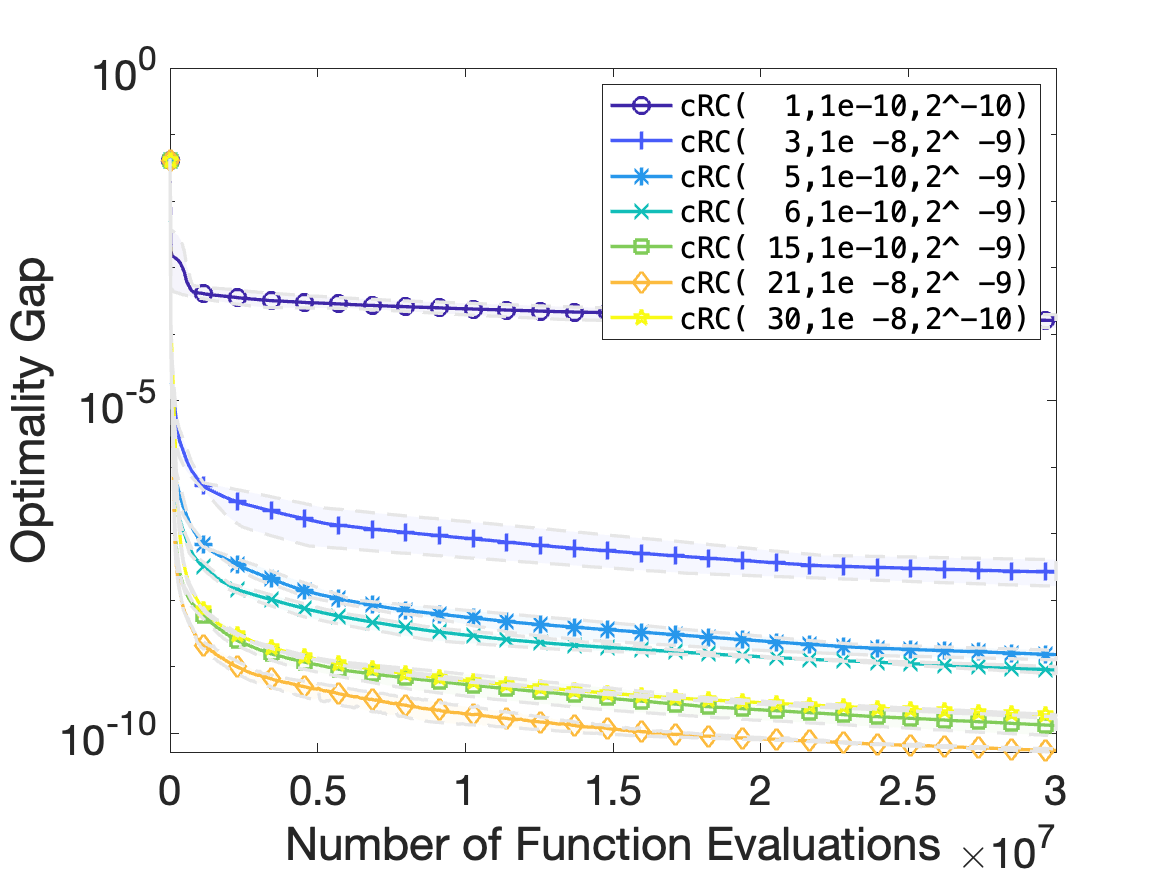}
  \caption{Performance of cRC}
\end{subfigure}
\caption{ The effect of number of directions $N$ on the performance of different randomized gradient estimation methods on the Chebyquad function with absolute error and $ \sigma = 10^{-5} $. The sampling radius $\nu$ and step size $\alpha$ are tuned for each method and $N$ combination to achieve the best performance.}
\end{figure}

\newpage
\paragraph{Osborne Function $(d = 11, p = 65)$ with Relative Error, $\sigma = 10^{-3}$}

\begin{figure}[H]
\centering
\begin{subfigure}{0.33\textwidth}
  \centering
  \includegraphics[width=1.1\linewidth]{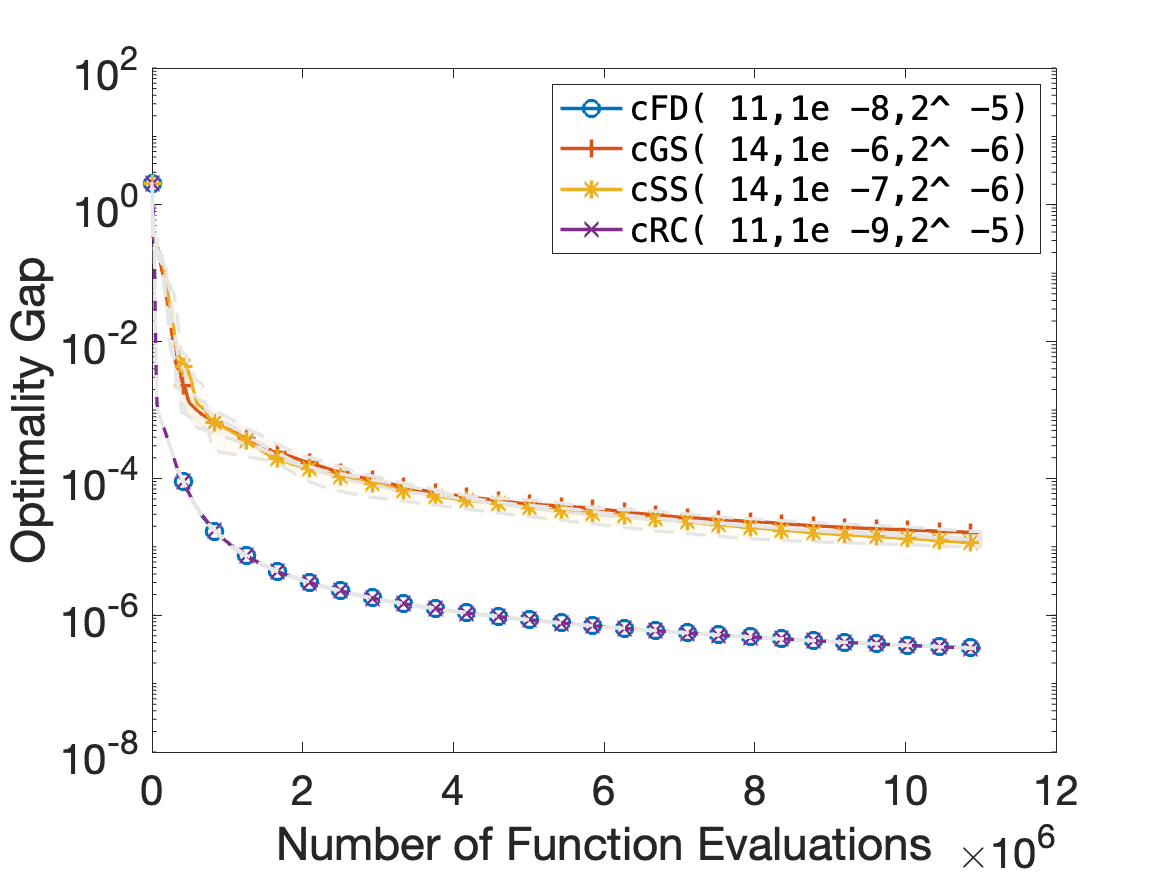}
  \caption{Optimality Gap}
\end{subfigure}%
\begin{subfigure}{0.33\textwidth}
  \centering
  \includegraphics[width=1.1\linewidth]{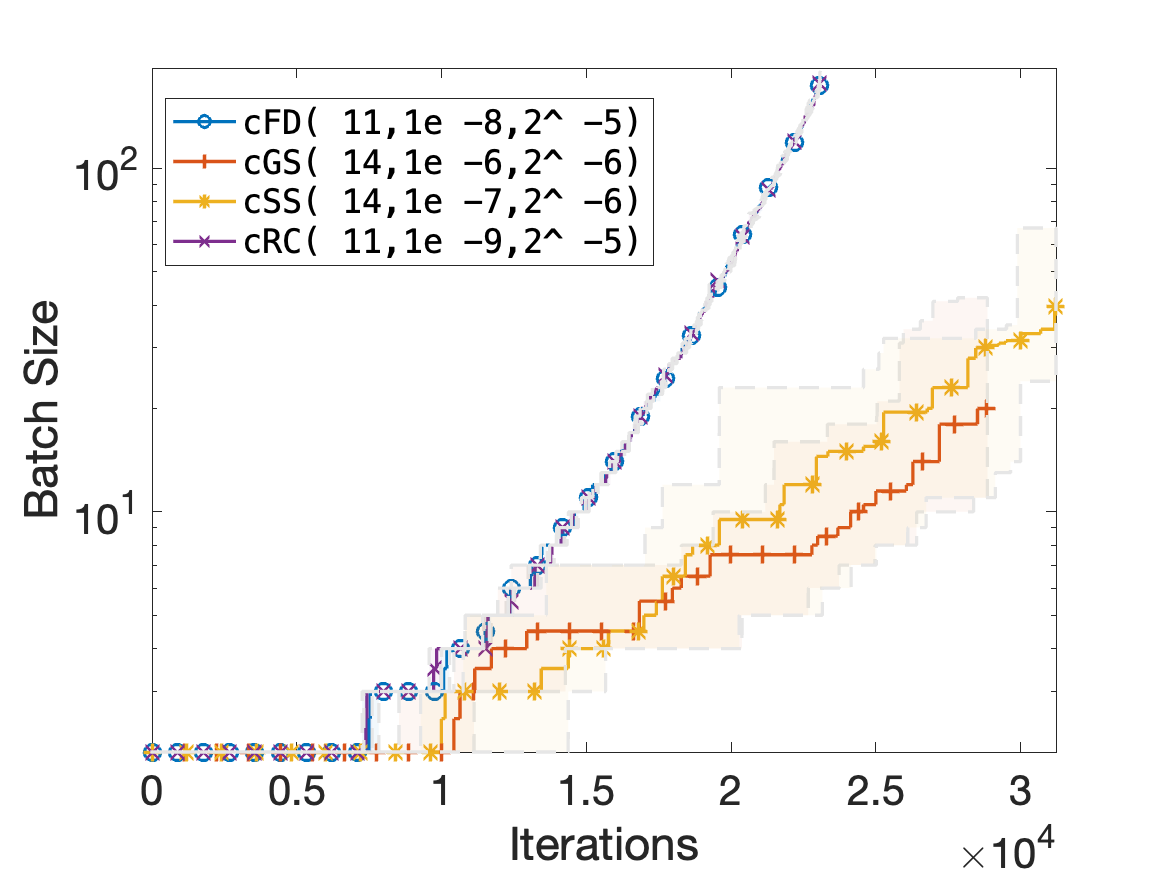}
  \caption{Batch Size}
\end{subfigure}
\begin{subfigure}{0.33\textwidth}
  \centering
  \includegraphics[width=1.1\linewidth]{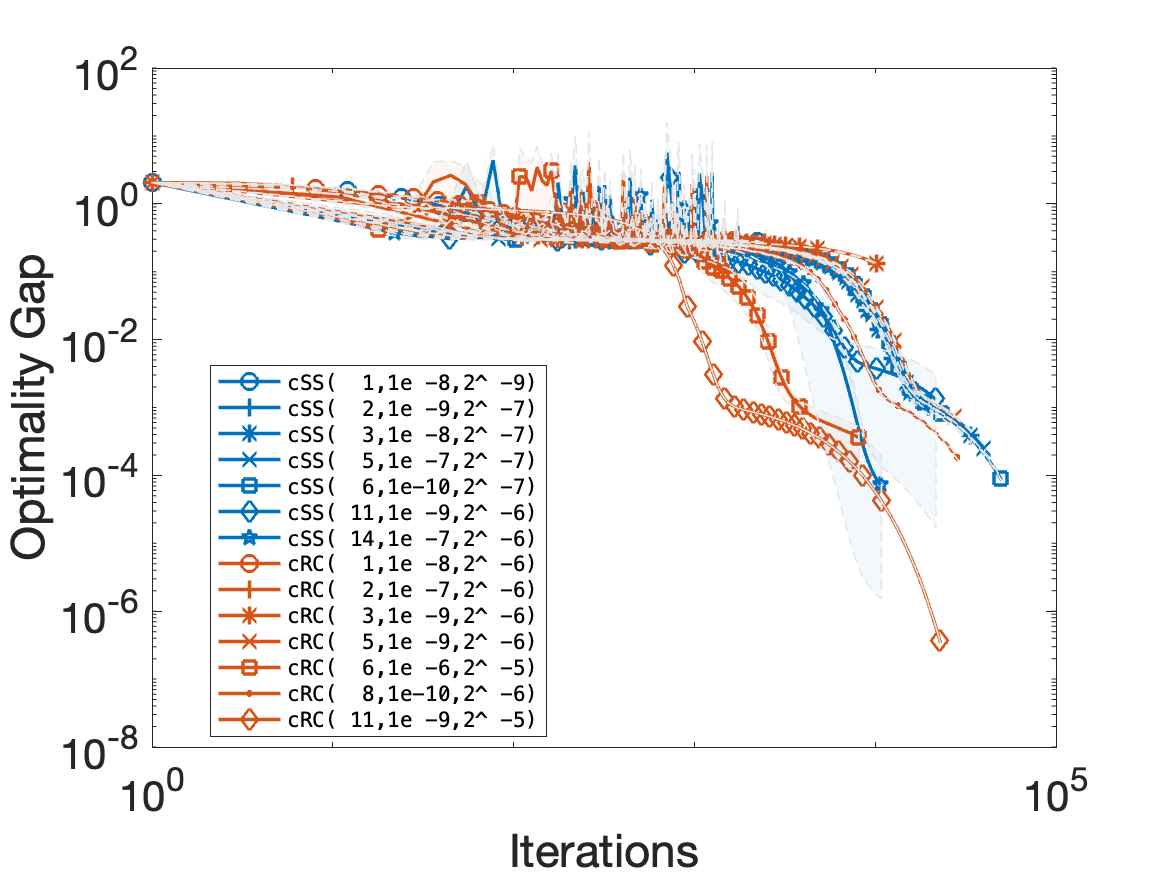}
  \caption{Comparison of cSS and cRC}
\end{subfigure}
\caption{Performance of different gradient estimation methods using the tuned hyperparameters on the Osborne function with relative error and $ \sigma = 10^{-3} $.}
\end{figure}

\begin{figure}[H]
\centering
\begin{subfigure}{0.33\textwidth}
  \centering
  \includegraphics[width=1.1\linewidth]{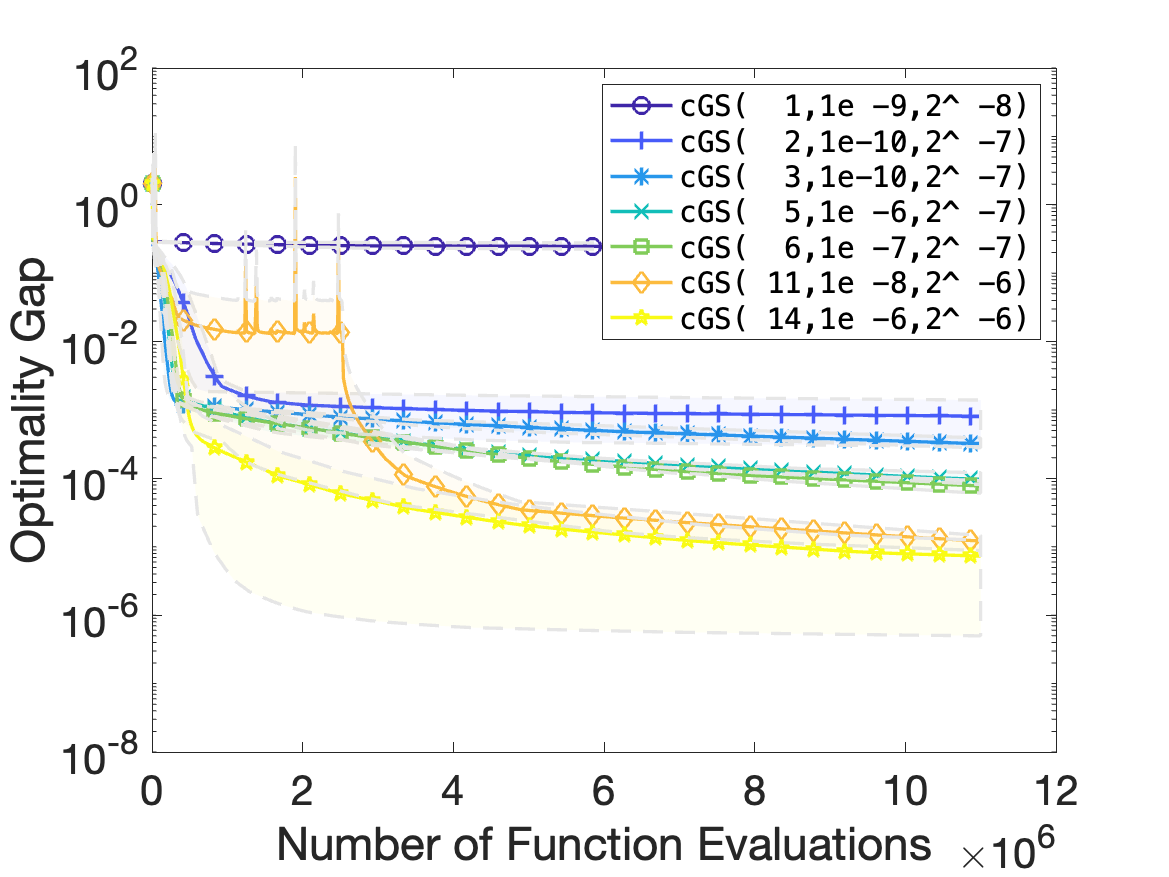}
  \caption{Performance of cGS}
\end{subfigure}%
\begin{subfigure}{0.33\textwidth}
  \centering
  \includegraphics[width=1.1\linewidth]{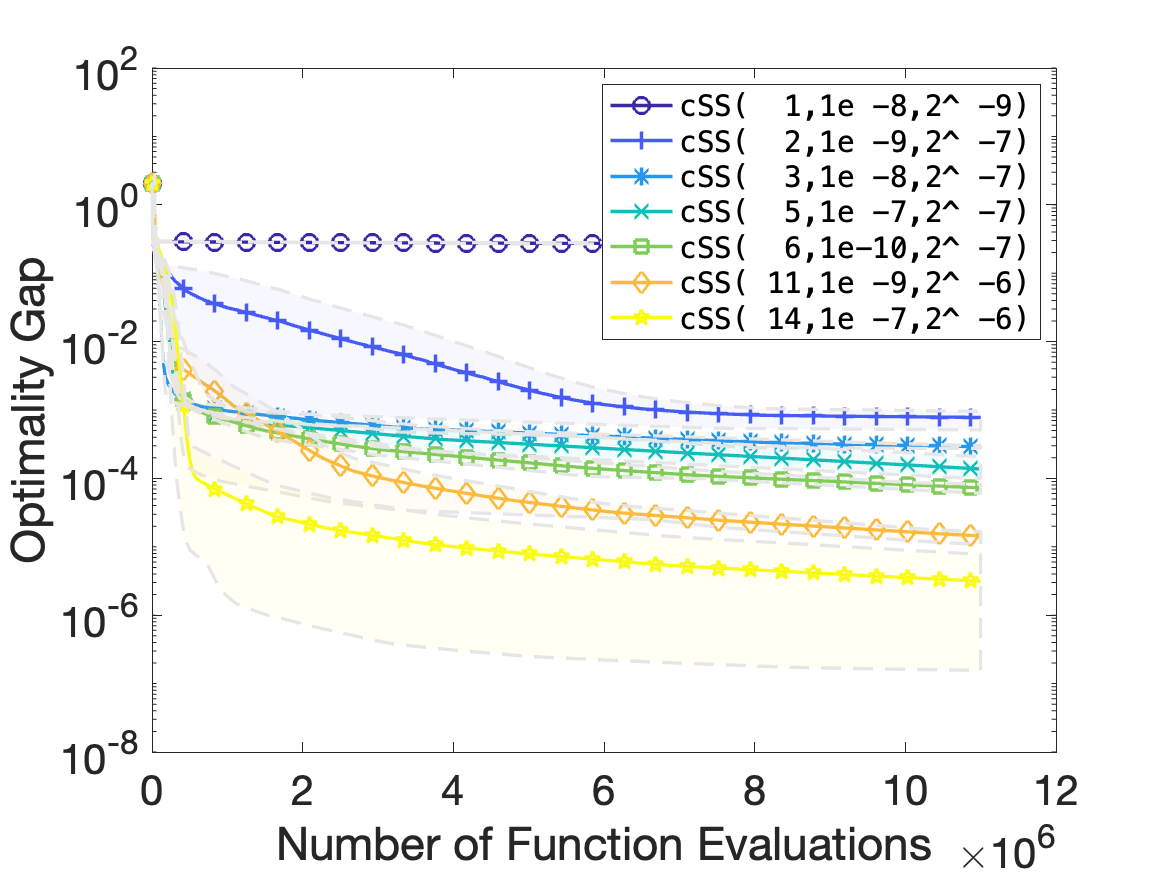}
  \caption{Performance of cSS}
\end{subfigure}%
\begin{subfigure}{0.33\textwidth}
  \centering
  \includegraphics[width=1.1\linewidth]{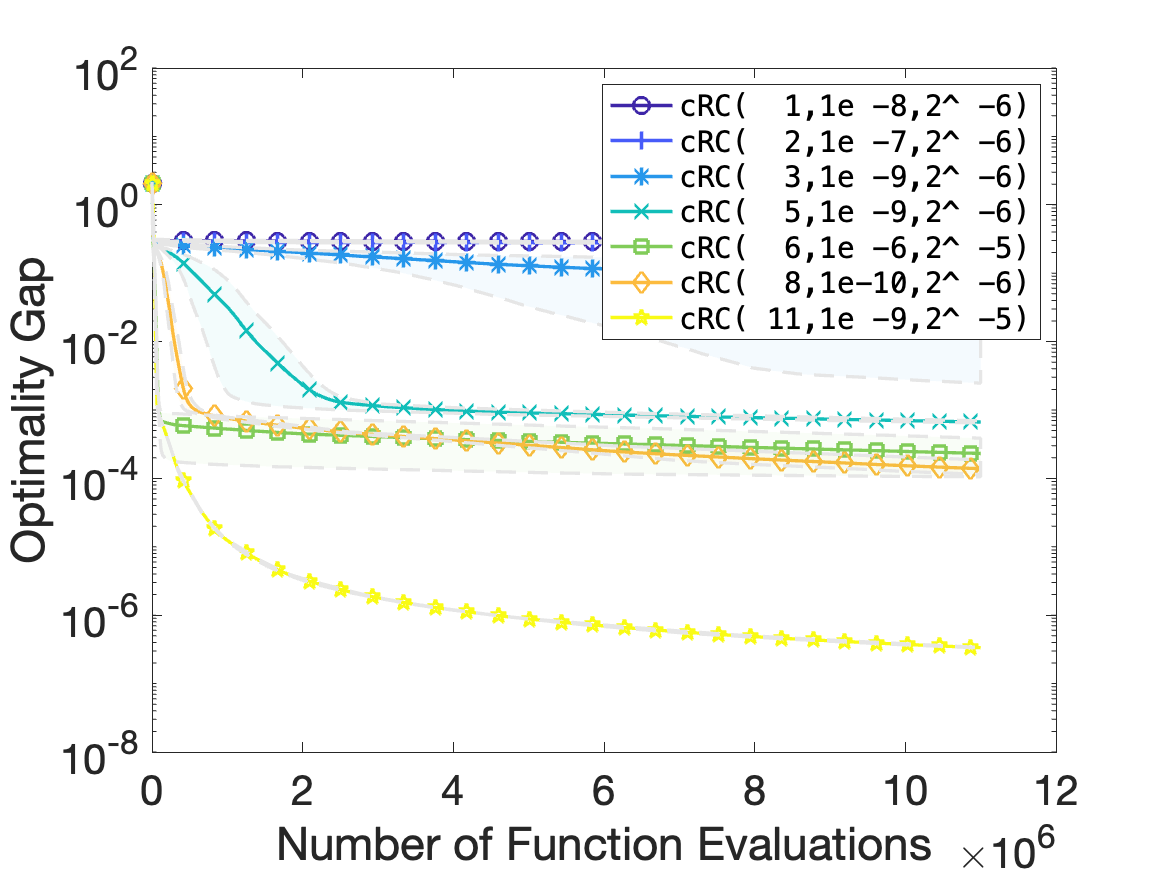}
  \caption{Performance of cRC}
\end{subfigure}
\caption{The effect of number of directions $N$ on the performance of different randomized gradient estimation methods on the Osborne function with relative error and $ \sigma = 10^{-3} $. The sampling radius $\nu$ and step size $\alpha$ are tuned for each method and $N$ combination to achieve the best performance.}
\end{figure}

\newpage
\paragraph{Osborne Function $(d = 11, p = 65)$ with Absolute Error, $\sigma = 10^{-3}$}

\begin{figure}[H]
\centering
\begin{subfigure}{0.33\textwidth}
  \centering
  \includegraphics[width=1.1\linewidth]{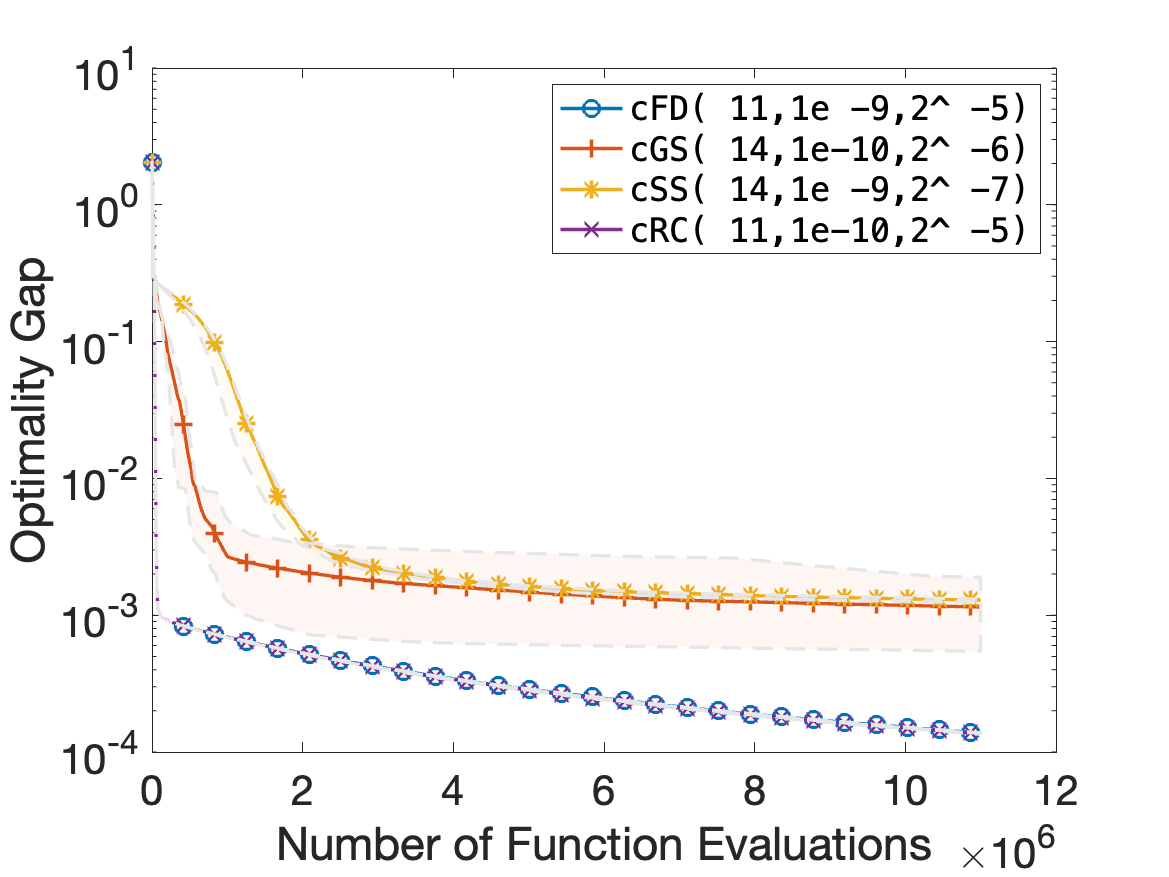}
  \caption{Optimality Gap}
\end{subfigure}%
\begin{subfigure}{0.33\textwidth}
  \centering
  \includegraphics[width=1.1\linewidth]{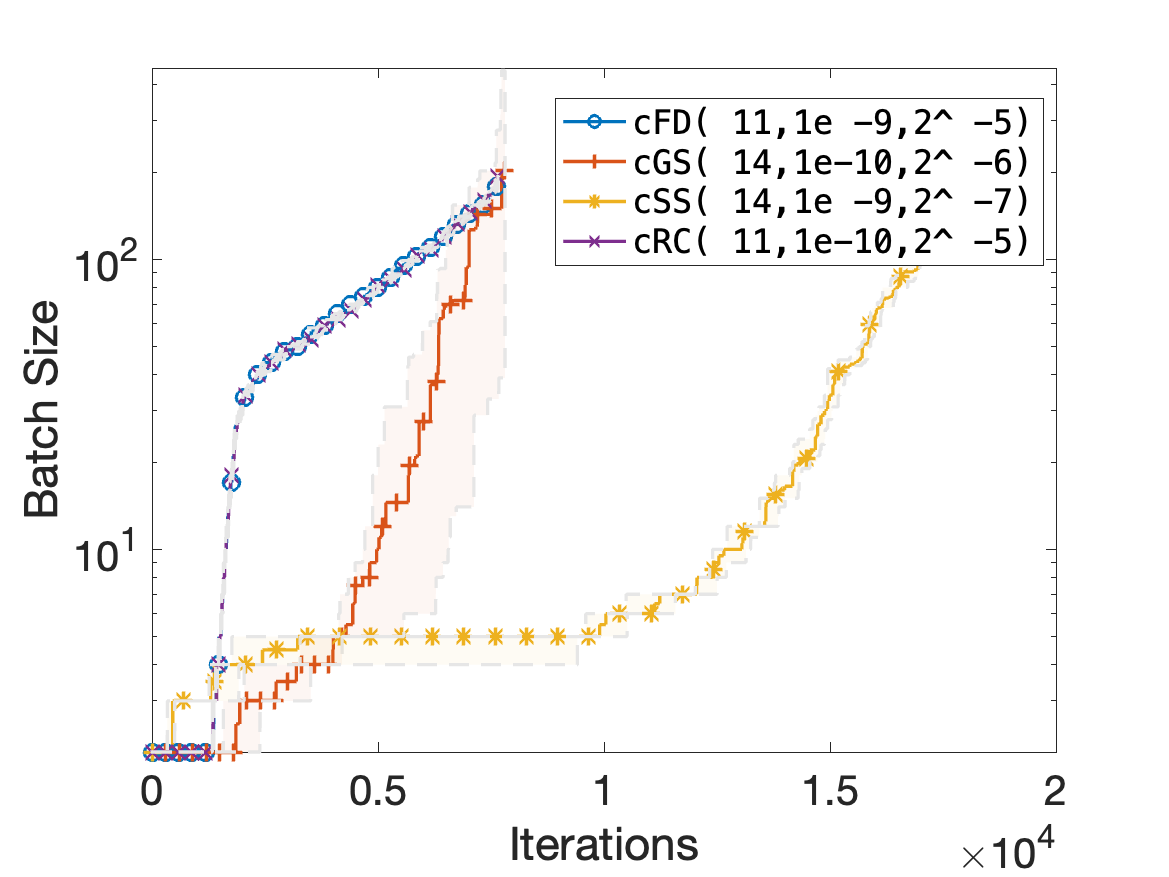}
  \caption{Batch Size}
\end{subfigure}
\begin{subfigure}{0.33\textwidth}
  \centering
  \includegraphics[width=1.1\linewidth]{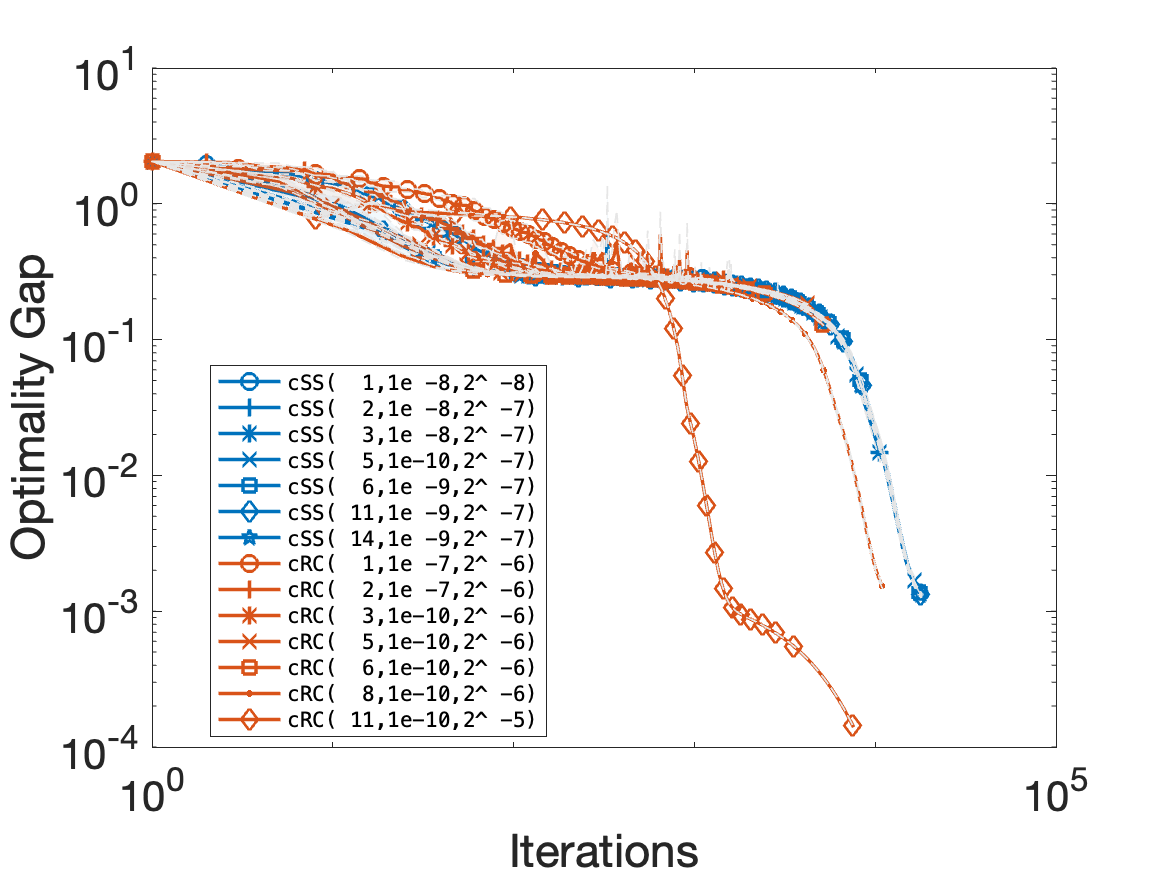}
  \caption{Comparison of cSS and cRC}
\end{subfigure}
\caption{Performance of different gradient estimation methods using the tuned hyperparameters on the Osborne function with absolute error and $ \sigma = 10^{-3} $.}
\end{figure}

\begin{figure}[H]
\centering
\begin{subfigure}{0.33\textwidth}
  \centering
  \includegraphics[width=1.1\linewidth]{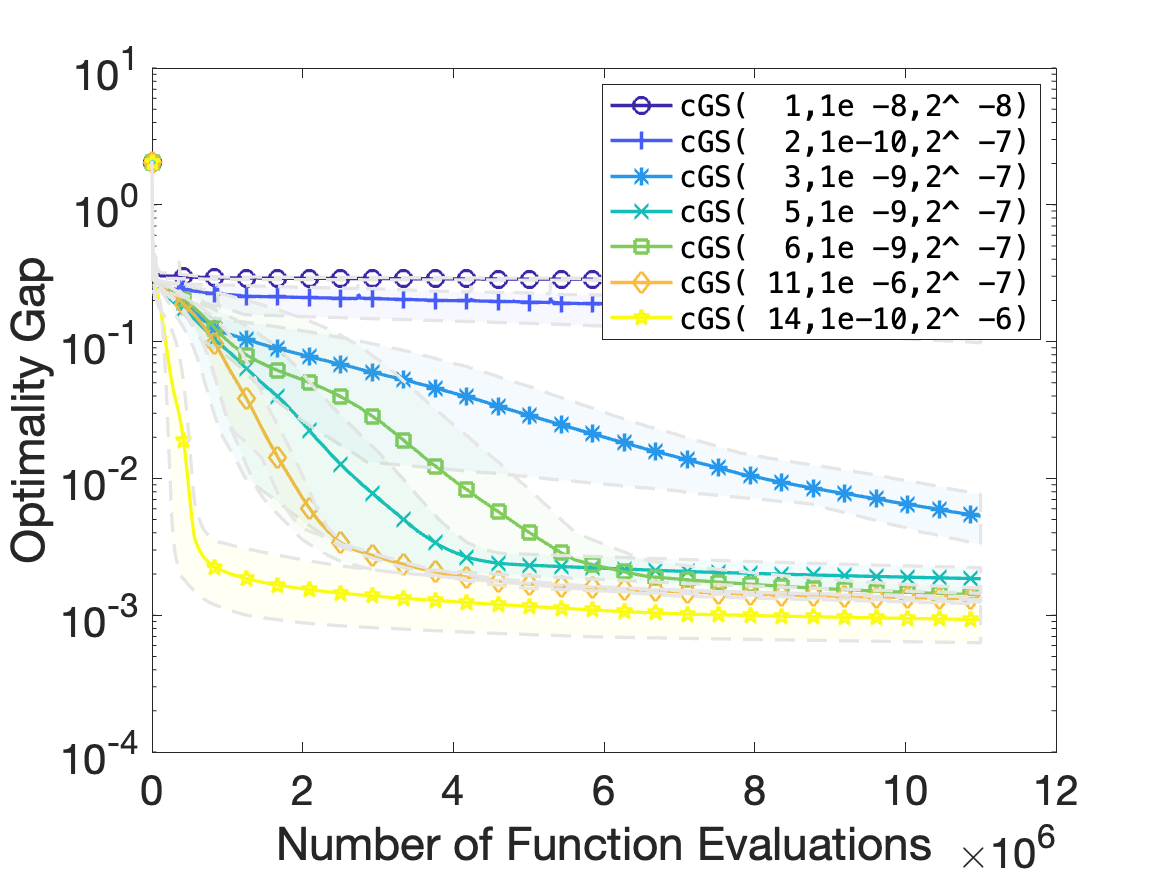}
  \caption{Performance of cGS}
\end{subfigure}%
\begin{subfigure}{0.33\textwidth}
  \centering
  \includegraphics[width=1.1\linewidth]{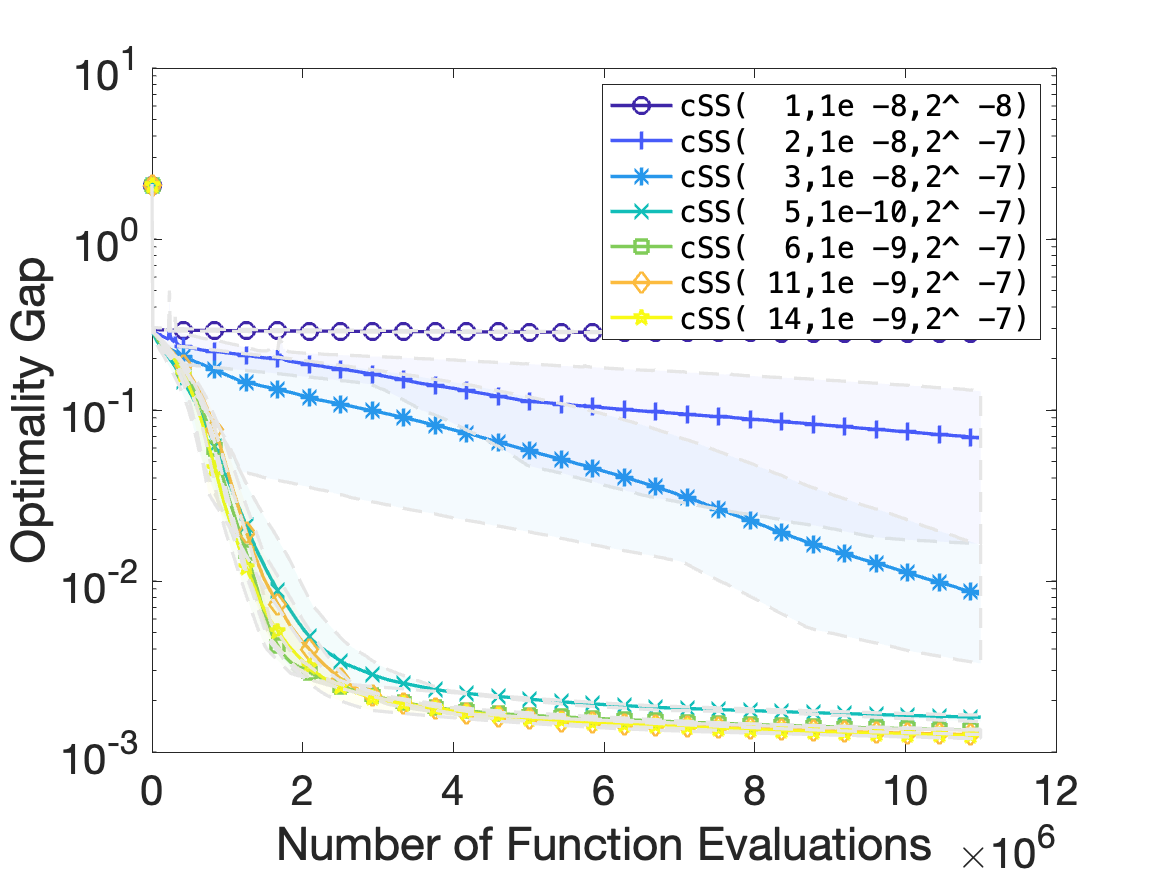}
  \caption{Performance of cSS}
\end{subfigure}%
\begin{subfigure}{0.33\textwidth}
  \centering
  \includegraphics[width=1.1\linewidth]{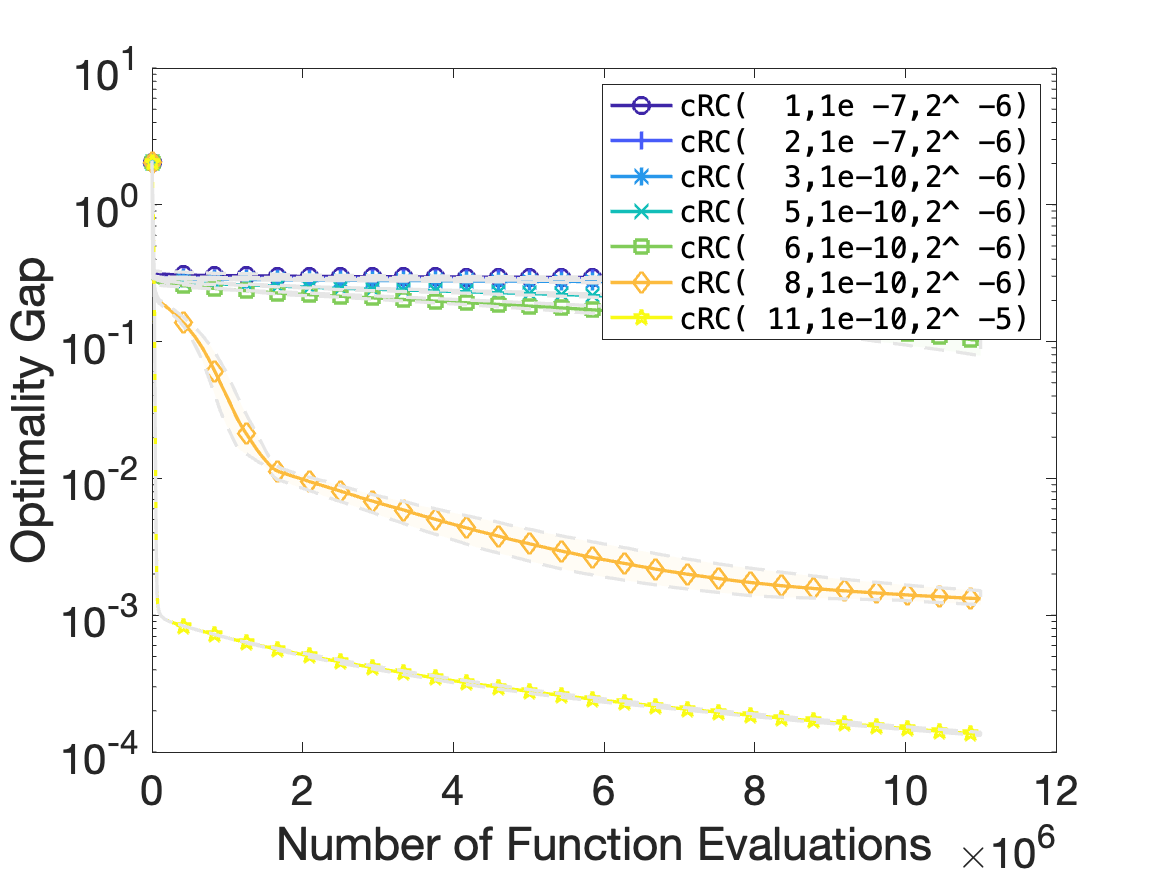}
  \caption{Performance of cRC}
\end{subfigure}
\caption{The effect of number of directions $N$ on the performance of different randomized gradient estimation methods on the Osborne function with absolute error and $ \sigma = 10^{-3} $. The sampling radius $\nu$ and step size $\alpha$ are tuned for each method and $N$ combination to achieve the best performance.}
\end{figure}

\newpage
\paragraph{Osborne Function $(d = 11, p = 65)$ with Relative Error, $\sigma = 10^{-5}$}

\begin{figure}[H]
\centering
\begin{subfigure}{0.33\textwidth}
  \centering
  \includegraphics[width=1.1\linewidth]{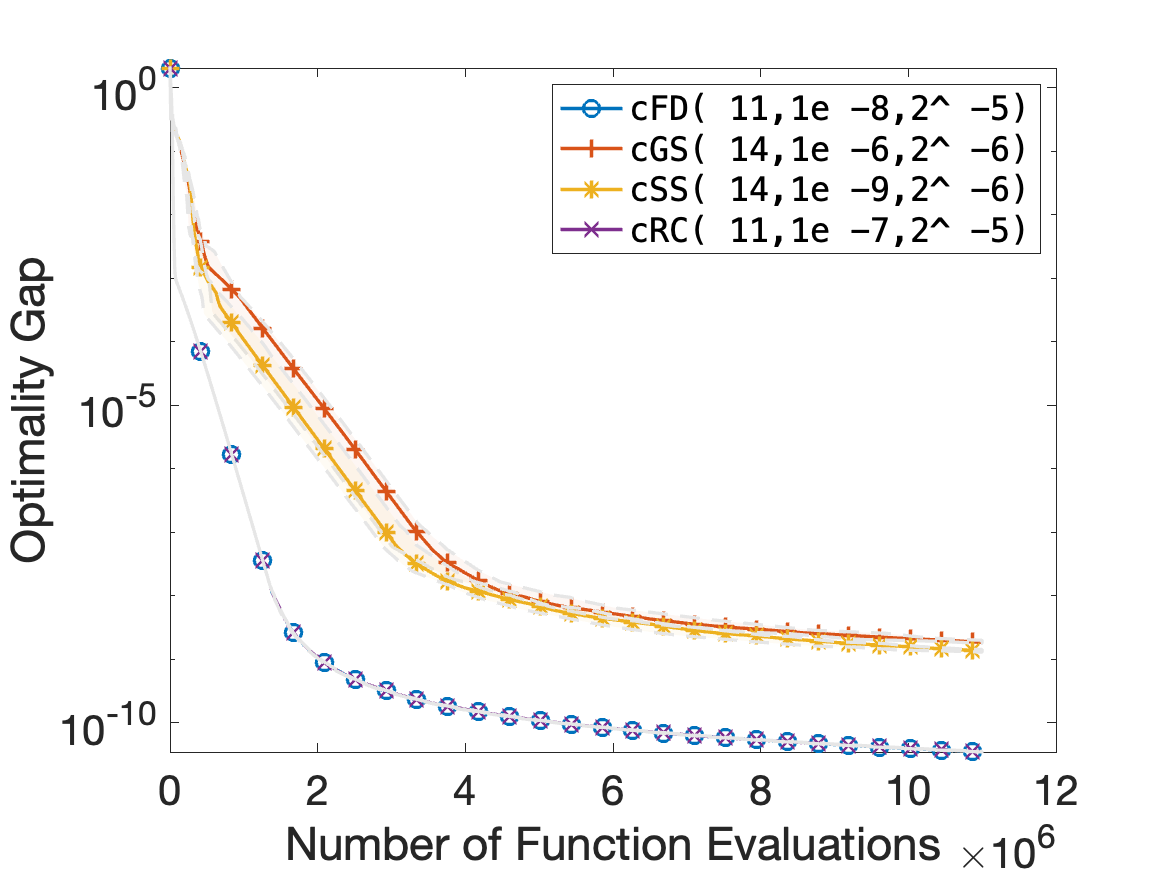}
  \caption{Optimality Gap}
\end{subfigure}%
\begin{subfigure}{0.33\textwidth}
  \centering
  \includegraphics[width=1.1\linewidth]{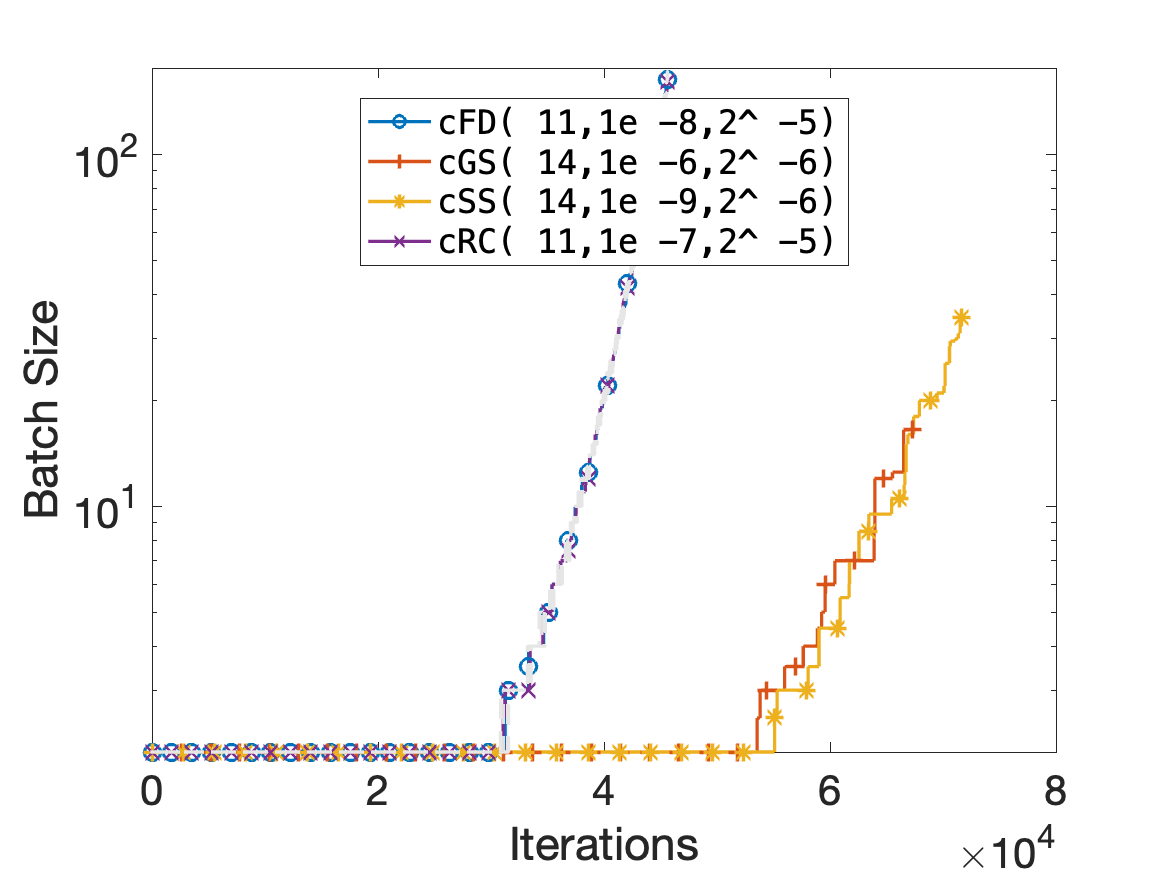}
  \caption{Batch Size}
\end{subfigure}
\begin{subfigure}{0.33\textwidth}
  \centering
  \includegraphics[width=1.1\linewidth]{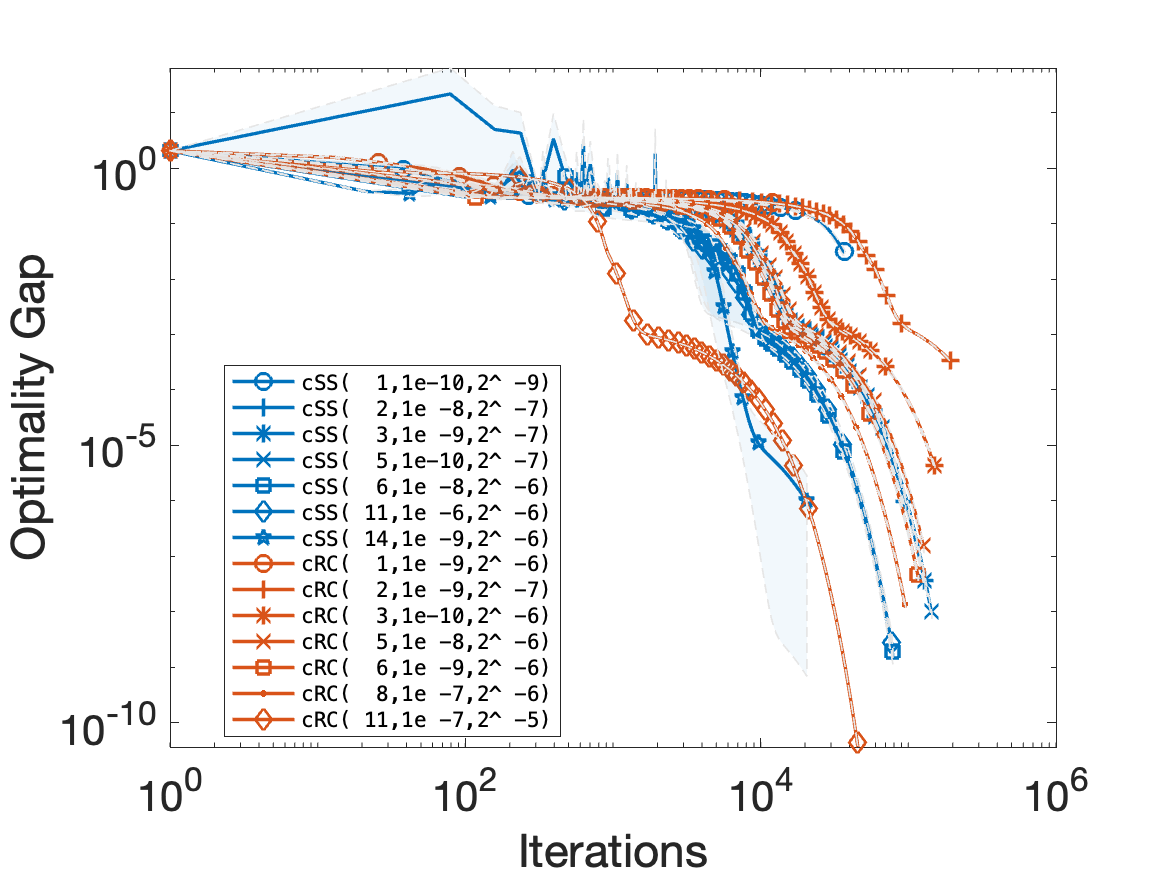}
  \caption{Comparison of cSS and cRC}
\end{subfigure}
\caption{Performance of different gradient estimation methods using the tuned hyperparameters on the Osborne function with relative error and $ \sigma = 10^{-5} $.}
\end{figure}

\begin{figure}[H]
\centering
\begin{subfigure}{0.33\textwidth}
  \centering
  \includegraphics[width=1.1\linewidth]{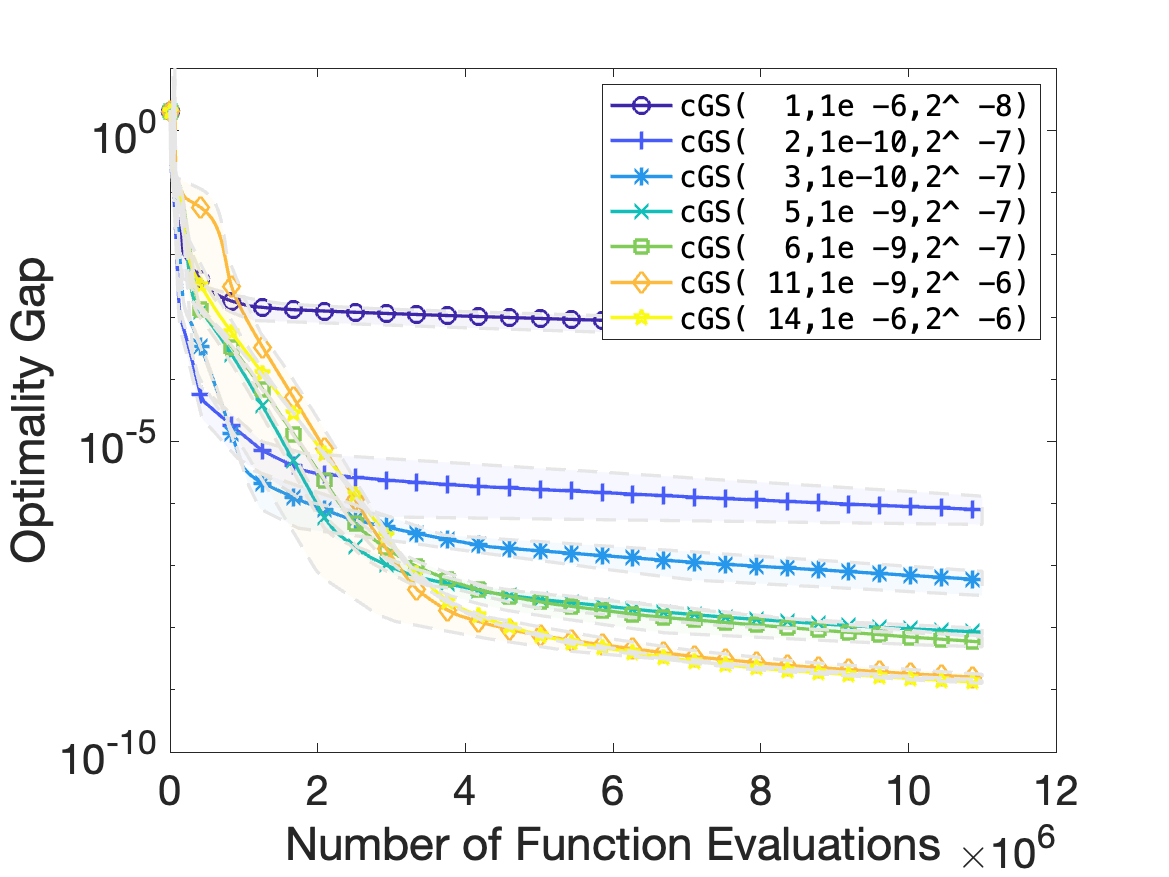}
  \caption{Performance of cGS}
\end{subfigure}%
\begin{subfigure}{0.33\textwidth}
  \centering
  \includegraphics[width=1.1\linewidth]{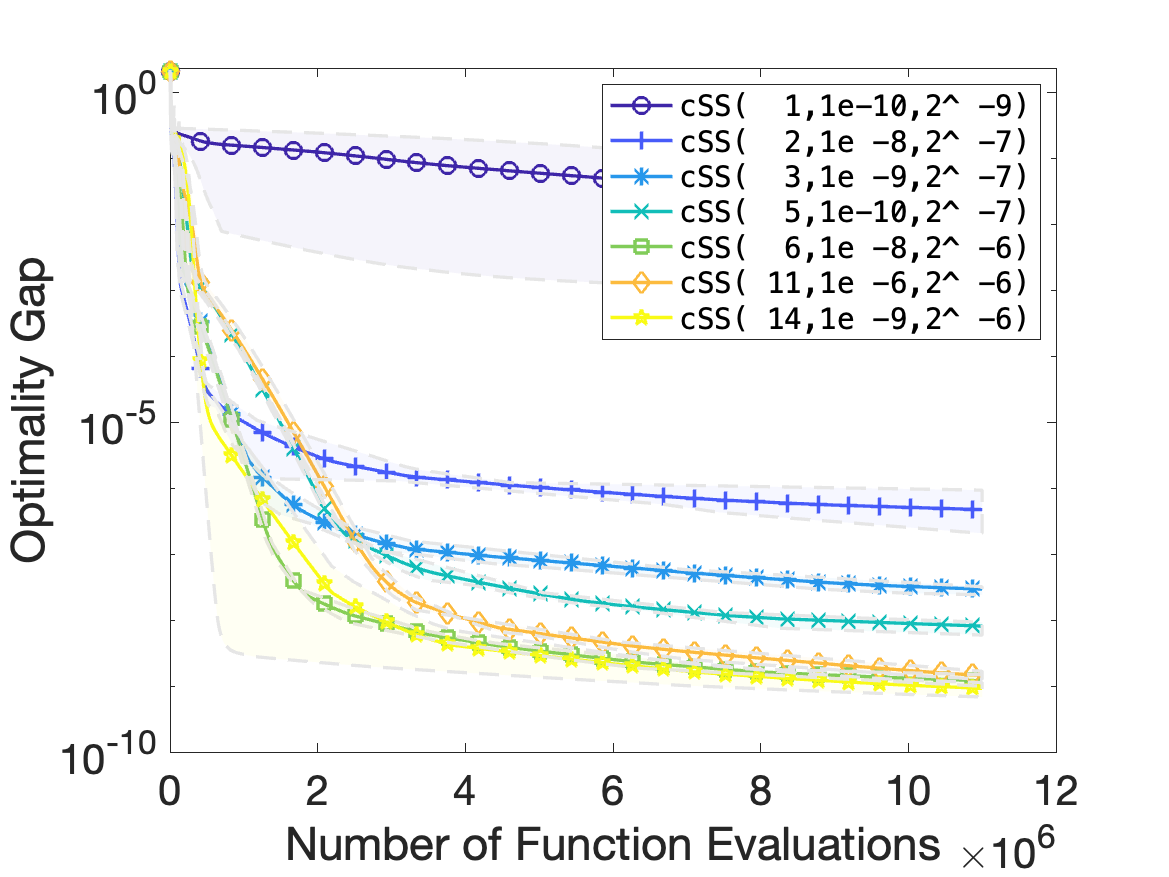}
  \caption{Performance of cSS}
\end{subfigure}%
\begin{subfigure}{0.33\textwidth}
  \centering
  \includegraphics[width=1.1\linewidth]{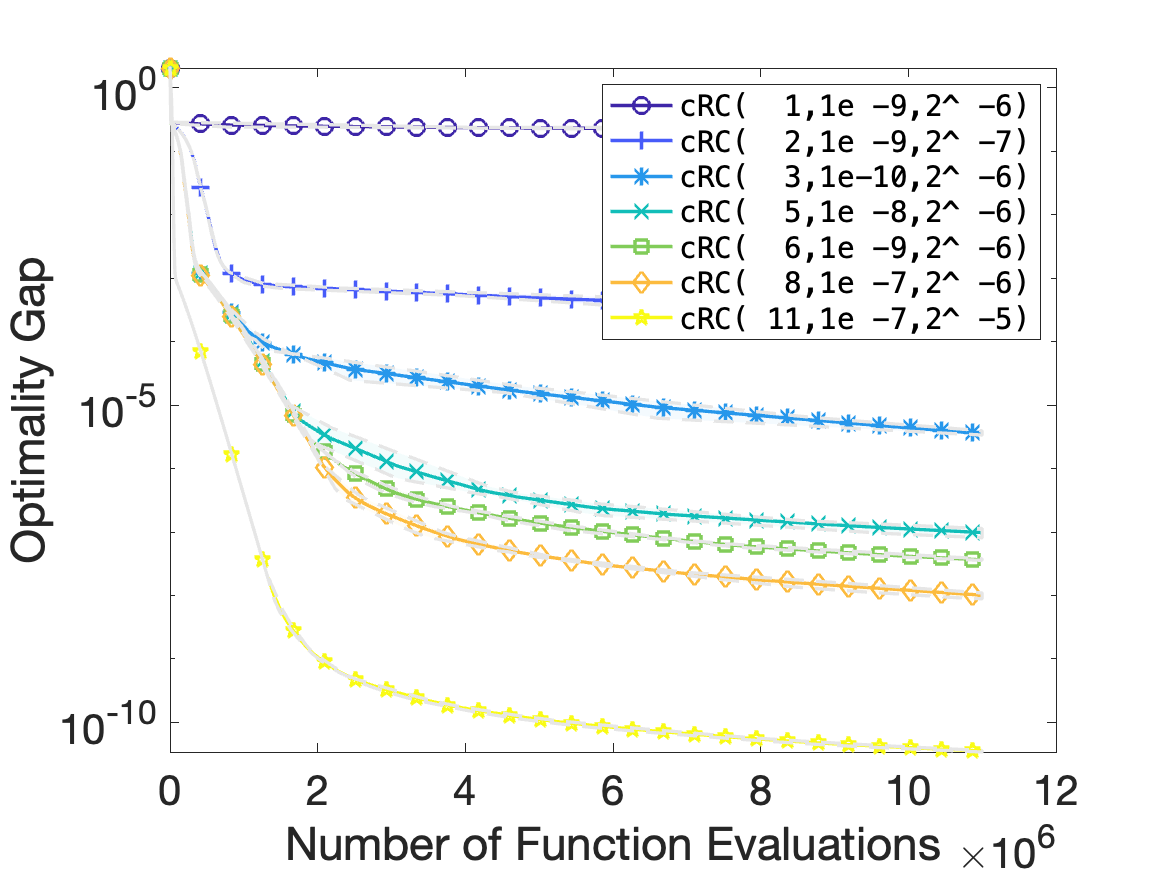}
  \caption{Performance of cRC}
\end{subfigure}
\caption{The effect of number of directions $N$ on the performance of different randomized gradient estimation methods on the Osborne function with relative error and $ \sigma = 10^{-5} $. The sampling radius $\nu$ and step size $\alpha$ are tuned for each method and $N$ combination to achieve the best performance.}
\end{figure}

\newpage
\paragraph{Osborne Function $(d = 11, p = 65)$ with Absolute Error, $\sigma = 10^{-5}$}

\begin{figure}[H]
\centering
\begin{subfigure}{0.33\textwidth}
  \centering
  \includegraphics[width=1.1\linewidth]{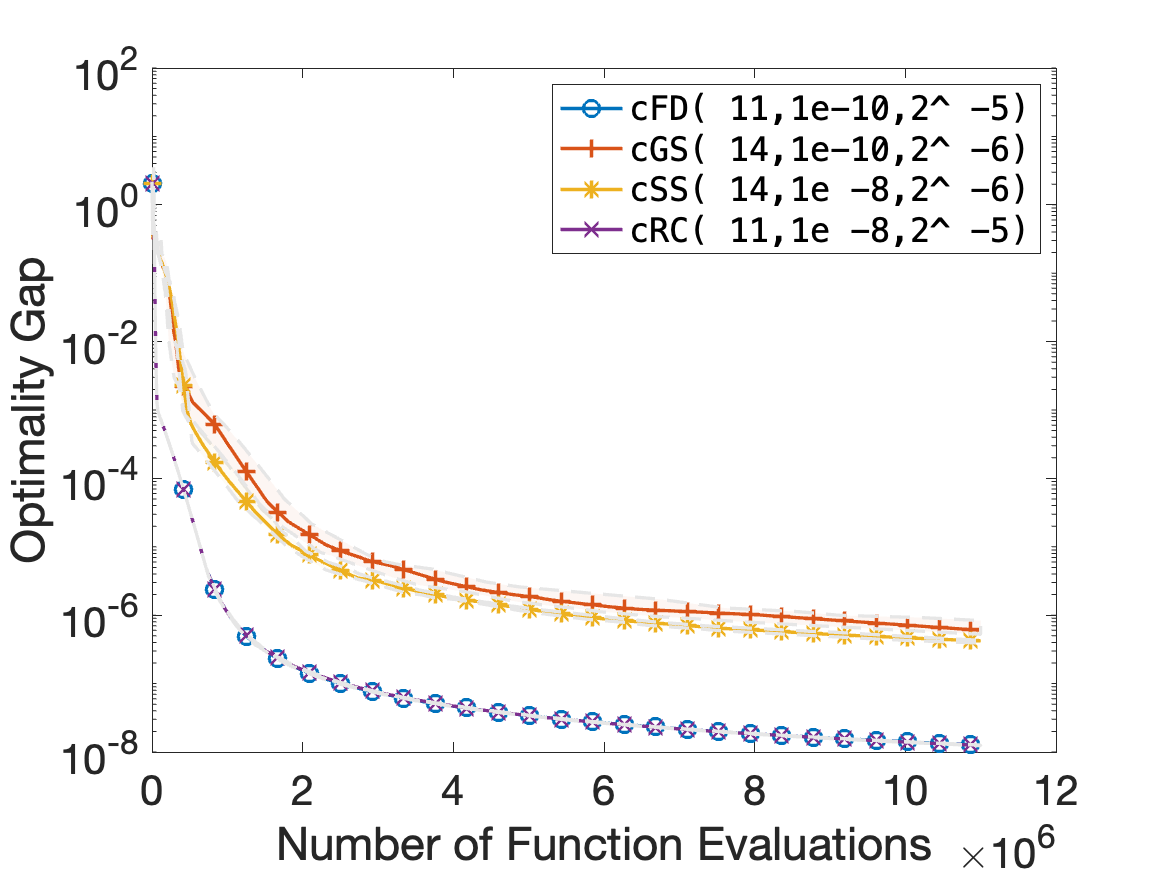}
  \caption{Optimality Gap}
\end{subfigure}%
\begin{subfigure}{0.33\textwidth}
  \centering
  \includegraphics[width=1.1\linewidth]{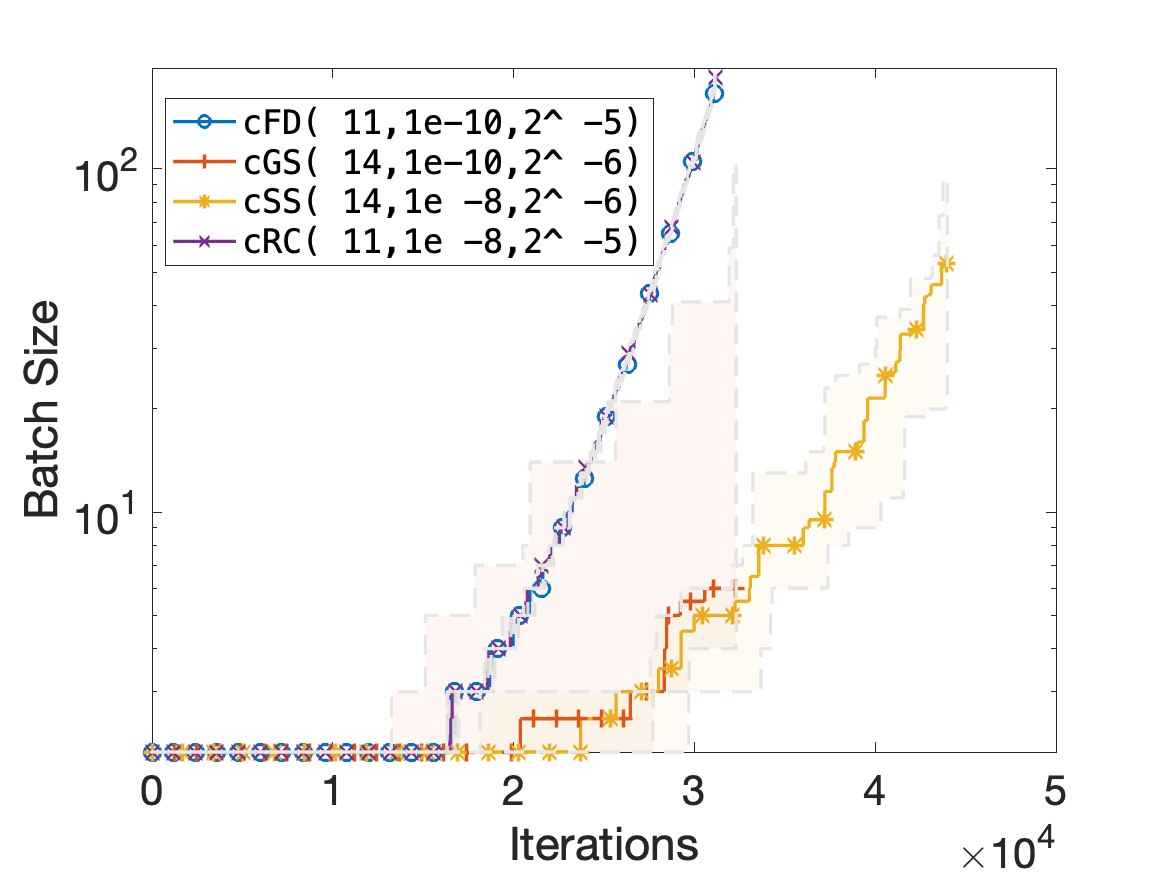}
  \caption{Batch Size}
\end{subfigure}
\begin{subfigure}{0.33\textwidth}
  \centering
  \includegraphics[width=1.1\linewidth]{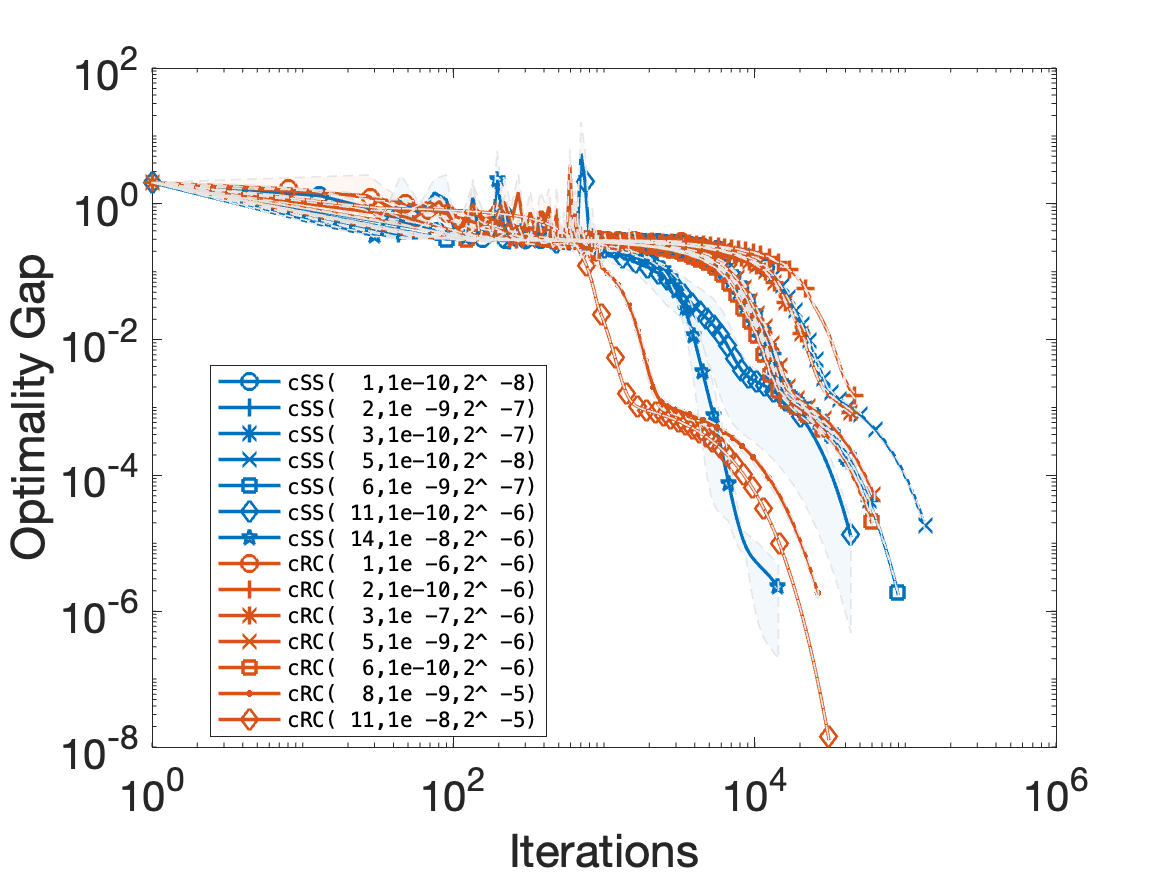}
  \caption{Comparison of cSS and cRC}
\end{subfigure}
\caption{Performance of different gradient estimation methods using the tuned hyperparameters on the Osborne function with absolute error and $ \sigma = 10^{-5} $.}
\end{figure}

\begin{figure}[H]
\centering
\begin{subfigure}{0.33\textwidth}
  \centering
  \includegraphics[width=1.1\linewidth]{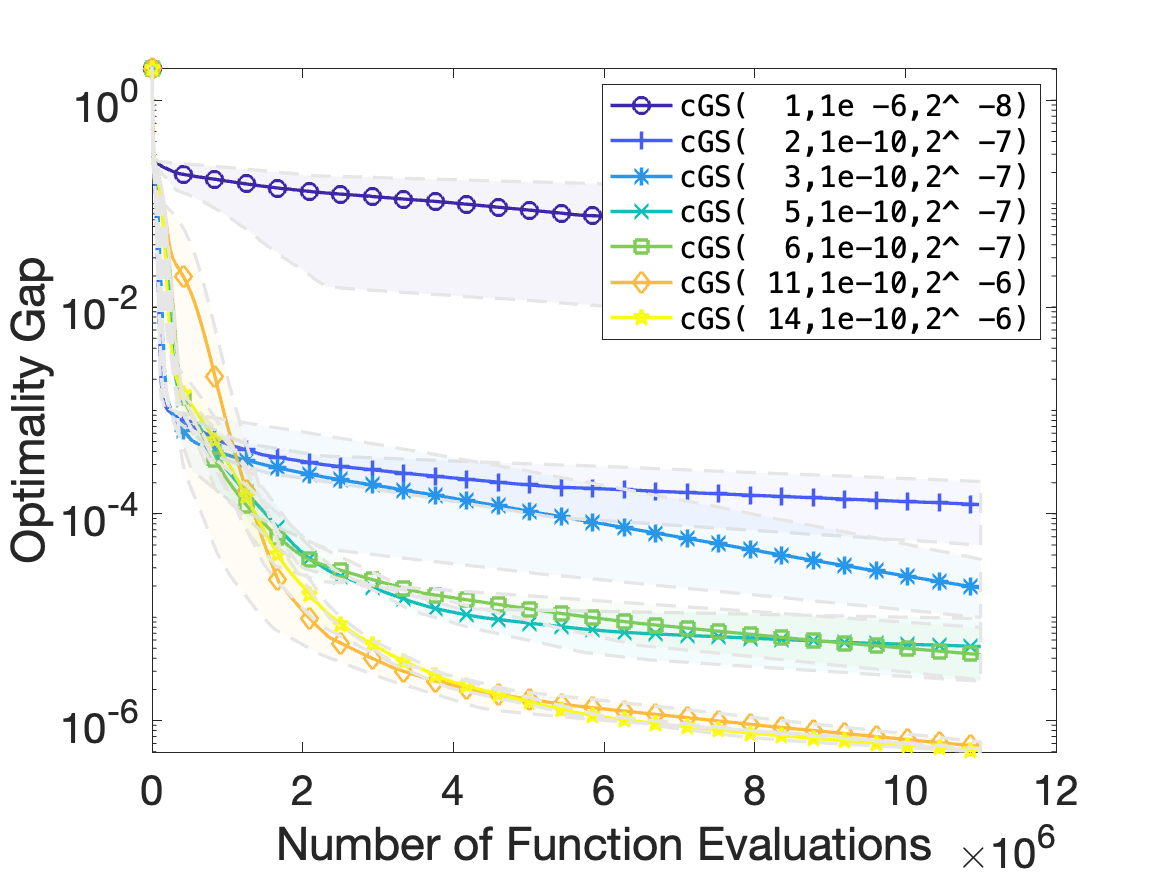}
  \caption{Performance of cGS}
\end{subfigure}%
\begin{subfigure}{0.33\textwidth}
  \centering
  \includegraphics[width=1.1\linewidth]{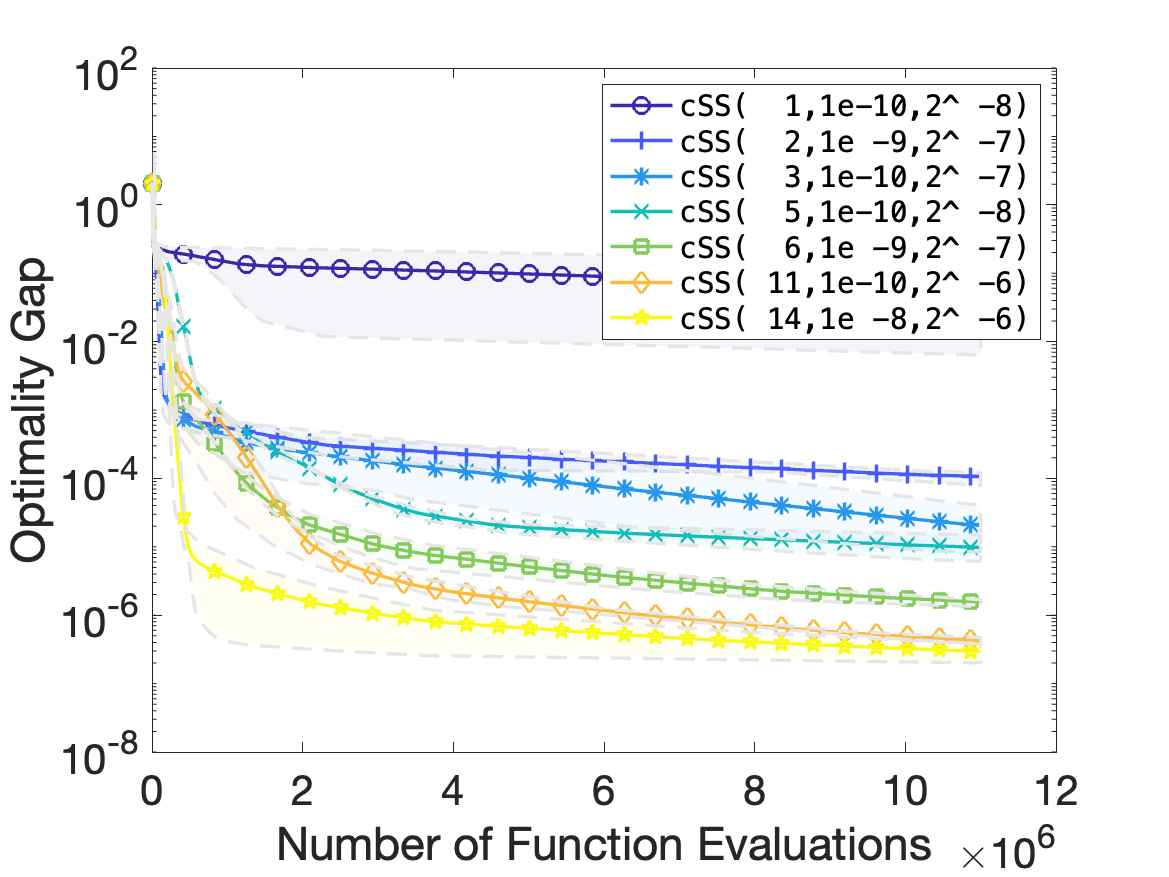}
  \caption{Performance of cSS}
\end{subfigure}%
\begin{subfigure}{0.33\textwidth}
  \centering
  \includegraphics[width=1.1\linewidth]{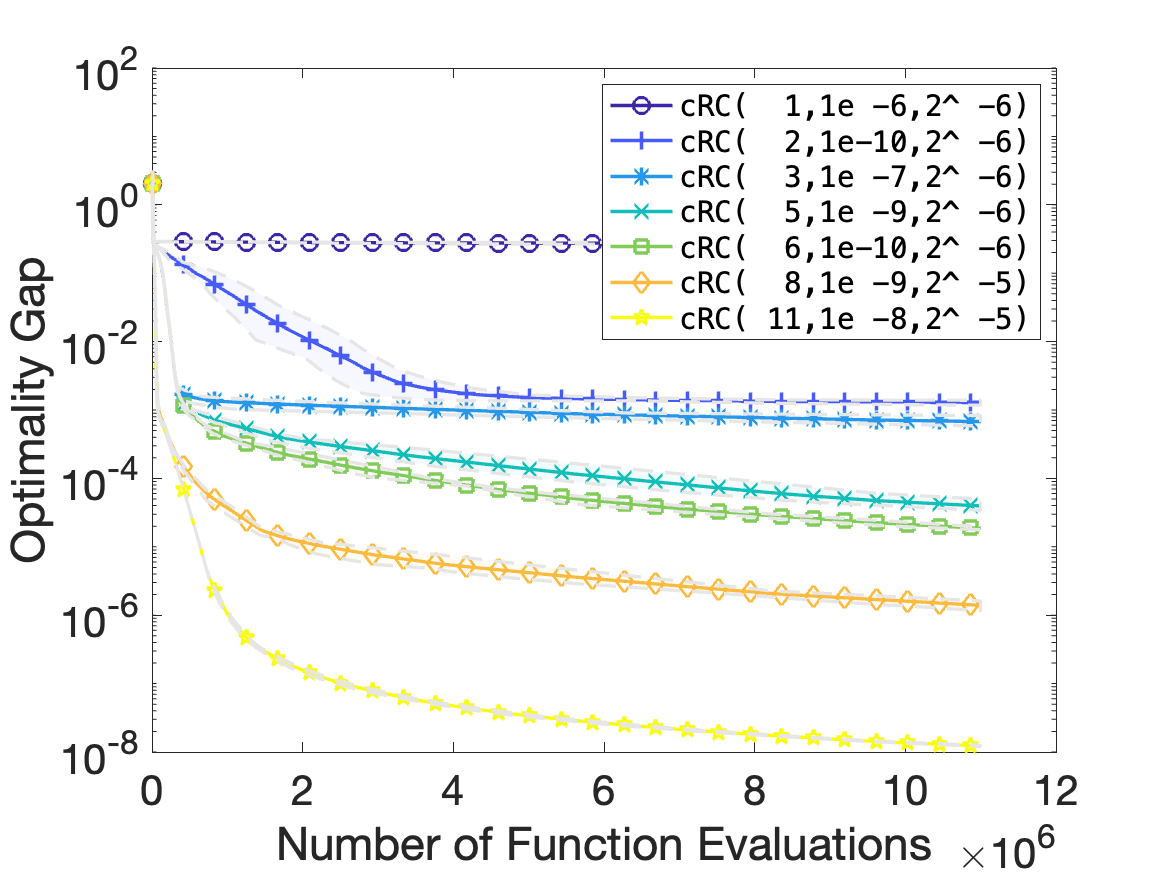}
  \caption{Performance of cRC}
\end{subfigure}
\caption{The effect of number of directions $N$ on the performance of different randomized gradient estimation methods on the Osborne function with absolute error and $ \sigma = 10^{-5} $. The sampling radius $\nu$ and step size $\alpha$ are tuned for each method and $N$ combination to achieve the best performance.}
\end{figure}

\newpage
\paragraph{Bdqrtic Function $(d = 50, p = 92)$ with Relative Error, $\sigma = 10^{-5}$}

\begin{figure}[H]
\centering
\begin{subfigure}{0.33\textwidth}
  \centering
  \includegraphics[width=1.1\linewidth]{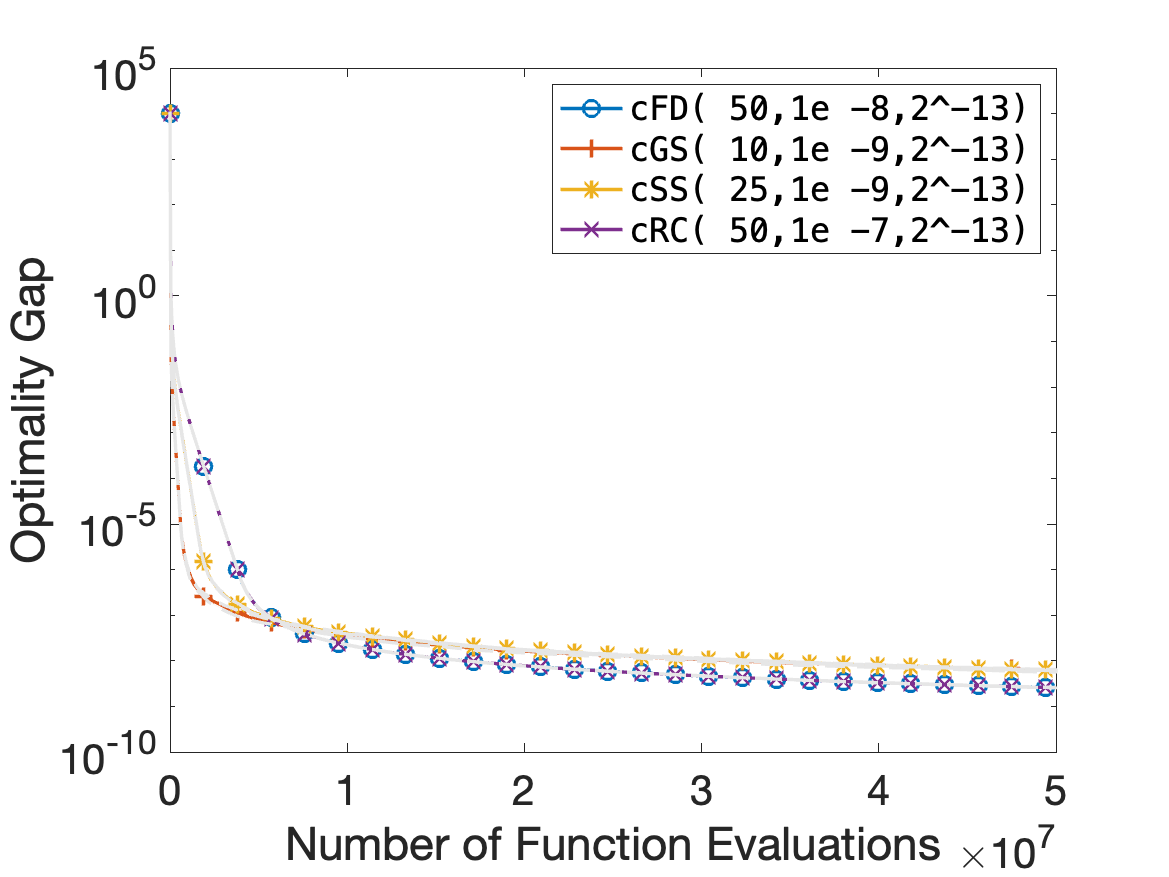}
  \caption{Optimality Gap}
\end{subfigure}%
\begin{subfigure}{0.33\textwidth}
  \centering
  \includegraphics[width=1.1\linewidth]{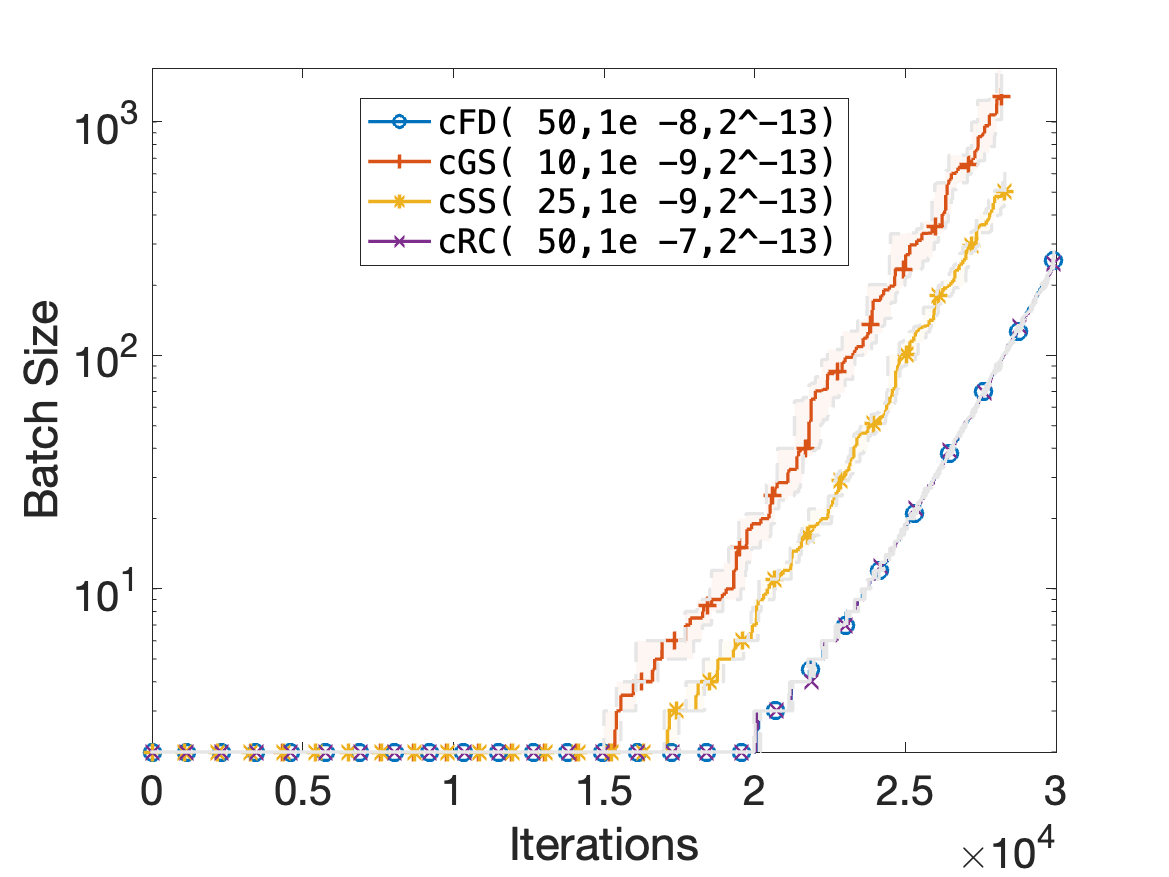}
  \caption{Batch Size}
\end{subfigure}
\begin{subfigure}{0.33\textwidth}
  \centering
  \includegraphics[width=1.1\linewidth]{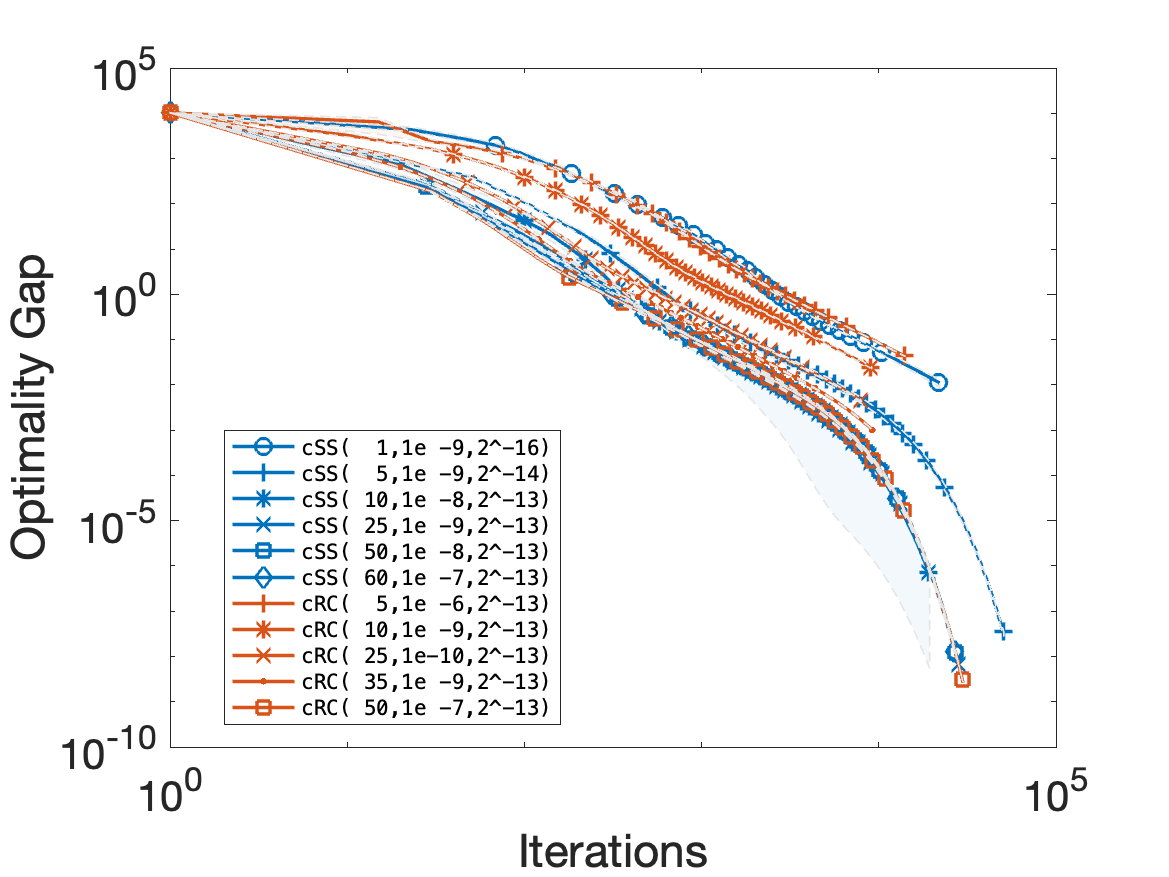}
  \caption{Comparison of cSS and cRC}
\end{subfigure}
\caption{Performance of different gradient estimation methods using the tuned hyperparameters on the Bdqrtic function with relative error and $ \sigma = 10^{-5} $.}
\end{figure}

\begin{figure}[H]
\centering
\begin{subfigure}{0.33\textwidth}
  \centering
  \includegraphics[width=1.1\linewidth]{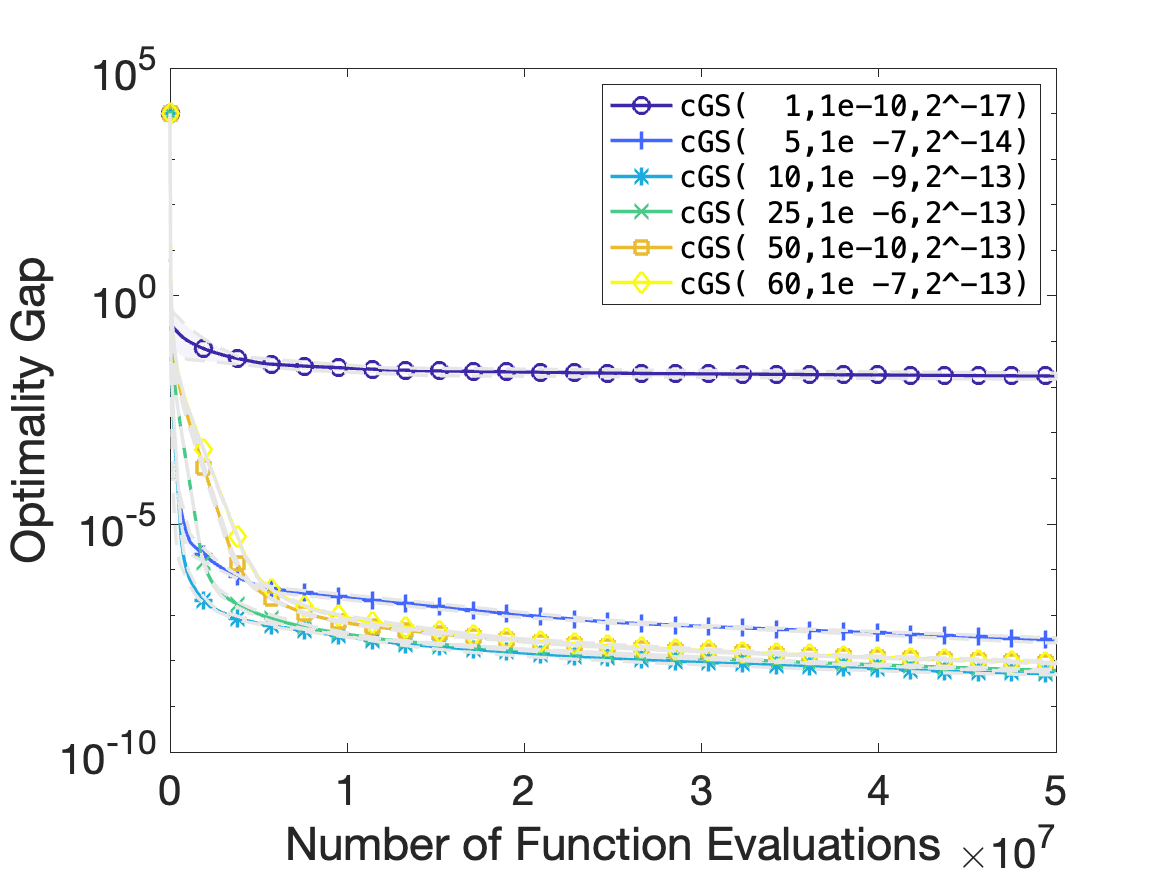}
  \caption{Performance of cGS}
\end{subfigure}%
\begin{subfigure}{0.33\textwidth}
  \centering
  \includegraphics[width=1.1\linewidth]{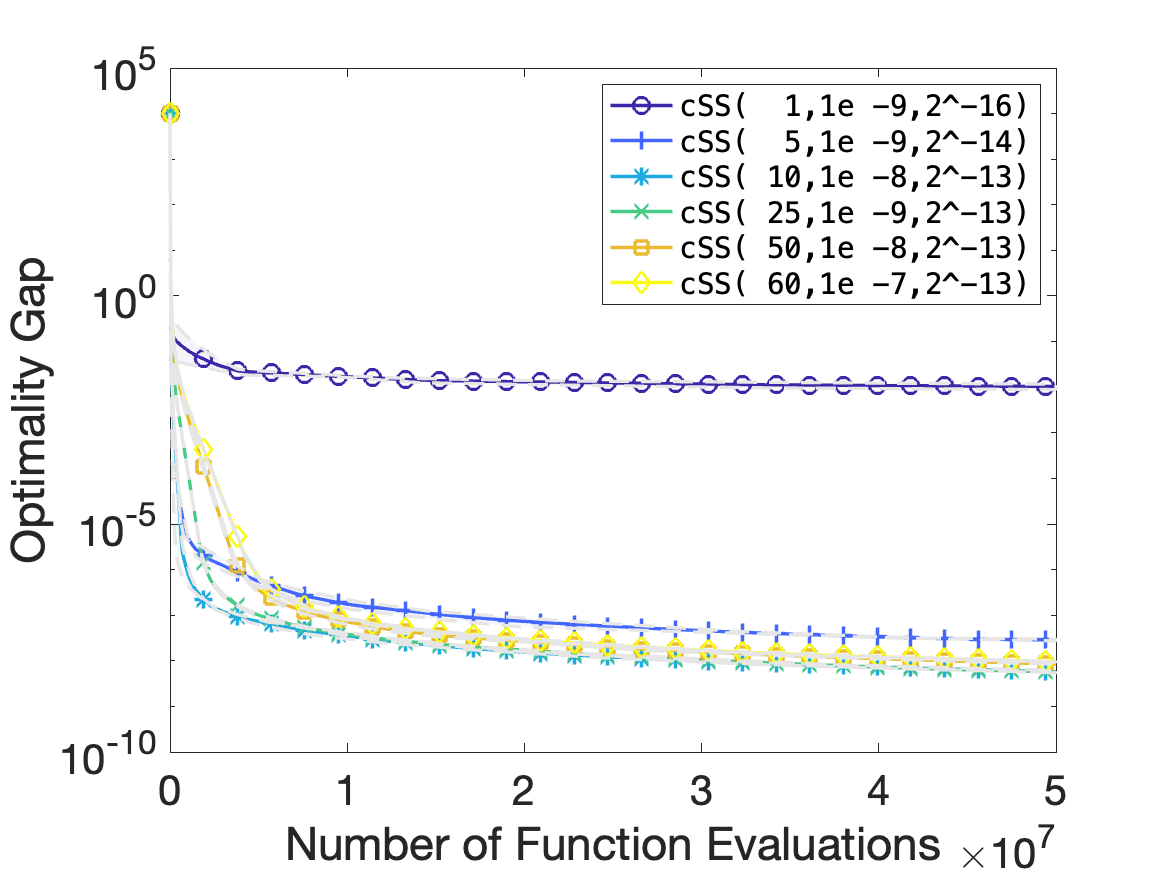}
  \caption{Performance of cSS}
\end{subfigure}%
\begin{subfigure}{0.33\textwidth}
  \centering
  \includegraphics[width=1.1\linewidth]{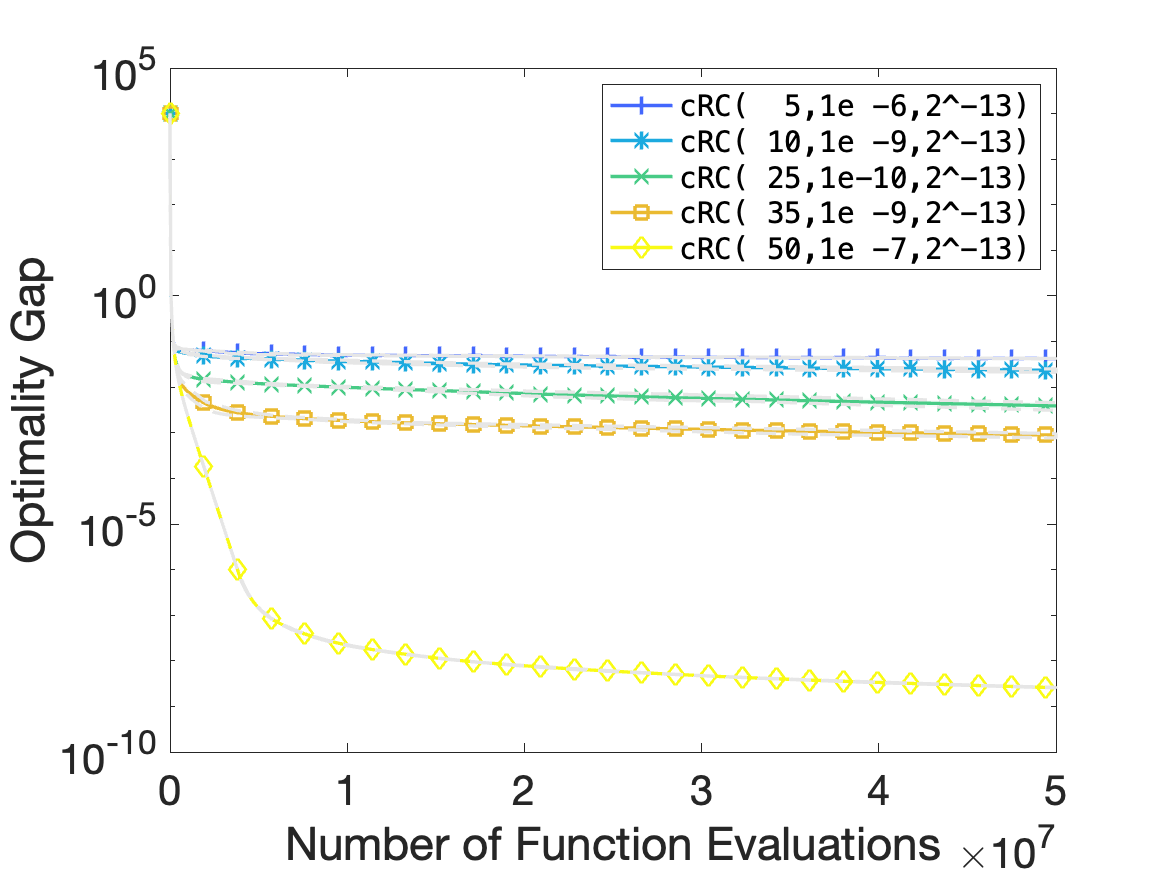}
  \caption{Performance of cRC}
\end{subfigure}
\caption{The effect of number of directions $N$ on the performance of different randomized gradient estimation methods on the Bdqrtic function with relative error and $ \sigma = 10^{-5} $. The sampling radius $\nu$ and step size $\alpha$ are tuned for each method and $N$ combination to achieve the best performance.}
\end{figure}

\newpage
\paragraph{Bdqrtic Function $(d = 50, p = 92)$ with Absolute Error, $\sigma = 10^{-5}$}

\begin{figure}[H]
\centering
\begin{subfigure}{0.33\textwidth}
  \centering
  \includegraphics[width=1.1\linewidth]{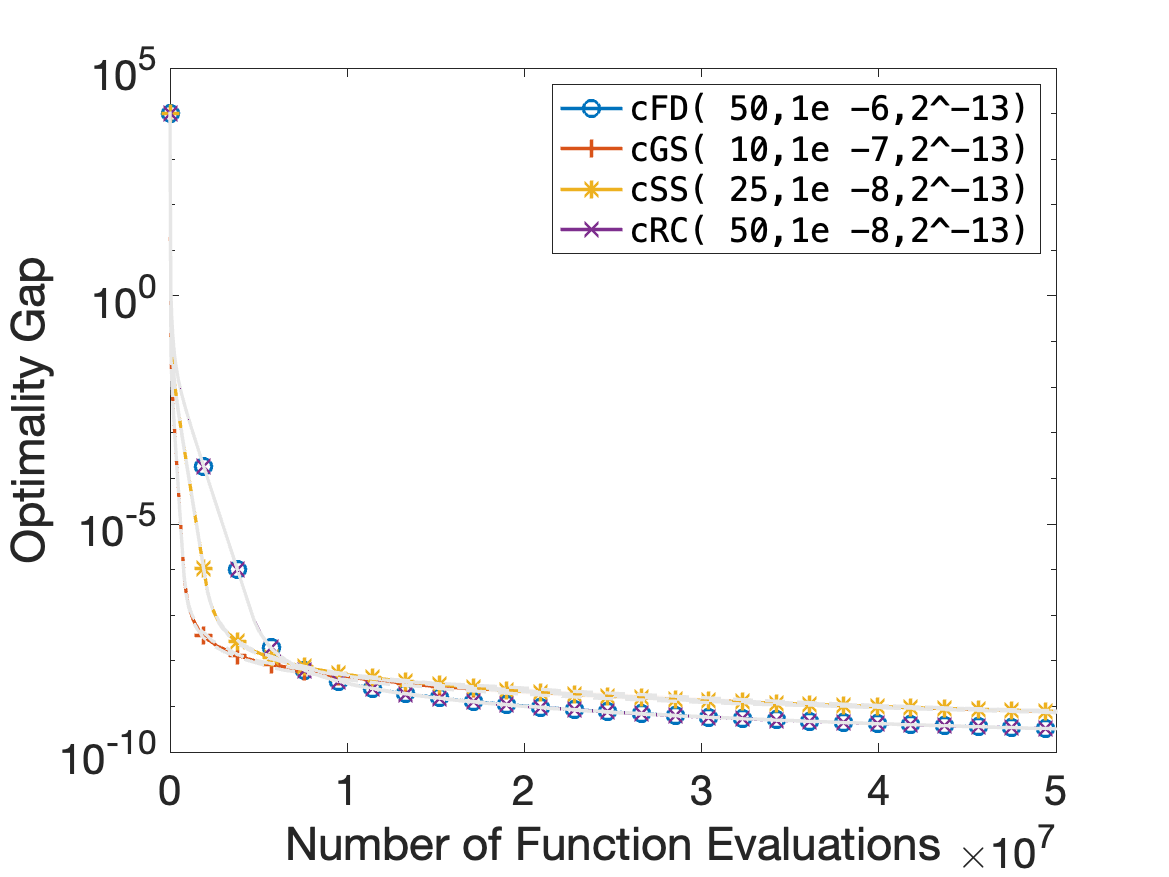}
  \caption{Optimality Gap}
\end{subfigure}%
\begin{subfigure}{0.33\textwidth}
  \centering
  \includegraphics[width=1.1\linewidth]{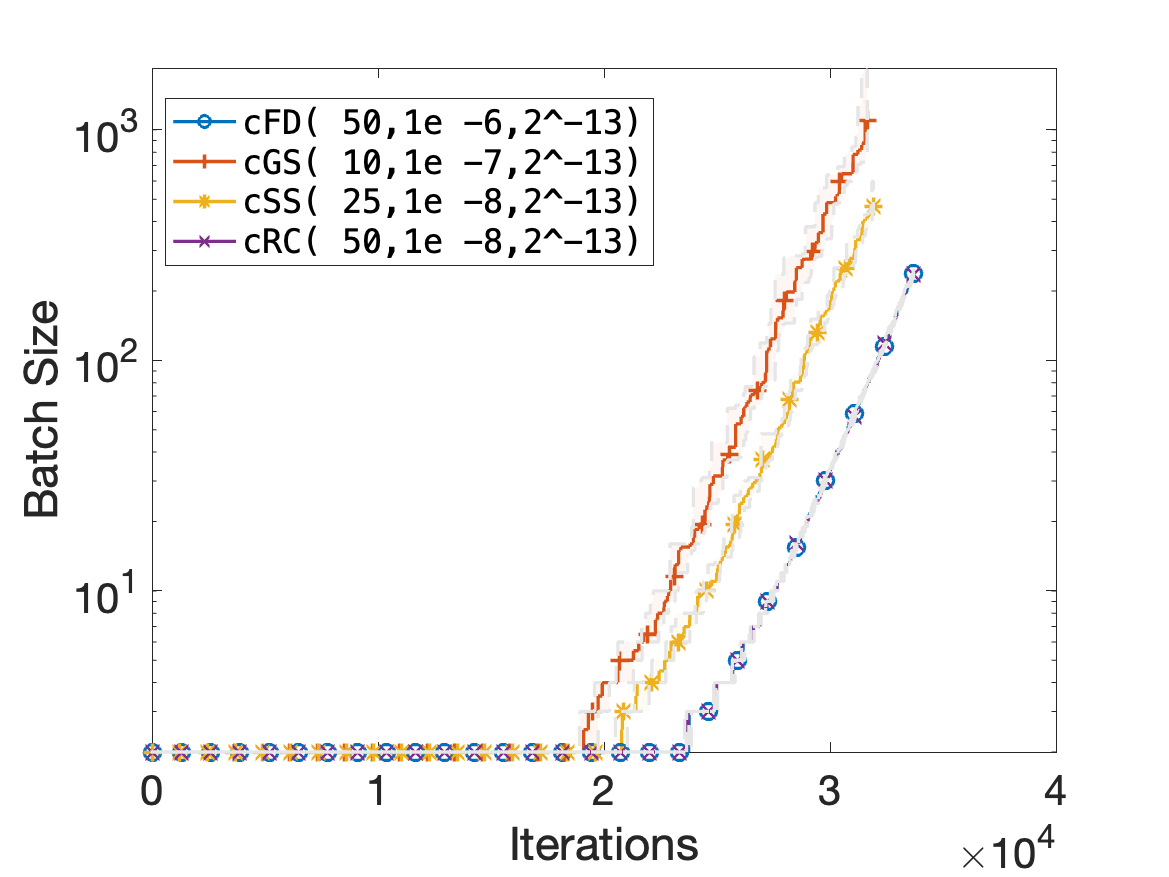}
  \caption{Batch Size}
\end{subfigure}
\begin{subfigure}{0.33\textwidth}
  \centering
  \includegraphics[width=1.1\linewidth]{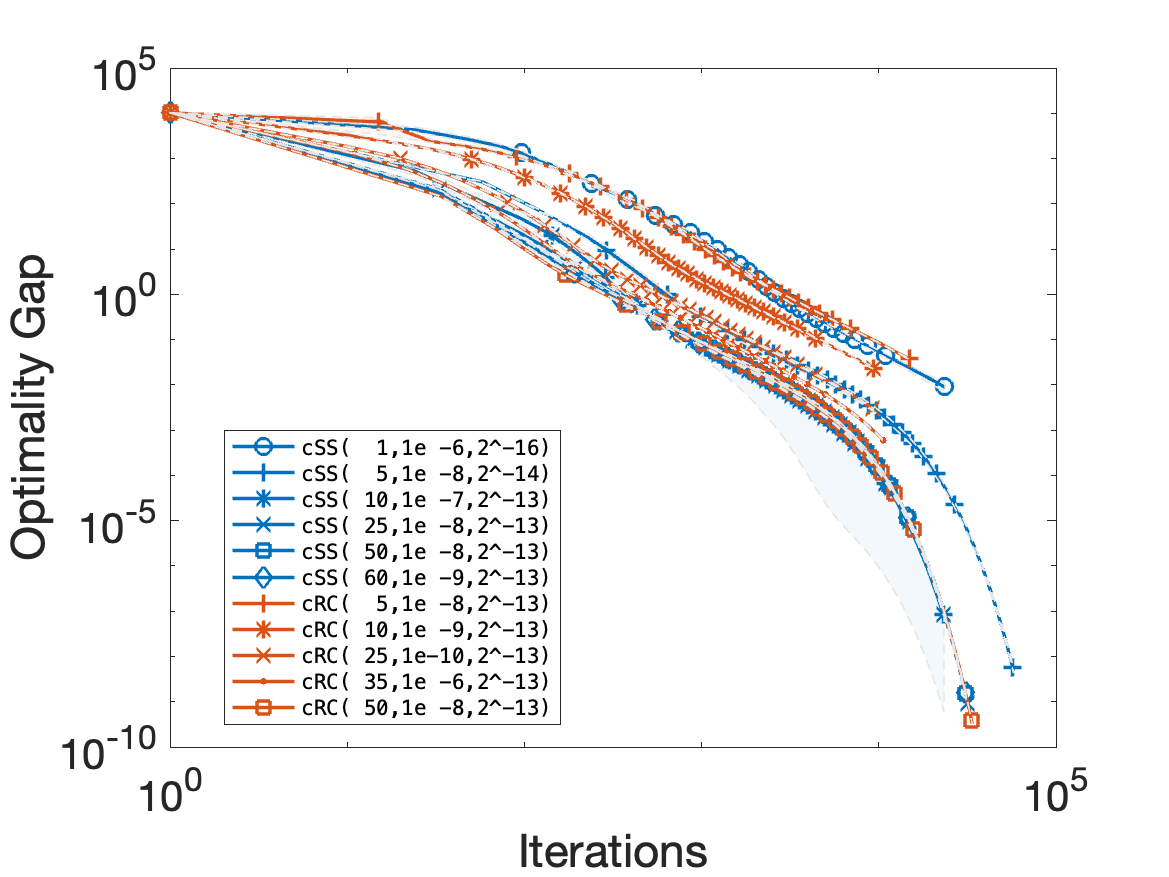}
  \caption{Comparison of cSS and cRC}
\end{subfigure}
\caption{Performance of different gradient estimation methods using the tuned hyperparameters on the Bdqrtic function with absolute error and $ \sigma = 10^{-5} $.}
\end{figure}

\begin{figure}[H]
\centering
\begin{subfigure}{0.33\textwidth}
  \centering
  \includegraphics[width=1.1\linewidth]{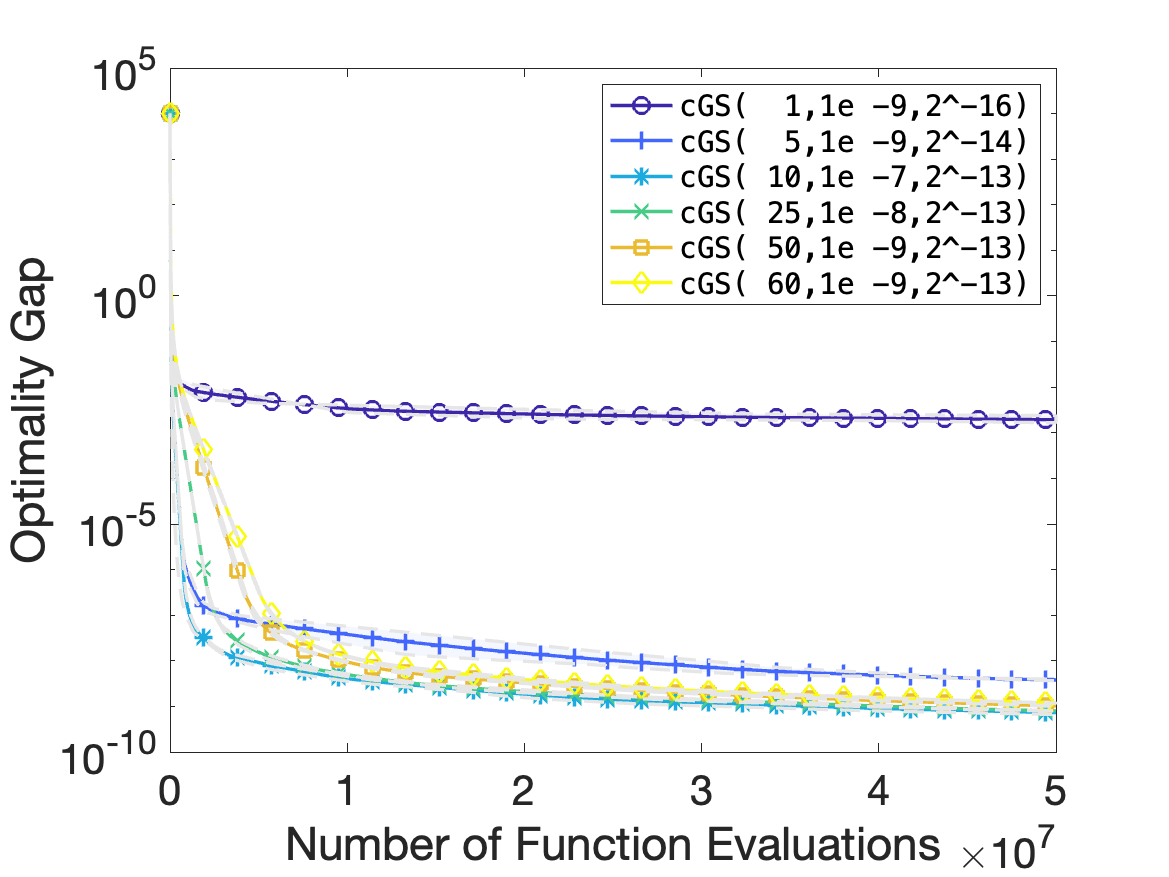}
  \caption{Performance of cGS}
\end{subfigure}%
\begin{subfigure}{0.33\textwidth}
  \centering
  \includegraphics[width=1.1\linewidth]{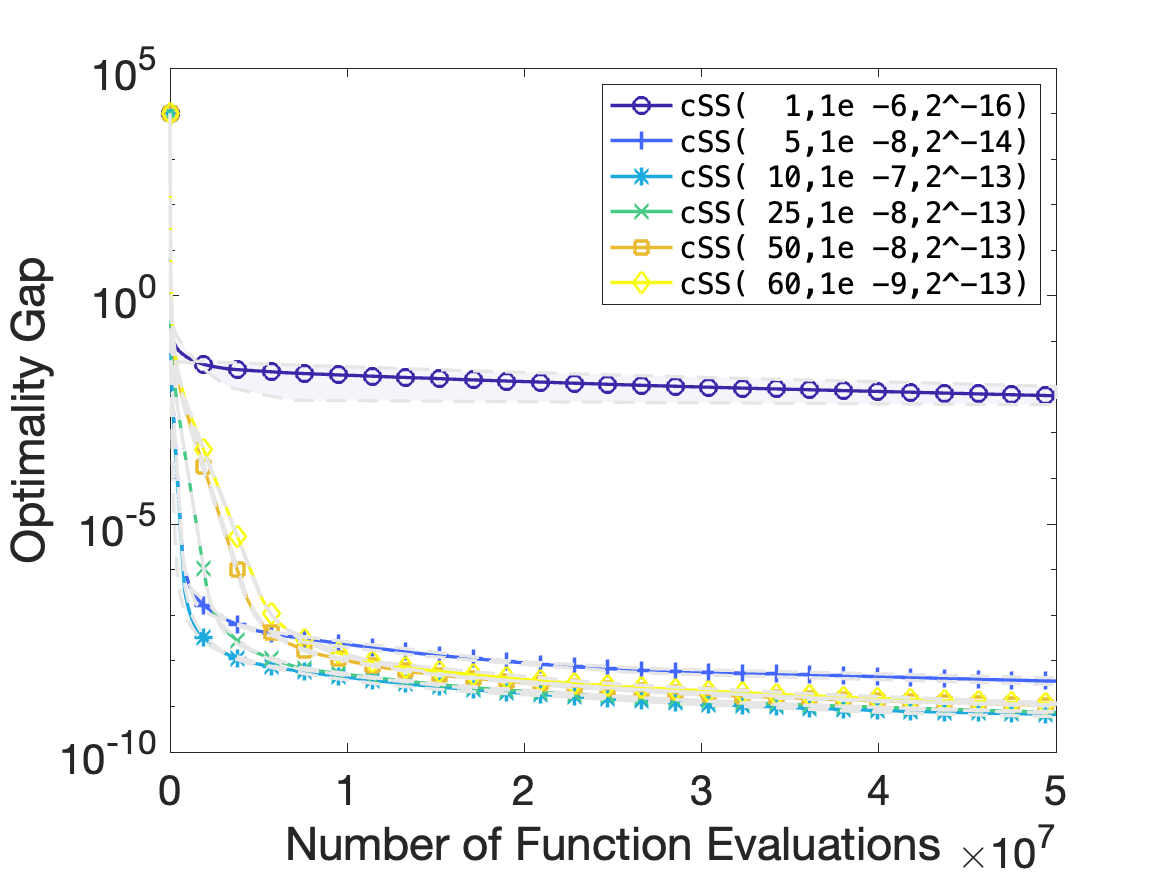}
  \caption{Performance of cSS}
\end{subfigure}%
\begin{subfigure}{0.33\textwidth}
  \centering
  \includegraphics[width=1.1\linewidth]{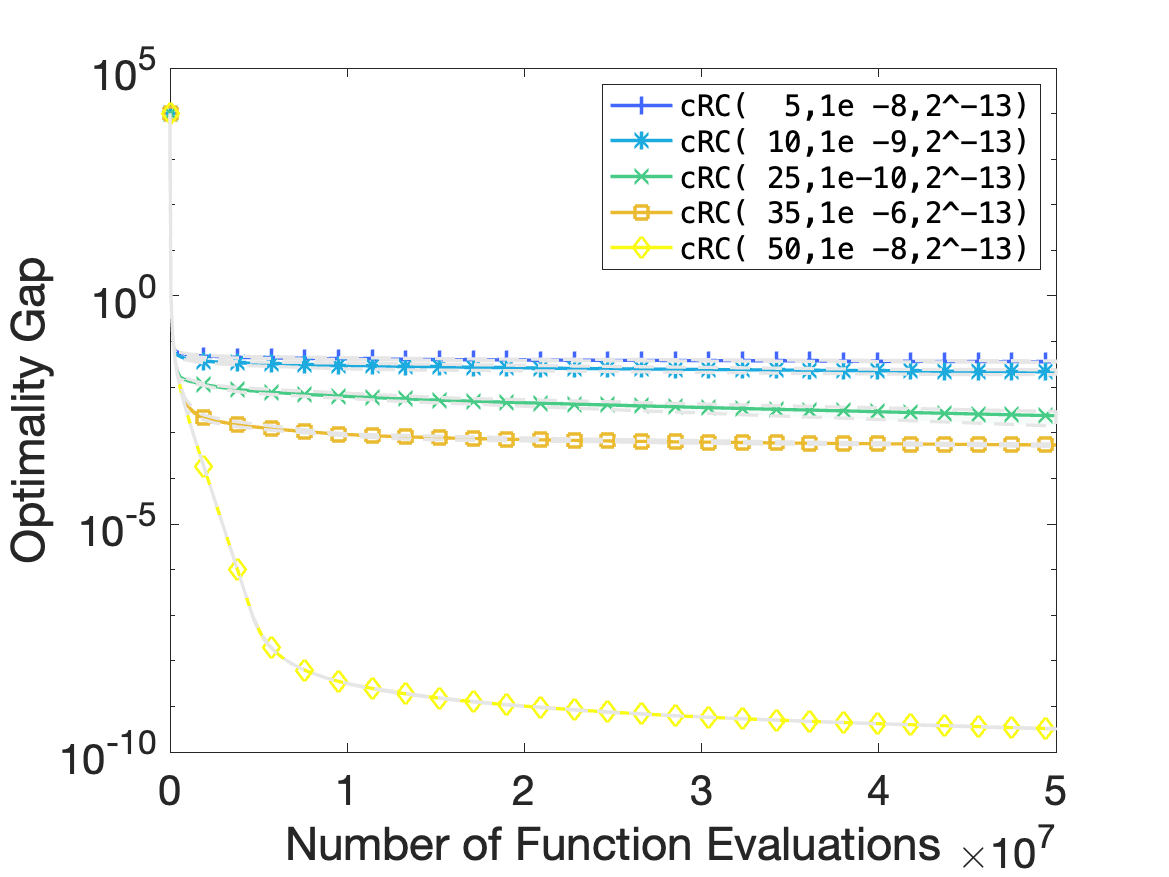}
  \caption{Performance of cRC}
\end{subfigure}
\caption{The effect of number of directions $N$ on the performance of different randomized gradient estimation methods on the Bdqrtic function with absolute error and $ \sigma = 10^{-5} $. The sampling radius $\nu$ and step size $\alpha$ are tuned for each method and $N$ combination to achieve the best performance.}
\end{figure}

\newpage
\paragraph{Cube Function $(d = 20, p = 30)$ with Relative Error, $\sigma = 10^{-3}$}

\begin{figure}[H]
\centering
\begin{subfigure}{0.33\textwidth}
  \centering
  \includegraphics[width=1.1\linewidth]{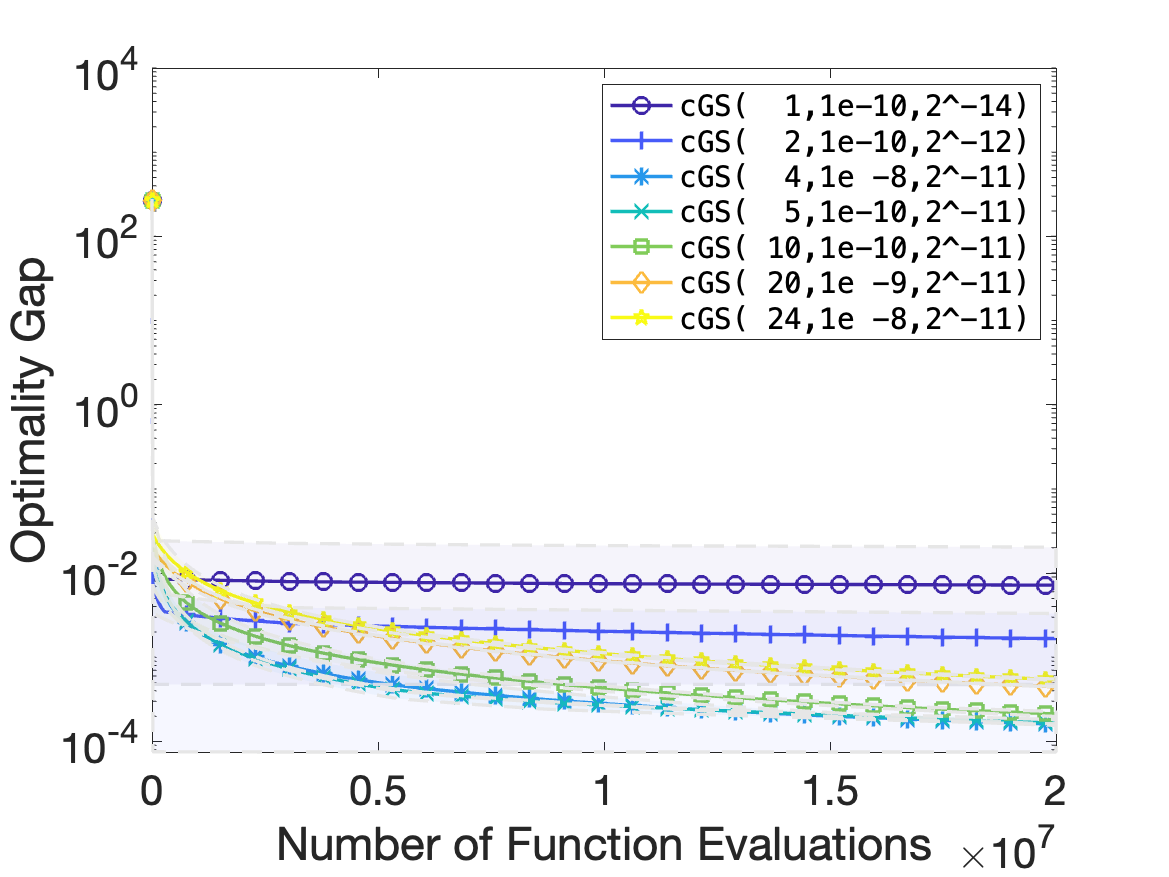}
  \caption{Performance of cGS}
\end{subfigure}%
\begin{subfigure}{0.33\textwidth}
  \centering
  \includegraphics[width=1.1\linewidth]{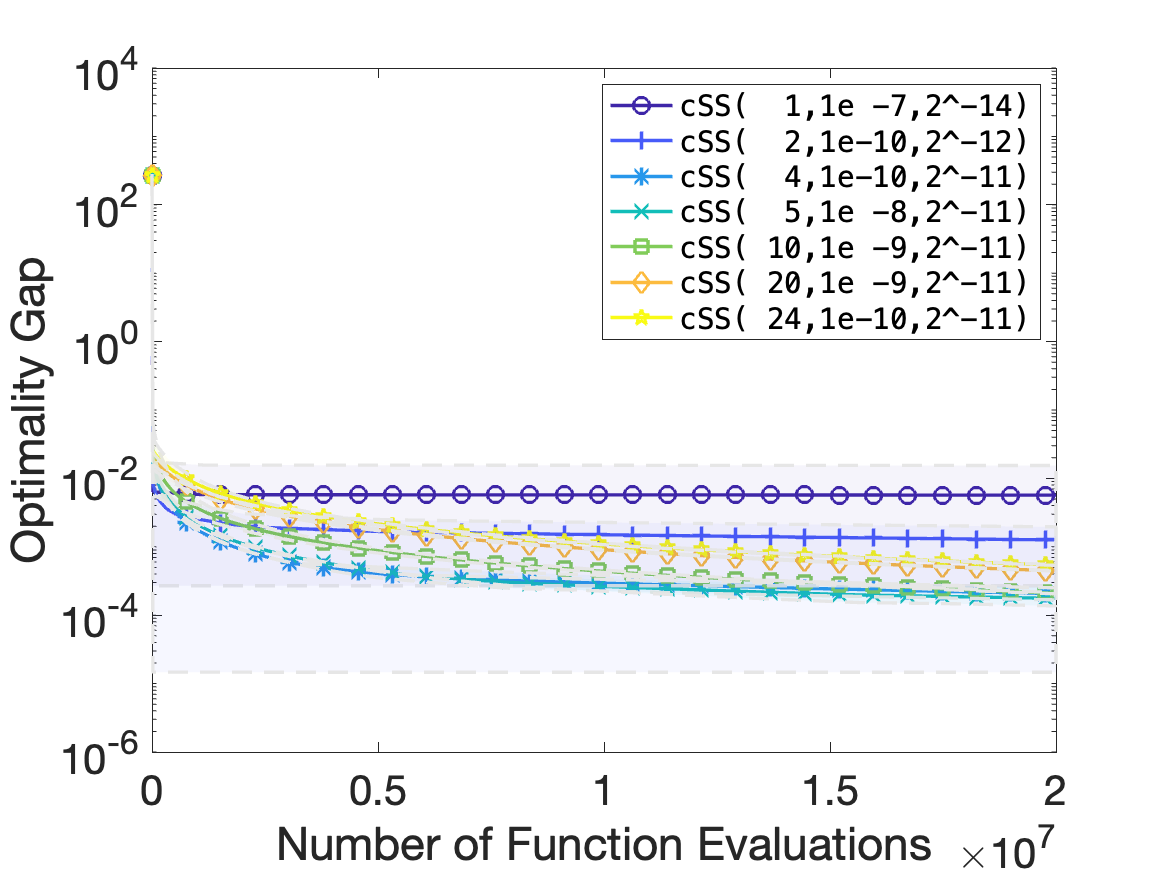}
  \caption{Performance of cSS}
\end{subfigure}%
\begin{subfigure}{0.33\textwidth}
  \centering
  \includegraphics[width=1.1\linewidth]{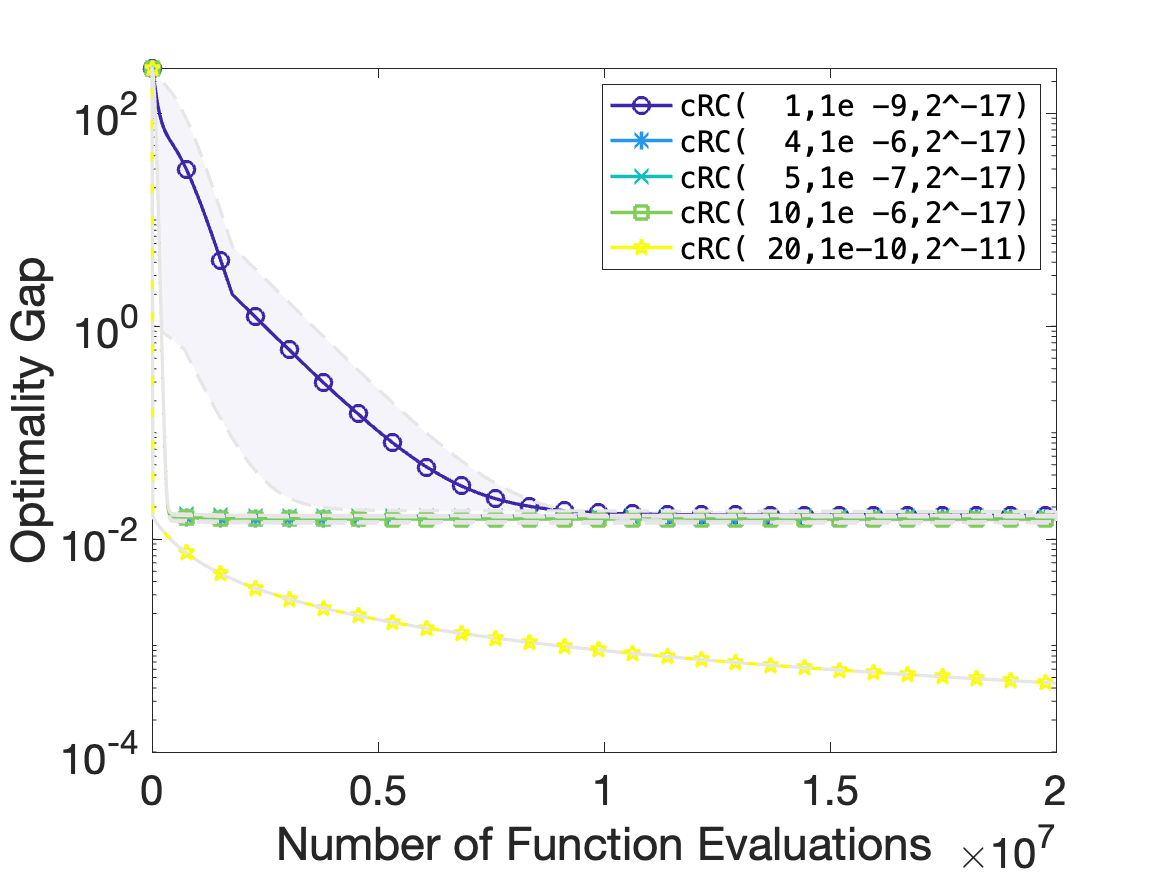}
  \caption{Performance of cRC}
\end{subfigure}
\caption{The effect of number of directions $N$ on the performance of different randomized gradient estimation methods on the Cube function with relative error and $ \sigma = 10^{-3} $. Sampling radius $\nu$ and step size $\alpha$ are tuned for each method and $N$ combination to achieve the best performance.}
\end{figure}

\paragraph{Cube Function $(d = 20, p = 30)$ with Absolute Error, $\sigma = 10^{-3}$}

\begin{figure}[H]
\centering
\begin{subfigure}{0.33\textwidth}
  \centering
  \includegraphics[width=1.1\linewidth]{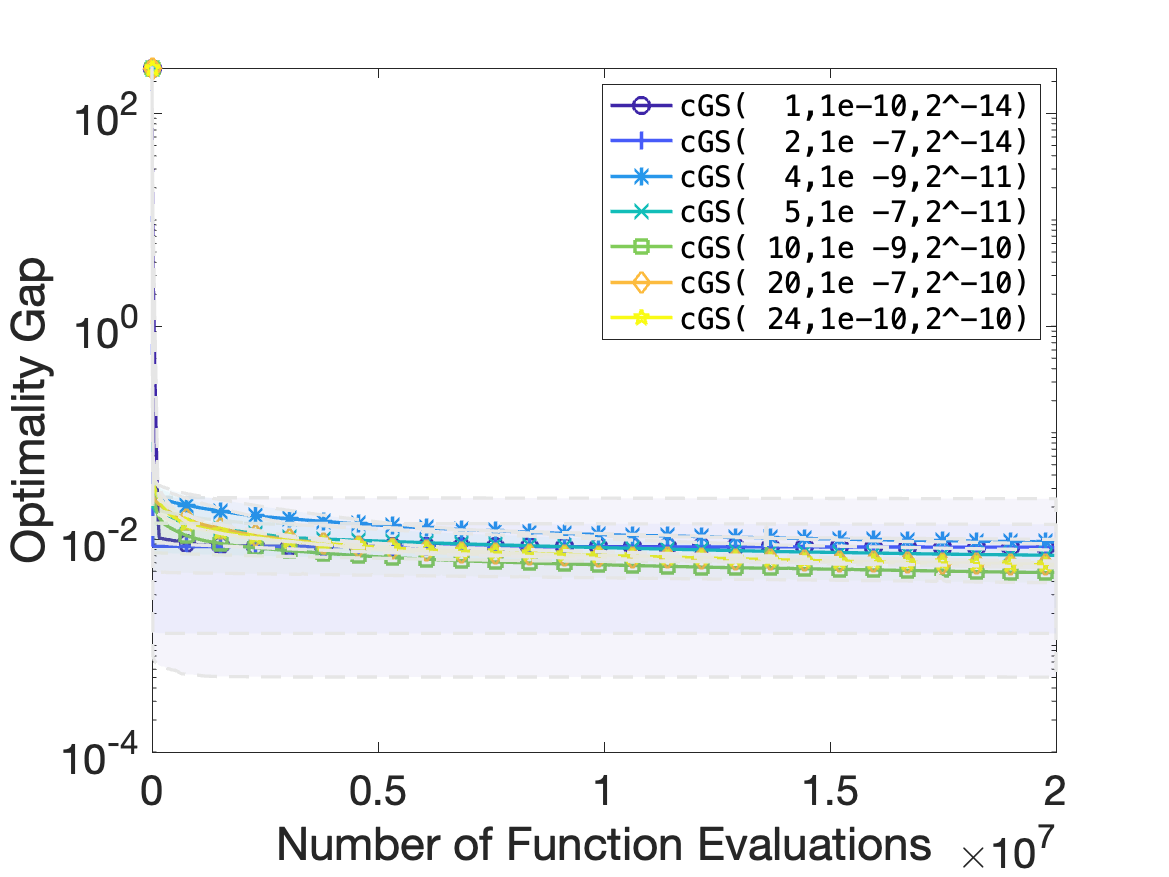}
  \caption{Performance of cGS}
\end{subfigure}%
\begin{subfigure}{0.33\textwidth}
  \centering
  \includegraphics[width=1.1\linewidth]{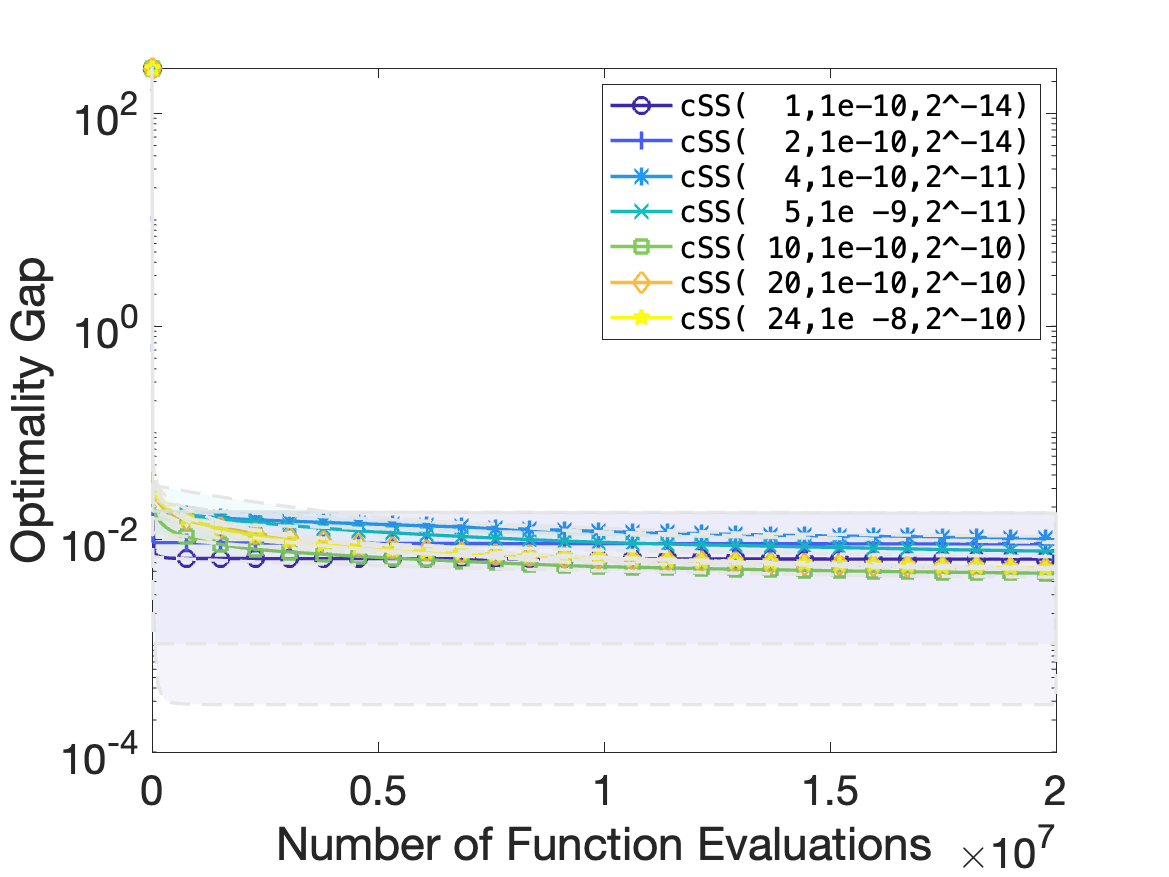}
  \caption{Performance of cSS}
\end{subfigure}%
\begin{subfigure}{0.33\textwidth}
  \centering
  \includegraphics[width=1.1\linewidth]{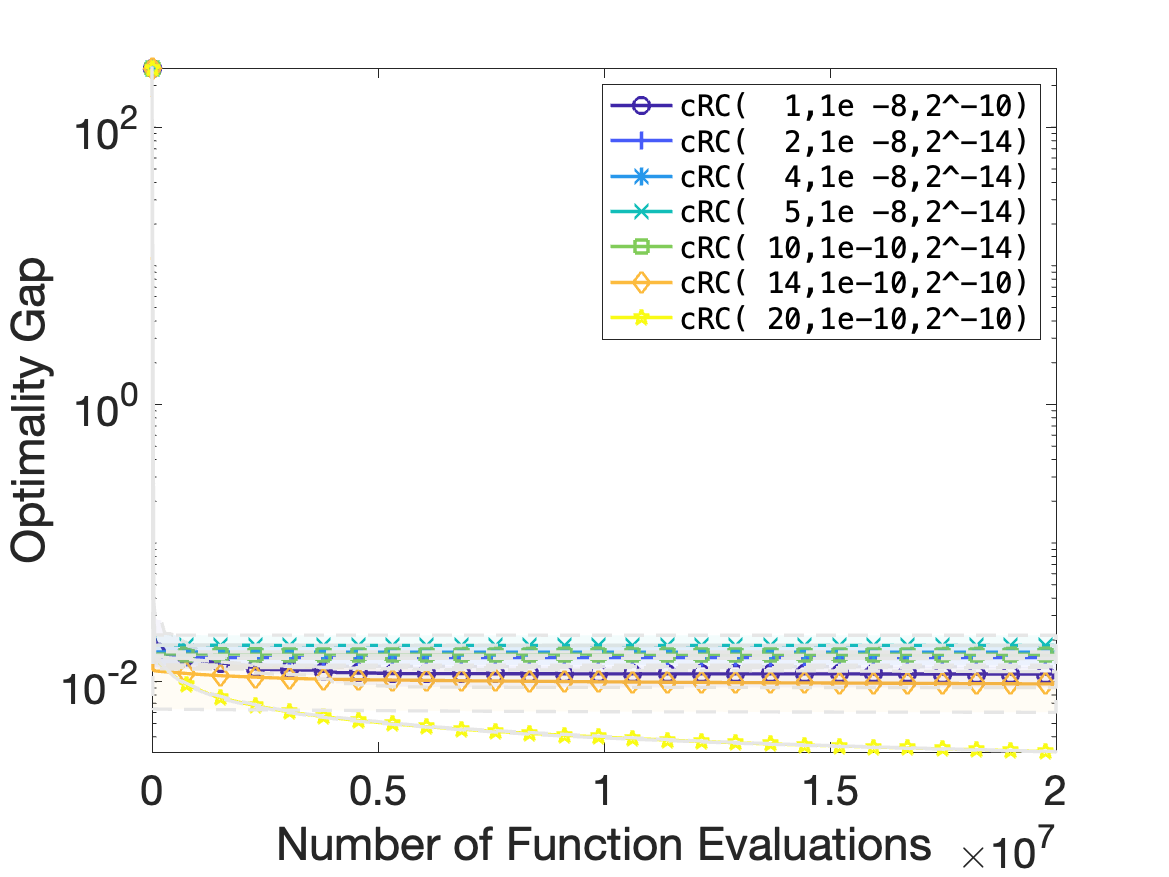}
  \caption{Performance of cRC}
\end{subfigure}
\caption{The effect of number of directions $N$ on the performance of different randomized gradient estimation methods on the Cube function with absolute error and $ \sigma = 10^{-3} $. The sampling radius $\nu$ and step size $\alpha$ are tuned for each method and $N$ combination to achieve the best performance.}
\end{figure}

\newpage
\paragraph{Cube Function $(d = 20, p = 30)$ with Relative Error, $\sigma = 10^{-5}$}

\begin{figure}[H]
\centering
\begin{subfigure}{0.33\textwidth}
  \centering
  \includegraphics[width=1.1\linewidth]{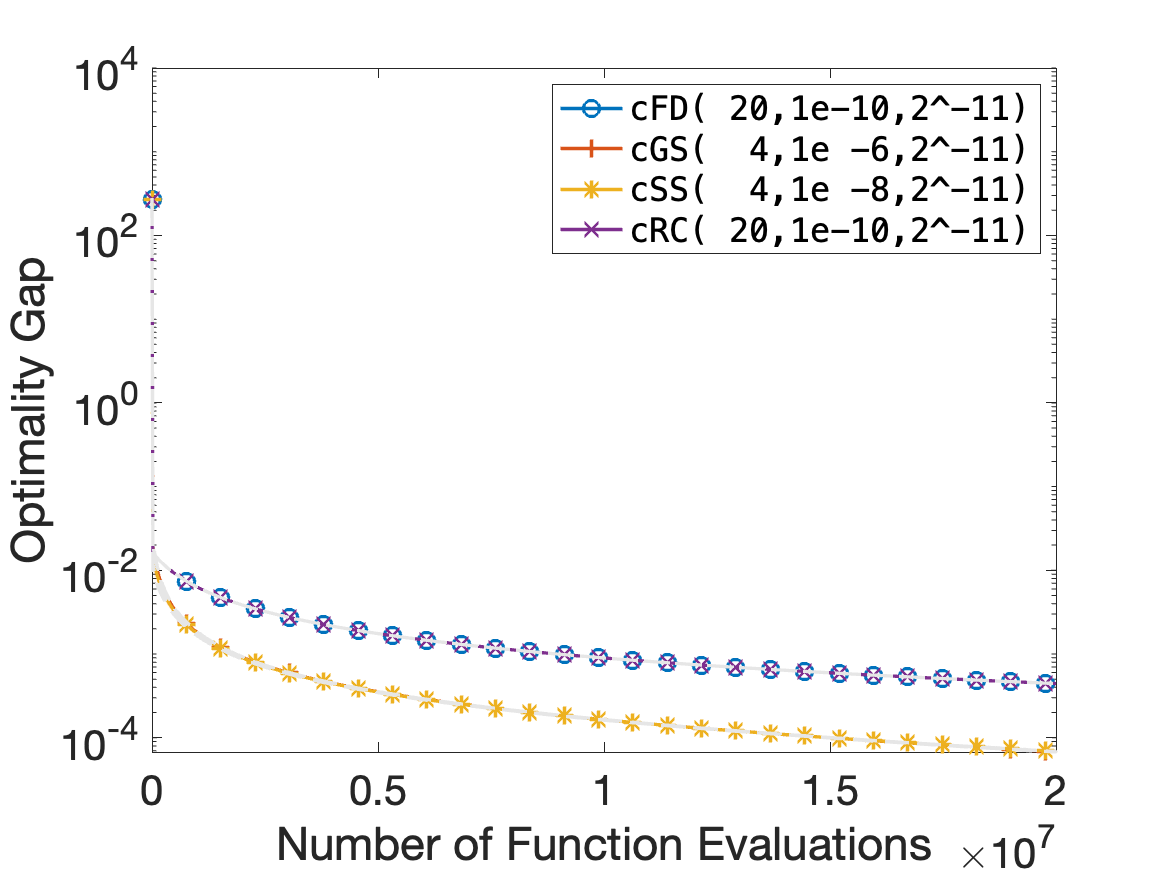}
  \caption{Optimality Gap}
\end{subfigure}%
\begin{subfigure}{0.33\textwidth}
  \centering
  \includegraphics[width=1.1\linewidth]{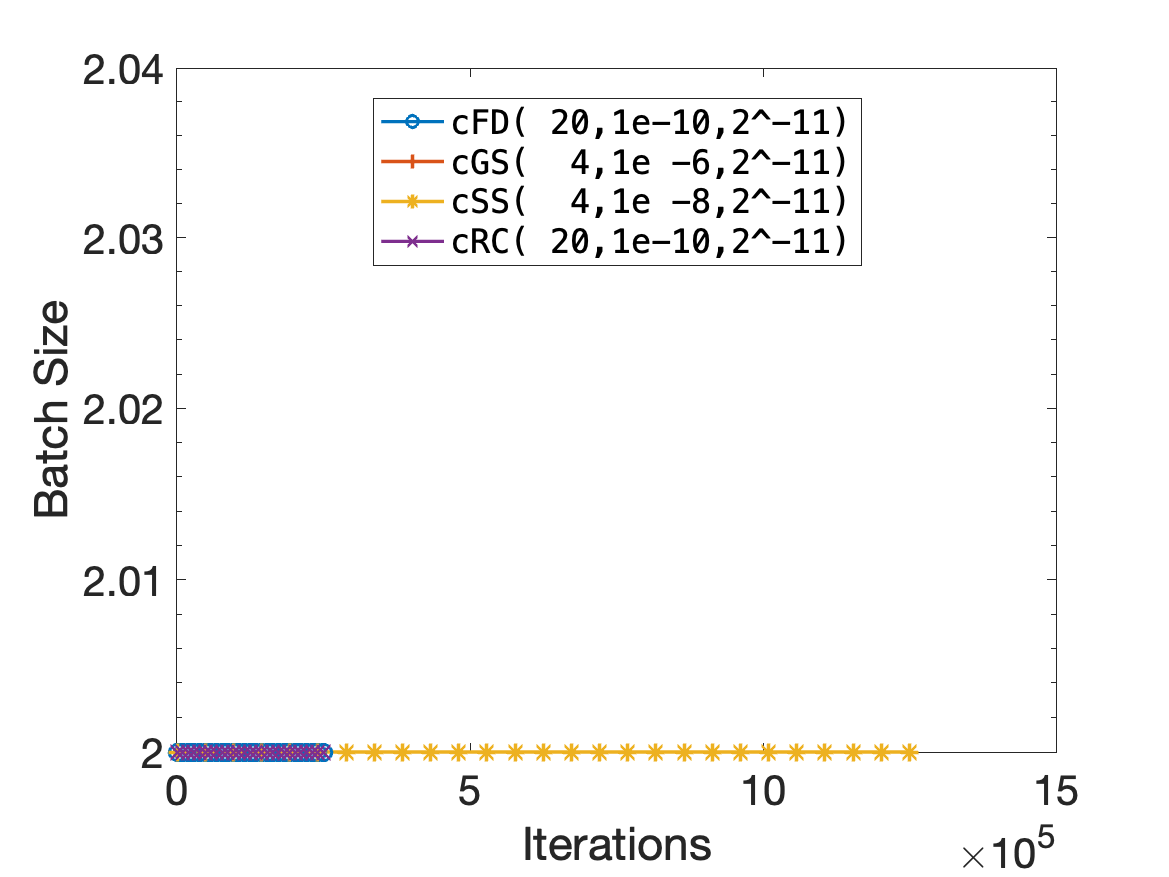}
  \caption{Batch Size}
\end{subfigure}
\begin{subfigure}{0.33\textwidth}
  \centering
  \includegraphics[width=1.1\linewidth]{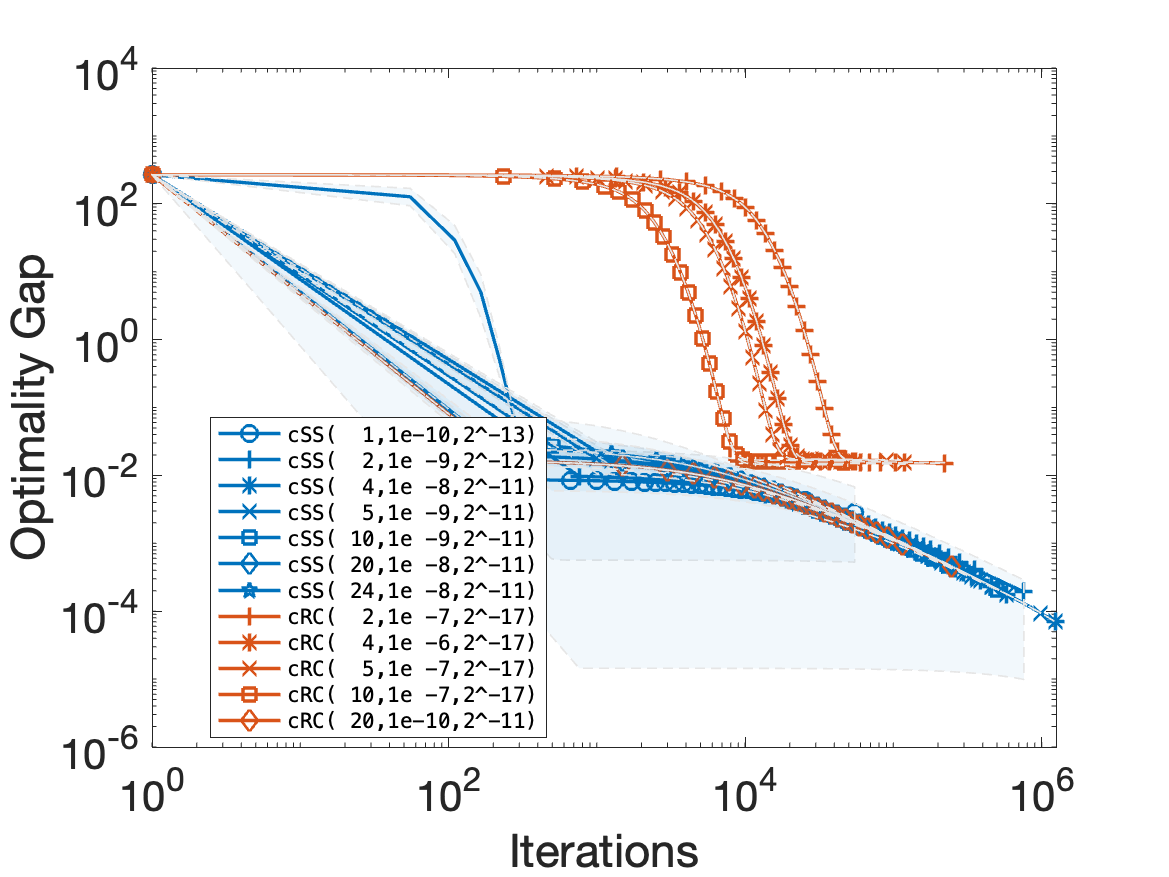}
  \caption{Comparison of cSS and cRC}
\end{subfigure}
\caption{Performance of different gradient estimation methods using the tuned hyperparameters on the Cube function with relative error and $ \sigma = 10^{-5} $.}
\end{figure}

\begin{figure}[H]
\centering
\begin{subfigure}{0.33\textwidth}
  \centering
  \includegraphics[width=1.1\linewidth]{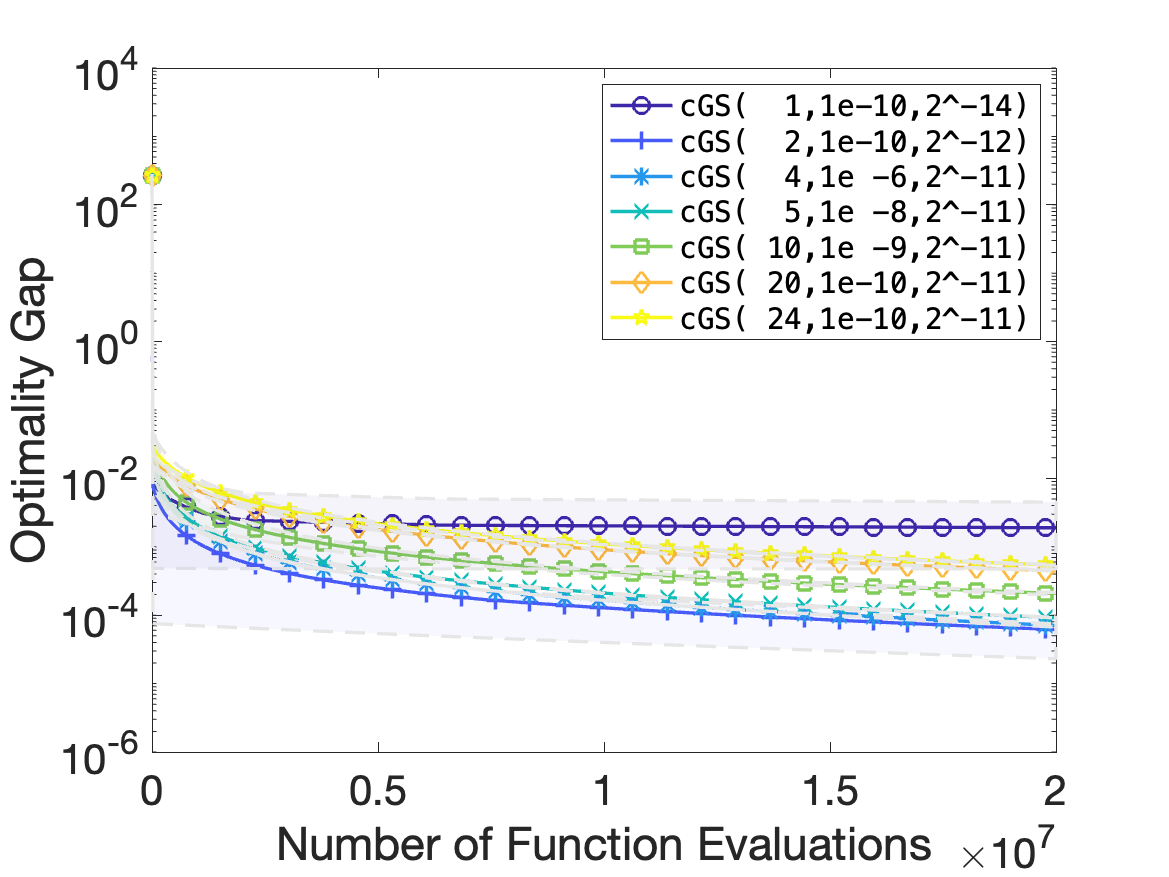}
  \caption{Performance of cGS}
\end{subfigure}%
\begin{subfigure}{0.33\textwidth}
  \centering
  \includegraphics[width=1.1\linewidth]{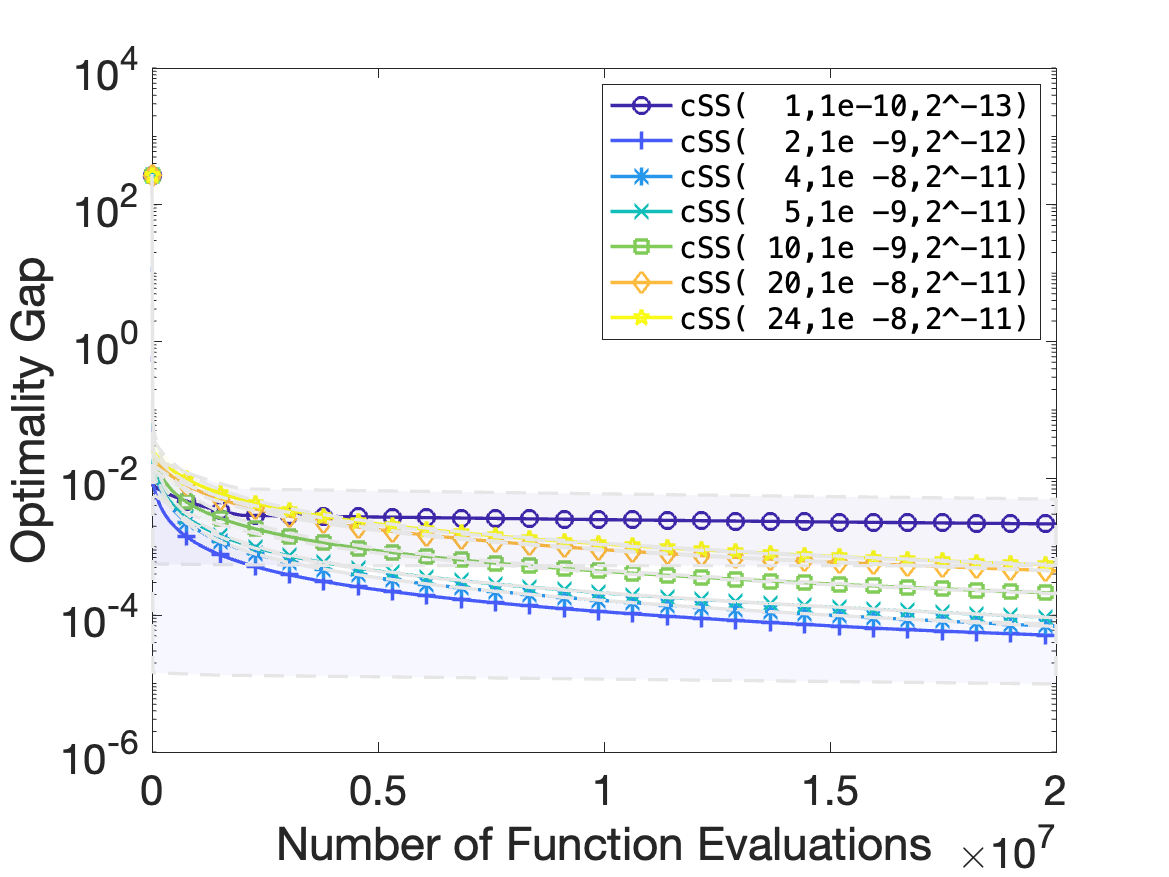}
  \caption{Performance of cSS}
\end{subfigure}%
\begin{subfigure}{0.33\textwidth}
  \centering
  \includegraphics[width=1.1\linewidth]{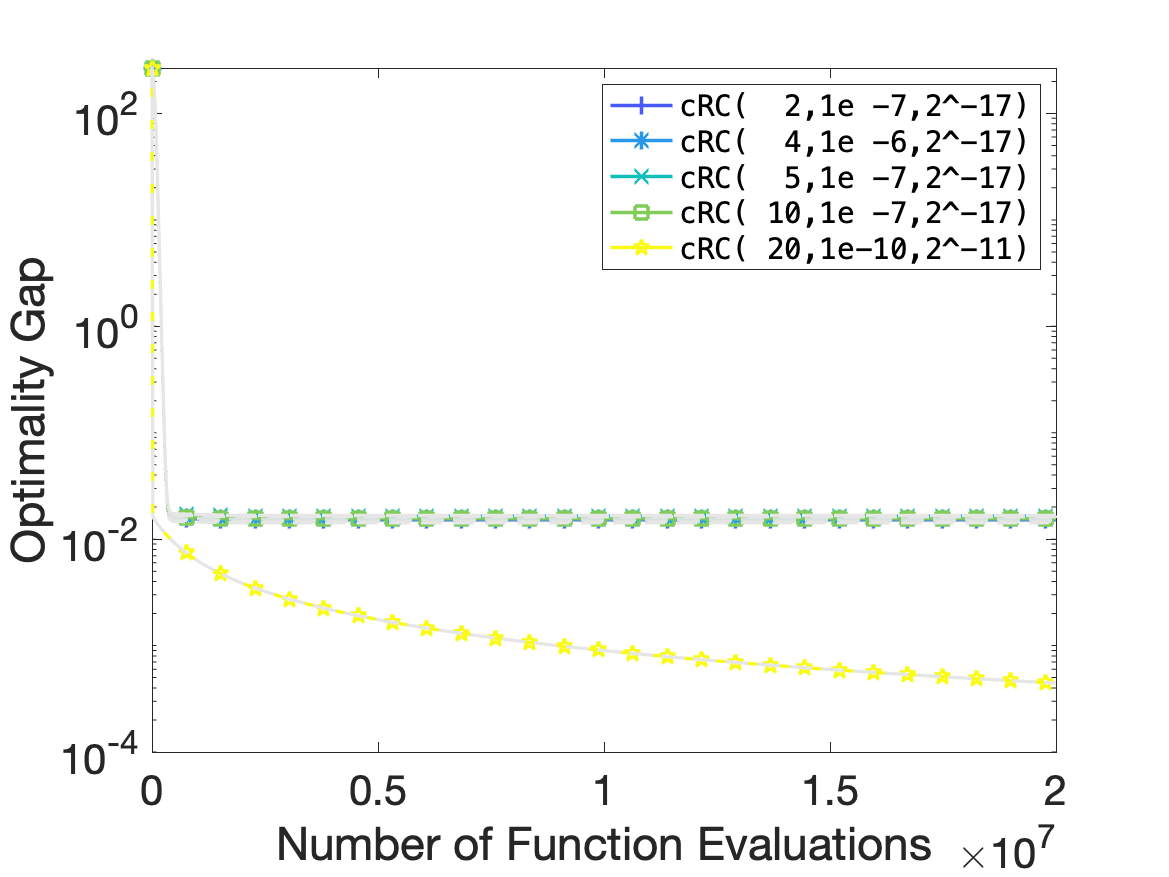}
  \caption{Performance of cRC}
\end{subfigure}
\caption{The effect of number of directions $N$ on the performance of different randomized gradient estimation methods on the Cube function with relative error and $ \sigma = 10^{-5} $. The sampling radius $\nu$ and step size $\alpha$ are tuned for each method and $N$ combination to achieve the best performance.}
\end{figure}

\newpage
\paragraph{Cube Function $(d = 20, p = 30)$ with Absolute Error, $\sigma = 10^{-5}$}

\begin{figure}[H]
\centering
\begin{subfigure}{0.33\textwidth}
  \centering
  \includegraphics[width=1.1\linewidth]{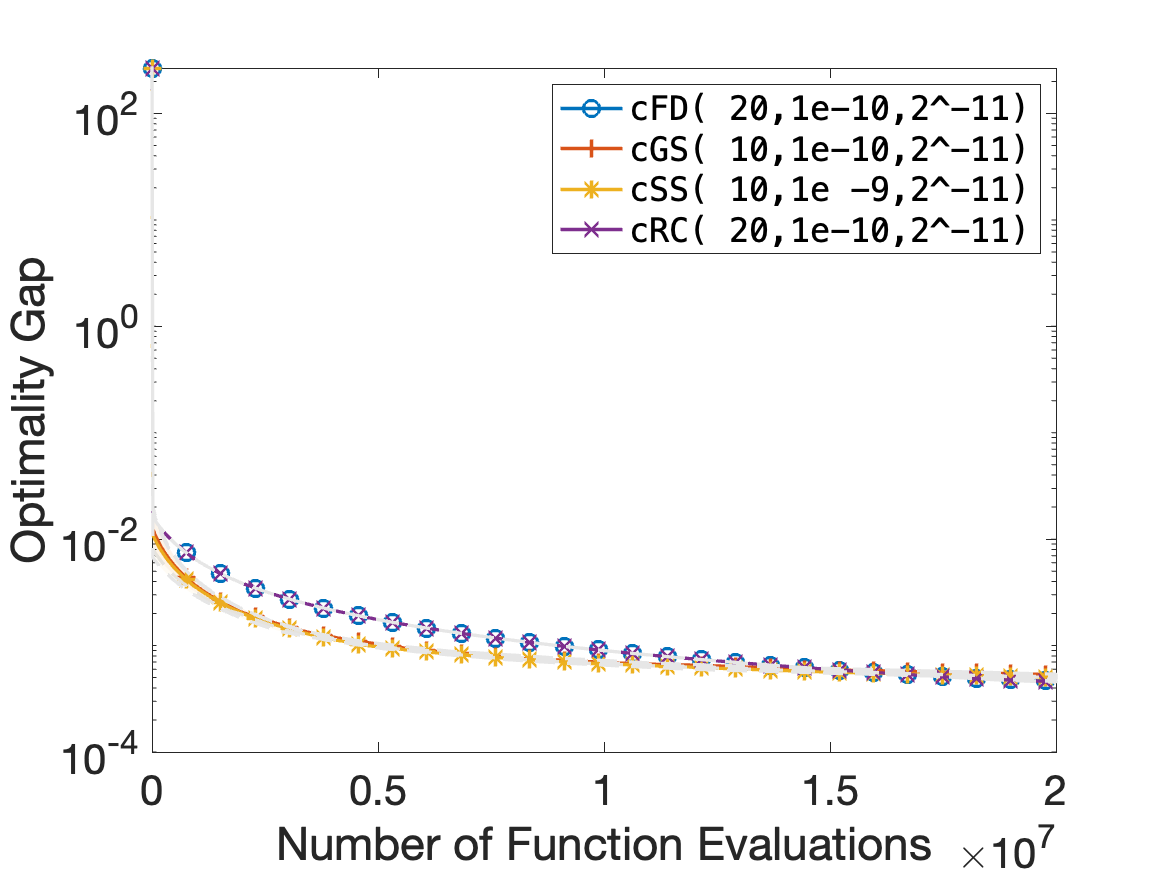}
  \caption{Optimality Gap}
\end{subfigure}%
\begin{subfigure}{0.33\textwidth}
  \centering
  \includegraphics[width=1.1\linewidth]{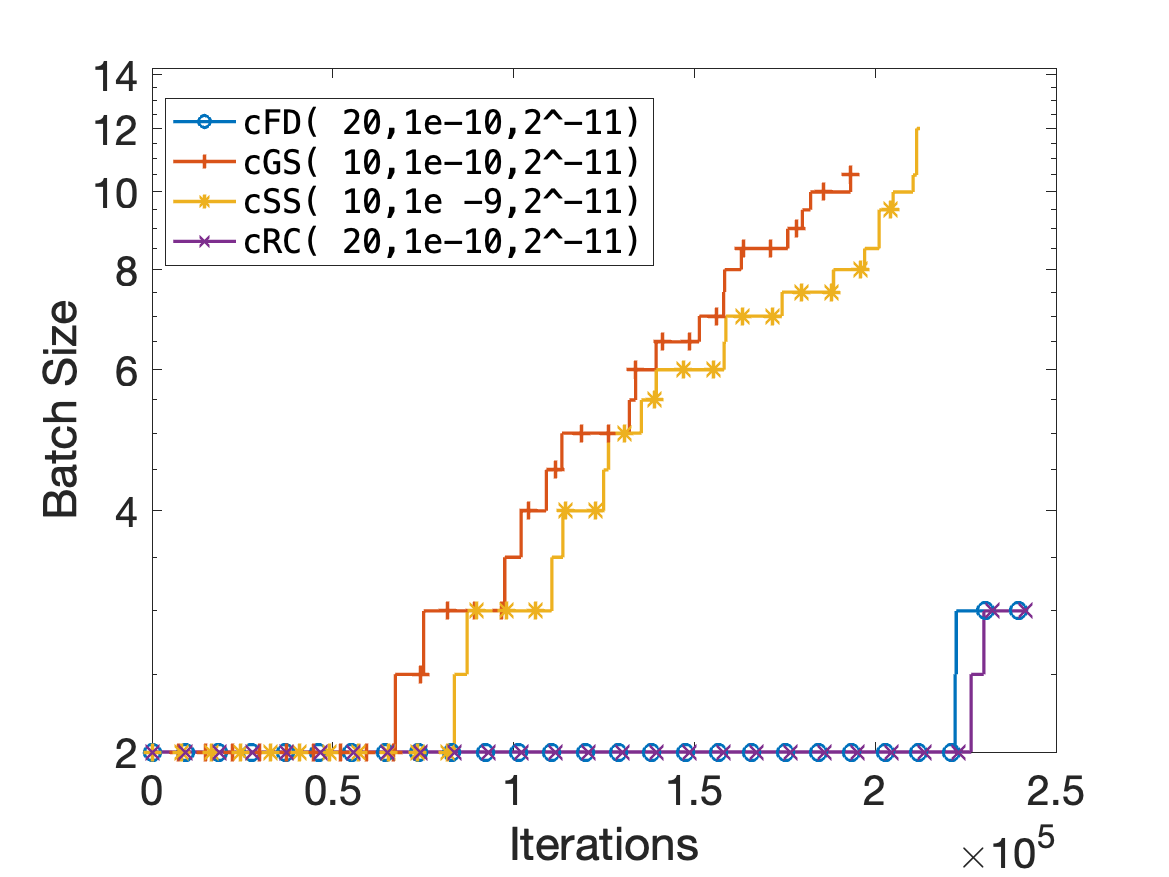}
  \caption{Batch Size}
\end{subfigure}
\begin{subfigure}{0.33\textwidth}
  \centering
  \includegraphics[width=1.1\linewidth]{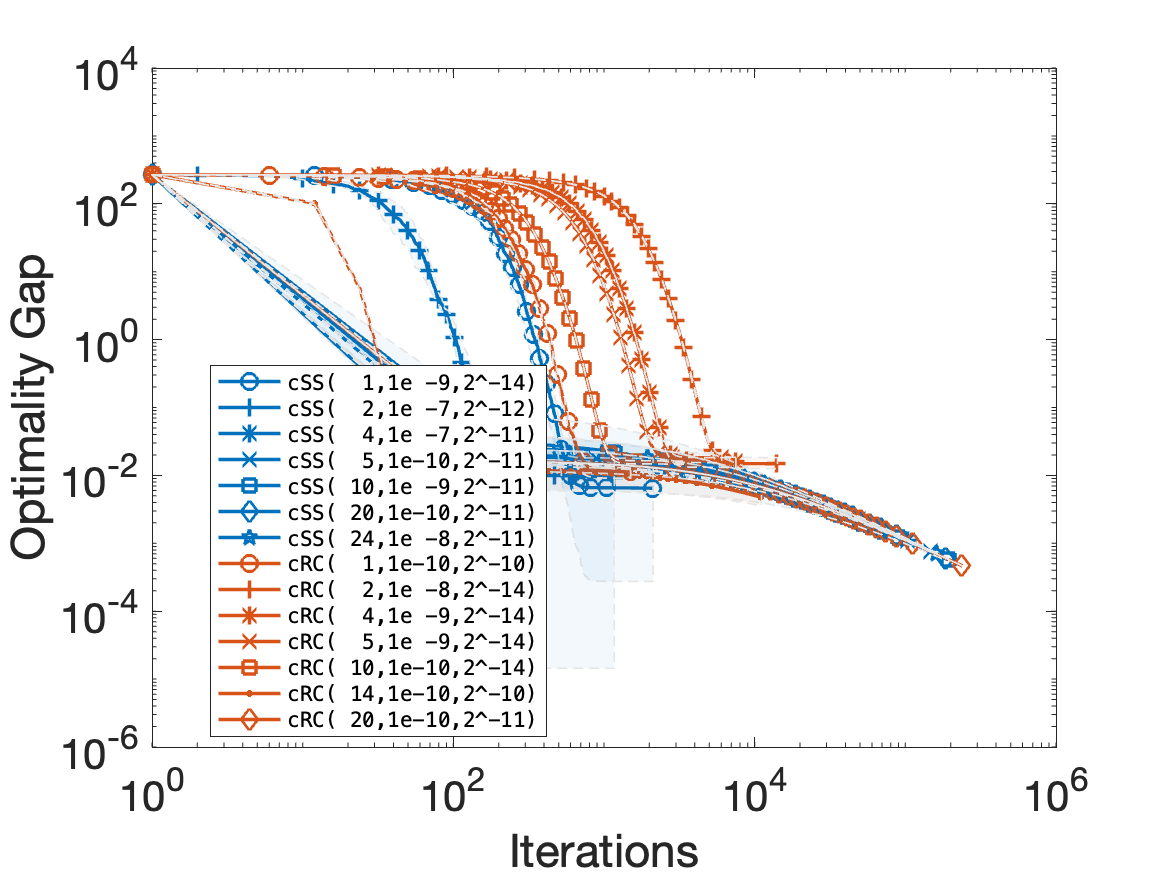}
  \caption{Comparison of cSS and cRC}
\end{subfigure}
\caption{Performance of different gradient estimation methods using the tuned hyperparameters on the Cube function with absolute error and $ \sigma = 10^{-5} $.}
\end{figure}

\begin{figure}[H]
\centering
\begin{subfigure}{0.33\textwidth}
  \centering
  \includegraphics[width=1.1\linewidth]{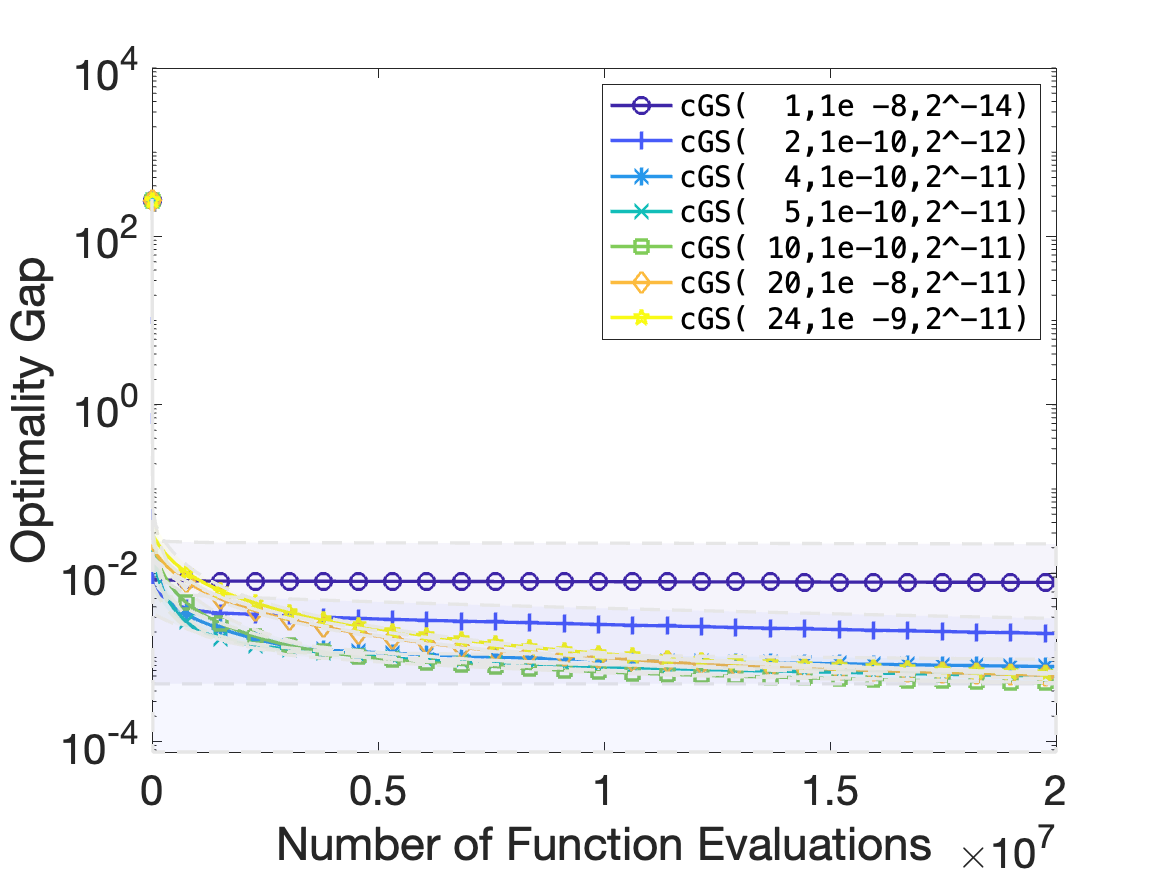}
  \caption{Performance of cGS}
\end{subfigure}%
\begin{subfigure}{0.33\textwidth}
  \centering
  \includegraphics[width=1.1\linewidth]{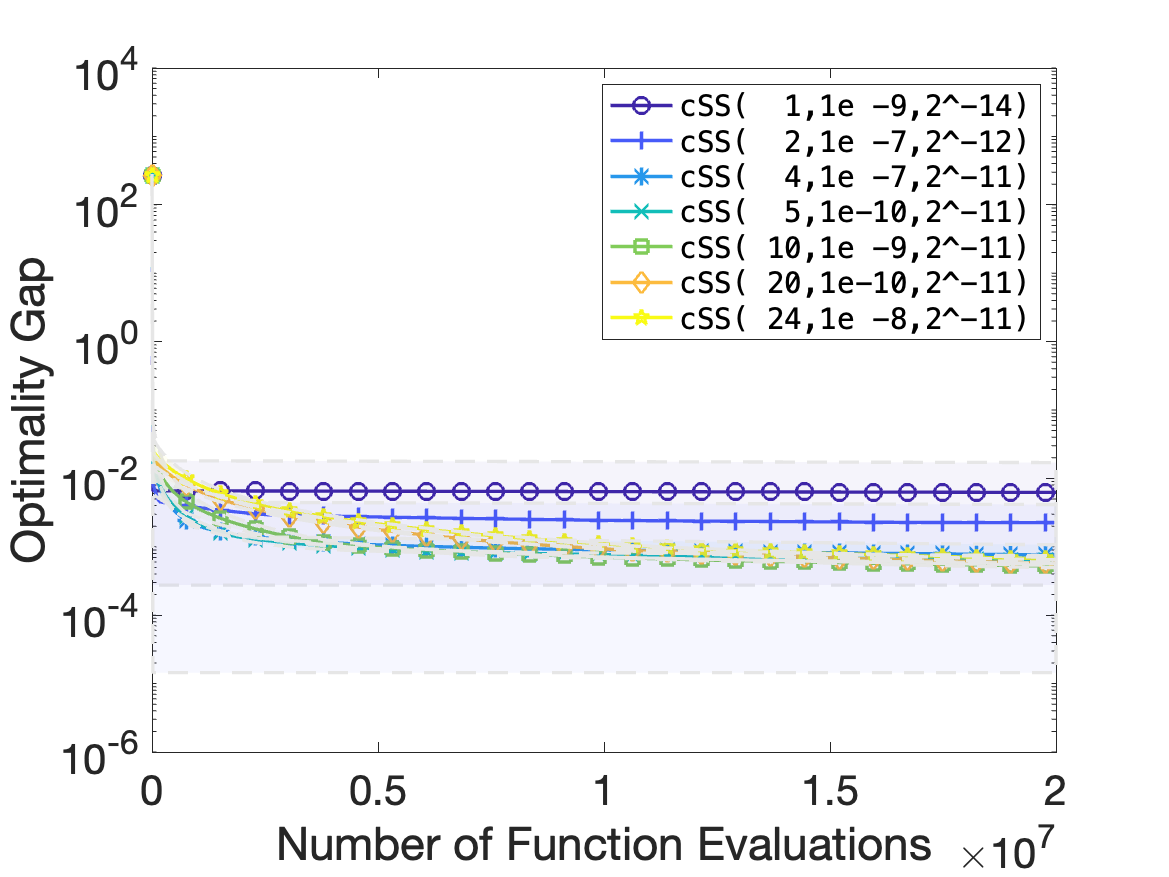}
  \caption{Performance of cSS}
\end{subfigure}%
\begin{subfigure}{0.33\textwidth}
  \centering
  \includegraphics[width=1.1\linewidth]{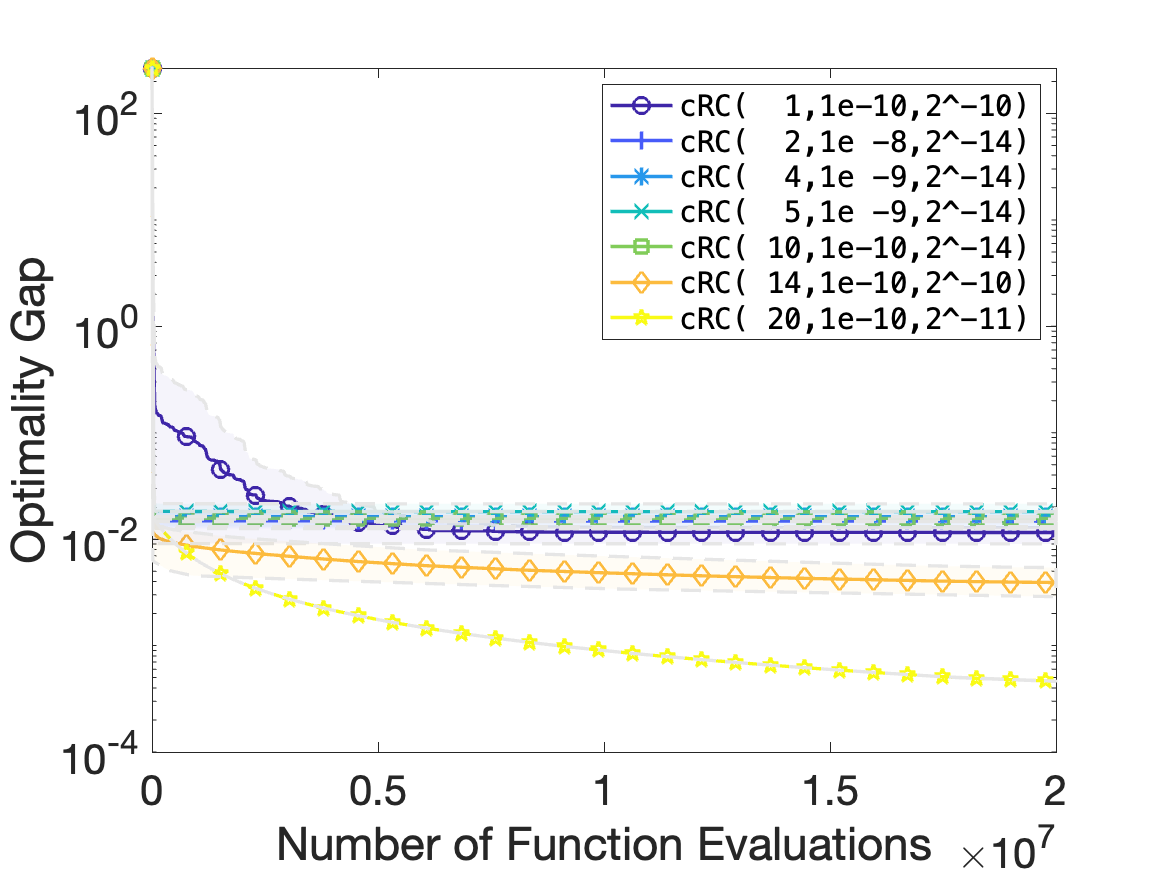}
  \caption{Performance of cRC}
\end{subfigure}
\caption{The effect of number of directions $N$ on the performance of different randomized gradient estimation methods on the Cube function with absolute error and $ \sigma = 10^{-5} $. The sampling radius $\nu$ and step size $\alpha$ are tuned for each method and $N$ combination to achieve the best performance.}
\end{figure}

\newpage
\subsubsection{Logistic Regression Problems}
\label{sec:add_plots_CFD_logreg}

For the binary classification problem, we consider minimization of the logistic loss with $\ell_2$ regularization:
\begin{equation}%
    F(x) = \frac{1}{N_{\text{data}}} \sum_{i = 1}^{N_{\text{data}}} \log(1 + \exp(-z^i x^T y^i)) + \frac{\lambda}{2} \|x\|^2,
\end{equation}
where
$(y_i,z_i) \in \R^d\times\{-1,1\}$ are the training data for $i=1,2,\cdots, N_{\text{data}}$, and $\lambda = 1/N_{\text{data}}$ is the regularization parameter. We considered the classification data sets \texttt{mushroom} and \texttt{MNIST} from the LIBSVM collection \citep{10.1145/1961189.1961199}, where $N_{\text{data}} = 5,500$ and $d = 112$, and $N_{\text{data}} = 60,000$ and $d = 784$, respectively. We chose the initial sample size $|S_0|$ to be $10\%$ of $N_{\text{data}}$. We set the total function evaluation budget to be $100 d N_{\text{data}}$. \\

\paragraph{\texttt{mushroom} data set}

\begin{figure}[H]
    \centering
    \begin{subfigure}{0.33\textwidth}
        \centering
        \includegraphics[width=1.1\linewidth]{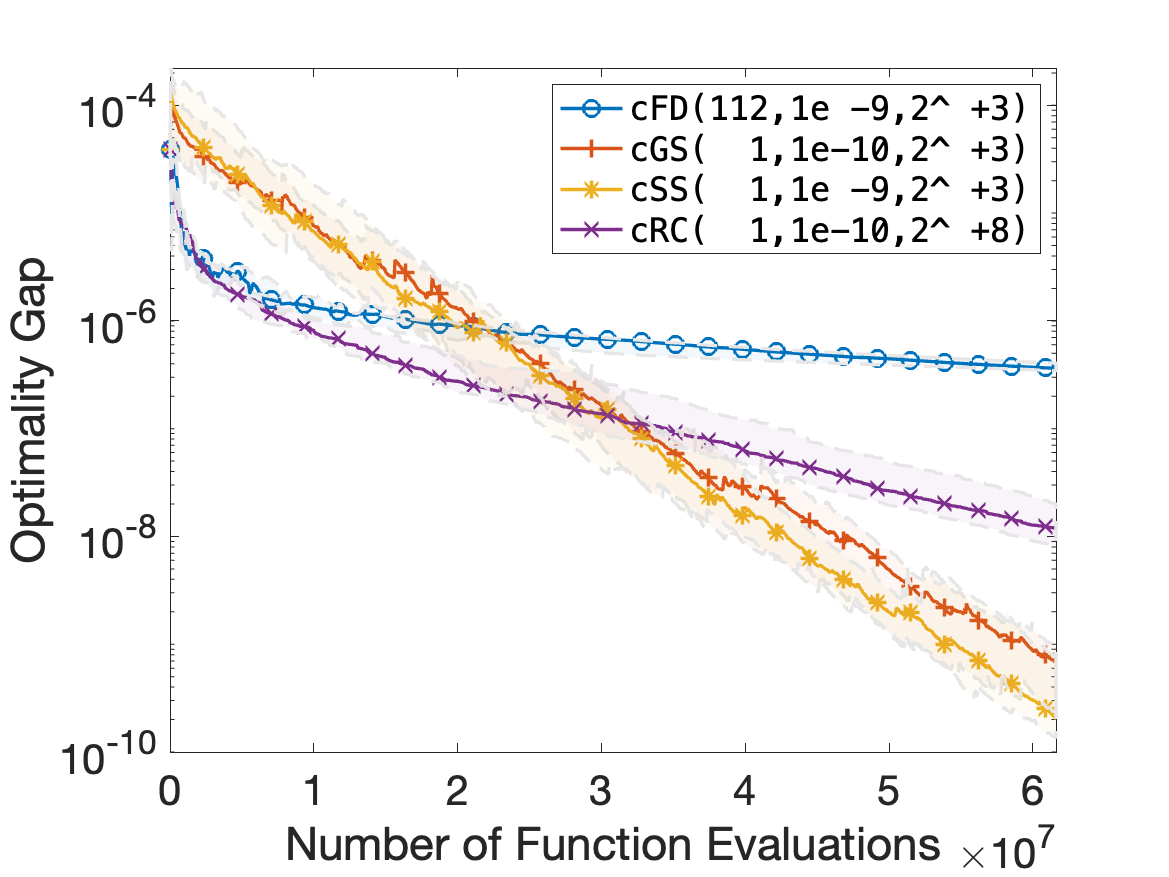}
        \caption{Optimality Gap}
    \end{subfigure}%
    \begin{subfigure}{0.33\textwidth}
        \centering
        \includegraphics[width=1.1\linewidth]{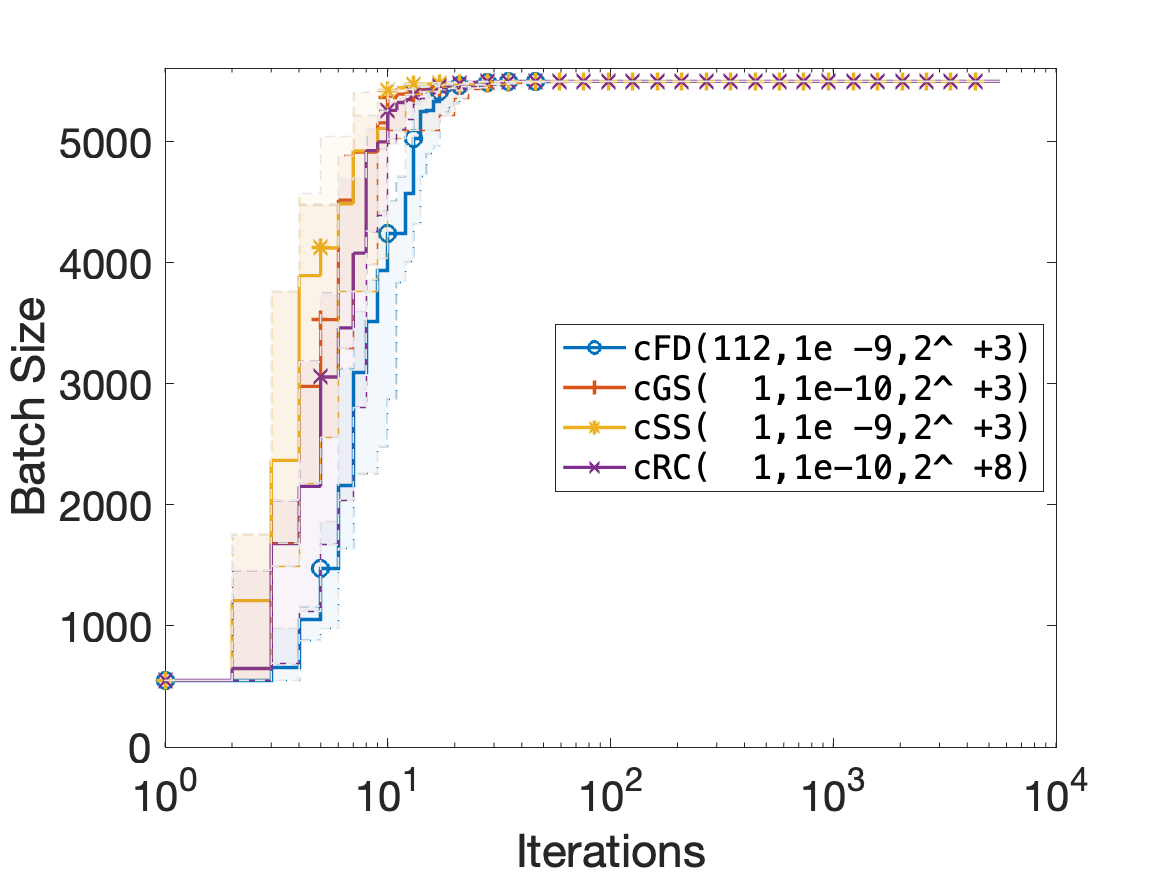}
        \caption{Batch Size}
    \end{subfigure}
    \begin{subfigure}{0.33\textwidth}
        \centering
        \includegraphics[width=1.1\linewidth]{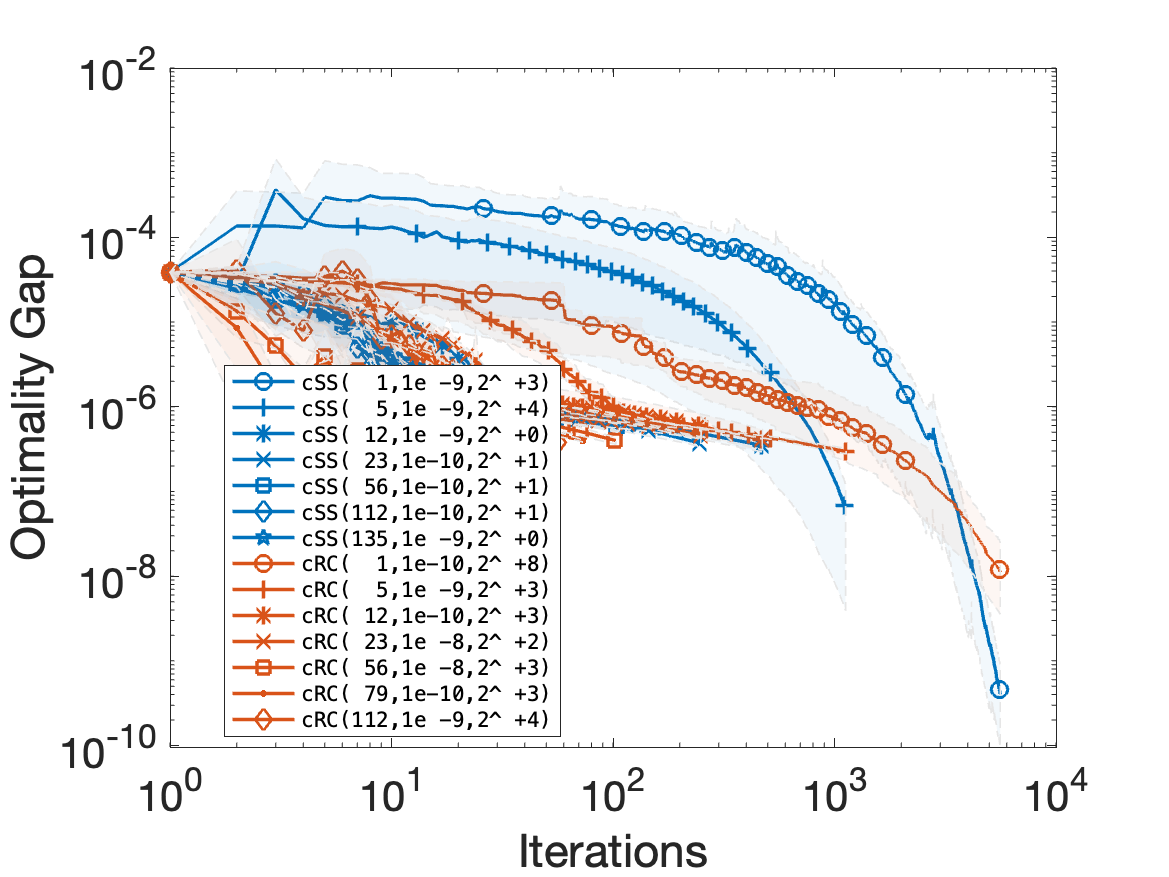}
        \caption{Comparison of cSS and cRC}
    \end{subfigure}
    \caption{Performance of different gradient estimation methods using the tuned hyperparameters on the \texttt{mushroom} data set.}
\end{figure}

\begin{figure}[H]
\centering
    \begin{subfigure}{0.33\textwidth}
        \includegraphics[width=1.1\linewidth]{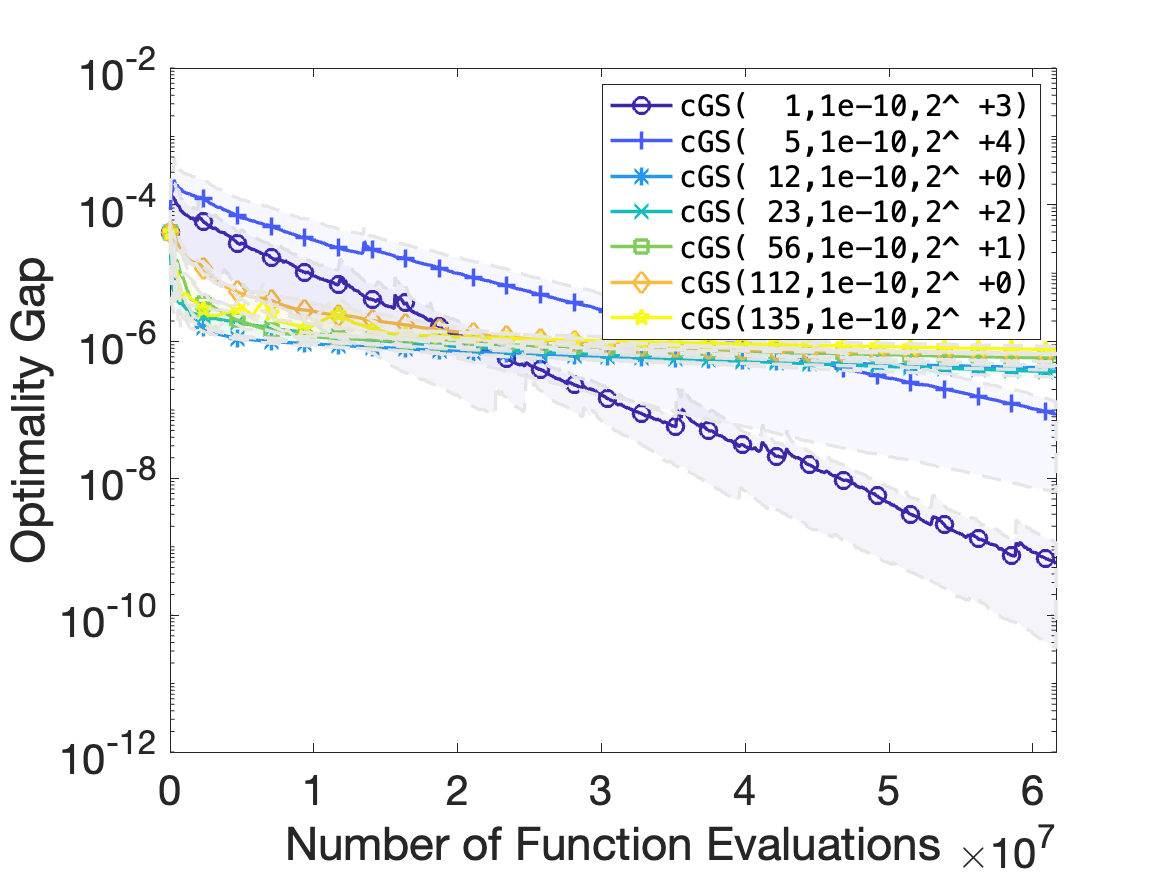}
        \caption{Performance of cGS}
    \end{subfigure}%
    \begin{subfigure}{0.33\textwidth}
        \centering
        \includegraphics[width=1.1\linewidth]{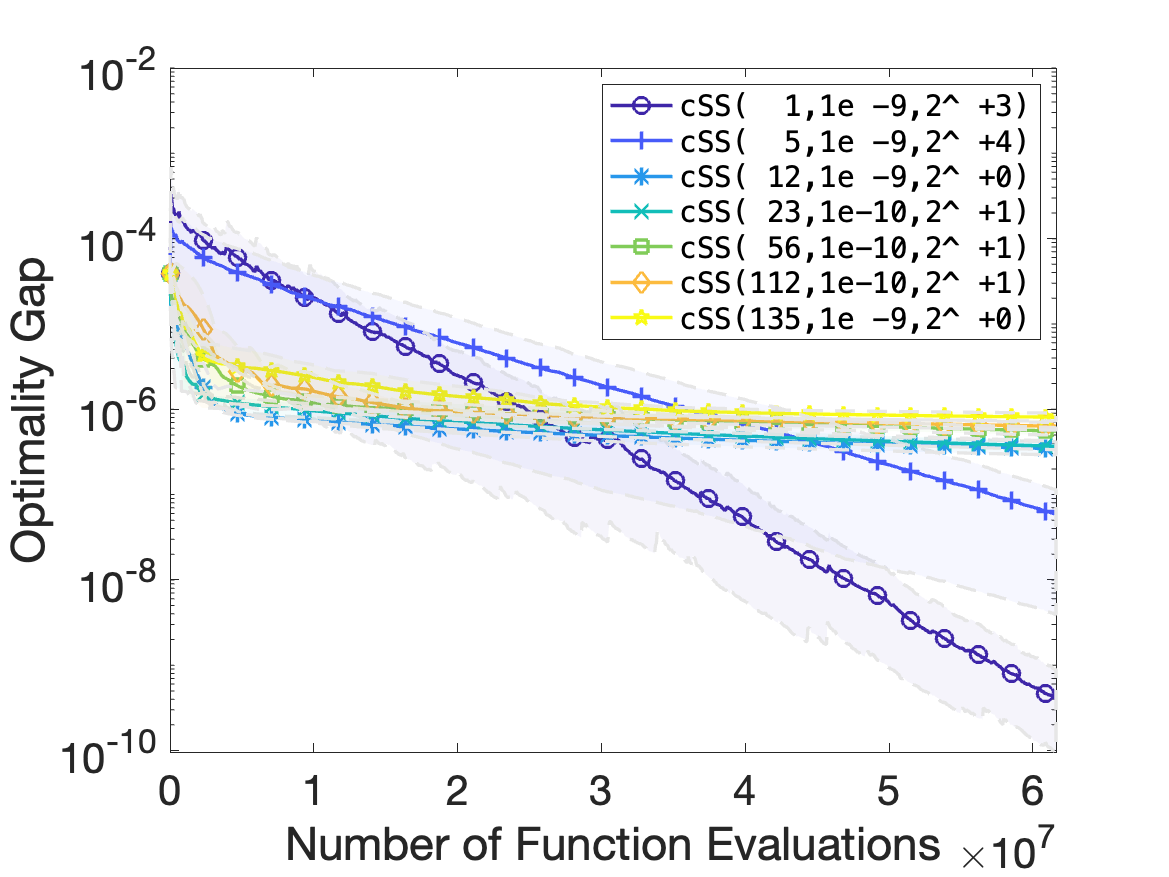}
        \caption{Performance of cSS}
    \end{subfigure}
    \begin{subfigure}{0.33\textwidth}
        \centering
        \includegraphics[width=1.1\linewidth]{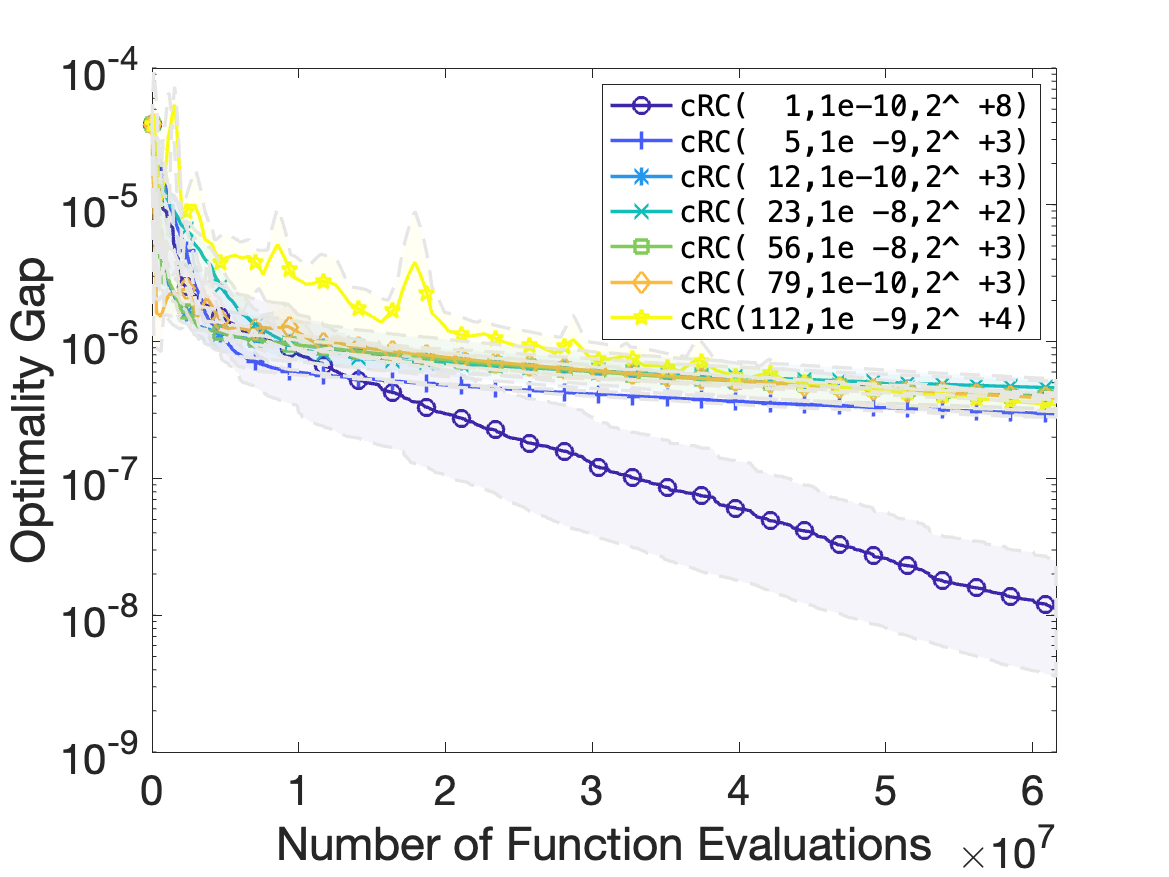}
        \caption{Performance of cRC}
    \end{subfigure}
\caption{The effect of number of directions $N$ on the performance of different randomized gradient estimation methods on the \texttt{mushroom} data set. The sampling radius $\nu$ and step size $\alpha$ are tuned for each method and $N$ combination to achieve the best performance.
}
\end{figure}

\newpage
\paragraph{\texttt{MNIST} data set}

\begin{figure}[H]
    \centering
    \begin{subfigure}{0.33\textwidth}
        \centering
        \includegraphics[width=1.1\linewidth]{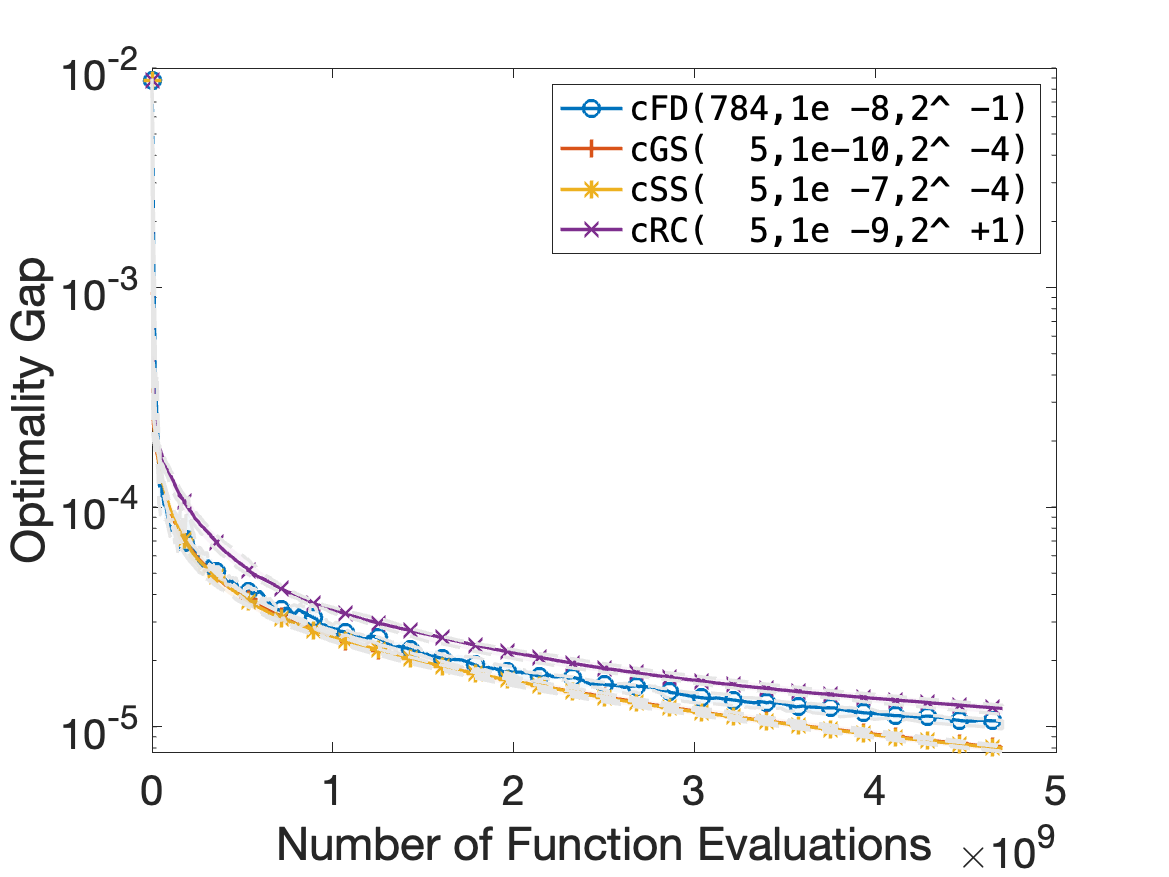}
        \caption{Optimality Gap}
    \end{subfigure}%
    \begin{subfigure}{0.33\textwidth}
        \centering
        \includegraphics[width=1.1\linewidth]{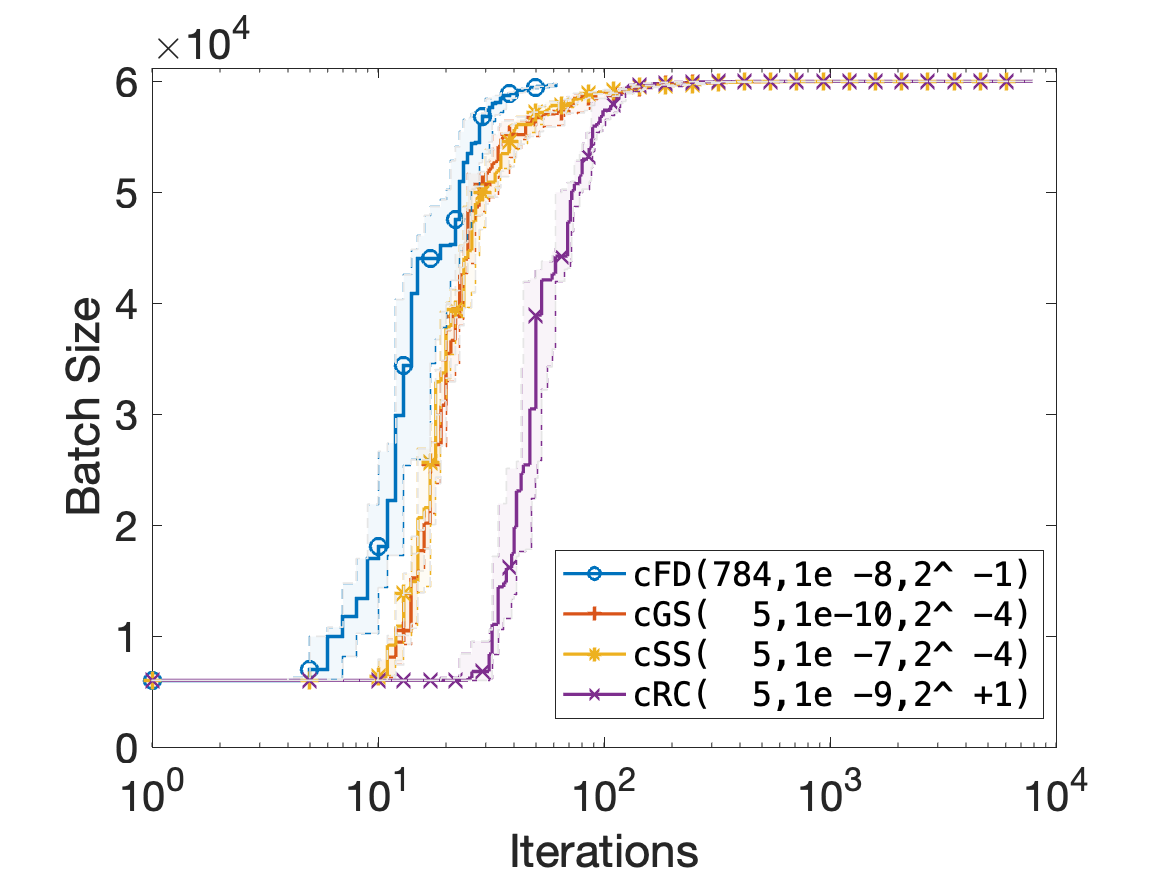}
        \caption{Batch Size}
    \end{subfigure}
    \begin{subfigure}{0.33\textwidth}
        \centering
        \includegraphics[width=1.1\linewidth]{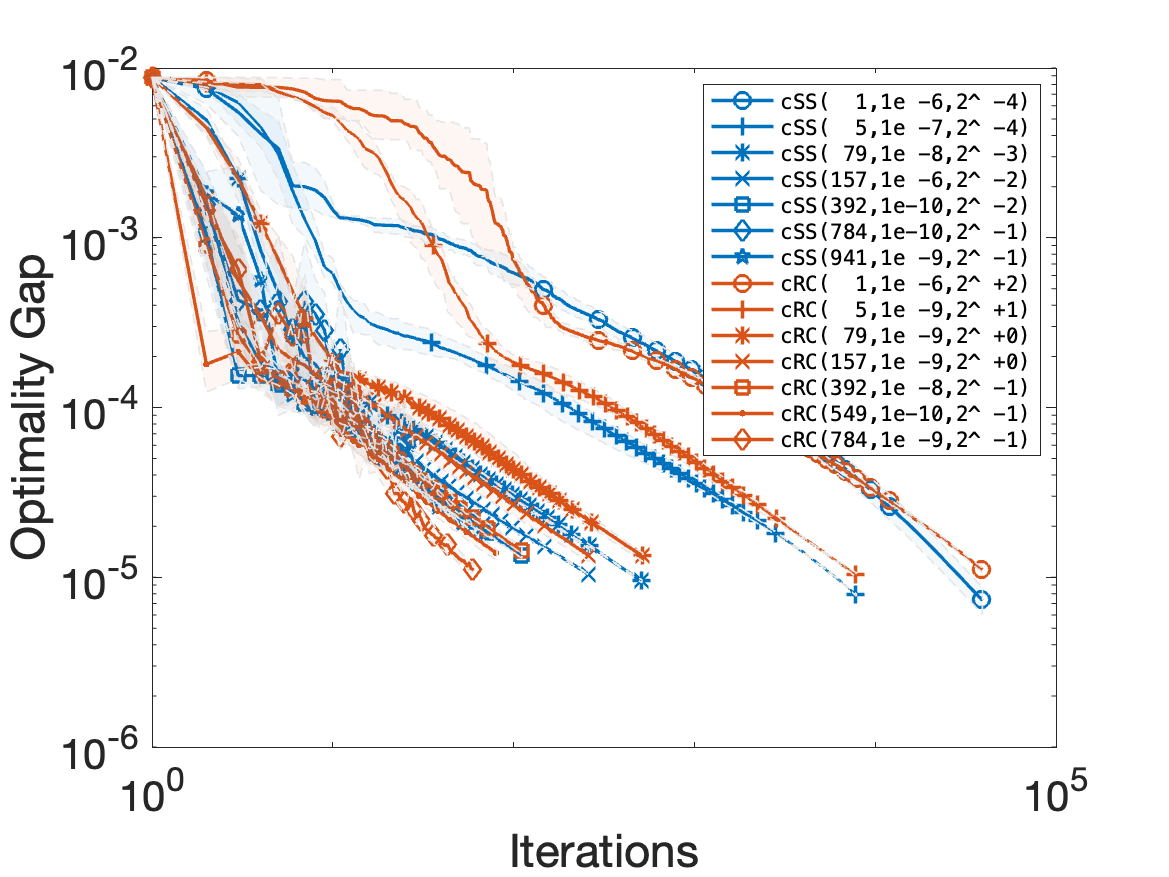}
        \caption{Comparison of cSS and cRC}
    \end{subfigure}
    \caption{Performance of different gradient estimation methods using the tuned hyperparameters on the \texttt{MNIST} data set.}
\end{figure}

\begin{figure}[H]
\centering
    \begin{subfigure}{0.33\textwidth}
        \includegraphics[width=1.1\linewidth]{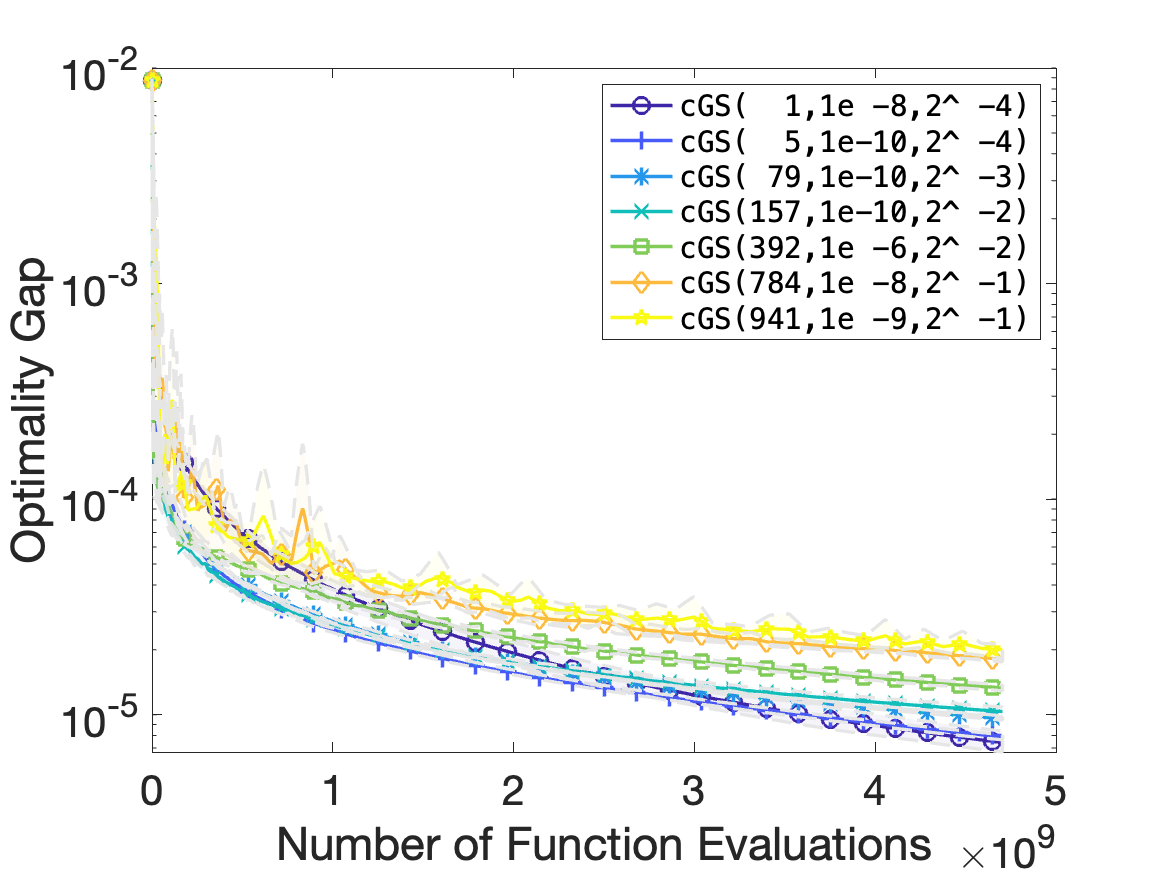}
        \caption{Performance of cGS}
    \end{subfigure}%
    \begin{subfigure}{0.33\textwidth}
        \centering
        \includegraphics[width=1.1\linewidth]{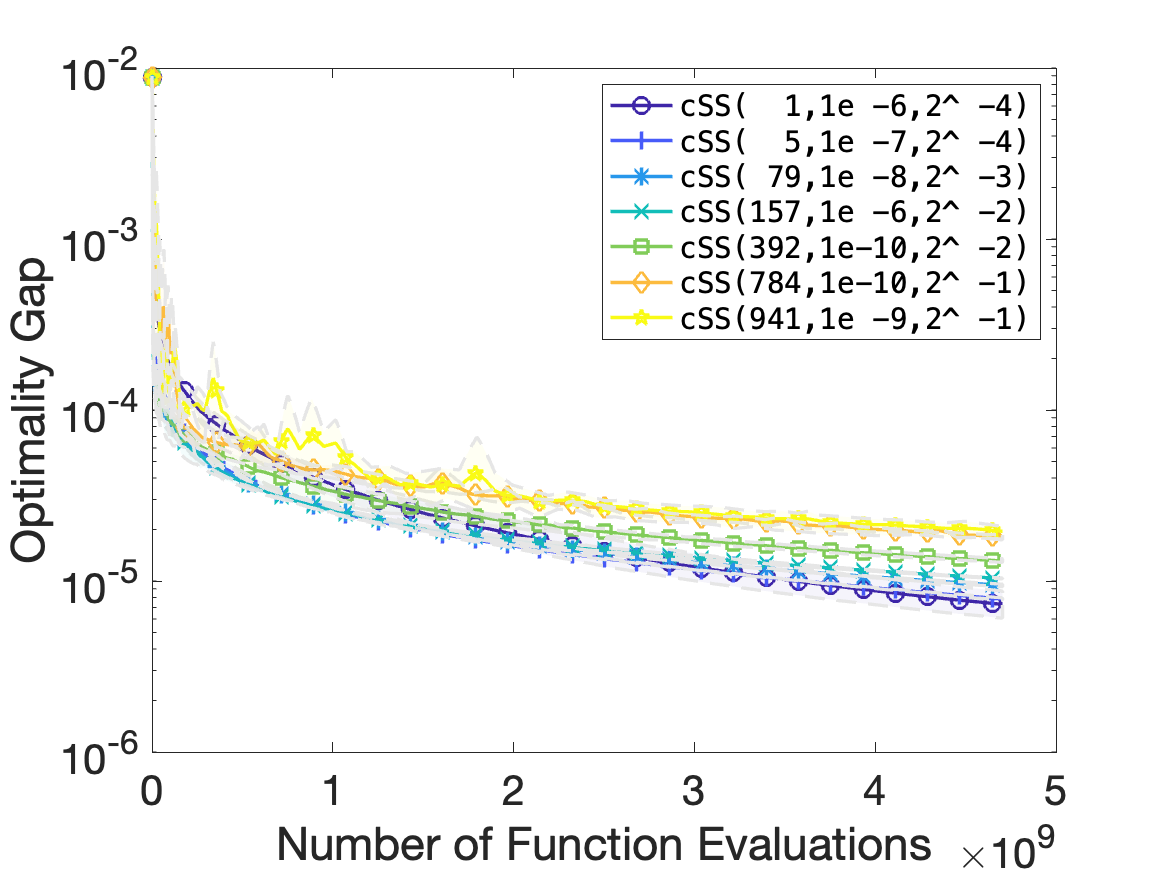}
        \caption{Performance of cSS}
    \end{subfigure}
    \begin{subfigure}{0.33\textwidth}
        \centering
        \includegraphics[width=1.1\linewidth]{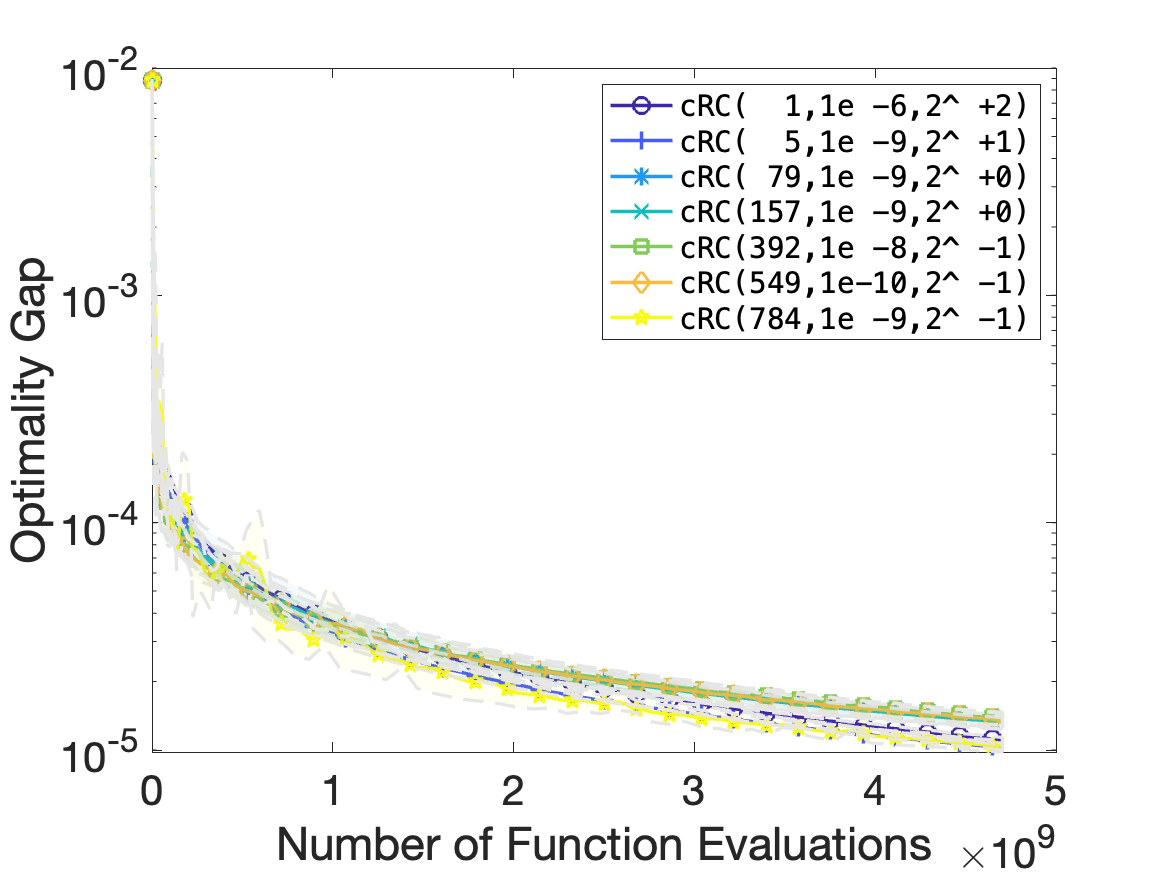}
        \caption{Performance of cRC}
    \end{subfigure}
    \caption{The effect of number of directions $N$ on the performance of different randomized gradient estimation methods on the \texttt{MNIST} data set. The sampling radius $\nu$ and step size $\alpha$ are tuned for each method and $N$ combination to achieve the best performance.
    }
\end{figure}

\end{document}